\colorlet{darkgreen}{green!50!black}
\tikzset{dots/.style={very thick,loosely dotted},
         greendot/.style={fill,circle,color=darkgreen,inner sep=1.5pt,outer sep=0},
         bluedot/.style={fill,circle,color=blue,inner sep=1.5pt,outer sep=0}
}
\def\greendot(#1,#2){\node[greendot] at(#1,#2){}}
\def\bluedot(#1,#2){\node[bluedot] at(#1,#2){}}
\newenvironment{braid}{
  \begin{tikzpicture}[baseline=6mm,black,line width=1pt, scale=0.33,
                      every node/.append style={font=\fontsize{5}{5}\selectfont}]%
  }{\end{tikzpicture}
}
\def\Grid(#1,#2){
  \draw[very thin,gray,step=2mm] (0,0)grid(#1,#2);
  \draw[very thin,darkgreen,step=10mm] (0,0)grid(#1,#2);
}
\newcommand\Tableau[2][\relax]{
  \begin{tikzpicture}[scale=0.5,draw/.append style={thick,black}]
    \ifx\relax#1\relax%
    \else 
      \foreach\box in {#1} { \filldraw[blue!30]\box+(-.5,-.5)rectangle++(.5,.5); }
    \fi
    \newcount\row\newcount\col
    \row=0
    \foreach \Row in {#2} {
       \col=1
       \foreach\k in \Row {
          \draw(\the\col,\the\row)+(-.5,-.5)rectangle++(.5,.5);
          \draw(\the\col,\the\row)node{\k};
          \global\advance\col by 1
       }
       \global\advance\row by -1
    }
  \end{tikzpicture}
}
\newcommand\YoungDiagram[2][\relax]{
  \begin{tikzpicture}[scale=0.5,draw/.append style={thick,black}]
    \ifx\relax#1\relax%
    \else 
    \foreach\box in {#1} {
      \filldraw[blue!30]\box rectangle ++(1,1);
    }
    \fi
    \newcount\row
    \row=0
    \foreach \col in {#2} {
       \draw(1,\the\row)grid ++(\col,1);
       \global\advance\row by -1
    }
  \end{tikzpicture}
}
\DeclareMathOperator{\Hom}{Hom}
\newcommand{\CC}{\mathbb{C}}
\newcommand{\ZZ}{\mathbb{Z}}
\newcommand{\id}{\textup{id}}
\newcommand{\CH}{\textup{ch}}
\newcommand{\ch}{\textup{char}}
\newcommand{\sh}{\textup{sh}}
\newcommand{\Res}{\textup{Res}}
\newcommand{\res}{\textup{res}}
\newcommand{\Ind}{\textup{Ind}}
\newcommand{\Gar}{\textup{Gar}}
\newcommand{\codeg}{\textup{codeg}}
\newcommand{\St}{\textup{St}}
\newcommand{\Tab}{\textup{Tab}}
\newcommand{\bi}{\boldsymbol{i}}
\newcommand{\bj}{\boldsymbol{j}}
\newcommand{\bk}{\boldsymbol{k}}
\newcommand{\bI}{\boldsymbol{I}}
\newcommand{\bs}{\boldsymbol{s}}
\newcommand{\br}{\boldsymbol{r}}
\newcommand{\TTT}{{\tt T}}
\newcommand{\SSS}{{\tt S}}
\newcommand{\UUU}{{\tt U}}
\newcommand{\GGG}{{\tt G}}
\newcommand{\YYY}{{\tt Y}}
\newcommand{\ttt}{{\tt t}}
\newcommand{\sss}{{\tt s}}
\newcommand{\uuu}{{\tt u}}
\newcommand{\xxx}{{\tt x}}
\newcommand{\gggg}{{\tt g}}
\newcommand{\blam}{\boldsymbol{\lambda}}
\newcommand{\bmu}{\boldsymbol{\mu}}
\newcommand{\bnu}{\boldsymbol{\nu}}
\newcommand{\bxi}{\boldsymbol{\xi}}
\newcommand{\bzeta}{\boldsymbol{\zeta}}
\newtheorem{thm}{Theorem}[section]
\newtheorem{prop}[thm]{Proposition}
\newtheorem{lem}[thm]{Lemma}
\newtheorem{cor}[thm]{Corollary}
\theoremstyle{definition}
\newtheorem{definition}[thm]{Definition}
\newtheorem{example}[thm]{Example}
\theoremstyle{remark}
\newtheorem{remark}[thm]{Remark}
\numberwithin{equation}{section}
\begin{document}

\title[Skew Specht modules and cuspidal modules]{Graded skew Specht modules and cuspidal modules for Khovanov-Lauda-Rouquier algebras of affine type A}

\author{Robert Muth}
\address{Department of Mathematics, University of Oregon, 
Eugene, OR 97403}
\email{muth@uoregon.edu}

\begin{abstract}
Kleshchev, Mathas and Ram (2012) gave a presentation for graded Specht modules over Khovanov-Lauda-Rouquier algebras of finite and affine type A. We show that this construction can be applied more generally to skew shapes to give a presentation of graded skew Specht modules, which arise as subquotients of restrictions of Specht modules. As an application, we show that cuspidal modules associated to a balanced convex preorder in affine type A are skew Specht modules for certain hook shapes.

\end{abstract}

\maketitle

\section{Introduction}
\noindent Let \(\mathcal{O}\) be a commutative ring with identity, and let \(\mathfrak{S}_d\) be the symmetric group on \(d\) letters. To every partition \(\lambda\) of \(d\), or equivalently, every Young diagram with \(d\) nodes, there is an associated \(\mathcal{O}\mathfrak{S}_d\)-module \(S^\lambda_{\mathcal{O}}\) called a {\it Specht module}, which has \(\mathcal{O}\)-basis in correspondence with standard \(\lambda\)-tableaux. Over the complex numbers, the group algebra of \(\mathfrak{S}_d\) is semisimple, and it is well known that \(\{ S^\lambda_{\CC} \mid \lambda \vdash d\}\) is a complete set of {\it irreducible} representations. For \(k \leq d\), we consider \(\mathfrak{S}_k\) a subgroup of \(\mathfrak{S}_d\) with respect to the first \(k\) letters, and denote the copy of  \(\mathfrak{S}_{k}\) embedded in \(\mathfrak{S}_d\) with respect to the last \(k\) letters as \(\mathfrak{S}_k'\). For \(\lambda \vdash d\) and \(\mu \vdash k\),
\begin{align}\label{skewhom}
S^{\lambda/\mu}_\CC:= \Hom_{\mathfrak{S}_{k}}(S_\CC^{\mu}, \Res_{\mathfrak{S}_{k}}S_\CC^\lambda)
\end{align}
is a \(\CC\mathfrak{S}_{d-k}'\)-module. In fact, \(S^{\lambda/\mu}_\CC \neq 0\) if and only if the Young diagram for \(\mu\) is contained in that of \(\lambda\), so going forward we assume that is the case. The set of nodes  \(\lambda/\mu\) in the complement is called a {\it skew diagram}, and \(S^{\lambda/\mu}_\CC\) is called a {\it skew Specht module}. As a \(\CC\)-vector space, \(S^{\lambda/\mu}_\CC\) has basis in correspondence with standard \(\lambda/\mu\)-tableaux, and there is an analogue of Young's orthogonal form for skew Specht modules, see e.g. \cite[\S2]{klesh}. When \(\mathcal{O}=\mathbb{F}\) is a field of positive characteristic, semisimplicity fails, but skew Specht modules still arise as subquotients of restrictions of Specht modules to Young subgroups. James and Peel studied the structure of skew Specht modules \(S^{\lambda/\mu}_{\mathbb{F}}\) over \(\mathbb{F}\mathfrak{S}_n\) in \cite{JP}.

More generally, to an \(l\)-multipartition \(\blam\), one may associate a Specht module \(S^{\blam}\) over a cyclotomic Hecke algebra of level \(l\), of which the group algebra of \(\mathfrak{S}_d\) is a special (level one) case. Brundan and Kleshchev \cite{bk} showed that over an arbitrary field such algebras are isomorphic to a certain cyclotomic quotient \(R_d^\Lambda\) of the Khovanov-Lauda-Rouquier (KLR) algebra \(R_d = \bigoplus_{\textup{ht}(\alpha)=d}R_\alpha\). Importantly, KLR algebras and their cyclotomic quotients carry a grading, thereby allowing one to consider the {\it graded} representation theory of cyclotomic Hecke algebras via this isomorphism. In \cite{bkw}, Brundan, Kleshchev and Wang showed Specht modules are gradable.  In \cite{kmr}, Kleshchev, Mathas and Ram gave a presentation for \(S^{\blam}\) over \(R_\alpha\), in terms of a `highest weight' generator \(v^{\blam}\) and relations which include a homogeneous version of the classical Garnir relations for Specht modules. 

In this paper we define graded skew Specht modules over \(R_\alpha\) by extending, in the most obvious way, the presentation of \cite{kmr} to skew diagrams \(\blam/\bmu\). We prove that this yields a graded \(R_{\alpha}\)-module \(S^{\blam/\bmu}\) with homogeneous basis in correspondence with standard \(\blam/\bmu\)-tableaux. We show that for \(\blam\) of {\it content} \(\beta+ \alpha\), the \(R_\beta \otimes R_\alpha\)-module \(\Res_{\beta,\alpha}S^{\blam}\) has an explicit graded filtration with subquotients of the form \(S^{\bmu} \boxtimes S^{\blam/\bmu}\). This result may be compared with \cite[Theorem 3.1]{JP}, which gives a similar result for restrictions of classical Specht modules over the symmetric group algebra to Young subgroups. However, the connection between the skew Specht \(R_\alpha\)-modules defined in this paper and the skew Specht \(\mathbb{F}\mathfrak{S}_n\)-modules considered in \cite{JP} is not as direct as may be expected, see Remark \ref{cycconnect}.

Our motivation for constructing graded skew Specht modules arose from the study of {\it cuspidal modules} over KLR algebras of affine type {\tt A}. The theory of cuspidal systems for affine KLR algebras, described in \cite{cusp}, \cite{imagsw}, \cite{mcn}, building on the ideas of \cite{lyndon}, \cite{mcnfin} for finite types, provides for a classification of irreducible modules over \(R_\alpha\), and plays a pivotal role in stratification theory of \(R_\alpha\) and categorification of PBW bases for the quantum group \cite{KMStrat}, \cite{mcn}, \cite{TW}. 

Cuspidal modules (and semicuspidal modules) are the building blocks of this theory; to every positive real root \(\alpha \in \Phi_+\), one associates an irreducible \(R_\alpha\)-module \(L_\alpha\) characterized by specific properties (see \S\ref{cuspidalsec}). We show that under a balanced convex preorder (part of the data of the cuspidal system), \(L_\alpha\) is isomorphic to \(S^{\lambda/\mu}\) up to some shift, where \(\lambda/\mu\) is a {\it skew hook diagram} described by an inductive process and dependent on the chosen preorder. This gives a presentation for cuspidal modules, along with a description of the graded character which can be read off from the skew hook diagram. This result can be seen as an affine analogue of a result by Kleshchev and Ram \cite[\S8.4]{lyndon}, which showed that in {\it finite} type \({\tt A}\), the cuspidal modules are Specht modules associated to certain hook partitions.

In Section \ref{prelimsec} we collect combinatorial facts and notation surrounding Young diagrams, skew diagrams, and their tableaux. In Section \ref{klrsec} we recall the definition of the KLR algebra \(R_\alpha\), and some facts about its representation theory. In Section \ref{skewspechtsec} we define skew permutation modules and skew Specht modules. Readers familiar with the construction in \cite{kmr} and the `spanning' half of the Specht module basis theorem, and to whom the phrase `extend to skew diagrams in the obvious way' makes sense, may reasonably skip ahead to Section \ref{linind}, as up to this point most of the arguments from \cite{kmr} carry over to the skew diagram case with little significant alteration. In the key Section \ref{linind}, we prove that skew Specht modules arise as subquotients of restrictions of Specht modules, and complete the `linear independence' half of the basis theorem for skew Specht modules. In Section \ref{joinablesec}, we prove some useful results on characters of certain skew Specht modules. In Section \ref{cuspidalsec} we briefly describe the theory of cuspidal systems, along with some related notions we'll need in Section \ref{cuspidalmodsec}, where we demonstrate the connection between cuspidal modules and skew hook Specht modules.

\subsection*{Acknowledgements} This paper was written under the supervision of Alexander Kleshchev at the University of Oregon. The author would like to thank Dr. Kleshchev for his helpful comments and guidance.



\section{Preliminaries}\label{prelimsec}
\subsection{Lie theoretic notation}\label{lie}
We use notation similar to \cite{kmr}, \cite{cusp}. Let \(e \in \{0,2,3,4, \ldots\}\) and \(I = \ZZ/e\ZZ\). Let \(\Gamma\) be the quiver with vertex set \(I\) and a directed edge \(i \to j\) if \(j=i-1 \pmod{e}\). Thus \(\Gamma\) is a quiver of type \({\tt A}_\infty\) if \(e=0\) or \({\tt A}^{(1)}_{e-1}\) if \(e>0\). The corresponding {\it Cartan matrix} \({\tt C} = (a_{i,j})_{i,j \in I}\) is defined by
\begin{align*}
a_{i,j} := \begin{cases}
2 & \textup{if } i=j;\\
0 &  \textup{if }j \neq i, i\pm 1;\\
-1 &  \textup{if }i \to j \textup{ or } i\leftarrow j; \\
-2 &  \textup{if }i \leftrightarrows j.
\end{cases}
\end{align*}
Let \((\mathfrak{h},\Pi,\Pi^\vee)\) be a realization of  \((a_{i,j})_{i,j \in I}\), with root system \(\Phi\), positive roots \(\Phi_+\), simple roots \(\{ \alpha_i \mid i \in I\}\), fundamental dominant weights \(\{\Lambda_i \mid i \in I\}\), and normalized invariant form \((\cdot, \cdot)\) such that \((\alpha_i, \alpha_j) = a_{ij}\) and \((\Lambda_i, \alpha_j) = \delta_{i,j}\). Let \(P_+\) be the set of dominant integral weights, and let \(Q_+ := \bigoplus_{i \in I} \ZZ_{\geq 0} \alpha_i\) be the positive root lattice. For \(\alpha = \sum_{i \in I} m_i\alpha_i\in Q_+\), define the {\it height} of \(\alpha\) to be \(\textup{ht}(\alpha)=\sum_{i \in I} m_i\). When \(e>0\), we label the {\it null-root} \(\delta= \sum_{i \in I} \alpha_i\). Finally fix a level \(l \in \ZZ_{>0}\) and a {\it multicharge} \(\kappa=(k_1, \ldots, k_l) \in I^l\). 

\subsection{Words} Sequences of elements of \(I\) will be called {\it words}, and the set of all words is denoted \(\langle I \rangle\). If \(\bi = i_1 \cdots i_d \in \langle I \rangle\), then \(|\bi|:= \alpha_{i_1} + \cdots + \alpha_{i_d} \in Q_+\). For \(\alpha \in Q_+\), denote
\begin{align*}
\langle I \rangle_\alpha := \{ \bi \in \langle I \rangle \mid |\bi| = \alpha\}.
\end{align*}
If \(\alpha\) is of height \(d\), then \(\mathfrak{S}_d\) with simple transpositions \(s_1, \ldots, s_{d-1}\) has a left action on  \(\langle I \rangle_\alpha\) via place permutations.

\subsection{Young diagrams}
An \(l\)-multipartition \(\blam\) of \(d\) is an \(l\)-tuple of partitions \((\lambda^{(1)}, \ldots, \lambda^{(l)})\) such that \(\sum_{i=1}^l |\lambda^{(i)}| = d\). For \(1 \leq i \leq l\), let \(n(\blam,i)\) be the number of nonzero parts of \(\lambda^{(i)}\). When \(l=1\), we will usually write \(\blam = \lambda = \lambda^{(1)}\). The {\it Young diagram} of the partition \(\blam\) is
\begin{align*}
\{(a,b,m) \in \ZZ_{> 0} \times \ZZ_{> 0}\times \{1,\ldots,l\} \mid 1 \leq b \leq \lambda_a^{(m)}\}. 
\end{align*}
We call the elements of this set {\it nodes} of \(\blam\). We will usually identify the multipartition with its Young diagram. To each node \(A=(a,b,m)\) we associate its {\it residue}
\begin{align*}
\textup{res} A = \textup{res}^\kappa A = k_m + (b-a) \pmod{e}.
\end{align*}
An \(i\)-{\it node} is a node of residue \(i\). The {\it residue content of} \(\blam\) is 
\(\textup{cont}(\blam):= \sum_{A \in \blam} \alpha_{\textup{res} A} \in Q_+.\)
Denote 
\begin{align*}
\mathscr{P}^\kappa_\alpha := \{ \blam \in \mathscr{P}^\kappa \mid \textup{cont}(\blam) = \alpha\}, \hspace{10mm} (\alpha \in Q_+).
\end{align*}
and set \(\mathscr{P}^\kappa_d := \bigcup_{\textup{ht}(\alpha)=d}\mathscr{P}^\kappa_\alpha\). For \(\blam, \bmu \in \mathscr{P}^\kappa_d\), we say \(\blam\) {\it dominates} \(\bmu\), and write \(\blam \trianglerighteq \bmu\), if 
\begin{align*}
\sum_{a=1}^{m-1}|\lambda^{(a)}| + \sum_{b=1}^c \lambda_b^{(m)} \geq \sum_{a=1}^{m-1}|\mu^{(a)}| + \sum_{b=1}^c \mu_b^{(m)}
\end{align*}
for all \(1\leq m \leq l\) and \(c\geq 1\).

A node \(A \in \blam\) is {\it removable} if \(\blam \backslash \{A\}\) is a Young diagram, and a node \(B \notin \blam\) is {\it addable} if \(\blam \cup \{B\}\) is a Young diagram. Define  \(\blam_A:= \blam \backslash \{A\}\) and \(\blam^B:= \blam \cup \{B\}\).

Let \(\blam' = (\lambda^{(l)'}, \ldots, \lambda^{(1)'})\) signify the {\it conjugate partition} to \(\blam\), where \(\lambda^{(i)'}\) is obtained by swapping the rows and columns of \(\lambda^{(i)}\).

\subsection{Tableaux} Let \(\blam \in \mathscr{P}^\kappa_d\). A \(\blam\)-tableau \(\TTT\) is an injective map \(\TTT:\{1, \ldots, d\} \to \blam\), i.e. a labeling of the nodes of \(\blam\) with the integers \(1, \ldots, d\). We also label the inverse of this bijection with \(\TTT\); if \(\TTT(r) = (a,b,m)\) we will also write \(\TTT(a,b,m) = r\). We set \(\textup{res}_{\TTT}(r) = \textup{res}\, \TTT(r)\). The {\it residue sequence} of \(\TTT\) is the word
\begin{align*}
\bi(\TTT) = \bi^k(\TTT) = \textup{res}_{\TTT}(1)\cdots\textup{res}_{\TTT}(d) \in \langle I \rangle.
\end{align*}

A \(\blam\)-tableau is {\it row-strict} if \(\TTT(a,b,m) < \TTT(a,c,m)\) when \(b<c\), and {\it column-strict} if \(\TTT(a,b,m) < \TTT(c,b,m)\) when \(a<c\). We say \(\TTT\) is standard if it is row- and column-strict. Let \(\Tab(\blam)\) (resp. \(\St(\blam)\)) be the set of all (resp. standard) \(\blam\)-tableaux.

Let \(\blam \in \mathscr{P}^\kappa\), \(i \in I\), \(A\) be a removable \(i\)-node, and \(B\) be an addable \(i\)-node of \(\blam\). We set
\begin{align*}
d_A(\blam)&:= \#\{ \textup{addable \(i\)-nodes strictly below } A\} - \#\{\textup{removable \(i\)-nodes strictly below } A\}\\
d^B(\blam)&:= \#\{ \textup{addable \(i\)-nodes strictly above } B\} - \#\{\textup{removable \(i\)-nodes strictly above } B\}.
\end{align*}

In \cite[Section 3.5]{bkw}, the {\it degree} of \(\TTT\) is defined inductively as follows. If \(d=0\), then \(\TTT= \varnothing\) and  \(\deg \TTT:=0\). For \(d>0\), let \(A\) be the node occupied by \(d\) in \(\TTT\). Let \(\TTT_{<d} \in \St(\blam_A)\) be the tableau obtained by removing this node, and set
\begin{align*}
\deg \TTT := d_A(\blam) + \deg \TTT_{<d}.
\end{align*}
Similarly, define the dual notion of {\it codegree} of \(\TTT\) by \(\codeg\; \varnothing=0\) and
\begin{align*}
\codeg\; \TTT:= d^A(\blam) + \codeg\;\TTT_{<d}.
\end{align*}

The group \(\mathfrak{S}_d\) acts on the set of \(\blam\)-tableaux on the left by acting on entries; considering \(\TTT\) as a function \(\blam \to \{1,\ldots,d\}\), we have \(w\cdot \TTT = w \circ \TTT\). Let \(\TTT^{\blam} \) be the \(\blam\)-tableau in which the numbers \(1,2,\ldots, d\) appear in order from left to right along the successive rows, starting from the top. Let \(\TTT_{\blam}:= (\TTT^{\blam})'\), where we define the conjugate tableau in the obvious way.

For each \(\blam\)-tableau \(\TTT\), define permutations \(w^\TTT\) and \(w_\TTT \in \mathfrak{S}_d\) such that
\begin{align*}
w^\TTT \TTT^{\blam} = \TTT = w_\TTT \TTT_{\blam}.
\end{align*}

\subsection{Bruhat order}
Let \(\ell\) be the length function on \(\mathfrak{S}_d\) with respect to the Coxeter generators \(s_1, \ldots, s_{d-1}\). Let \(\trianglelefteq\) be the Bruhat order on \(\mathfrak{S}_d\), so that \(1 \trianglelefteq w\) for all \(w \in \mathfrak{S}_d\). Define a partial order \(\trianglelefteq\) on \(\St(\blam)\) as follows:
\begin{align*}
\SSS \trianglelefteq \TTT \iff w^\SSS \trianglelefteq w^\TTT.
\end{align*}


\subsection{Skew diagrams and tableaux} Let \(\blam, \bmu \in \mathscr{P}^\kappa\), with \(\bmu \subset \blam\) as Young diagrams. Then we call \(
\blam/\bmu := \blam \backslash \bmu
\)
a {\it skew diagram}. A (level one) skew diagram is called a {\it skew hook} if it is connected and does not have two nodes on the same diagonal. We may consider a Young diagram as a skew diagram with empty inner tableau. With \(\bmu\) fixed, let \(\mathscr{S}^{\kappa}_{\bmu,d}\) be the of skew diagrams \(\blam/\bmu\) such that \(|\blam/\bmu|=d\). Let \(\mathscr{S}^{\kappa}_{\bmu}=\bigcup \mathscr{S}^{\kappa}_{\bmu,d}\). Residue and content for skew diagrams are defined as before; for example 
\(\textup{cont}(\blam/\bmu) := \sum_{ A \in \blam/\bmu} \alpha_{\textup{res} A} \in Q_+.\)
Denote 
\begin{align*}
\mathscr{S}^{\kappa}_{\bmu, \alpha} = \{ \blam/\bmu\in \mathscr{S}^\kappa_{\bmu} \mid \textup{cont}(\blam/\bmu)=\alpha\}.
\end{align*}
For \(\blam/\bmu, \bnu/\bmu \in \mathscr{S}^\kappa_{\bmu}\), we say that \(\blam/\bmu\) dominates \(\bnu/\bmu\), or \(\blam/\bmu \trianglerighteq \bnu/\bmu\), if \(\blam \trianglerighteq \bnu\).

For \(\blam/\bmu \in \mathscr{S}^\kappa_{\bmu,d}\), a \(\blam/\bmu\)-tableau is a bijection \(\ttt:\{1, \ldots, d\} \to \blam/\bmu\). Let \(\textup{Tab}(\blam/\bmu)\) be the set of \(\blam/\bmu\)-tableaux. We define the residue sequence of \(\bi(\ttt)\) in the same manner as for Young tableaux, and \(\ttt^{\blam/\bmu}\) we define to be the \(\blam/\bmu\)-tableau in which the numbers \(1, \ldots, d\) appear in order from left to right, starting from the top. We will write \(\bi^{\blam/\bmu}:= \bi(\ttt^{\blam/\bmu})\). For every \(\blam/\bmu\)-tableau \(\ttt\), define a \(\blam\)-tableau \(\YYY(\ttt)\) by setting \(\YYY(\ttt)(a,b,m)=\TTT^{\bmu}(a,b,m)\) for \((a,b,m) \in \bmu\) and \(\YYY(\ttt)(a,b,m) = \ttt(a,b,m) + |\bmu|\) for \((a,b,m) \in \blam/\bmu\). For example, if \(l=1\), \(\lambda=(4,4,1)\),  and \(\mu=(2,1,1)\), then
\begin{align*}
\ttt^{\lambda/\mu}={\Yvcentermath1\young(:::12,::345)}\;, 
\hspace{5mm}\text{ and } \hspace{5mm}
\YYY(\ttt^{\lambda/\mu})={\Yvcentermath1\young(1256,3789,4)}\;.
\end{align*}

Let \(\textup{St}(\blam/\bmu)\) be the set of standard (i.e. row- and column-strict) \(\blam/\bmu\)-tableaux. For \(\ttt \in \textup{St}(\blam/\bmu)\), we define 
\begin{align*}
\deg \ttt:= \deg \YYY(\ttt) - \deg \TTT^{\bmu}.
\end{align*}

The symmetric group \(\mathfrak{S}_d\) acts on \(\Tab(\blam/\bmu)\) in the obvious fashion. For \(\ttt \in \textup{Tab}(\blam/\bmu)\), define \(w^\ttt\) by \(w^\ttt \ttt^{\blam/\bmu} = \ttt\). Define a partial order on \(\textup{Tab}(\blam/\bmu)\) as follows:
\begin{align*}
\sss \trianglelefteq \ttt \hspace{5mm}\textup{if and only if}\hspace{5mm} w^\sss \trianglelefteq w^\ttt.
\end{align*}

\begin{lem} \label{skeworder}
Let \(\sss, \ttt \in \Tab(\blam/\bmu)\). Then \(\sss \trianglelefteq \ttt\) if and only if \(\YYY(\sss) \trianglelefteq \YYY(\ttt)\).
\end{lem}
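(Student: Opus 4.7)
The strategy is to express $w^{\YYY(\ttt)} \in \mathfrak{S}_n$ (with $n := |\blam|$, $d := |\blam/\bmu|$) in terms of $w^\ttt$ via a fixed permutation depending only on the shape, and then invoke standard Coxeter-theoretic facts. Let $\iota : \mathfrak{S}_d \hookrightarrow \mathfrak{S}_n$ be the embedding realizing $\mathfrak{S}_d$ as the Young subgroup fixing $\{1,\ldots,|\bmu|\}$ pointwise (so $\iota(s_k) = s_{k+|\bmu|}$), and set $\pi := w^{\YYY(\ttt^{\blam/\bmu})} \in \mathfrak{S}_n$.

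The key identity I would establish first is
\[
w^{\YYY(\ttt)} = \iota(w^\ttt)\,\pi \quad \text{for all } \ttt \in \Tab(\blam/\bmu),
\]
which follows by directly unwinding definitions: on $\bmu$-nodes both sides agree with $\TTT^{\bmu}$ (since $\iota(w^\ttt)$ fixes $\{1,\ldots,|\bmu|\}$), while on $\blam/\bmu$-nodes both yield the entries of $\ttt$ shifted by $|\bmu|$. The second technical input is that $\pi$ is the minimal-length element of the coset $\iota(\mathfrak{S}_d)\pi$; equivalently, $\pi^{-1}(k) < \pi^{-1}(k+1)$ for every $k$ with $|\bmu| < k < n$. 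This reduces to the observation that $\pi^{-1}(k)$ is the row-reading position in $\blam$ of the $(k-|\bmu|)$-th node of $\blam/\bmu$ in its own row-reading order, and this index is strictly increasing in $k$ because the row-reading of $\blam$ restricts to the row-reading of $\blam/\bmu$.

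With these two ingredients the lemma follows by chaining Bruhat-order equivalences: $\iota$ preserves Bruhat order (standard for Young subgroups), and right multiplication by the distinguished coset representative $\pi$ is a Bruhat-order isomorphism from $\iota(\mathfrak{S}_d)$ onto the coset $\iota(\mathfrak{S}_d)\pi$ inside $\mathfrak{S}_n$. Therefore
\[
w^\sss \trianglelefteq w^\ttt \iff \iota(w^\sss) \trianglelefteq \iota(w^\ttt) \iff \iota(w^\sss)\pi \trianglelefteq \iota(w^\ttt)\pi \iff w^{\YYY(\sss)} \trianglelefteq w^{\YYY(\ttt)}.
\]
The main obstacle is the order-preservation under right multiplication by $\pi$; this is a well-known consequence of the length-additivity $\ell(w\pi) = \ell(w) + \ell(\pi)$ for $w \in \iota(\mathfrak{S}_d)$ (which the minimality of $\pi$ in its coset provides), but it needs to be invoked explicitly or cited from the Coxeter-group literature (e.g.\ Bj\"orner--Brenti).
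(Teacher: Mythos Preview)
Your proof is correct and follows essentially the same route as the paper's: factor $w^{\YYY(\ttt)} = \iota(w^\ttt)\,\pi$ with $\pi = w^{\YYY(\ttt^{\blam/\bmu})}$, observe length additivity, and deduce the Bruhat equivalence. The paper's version is terser, asserting the final implication without explicitly naming the Coxeter-theoretic fact that right multiplication by a length-additive element preserves Bruhat order on the coset; your explicit appeal to this (via minimality of $\pi$ in $\iota(\mathfrak{S}_d)\pi$) makes that step more transparent.
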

\begin{proof}
Let \(\widehat{w^\ttt}\) be the image of \(w^\ttt\) under the `right side' embedding
\(\mathfrak{S}_d \hookrightarrow \mathfrak{S}_{|\bmu|} \times \mathfrak{S}_d \hookrightarrow \mathfrak{S}_{|\blam|}\). Then \(w^{\YYY(\ttt)} = \widehat{w^\ttt}w^{\YYY(\ttt^{\blam/\bmu})}\), with \(\ell(w^{\YYY(\ttt)}) =\ell( \widehat{w^\ttt})+\ell(w^{\YYY(\ttt^{\blam/\bmu})})\), and similarly for \(w^{\YYY(\sss)}\). Since \(w^\sss \trianglelefteq w^\ttt\) if and only if \(\widehat{w^\sss} \trianglelefteq \widehat{w^\ttt}\), the result follows.
\end{proof}

\begin{remark} \label{orderremarks} In order to translate between the orders in the various papers cited, we provide the following dictionary. Our partial order on partitions and tableaux agrees with that of \cite{bkw}. In \cite{kmr} the order on tableaux (which we'll call \(\trianglelefteq_U\)) amounts to \(\SSS \trianglelefteq_U \TTT \iff w^{\SSS'} \trianglelefteq w^{\TTT'}\). As is shown in \cite[Lemma 2.18(ii)]{kmr}, when \(\SSS,\TTT \in \textup{St}(\bmu)\), we have \(\SSS \trianglelefteq_U \TTT \iff \SSS \trianglerighteq \TTT\). In \cite{mathas}, the reverse Bruhat order (\(1 \geq w\)) is used on elements of \(\mathfrak{S}_d\), and the order on tableaux (which we'll call \(\trianglelefteq_M\)) is defined (on row-strict tableaux) by the shape condition in Lemma \ref{ordershape}. Thus Lemma \ref{ordershape} will give \(\SSS \trianglelefteq_M \TTT \iff \SSS \trianglerighteq \TTT\) when \(\SSS, \TTT\) are row-strict.
\end{remark}

For nodes \(A,B\) in \(\blam/\bmu\), we say that \(A\) is {\it earlier} than \(B\) if \(\ttt^{\blam/\bmu}(A) < \ttt^{\blam/\bmu}(B)\); i.e. \(A\) is above or directly to the left of \(B\), or in an earlier component.

Let \(\TTT\) be a \(\blam\)-tableau and suppose that  \(r=\TTT(a_1,b_1,m_1)\) and \(s=\TTT(a_2,b_2,m_2)\). We write \(r \nearrow_\TTT s\) if \(m_1 = m_2\), \(a_1 > a_2\) and \(b_1 < b_2\); informally, if \(r\) and \(s\) are in the same component of \(\blam\) and \(s\) is strictly to the northeast of \(r\). We write \(r \rotatebox[origin=c]{45}{$\Rightarrow$}_{\hspace{-1mm}\TTT} s\) if \(r \nearrow_\TTT s\) or \(m_1 > m_2\). Other rotations of the symbols \(\nearrow_\TTT\) and \(\rotatebox[origin=c]{45}{$\Rightarrow$}_{\hspace{-1mm}\TTT}\) have similarly obvious meanings.

The following lemmas are proved in \cite{bkw} and \cite{mathas} in the context of Young diagrams, but the proofs carry over to skew shapes without significant alteration. The first lemma is obvious.

\begin{lem}\label{arrows}
Let \(\ttt \in \St(\blam/\bmu)\). Then \(s_r \ttt \in \St(\blam/\bmu)\) if and only if \(r \rotatebox[origin=c]{45}{$\Rightarrow$}_{\hspace{-1mm}\TTT} r+1\), or \(r+1 \rotatebox[origin=c]{45}{$\Rightarrow$}_{\hspace{-1mm}\TTT} r\).
\end{lem}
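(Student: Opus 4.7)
The plan is to identify, for standard $\ttt$, the possible geometric relationships between the nodes $A$ and $B$ containing $r$ and $r+1$ respectively, and then determine in which cases swapping these entries preserves standardness. Write $A = (a_1,b_1,m_1)$ and $B = (a_2,b_2,m_2)$.

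First I would note that $s_r$ alters only the positions of $r$ and $r+1$, so $s_r\ttt$ is non-standard precisely when this swap creates a row- or column-violation at $A$ or $B$. In $\ttt$, no entry can lie strictly between $r$ and $r+1$, so if $A$ and $B$ share a row of the same component, they must be adjacent in that row, in which case the swap immediately violates row-strictness; the same holds for a shared column. Conversely, if $m_1 \neq m_2$, or if $m_1 = m_2$ but $a_1 \neq a_2$ and $b_1 \neq b_2$, a direct check confirms $s_r\ttt$ remains both row- and column-strict, since all comparisons of $r$ or $r+1$ with other entries of $\ttt$ slacken by at most one unit and are strict to begin with.

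The key observation I would then isolate is that for standard $\ttt$, the pair $(A,B)$ within a single component cannot occupy a strict ``northwest/southeast'' configuration. Indeed, if $m_1 = m_2 = m$ with $a_1 < a_2$ and $b_1 < b_2$, then the node $C := (a_1,b_2,m)$ lies in $\blam$ (since $\blam$ is a Young diagram containing $B$) and not in $\bmu$ (otherwise, since $\bmu$ is a Young diagram, $A$ would also lie in $\bmu$, contradicting $A \in \blam/\bmu$); so $C \in \blam/\bmu$, but row-strictness along row $a_1$ forces $\ttt(C) > r$ while column-strictness along column $b_2$ forces $\ttt(C) < r+1$, which is impossible. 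The symmetric case $a_1 > a_2$, $b_1 > b_2$ is handled identically.

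Combining, for standard $\ttt$ the pair $(A,B)$ falls into exactly one of four mutually exclusive configurations: different components, same row adjacent, same column adjacent, or one strictly northeast of the other within a common component. The first and last preserve standardness, and these are precisely the cases encoded by $r \rotatebox[origin=c]{45}{$\Rightarrow$}_{\hspace{-1mm}\ttt} r+1$ or $r+1 \rotatebox[origin=c]{45}{$\Rightarrow$}_{\hspace{-1mm}\ttt} r$. I anticipate no serious obstacle; the author's remark that the lemma is obvious suggests this routine case analysis is all that is needed, with the only nontrivial ingredient being the ``no NW/SE'' observation above.
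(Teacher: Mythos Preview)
Your proposal is correct and, in fact, more detailed than the paper's treatment: the paper declares the lemma ``obvious'' and gives no proof. Your case analysis is exactly the routine verification the author has in mind, and the only nontrivial point---that a standard skew tableau cannot place $r$ and $r+1$ in a strict NW/SE configuration within the same component---is handled cleanly by your intermediate-node argument using $C = (a_1, b_2, m)$.
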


\begin{lem}\label{transpositions}
Let \(\sss,\ttt \in \textup{Tab}(\blam/\bmu)\). Then \(\sss \trianglelefteq \ttt\) if and only if \(\sss = (a_1 b_1) \cdots (a_r b_r) \ttt\) for some transpositions \((a_1 b_1), \ldots, (a_r b_r)\) such that for each \(1 \leq n \leq r\) we have \(a_n < b_n\) and \(b_n\) is in an earlier node in \((a_{n+1} b_{n+1}) \cdots (a_r b_r)\ttt\) than \(a_n\).
\end{lem}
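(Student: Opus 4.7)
My plan is to translate the combinatorial condition on chains of transpositions on tableaux into a length-decreasing reflection chain on the corresponding permutations in \(\mathfrak{S}_d\), and then invoke the classical characterization of the Bruhat order.

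First I would set up notation: given transpositions \((a_1 b_1), \ldots, (a_r b_r)\) with each \(a_n < b_n\), define \(\ttt_n := (a_{n+1} b_{n+1}) \cdots (a_r b_r)\ttt\) for \(0 \leq n \leq r\), so that \(\ttt_r = \ttt\), \(\ttt_{n-1} = (a_n b_n)\ttt_n\), and \(\ttt_0 = \sss\) precisely when \(\sss = (a_1 b_1)\cdots(a_r b_r)\ttt\). Setting \(v_n := w^{\ttt_n}\) yields \(v_{n-1} = (a_n b_n) v_n\). Unwinding the identity \(\ttt_n = v_n \circ \ttt^{\blam/\bmu}\), for each label \(k\) one has \(\ttt^{\blam/\bmu}(\ttt_n^{-1}(k)) = v_n^{-1}(k)\), so by the paper's definition of ``earlier'' in terms of the order on values of \(\ttt^{\blam/\bmu}\), the condition ``\(b_n\) is in an earlier node than \(a_n\) in \(\ttt_n\)'' becomes \(v_n^{-1}(b_n) < v_n^{-1}(a_n)\). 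Since \(a_n < b_n\), this is exactly the standard criterion for left multiplication by the reflection \((a_n b_n)\) to strictly decrease length, that is, \(\ell(v_{n-1}) < \ell(v_n)\), which in turn is \(v_{n-1} \triangleleft v_n\) in the Bruhat order.

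Thus the hypothesis of the lemma is equivalent to the existence of a chain \(w^\sss = v_0, v_1, \ldots, v_r = w^\ttt\) in \(\mathfrak{S}_d\) in which each successive step is a length-decreasing left multiplication by a reflection. By the classical characterization of Bruhat order in a Coxeter group via such length-decreasing reflection chains, this is equivalent to \(w^\sss \trianglelefteq w^\ttt\), which by definition is \(\sss \trianglelefteq \ttt\). The analogous Young-diagram result in \cite{bkw, mathas} rests on the same Coxeter-theoretic fact applied to \(\mathfrak{S}_{|\blam|}\); alternatively one could derive the skew version directly from the Young version by combining Lemma \ref{skeworder} with the observation that a length-decreasing reflection chain from \(w^{\YYY(\ttt)}\) to \(w^{\YYY(\sss)}\) can be chosen to use only reflections that fix \(\{1,\ldots,|\bmu|\}\) pointwise, since \(\YYY(\sss)\) and \(\YYY(\ttt)\) agree on \(\bmu\).

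The main obstacle is keeping the conventions straight: one must verify carefully that the ``earlier node'' condition translates into an inversion of \(v_n^{-1}\) rather than of \(v_n\), which is precisely what detects length decrease under \emph{left} (as opposed to right) multiplication by a reflection. Once this translation is correctly pinned down, no new combinatorial difficulty arises beyond what is already handled in the Young-tableau case.
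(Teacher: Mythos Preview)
Your argument is correct. The translation of the ``earlier node'' condition into the inversion condition \(v_n^{-1}(b_n) < v_n^{-1}(a_n)\) is done carefully and correctly, and this is indeed the standard criterion for \(\ell((a_n b_n)v_n) < \ell(v_n)\) when \(a_n < b_n\). The equivalence with Bruhat order via length-decreasing reflection chains is then immediate.

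Your approach differs from the paper's, which is a one-line reduction: apply Lemma~\ref{skeworder} to pass from \(\sss,\ttt\) to \(\YYY(\sss),\YYY(\ttt)\), then cite the Young-diagram version \cite[Lemma~3.4]{bkw}. You instead work directly in \(\mathfrak{S}_d\) and invoke the Coxeter-theoretic characterization of Bruhat order, which is precisely what underlies the cited result in \cite{bkw} anyway. Your route is slightly more self-contained and sidesteps a small point the paper leaves implicit: when one pulls the reflection chain in \(\mathfrak{S}_{|\blam|}\) back to \(\mathfrak{S}_d\), one must know it can be chosen to avoid the labels \(1,\ldots,|\bmu|\). You flag exactly this in your final paragraph, so you have in fact recovered the paper's argument as your ``alternative'' and supplied the missing justification. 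The paper's approach buys brevity and uniformity with the other lemmas in that section (all proved by the same reduction); yours buys transparency about why the skew case is no harder than the Young case.
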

\begin{proof}
This follows from applying Lemma \ref{skeworder} and \cite[Lemma 3.4]{bkw} to \(\YYY(\sss)\) and \(\YYY(\ttt)\).
\end{proof}
Given \(\blam/\bmu \in \mathscr{S}^\kappa_{\bmu}\) and a row-strict \(\ttt \in \textup{Tab}(\blam/\bmu)\), for all \(1 \leq a \leq d\) define \(\ttt_{\leq a}\) to be the tableau obtained by erasing all nodes occupied by entries greater than \(a\).
\begin{lem} \label{ordershape} 
Let \(\sss,\ttt\) be row-strict \(\blam/\bmu\)-tableaux. Then \(\sss \trianglelefteq \ttt\) if and only if \(\textup{sh}(\YYY(\sss)_{\leq a}) \trianglerighteq \textup{sh}(\YYY(\ttt)_{\leq a})\) for each \(a=|\bmu|+1,\ldots, |\bmu| + d\).
\end{lem}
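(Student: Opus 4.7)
The plan is to reduce to the non-skew case using Lemma~\ref{skeworder}, then invoke the classical shape characterization of Bruhat order for row-strict tableaux on an ordinary Young diagram, and finally trim away the trivial part of the shape condition that concerns the entries coming from $\bmu$.

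More precisely, by Lemma~\ref{skeworder} the hypothesis $\sss \trianglelefteq \ttt$ is equivalent to $\YYY(\sss) \trianglelefteq \YYY(\ttt)$ in $\Tab(\blam)$. Moreover, $\YYY(\sss)$ and $\YYY(\ttt)$ are row-strict $\blam$-tableaux, since $\sss$ and $\ttt$ are row-strict and $\TTT^{\bmu}$ is itself row-strict on $\bmu$. So the lemma will follow once I establish the analogous assertion on an honest (non-skew) Young diagram: for row-strict $\blam$-tableaux $\SSS, \TTT$,
\begin{align*}
\SSS \trianglelefteq \TTT \iff \textup{sh}(\SSS_{\leq a}) \trianglerighteq \textup{sh}(\TTT_{\leq a}) \text{ for all } 1 \leq a \leq |\blam|.
\end{align*}
This is a standard fact, essentially \cite[Theorem 3.8]{mathas} after translating conventions as in Remark~\ref{orderremarks}, and may also be proved directly by induction on $|\blam|$ using Lemma~\ref{transpositions} exactly as in the standard-tableau case: a single transposition $(a_n b_n)$ in the product from Lemma~\ref{transpositions} moves one entry from a later node to an earlier node, and a simple case check shows that this either leaves all prefix shapes fixed or enlarges $\textup{sh}((\cdot)_{\leq a})$ in dominance order for a range of $a$'s, giving one direction; the converse follows by induction on $\ell(w^\TTT(w^\SSS)^{-1})$.

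Finally, I would observe that for $1 \leq a \leq |\bmu|$ the truncations $\YYY(\sss)_{\leq a}$ and $\YYY(\ttt)_{\leq a}$ both agree with $\TTT^{\bmu}_{\leq a}$ by the very definition of $\YYY$, so their shapes are equal and the shape condition holds automatically in that range. Consequently, the full non-skew shape condition for $1 \leq a \leq |\blam|$ collapses to the condition for $a = |\bmu|+1, \ldots, |\bmu|+d$, which is exactly what the lemma claims.

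I expect the main obstacle to be locating a cleanly stated non-skew version for \emph{row-strict} (not merely standard) tableaux; the sources \cite{bkw,mathas} phrase their results with slightly different conventions, so a brief verification that the induction argument uses only row-strictness (which is evident, since Lemma~\ref{transpositions} is already stated for all of $\Tab(\blam/\bmu)$) is really all that is needed. Everything else is bookkeeping of the form ``pass through $\YYY$ and discard the first $|\bmu|$ indices''.
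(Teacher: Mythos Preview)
Your proposal is correct and follows essentially the same approach as the paper: reduce to the non-skew case via Lemma~\ref{skeworder}, then invoke \cite[Theorem 3.8]{mathas} (with the convention translation from Remark~\ref{orderremarks}), and finally note that for $1\leq a\leq |\bmu|$ the shapes of $\YYY(\sss)_{\leq a}$ and $\YYY(\ttt)_{\leq a}$ coincide. The paper's one-line proof cites exactly these two ingredients; your version simply unpacks the bookkeeping (including the useful observation that $\YYY$ preserves row-strictness).
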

\begin{proof}
This follows from Lemma \ref{skeworder} and \cite[Theorem 3.8]{mathas}.
\end{proof}
\begin{lem}\label{srt}
Let \(\blam/\bmu \in \mathscr{S}^\kappa_{\bmu}\) and \(\sss,\ttt \in \St(\blam/\bmu)\), and \(r \in \{1, \ldots, d-1\}\) such that \(r \downarrow_\ttt r+1\) or \(r \rightarrow_\ttt r+1\). Then \(\sss \triangleleft s_r\ttt\) implies \(\sss \trianglelefteq \ttt\).
\end{lem}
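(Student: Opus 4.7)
The strategy is to reduce this to the analogous fact for standard Young tableaux via the $\YYY$-embedding, and then cite the Young-tableau version. By Lemma \ref{skeworder}, the partial order on $\St(\blam/\bmu)$ pulls back cleanly from that on $\St(\blam)$ under $\YYY$, so it suffices to prove: if $\YYY(\sss) \triangleleft \YYY(s_r\ttt)$ then $\YYY(\sss) \trianglelefteq \YYY(\ttt)$, where both images lie in $\St(\blam)$.

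The key compatibility is
\[
\YYY(s_r\ttt) \;=\; s_{r+|\bmu|}\,\YYY(\ttt).
\]
This follows directly from the definition of $\YYY$: the action of $s_r$ swaps entries $r$ and $r+1$ of $\ttt$, which lie in $\blam/\bmu$ and correspond to entries $r+|\bmu|$ and $r+|\bmu|+1$ of $\YYY(\ttt)$, while the $\bmu$-part of $\YYY$ is unchanged. By the same reasoning, the hypothesis $r\downarrow_\ttt r+1$ (resp.\ $r\rightarrow_\ttt r+1$) transfers to $(r+|\bmu|)\downarrow_{\YYY(\ttt)}(r+|\bmu|+1)$ (resp.\ $\rightarrow$), since the physical nodes occupied by these entries are preserved by $\YYY$. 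Note also that $\YYY(\sss), \YYY(\ttt) \in \St(\blam)$ since $\sss, \ttt$ are standard and the $\bmu$-entries are strictly smaller than the $\blam/\bmu$-entries by construction of $\YYY$.

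What remains is exactly the Young-tableau statement: for $\SSS,\TTT \in \St(\blam)$ and $k$ with $k\downarrow_\TTT k+1$ or $k\rightarrow_\TTT k+1$, $\SSS \triangleleft s_k\TTT$ implies $\SSS \trianglelefteq \TTT$. This is a classical lemma for standard Young tableaux; it appears, for example, in \cite{mathas}, and as the excerpt notes immediately before Lemma \ref{arrows}, results of this kind pass from the Young to the skew setting without significant alteration. The main obstacle in the proof is nothing more than setting up the translation via $\YYY$ carefully; once the displayed identity and the transfer of the arrow hypothesis are in hand, the classical Young result is applied directly to $\YYY(\sss), \YYY(\ttt), \YYY(s_r\ttt)$, and Lemma \ref{skeworder} converts the conclusion back to the desired inequality $\sss \trianglelefteq \ttt$.
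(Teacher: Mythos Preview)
Your proof is correct and follows essentially the same approach as the paper: reduce to the Young-tableau case via the identity $\YYY(s_r\ttt)=s_{r+|\bmu|}\YYY(\ttt)$, invoke the known result for standard $\blam$-tableaux, and translate back using Lemma~\ref{skeworder}. The paper's proof is terser and cites \cite[Lemma 3.7]{bkw} for the Young-tableau step rather than \cite{mathas}, but the argument is the same.
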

\begin{proof}
By \cite[Lemma 3.7]{bkw}, \(\YYY(\sss) \triangleleft \YYY(s_r\ttt)=s_{r+|\bmu|}\YYY(\ttt)\) if and only if \(\YYY(\sss) \trianglelefteq \YYY(\ttt)\), and the result follows by Lemma \ref{skeworder}.
\end{proof}


\section{KLR algebras}\label{klrsec}
\subsection{Definition}
Let \(\mathcal{O}\) be a commutative ring with identity, let \(\alpha \in Q_+\), and assume \(e>0\) (resp. \(e=0\)). The {\it KLR algebra} \(R_{\alpha} = R_\alpha(\mathcal{O})\)  of type \({\tt A}^{(1)}_{e-1}\) (resp. \({\tt A}_\infty\)) is an associative graded unital \(\mathcal{O}\)-algebra generated by 
\begin{align*}
\{ 1_{\bi} \mid  \bi \in \langle I \rangle_\alpha\} \cup \{y_1, \ldots, y_d\} \cup \{\psi_1,\ldots, \psi_{d-1}\}
\end{align*}
and subject only to the following relations:
\begin{gather*}
1_{\bi}1_{\bj}=\delta_{\bi,\bj}1_{\bi};\hspace{15mm}
\sum_{\bi \in \langle I \rangle_\alpha} 1_{\bi} =1;\hspace{15mm}
y_r1_{\bi}=1_{\bi}y_r;\hspace{15mm}
y_ry_t = y_t y_r;\\
\psi_r1_{\bi} = 1_{s_r\bi}\psi_r;\hspace{15mm}
(y_t \psi_r - \psi_r y_{s_r(t)})1_{\bi} = \delta_{i_r,i_{r+1}}(\delta_{t,r+1}-\delta_{t,r})1_{\bi};
\end{gather*}
\begin{align*}
\psi_r^21_{\bi}&= \begin{cases}
0 & a_{i_r,i_{r+1}}=2\\
1_{\bi} & a_{i_r,i_{r+1}}=0\\
(\delta_{i_r+1,i_{r+1}}- \delta_{i_r-1,i_{r+1}})(y_r-y_{r+1})1_{\bi} & a_{i_r,i_{r+1}}=-1\\
(y_r-y_{r+1})(y_{r+1}-y_r)1_{\bi} & a_{i_r,i_{r+1}}=-2
\end{cases}
\\
(\psi_{r+1}\psi_r\psi_{r+1} - \psi_r \psi_{r+1} \psi_r)1_{\bi} &=
\begin{cases}
(\delta_{i_r+1,i_{r+1}}- \delta_{i_r-1,i_{r+1}})1_{\bi} &i_r = i_{r+2}, a_{i_r,i_{r+1}}=-1\\
(-y_r + 2y_{r+1} - y_{r+2})1_{\bi} &  i_r =i_{r+2}, a_{i_r,i_{r+1}}=-2\\
0 & \textup{otherwise}.
\end{cases}
\end{align*}
The grading on \(R_\alpha\) is defined by setting
\begin{align*}
\deg(1_{\bi})=0, \hspace{10mm} \deg(y_r)=2, \hspace{10mm} \deg(\psi_r 1_{\bi})=-a_{i_r,i_{r+1}}.
\end{align*}
\begin{remark}
In \cite{kl}, Khovanov and Lauda present a convenient diagrammatic approach to computations in \(R_\alpha\) which will be used often in arguments in this paper.
\end{remark}
\subsection{Properties}
Let \(\alpha \in Q_+\) and \(\textup{ht}(\alpha)=d\). For every \(w \in \mathfrak{S}_d\), fix a {\it preferred reduced expression} \(w=s_{r_1} \cdots s_{r_m}\), and define \(\psi_w = \psi_{r_1} \cdots \psi_{r_m} \in R_\alpha\). In general \(\psi_w\) depends on the choice of reduced expression. When \(w\) is {\it fully commutative} however, i.e., when one can go from any reduced expression for \(w\) to any other using only the braid relations of the form \(s_r s_t = s_t s_r\) for \(|r-t|>1\), the element \(\psi_w\) is independent of the choice of reduced expression.

For \(\blam/\bmu \in \mathscr{S}^\kappa_{\bmu,\alpha}\) and \(\ttt \in \textup{Tab}(\blam/\bmu)\), define
\begin{align*}
\psi^\ttt := \psi_{w^\ttt}.
\end{align*}
\begin{lem}\label{degmatch}
Let \(\ttt \in \St(\blam/\bmu)\). If \(w^\ttt=s_{r_1} \cdots s_{r_m}\) is a reduced decomposition in \(\mathfrak{S}_d\), then 
\begin{align*}
\deg \ttt -\deg \ttt^{\blam/\bmu}  = \deg(\psi_{r_1} \cdots \psi_{r_m} 1_{\bi^{\blam/\bmu}}).
\end{align*}
\end{lem}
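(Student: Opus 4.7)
The plan is to deduce this from the analogous fact for standard Young tableaux (proved in \cite{bkw}): for any $\TTT \in \St(\blam)$ and any reduced expression $w^\TTT = s_{q_1} \cdots s_{q_N}$,
\[
\deg \TTT - \deg \TTT^\blam = \deg(\psi_{q_1} \cdots \psi_{q_N} 1_{\bi^\blam}).
\]
I would apply this twice, to $\YYY(\ttt)$ and to $\YYY(\ttt^{\blam/\bmu})$, and subtract. Since $\deg \ttt = \deg \YYY(\ttt) - \deg \TTT^\bmu$ by definition, and likewise for $\ttt^{\blam/\bmu}$, the left side $\deg \ttt - \deg \ttt^{\blam/\bmu}$ equals $\deg \YYY(\ttt) - \deg \YYY(\ttt^{\blam/\bmu})$, which becomes $\deg(\psi_{w^{\YYY(\ttt)}} 1_{\bi^\blam}) - \deg(\psi_{w^{\YYY(\ttt^{\blam/\bmu})}} 1_{\bi^\blam})$ after applying the Young-tableau result.

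Next I would use the factorization $w^{\YYY(\ttt)} = \widehat{w^\ttt}\, w^{\YYY(\ttt^{\blam/\bmu})}$ with additive lengths established in the proof of Lemma \ref{skeworder}. Concatenating a reduced expression $w^\ttt = s_{r_1} \cdots s_{r_m}$ (shifted by $|\bmu|$ on the right embedding) with any reduced expression $w^{\YYY(\ttt^{\blam/\bmu})} = s_{t_1}\cdots s_{t_n}$ gives a reduced expression for $w^{\YYY(\ttt)}$. Thus
\[
\deg(\psi_{w^{\YYY(\ttt)}} 1_{\bi^\blam}) = \deg\bigl(\psi_{r_1+|\bmu|}\cdots \psi_{r_m+|\bmu|} \cdot \psi_{t_1}\cdots \psi_{t_n} 1_{\bi^\blam}\bigr),
\]
and the degree of this monomial in $R_\alpha$ splits as a sum of the degrees of the two halves, computed at the appropriate intermediate words.

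The key observation is that the intermediate word obtained after applying $\psi_{t_1} \cdots \psi_{t_n}$ to $1_{\bi^\blam}$ is $1_{\bi(\YYY(\ttt^{\blam/\bmu}))} = 1_{\bi^{\bmu} \ast \bi^{\blam/\bmu}}$, where $\ast$ denotes concatenation (this follows from the definition of $\YYY$ on the inner and outer parts of the skew diagram). The shifted simple transpositions $s_{r_k+|\bmu|}$ act only on the rightmost $d$ entries of this concatenated word, namely on the $\bi^{\blam/\bmu}$-part, and never touch the first $|\bmu|$ entries. Consequently, the step-by-step degree calculation for the shifted factor $\psi_{r_1+|\bmu|}\cdots \psi_{r_m+|\bmu|}\, 1_{\bi^\bmu \ast \bi^{\blam/\bmu}}$ is identical to that for $\psi_{r_1}\cdots \psi_{r_m}\, 1_{\bi^{\blam/\bmu}}$. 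Thus
\[
\deg(\psi_{w^{\YYY(\ttt)}} 1_{\bi^\blam}) - \deg(\psi_{w^{\YYY(\ttt^{\blam/\bmu})}} 1_{\bi^\blam}) = \deg(\psi_{r_1}\cdots \psi_{r_m} 1_{\bi^{\blam/\bmu}}),
\]
finishing the proof.

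The main obstacle I foresee is simply the bookkeeping of idempotents: verifying cleanly that $w^{\YYY(\ttt^{\blam/\bmu})}\bi^\blam = \bi^\bmu \ast \bi^{\blam/\bmu}$ (using $\YYY(\ttt^{\blam/\bmu}) = w^{\YYY(\ttt^{\blam/\bmu})} \TTT^\blam$ and how the symmetric group action on tableaux translates to the place-permutation action on residue sequences), and then justifying that the degree contribution of each $\psi_{r_k+|\bmu|}$ is insensitive to the $\bi^\bmu$ prefix. Both are routine once spelled out.
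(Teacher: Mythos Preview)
Your proposal is correct and follows essentially the same argument as the paper: apply \cite[Corollary 3.13]{bkw} to both $\YYY(\ttt)$ and $\YYY(\ttt^{\blam/\bmu})$, use the length-additive factorization $w^{\YYY(\ttt)} = \widehat{w^\ttt}\, w^{\YYY(\ttt^{\blam/\bmu})}$ from Lemma~\ref{skeworder}, and observe that the intermediate idempotent is $1_{\bi^{\bmu}\bi^{\blam/\bmu}}$ so the shifted $\psi$'s contribute the same degree as the unshifted ones. The paper's writeup is just a slightly more compressed version of exactly this computation.
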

\begin{proof}
Write \(c=|\bmu|\). Let \(s_{t_1} \cdots s_{t_n}\) be a reduced decomposition for \(w^{\YYY(\ttt^{\blam/\bmu})}\). Then \(\widehat{w^\ttt} = s_{r_1+c} \cdots s_{r_m+c}\) is reduced and \(w^{\YYY(\ttt)}=\widehat{w^\ttt}w^{\YYY(\ttt^{\blam/\bmu})}=s_{r_1+c} \cdots s_{r_m+c}s_{t_1} \cdots s_{t_n}\) is reduced. Then by \cite[Corollary 3.13]{bkw} we have 
\begin{align*}
\deg \YYY(\ttt)-\deg \TTT^{\blam} &= \deg(\psi_{r_1+c} \cdots \psi_{r_m+c} \psi_{t_1} \cdots \psi_{t_n} 1_{\bi^{\blam}})\\
&= \deg(\psi_{r_1+c} \cdots \psi_{r_m+c}1_{\bi^{\YYY(\ttt^{\blam/\bmu})}}) + \deg(\psi_{t_1} \cdots \psi_{t_n}1_{\bi^{\blam}})\\
&= \deg(\psi_{r_1} \cdots \psi_{r_m}1_{\bi^{\blam/\bmu}}) + \deg(\psi_{t_1} \cdots \psi_{t_n}1_{\bi^{\blam}})
\end{align*}
and
\begin{align*}
\deg \YYY(\ttt^{\blam/\bmu})-\deg \TTT^{\blam}  = \deg(\psi_{t_1} \cdots \psi_{t_n} 1_{\bi^{\blam}}),
\end{align*}
which implies the result.
\end{proof}
\begin{prop}\label{rewrite}
Let \(f(y)=f(y_1, \ldots, y_d) \in \mathcal{O}[y_1, \ldots y_d]\) be a polynomial in the generators \(y_r\) of \(R_\alpha\). Let \(1 \leq r_1, \ldots, r_m \leq d-1\). Then
\begin{enumerate}
\item[\textup{(i)}] \(f(y)\psi_{r_1} \cdots \psi_{r_m} 1_{\bi}\) is an \(\mathcal{O}\)-linear combination of elements of the form \(\psi_{r_1}^{\epsilon_1} \cdots \psi_{r_m}^{\epsilon_m} g(y) 1_{\bi}\), where \(g(y) \in \mathcal{O}[y_1,\ldots,y_d]\), each \(\epsilon_i \in \{0,1\}\), and \(s_{r_1}^{\epsilon_1} \cdots s_{r_m}^{\epsilon_m}\) is a reduced expression.
\item[\textup{(ii)}] If \(w=s_{r_1} \cdots s_{r_m}\) is reduced, and \(s_{t_1} \cdots s_{t_m}\) is another reduced expression for \(w\), then 
\begin{align*}
\psi_{r_1} \cdots \psi_{r_m} 1_{\bi} = \psi_{t_1} \cdots \psi_{t_m} 1_{\bi} + \sum_{u \triangleleft w} d_u\psi_ug_u(y) 1_{\bi},
\end{align*}
where each \(d_u \in \mathcal{O}\), \(g_u(y) \in \mathcal{O}[y_1, \ldots, y_d]\), and each \(u\) in the sum is such that \(\ell(u) \leq m-3\). Alternatively, 
\begin{align*}
\psi_{r_1} \cdots \psi_{r_m} 1_{\bi} = \psi_{t_1} \cdots \psi_{t_m} 1_{\bi} + (*),
\end{align*}
where \((*)\) is an \(\mathcal{O}\)-linear combination of elements of the form \(\psi_{r_1}^{\epsilon_1} \cdots \psi_{r_m}^{\epsilon_m} g(y)\), where \(g(y)\in \mathcal{O}[y_1, \ldots, y_d]\), \(\epsilon_i \in \{0,1\}\), \(\epsilon_i = 0\) for at least three distinct \(i\)'s, and \(s_{r_1}^{\epsilon_1} \cdots s_{r_m}^{\epsilon_m}\) is a reduced expression.
\end{enumerate}
\end{prop}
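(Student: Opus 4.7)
The plan is to prove both parts by induction on the number $m$ of $\psi$-factors, treating (i) first and deriving (ii) from it together with Matsumoto's theorem. For part (i), by linearity it suffices to treat the case $f(y) = y_t$ (the general case follows by iterated application, increasing the degree of $f$). The key relation
\[y_t\psi_{r_1}1_{\bj} = \psi_{r_1}y_{s_{r_1}(t)}1_{\bj} + \delta_{j_{r_1},j_{r_1+1}}(\delta_{t,r_1+1}-\delta_{t,r_1})1_{\bj}\]
lets us push $y_t$ past the leftmost $\psi$, producing a \emph{retained} summand (with $\psi_{r_1}$ intact and the $y$-index permuted by $s_{r_1}$) and an \emph{error} summand in which $\psi_{r_1}$ has been deleted. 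The error summand has $m-1$ factors of $\psi$, so the inductive hypothesis yields an expression of the desired form with $\epsilon_1 = 0$. The retained summand becomes $\psi_{r_1}\cdot\psi_{r_2}^{\epsilon_2}\cdots\psi_{r_m}^{\epsilon_m}g(y)1_{\bi}$ after applying the inductive hypothesis to the length-$(m-1)$ sub-product.

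If $s_{r_1}s_{r_2}^{\epsilon_2}\cdots s_{r_m}^{\epsilon_m}$ is already reduced, the expression is in the claimed form with $\epsilon_1 = 1$. Otherwise, invoke the strong exchange condition in $\mathfrak{S}_d$: the element $s_{r_1}s_{r_2}^{\epsilon_2}\cdots s_{r_m}^{\epsilon_m}$ admits a reduced expression starting with $s_{r_1}$, which corresponds to a matching pair of letters in the concatenated sequence that can be straightened via braid and commutation moves into an adjacent $\psi_{r_j}\psi_{r_j}$ drawn from positions within $r_1,\dots,r_m$. Substitute $\psi_{r_j}^2 1_{\bj'}$ by its polynomial value prescribed by the KLR relations, and push the resulting polynomial to the rightmost position by further applications of the commutation relation. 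Each substitution drops two $\psi$'s from the original index sub-sequence and strictly decreases the length of the $\psi$-word, so the procedure terminates.

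For part (ii), the first form follows by combining part (i) with Matsumoto's theorem. Any two reduced expressions for $w$ are connected by a finite sequence of braid moves; commutation moves $\psi_r\psi_s = \psi_s\psi_r$ for $|r-s|>1$ require no correction, while the only nontrivial corrections come from the three-letter braid relation $\psi_{r+1}\psi_r\psi_{r+1}-\psi_r\psi_{r+1}\psi_r$, which, by the defining KLR relations, produces a polynomial of degree at most $2$ multiplied by the outer $\psi$-word of length $m - 3$. Iterating and rewriting via (i) gives $\sum_{u \triangleleft w} d_u\psi_ug_u(y)1_{\bi}$ with $\ell(u) \leq m-3$. For the alternative formulation, apply (i) to each correction directly: the three positions eliminated by the braid relation appear as $\epsilon_j = \epsilon_{j+1} = \epsilon_{j+2} = 0$, and the subword property of Bruhat order guarantees that the surviving sub-sequence of $r_1,\ldots,r_m$ remains a reduced expression for some $u\trianglelefteq w$.

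The principal obstacle is the bookkeeping in part (i): one must verify that each application of the exchange condition and the $\psi^2$-relation drops two $\psi$'s whose positions lie within the original index sequence $r_1, \ldots, r_m$, rather than within indices introduced by intermediate braid moves. This rests on the fact that both the $y$-$\psi$ commutation and the $\psi^2$-relation \emph{only delete} $\psi$-indices, never introducing new ones, so that every intermediate and final $\psi$-word is necessarily a sub-sequence of the original.
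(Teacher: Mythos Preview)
The paper does not actually prove this proposition; it simply cites \cite[Lemma~2.4, Proposition~2.5]{bkw} and remarks that the cyclotomic relation is not used there. So there is no detailed argument in the paper to compare against, and your sketch is essentially an attempt to reconstruct the Brundan--Kleshchev--Wang proof.

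Your outline has the right architecture (joint induction on $m$, push $y$'s rightward, use $\psi^2$ and braid relations to shorten), but there is a genuine gap in the step where you straighten a non-reduced prefix. You assert that the manipulations ``only delete $\psi$-indices, never introducing new ones,'' citing the $y$--$\psi$ commutation and the $\psi^2$-relation. But to bring the matching $s_{r_1}$ to the front of the reduced tail $s_{r_2}^{\epsilon_2}\cdots s_{r_m}^{\epsilon_m}$ you must pass between two reduced expressions for the same element, and by Matsumoto this requires three-term braid moves $\psi_{r+1}\psi_r\psi_{r+1}\leftrightarrow\psi_r\psi_{r+1}\psi_r$ in general. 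That move does not delete indices; it replaces the triple $(r{+}1,r,r{+}1)$ by $(r,r{+}1,r)$. After one such move the leading term no longer carries indices from $r_1,\dots,r_m$, so error terms from a \emph{subsequent} braid move are sub-sequences of the altered list, not of the original. Your concluding paragraph does not address this, and the argument as written does not establish that every surviving $\psi$-word is a sub-sequence of $r_1,\dots,r_m$.

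The repair is to make the joint induction do real work. In part (i), when $s_{r_1}\cdot w$ is not reduced with $w=s_{r_2}^{\epsilon_2}\cdots s_{r_m}^{\epsilon_m}$ reduced, the exchange condition gives a specific $j$ with $w = s_{r_1}\,s_{r_2}^{\epsilon_2}\cdots\widehat{s_{r_j}}\cdots s_{r_m}^{\epsilon_m}$, both sides reduced of length $\le m-1$; now apply the \emph{inductive} case of (ii) (alternate form) to convert the $\psi$-version of the left side to that of the right, with error terms that are genuinely sub-sequences of $r_2,\dots,r_m$ missing at least three letters. Left-multiplying by $\psi_{r_1}$ then produces an honest $\psi_{r_1}^2$, and all remaining pieces have length $\le m-2$. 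For the alternate form of (ii) itself, after a chain of braid moves each error term corresponds to some $u\triangleleft w$ with $\ell(u)\le m-3$; invoke the subword property of Bruhat order to choose for $u$ a reduced expression that \emph{is} a sub-sequence of $r_1,\dots,r_m$, and use the inductive case of (ii) to pass to it.
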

\begin{proof}
This is proved in \cite[Lemma 2.4]{bkw} and \cite[Proposition 2.5]{bkw}, for the case of cyclotomic KLR algebras, but the cyclotomic relation is not used in the proof.
\end{proof}
\begin{thm}\label{KLRbasis}\textup{\cite[Theorem 2.5]{kl}, \cite[Theorem 3.7]{rouq}} Let \(\alpha \in Q_+\). Then
\begin{align*}
\{\psi_w y_1^{m_1} \cdots y_d^{m_d} 1_{\bi} \mid w \in \mathfrak{S}_d, m_1, \ldots, m_d \in \ZZ_{\geq 0}, \bi \in \langle I \rangle_\alpha\}
\end{align*}
is an \(\mathcal{O}\)-basis for \(R_\alpha\).
\end{thm}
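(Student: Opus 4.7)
The plan is to prove the basis theorem in two stages: spanning, then linear independence via a faithful polynomial representation.

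For spanning, my strategy is to rewrite any monomial in the generators into the claimed form. Using \(1_\bi 1_\bj = \delta_{\bi,\bj} 1_\bi\) together with \(\psi_r 1_\bi = 1_{s_r\bi}\psi_r\) and the commutation of the \(y\)'s with idempotents, every monomial reduces to one with a single idempotent \(1_\bi\) on the right. Next, the relation \((y_t\psi_r - \psi_r y_{s_r(t)})1_\bi = \delta_{i_r,i_{r+1}}(\delta_{t,r+1} - \delta_{t,r})1_\bi\) lets me move polynomial factors past individual \(\psi_r\)'s at the cost of terms with strictly fewer \(\psi\)'s. So every element is a sum of terms \(\psi_{r_1}\cdots\psi_{r_m}\,p(y)\,1_\bi\), and I induct on \(m\). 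If \(s_{r_1}\cdots s_{r_m}\) is not reduced, standard Coxeter combinatorics lets me apply braid and commutation relations (whose error terms are controlled by Proposition \ref{rewrite}) to bring two equal \(\psi_r\)'s adjacent, then collapse \(\psi_r^2\) via the quadratic relation into a polynomial in \(y_r, y_{r+1}\), strictly decreasing \(m\). If the expression is reduced and represents \(w\), Proposition \ref{rewrite}(ii) rewrites it as \(\psi_w\,p(y)\,1_\bi\) plus terms of shorter \(\psi\)-length. Induction then places everything in the span of the claimed set.

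For linear independence, I would construct a faithful polynomial representation. Let \(P_\alpha = \bigoplus_{\bi \in \langle I \rangle_\alpha} \mathcal{O}[x_1,\ldots,x_d]\cdot 1_\bi\). Define \(1_\bj\) as projection onto the \(\bi = \bj\) summand, let \(y_r\) act by multiplication by \(x_r\) within each summand, and define \(\psi_r\) as an operator \(P_\alpha 1_\bi \to P_\alpha 1_{s_r\bi}\) modeled on a divided difference: use \(\partial_r = (1-s_r)/(x_r - x_{r+1})\) when \(i_r = i_{r+1}\), use the place permutation \(s_r\) when \(a_{i_r,i_{r+1}} = 0\), and in the remaining cases use \(\partial_r\) scaled by an appropriate factor built from the Q-polynomial \(Q_{i_r,i_{r+1}}(x_r,x_{r+1})\) that matches the quadratic relation. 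One then verifies all KLR relations hold on \(P_\alpha\). Once this is done, one evaluates the candidate basis elements against the vectors \(1_{\bi_0} \in P_\alpha 1_{\bi_0}\); filtering by the length of \(w\) and the total degree in the \(y\)'s, and using the classical fact that the nilHecke operators \(\partial_{w}\) for \(w \in \mathfrak{S}_d\) are \(\mathcal{O}[x_1,\ldots,x_d]^{\mathfrak{S}_d}\)-linearly independent on \(\mathcal{O}[x_1,\ldots,x_d]\), one concludes that the images are linearly independent. This forces the basis elements themselves to be independent in \(R_\alpha\).

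The main obstacle is checking the KLR defining relations on the polynomial representation — especially the braid relation in the subtle cases \(i_r = i_{r+2}\) with \(a_{i_r,i_{r+1}} \in \{-1,-2\}\), where an explicit divided difference identity on three variables must be produced. Once the representation is constructed and the relations verified, both the spanning argument and the deduction of linear independence from it are essentially formal, consisting of iterated applications of the defining relations together with tracking of \(\psi\)-length and polynomial degree.
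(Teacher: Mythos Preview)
The paper does not give its own proof of this theorem; it simply cites \cite[Theorem 2.5]{kl} and \cite[Theorem 3.7]{rouq}. Your outline is essentially the argument in those references: spanning by straightening monomials (moving polynomials past \(\psi\)'s at the cost of shorter \(\psi\)-words, collapsing non-reduced expressions via the quadratic relation, and invoking Proposition~\ref{rewrite} to pass between reduced expressions), and linear independence via the faithful polynomial representation of Khovanov--Lauda.

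One point to tighten: in the polynomial representation, when \(i_r \neq i_{r+1}\) with \(a_{i_r,i_{r+1}}<0\), the operator \(\psi_r\) is \emph{not} a scaled divided difference \(\partial_r\); rather it is a (possibly \(Q\)-twisted) place permutation, e.g.\ \(f\,1_\bi \mapsto (s_r f)\,1_{s_r\bi}\) in one direction and \(f\,1_\bi \mapsto Q_{i_r,i_{r+1}}(x_r,x_{r+1})(s_r f)\,1_{s_r\bi}\) in the other, so that \(\psi_r^2 1_\bi\) returns the correct polynomial in \(y_r,y_{r+1}\). With that corrected, the verification of the quadratic and braid relations goes through as in \cite{kl}, and your linear-independence argument (filtering by \(\ell(w)\) and polynomial degree, then using independence of the \(\partial_w\) over symmetric polynomials) is the standard one.
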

\subsection{Representation theory of \(R_\alpha\)}
Let \(\mathscr{A}=\ZZ[q,q^{-1}]\). Denote by \(R_\alpha\)-mod the abelian category of finite dimensional graded left \(R_\alpha\)-modules, with degree-preserving module homomorphisms. Let \([R_\alpha\textup{-mod}]\) denote the corresponding Grothendieck group. \([R_\alpha\textup{-mod}]\) is a \( \mathscr{A}\)-module via \(q^m[M]:= [M\langle m \rangle]\), where \(M\langle m\rangle\) denotes the module obtained by shifting the grading up by \(m\). The {\it graded dimension} of a module \(M\) is 
\begin{align*}
\dim_q M:= \sum_{n \in \ZZ} (\dim V_n)q^n \in  \mathscr{A}.
\end{align*}
Every irreducible graded \(R_\alpha\)-module is finite dimensional \cite[Proposition 2.12]{kl}, and there are finitely many irreducible \(R_\alpha\)-modules up to grading shift \cite[\S 5.2]{kl}. For every irreducible module \(L\) there is a unique choice of grading shift so that \(L^{\circledast} \cong L\), and in particular, the grading of \(L\) is symmetric about zero. We often assume that the grading of irreducible modules has been chosen in accordance with this choice.

For \(\bi \in \langle I \rangle_\alpha\) and \(M \in R_\alpha\)-mod, the {\it \(\bi\)-word space} of \(M\) is \(M_{\bi}:=1_{\bi}M\). We say \(\bi\) is a word of \(M\) if \(M_{\bi} \neq 0\). We have \(M = \bigoplus_{ \bi \in \langle I \rangle_\alpha}M_{\bi}\). The {\it character} of \(M\) is
\begin{align*}
\CH_q M := \sum_{\bi \in \langle I \rangle_\alpha} (\dim_q M_{\bi})\bi \in \mathscr{A}\langle I \rangle_\alpha.
\end{align*}
The character map \(\CH_q: R_\alpha\textup{-mod} \to  \mathscr{A}\langle I \rangle_\alpha\) factors through to give an {\it injective} \( \mathscr{A}\)-linear map \(\ch_q:[R_\alpha\textup{-mod}] \to  \mathscr{A}\langle I \rangle_\alpha\) by \cite[Theorem 3.17]{kl}.

\subsection{Induction and restriction}

Given $\alpha, \beta \in Q_+$, we set $
R_{\alpha,\beta} := R_\alpha \otimes 
R_\beta$.  
There is an injective algebra homomorphism 
$R_{\alpha,\beta}\,\hookrightarrow\, R_{\alpha+\beta}$ 
sending $1_{\bi} \otimes 1_{\bj}$ to $1_{\bi\bj}$,
where $\bi\bj$ is the concatenation of the two sequences. The image of the identity
element of $R_{\alpha,\beta}$ under this map is \(1_{\alpha,\beta}:= \sum_{\bi \in \langle I \rangle_\alpha,\ \bj \in I^\beta} 1_{\bi\bj}.\)

Let $\Ind_{\alpha,\beta}$ and $\Res_{\alpha,\beta}$
be the corresponding induction and restriction functors: 
\begin{align*}
\Ind_{\alpha,\beta}&:= R_{\alpha+\beta} 1_{\alpha,\beta}
\otimes_{R_{\alpha,\beta}} ?:R_{\alpha,\beta}\textup{-mod} \rightarrow R_{\alpha+\beta}\textup{-mod},\\
\Res_{\alpha,\beta} &:= 1_{\alpha,\beta} R_{\alpha+\beta}
\otimes_{R_{\alpha+\beta}} ?:R_{\alpha+\beta}\textup{-mod}\rightarrow R_{\alpha,\beta}\textup{-mod}.
\end{align*}
The functor $\Ind_{\alpha,\beta}$ is left adjoint to $\Res_{\alpha,\beta}$. Both functors are exact and send finite dimensional modules to finite dimensional modules. These functors have obvious generalizations to $n\geq 2$ factors. If $M_a\in R_{\gamma_a}\textup{-mod}$, for $a=1,\dots,n$, we define 
\begin{equation}
M_1\circ\dots\circ M_n:=\Ind_{\gamma_1,\dots,\gamma_n}
M_1\boxtimes\dots\boxtimes M_n. 
\end{equation}

\section{Skew Specht modules}\label{skewspechtsec}
\noindent In this section we define the graded skew Specht module \(S^{\blam/\bmu}\). In fact, the construction is exactly the same as the one given for graded (row) Specht modules (associated to Young diagrams) in \cite{kmr}, only applied in the more general context of skew diagrams. For the reader's convenience, and since the particulars will be put to use often in Section \ref{linind}, we provide the construction of skew Specht modules here. The `spanning' result  \cite[Prop. 5.14]{kmr} and the proof given in that paper also generalize without much difficulty to the skew case.

\subsection{Garnir skew tableaux} Let \(A=(a,b,m)\) be a node of \(\blam/\bmu \in \mathscr{S}^\kappa\). We say \(A\) is a {\it Garnir node} if \((a+1,b,m)\) is also a node of \(\blam/\bmu\). The {\it \(A\)-Garnir belt} \({\bf B}^A\) is the set of nodes
\begin{align*}
{\bf B}^A = \{(a,c,m) \in \blam/\bmu \mid c \geq b\} \cup \{ (a+1,c,m) \in \blam/\bmu \mid c \leq b\}.
\end{align*}
For example, if \(l=1\), \(A=(2,6,1)\), \(\lambda = (11,10,7,2,2)\), and \(\mu = (7,4,3,1)\), then \({\bf B}^A\) is highlighted below:
\begin{align*}
\begin{tikzpicture}[scale=0.42]
\tikzset{baseline=0mm}
\draw [ fill=gray!30] (3,-2)--(3,-3)--(6,-3)--(6,-2)--(10,-2)--(10,-1)--(5,-1)--(5,-2)--cycle;
\draw(0,-4)--(0,-5);
\draw(1,-3)--(1,-5);
\draw(2,-3)--(2,-5);
\draw(3,-2)--(3,-3);
\draw(4,-1)--(4,-3);
\draw(5,-1)--(5,-3);
\draw(6,-1)--(6,-3);
\draw(7,0)--(7,-3);
\draw(8,0)--(8,-2);
\draw(9,0)--(9,-2);
\draw(10,0)--(10,-2);
\draw(11,0)--(11,-1);
\draw(7,0)--(11,0);
\draw(4,-1)--(11,-1);
\draw(3,-2)--(10,-2);
\draw(3,-3)--(7,-3);
\draw(1,-3)--(2,-3);
\draw(0,-4)--(2,-4);
\draw(0,-5)--(2,-5);
\draw(5.5,-1.5) node{$\scriptstyle A$};
\draw [ultra thick] (3,-2)--(3,-3)--(6,-3)--(6,-2)--(10,-2)--(10,-1)--(5,-1)--(5,-2)--cycle;
\end{tikzpicture}
\end{align*}
The {\it \(A\)-Garnir tableau} is the \(\blam/\bmu\)-tableau \({\gggg}^A\) that is equal to \(\ttt^{\blam/\bmu}\) outside the Garnir belt, and with numbers \(\ttt^{\blam/\bmu}(a,b,m)\) through \(\ttt^{\blam/\bmu}(a+1,b,m)\) inserted into the Garnir belt, in order from bottom left to top right. Continuing the example, we have:
\begin{align*}
\gggg^A = 
\begin{tikzpicture}[scale=0.42]
\tikzset{baseline=-10mm}
\draw [ fill=gray!30] (3,-2)--(3,-3)--(6,-3)--(6,-2)--(10,-2)--(10,-1)--(5,-1)--(5,-2)--cycle;
\draw(0,-4)--(0,-5);
\draw(1,-3)--(1,-5);
\draw(2,-3)--(2,-5);
\draw(3,-2)--(3,-3);
\draw(4,-1)--(4,-3);
\draw(5,-1)--(5,-3);
\draw(6,-1)--(6,-3);
\draw(7,0)--(7,-3);
\draw(8,0)--(8,-2);
\draw(9,0)--(9,-2);
\draw(10,0)--(10,-2);
\draw(11,0)--(11,-1);
\draw(7,0)--(11,0);
\draw(4,-1)--(11,-1);
\draw(3,-2)--(10,-2);
\draw(3,-3)--(7,-3);
\draw(1,-3)--(2,-3);
\draw(0,-4)--(2,-4);
\draw(0,-5)--(2,-5);
\draw(7.5,-0.5) node{$\scriptstyle 1$};
\draw(8.5,-0.5) node{$\scriptstyle 2$};
\draw(9.5,-0.5) node{$\scriptstyle 3$};
\draw(10.5,-0.5) node{$\scriptstyle 4$};
\draw(4.5,-1.5) node{$\scriptstyle 5$};
\draw(5.5,-1.5) node{$\scriptstyle 9$};
\draw(6.5,-1.5) node{$\scriptstyle 10$};
\draw(7.5,-1.5) node{$\scriptstyle 11$};
\draw(8.5,-1.5) node{$\scriptstyle 12$};
\draw(9.5,-1.5) node{$\scriptstyle 13$};
\draw(3.5,-2.5) node{$\scriptstyle 6$};
\draw(4.5,-2.5) node{$\scriptstyle 7$};
\draw(5.5,-2.5) node{$\scriptstyle 8$};
\draw(6.5,-2.5) node{$\scriptstyle 14$};
\draw(1.5,-3.5) node{$\scriptstyle 15$};
\draw(0.5,-4.5) node{$\scriptstyle 16$};
\draw(1.5,-4.5) node{$\scriptstyle 17$};
\draw [ultra thick] (3,-2)--(3,-3)--(6,-3)--(6,-2)--(10,-2)--(10,-1)--(5,-1)--(5,-2)--cycle;
\end{tikzpicture}
\end{align*}

\begin{lem} \label{sameoutside} 
Suppose that \(\blam/\bmu \in \mathscr{S}^\kappa_{\bmu}\), \(A\) is a Garnir node of \(\blam/\bmu\), and \(\ttt \in \textup{Tab}(\blam/\bmu)\). If \(\ttt \trianglelefteq \gggg^A\), then \(\ttt\) agrees with \(\ttt^{\blam/\bmu}\) outside the \(A\)-Garnir belt.
\end{lem}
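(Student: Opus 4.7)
The plan is to exhibit $w^{\gggg^A}$ as an element of a standard parabolic subgroup of $\mathfrak{S}_d$, and then invoke the fact that Bruhat order is preserved under parabolic subgroups.

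First, I would verify that the positions occupied by ${\bf B}^A$ under $\ttt^{\blam/\bmu}$ form a contiguous block in reading order. Indeed, the belt lies entirely in component $m$ of $A$; its row~$a$ part consists of the rightmost nodes of row~$a$ in $\blam/\bmu$ (columns $\geq b$), while its row~$a+1$ part consists of the leftmost nodes of row~$a+1$ (columns $\leq b$), and row~$a$ immediately precedes row~$a+1$ in reading order. Thus the belt occupies positions $\{c+1, c+2, \ldots, c+k\}$ for some $c \geq 0$, where $k = |{\bf B}^A|$. Set $B := \{c+1, \ldots, c+k\}$. Since $\gggg^A$ coincides with $\ttt^{\blam/\bmu}$ outside the belt and merely rearranges the same set of entries $B$ among the belt positions, the permutation $w^{\gggg^A}$ fixes every integer outside $B$, so $w^{\gggg^A}$ lies in the standard parabolic subgroup $\mathfrak{S}_B = \langle s_{c+1}, \ldots, s_{c+k-1} \rangle$ of $\mathfrak{S}_d$.

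Next, I would invoke the standard fact that if $v$ lies in a standard parabolic subgroup $W_J$ of a Coxeter group $W$, then every $u \trianglelefteq v$ in $W$ also lies in $W_J$: take a reduced expression for $v$ in the generators of $J$ (which is automatically reduced in $W$, since the length function on $W_J$ is the restriction of that on $W$), and apply the subword characterization of Bruhat order. Since $\ttt \trianglelefteq \gggg^A$ means $w^\ttt \trianglelefteq w^{\gggg^A}$ by definition, this yields $w^\ttt \in \mathfrak{S}_B$. Consequently, for any position $P \in \blam/\bmu$ outside the belt, the entry $\ttt^{\blam/\bmu}(P)$ is an integer outside $B$, hence fixed by $w^\ttt$, and therefore $\ttt(P) = w^\ttt(\ttt^{\blam/\bmu}(P)) = \ttt^{\blam/\bmu}(P)$.

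The only genuine care required is in the contiguity check in the first step (and, in the multi-partition case, noting that the belt never crosses a component boundary); after that the argument is a direct application of standard Coxeter-group facts.
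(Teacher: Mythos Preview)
Your proof is correct and follows essentially the same approach as the paper's. The paper's proof is a one-liner---``Since $w^{\gggg^A}$ fixes the entries outside the Garnir belt, $w^\ttt \trianglelefteq w^{\gggg^A}$ must do the same''---which is exactly the parabolic-subgroup argument you have spelled out in detail, including the contiguity check and the subword characterization of Bruhat order that the paper leaves implicit.
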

\begin{proof}
Since \(w^{\gggg^A}\) fixes the entries outside the Garnir belt, \(w^\ttt \leq w^{\gggg^A}\) must do the same.
\end{proof}

\subsection{Bricks} Take \(\blam/\bmu \in \mathscr{S}^\kappa_{\bmu}\) and Garnir node \(A=(a,b,m) \in \blam/\bmu\). A brick is a set of nodes
\begin{align*}
\{(c,d,m'),(c,d+1,m'), \ldots, (c,d+e-1,m')\} \subseteq {\bf B}^A
\end{align*}
such that \(\textup{res}(c,d,m')=\textup{res}(A)\). Let \(k^A\) be the total number of bricks in \({\bf B}^A\), and let \(f^A\) be the number of bricks in row \(a\) of \({\bf B}^A\). Label the bricks \(B_1^A, \ldots, B_{k^A}^A\) in order from left to right, beginning at the bottom left. 

Going back to our example, for the case \(e=2\), the bricks in \({\bf B}^A\) are as labeled below:
\begin{align*}
\gggg^A = 
\begin{tikzpicture}[scale=0.42]
\tikzset{baseline=-10mm}
\draw [ fill=gray!30] (3,-2)--(3,-3)--(4,-3)--(4,-2)--cycle;
\draw [ fill=gray!30] (10,-2)--(10,-1)--(9,-1)--(9,-2)--cycle;
\draw [fill=red!50, opacity=0.5] (4,-2)--(4,-3)--(6,-3)--(6,-2)--cycle;
\draw [ fill=blue!50, opacity=0.5] (5,-1)--(5,-2)--(7,-2)--(7,-1)--cycle;
\draw [fill=green!50, opacity=0.5] (7,-1)--(7,-2)--(9,-2)--(9,-1)--cycle;
\draw(0,-4)--(0,-5);
\draw(1,-3)--(1,-5);
\draw(2,-3)--(2,-5);
\draw(3,-2)--(3,-3);
\draw(4,-1)--(4,-3);
\draw(5,-1)--(5,-3);
\draw(6,-1)--(6,-3);
\draw(7,0)--(7,-3);
\draw(8,0)--(8,-2);
\draw(9,0)--(9,-2);
\draw(10,0)--(10,-2);
\draw(11,0)--(11,-1);
\draw(7,0)--(11,0);
\draw(4,-1)--(11,-1);
\draw(3,-2)--(10,-2);
\draw(3,-3)--(7,-3);
\draw(1,-3)--(2,-3);
\draw(0,-4)--(2,-4);
\draw(0,-5)--(2,-5);
\draw(7.5,-0.5) node{$\scriptstyle 1$};
\draw(8.5,-0.5) node{$\scriptstyle 2$};
\draw(9.5,-0.5) node{$\scriptstyle 3$};
\draw(10.5,-0.5) node{$\scriptstyle 4$};
\draw(4.5,-1.5) node{$\scriptstyle 5$};
\draw(5.5,-1.5) node{$\scriptstyle 9$};
\draw(6.5,-1.5) node{$\scriptstyle 10$};
\draw(7.5,-1.5) node{$\scriptstyle 11$};
\draw(8.5,-1.5) node{$\scriptstyle 12$};
\draw(9.5,-1.5) node{$\scriptstyle 13$};
\draw(3.5,-2.5) node{$\scriptstyle 6$};
\draw(4.5,-2.5) node{$\scriptstyle 7$};
\draw(5.5,-2.5) node{$\scriptstyle 8$};
\draw(6.5,-2.5) node{$\scriptstyle 14$};
\draw(1.5,-3.5) node{$\scriptstyle 15$};
\draw(0.5,-4.5) node{$\scriptstyle 16$};
\draw(1.5,-4.5) node{$\scriptstyle 17$};
\draw [semithick, double] (4,-2)--(4,-3);
\draw [semithick,double] (7,-1)--(7,-2);
\draw [semithick,double] (9,-1)--(9,-2);
\draw [semithick,double] (5,-2)--(6,-2);
\draw [ultra thick] (3,-2)--(3,-3)--(6,-3)--(6,-2)--(10,-2)--(10,-1)--(5,-1)--(5,-2)--cycle;
\draw (5,-3)--(6,-4);
\draw (6,-1)--(5,0);
\draw (8,-2)--(9,-3);
\draw (6.1,-3.8) node[below]{$B_1^A$};
\draw (5,0) node[above]{$B_2^A$};
\draw (9.1,-2.8) node[below]{$B_3^A$};
\end{tikzpicture}
\end{align*}
For \(1 \leq r \leq k^A\), let \(w_r^A \in \mathfrak{S}_d\) be the element that swaps \(B_r^A\) and \(B_{r+1}^A\). Define the group of {\it brick permutations}
\begin{align*}
\mathfrak{S}^A:= \langle w_1^A, \ldots, w_{k^A-1}^A \rangle \cong \mathfrak{S}_{k^A}.
\end{align*}
This is the trivial group if \(k^A=0\), e.g. if \(e=0\).

Let \(\Gar^A\) be the set of row-strict \(\blam/\bmu\)-tableaux which are are obtained by the action of \(\mathfrak{S}^A\) on \(\gggg^A\). All tableaux in \(\Gar^A\) save \(\gggg^A\) are standard. By Lemma \ref{ordershape}, \(\gggg^A\) is the unique maximal element of \(\Gar^A\), and there exists a unique minimal element \(\ttt^A\), which has the bricks \(B_1^A, \ldots, B_{f^A}^A\) in order from left to right in row \(a\), and the remaining bricks in order from left to right in row \(a+1\). By definition, if \(\ttt \in \Gar^A\), then \(\bi(\ttt) = \bi(\gggg^A)\). Define \(\bi^A\) as this common residue sequence.

Let \(\mathscr{D}^A\) be the set of minimal length left coset representatives of \(\mathfrak{S}_{f^A} \times \mathfrak{S}_{k^A-f^A}\) in \(\mathfrak{S}^A\). We have
\begin{align*}
\Gar^A = \{w\ttt^A \mid w \in \mathscr{D}^A\}.
\end{align*}

Continuing the example above, the elements of \(\Gar^A\) are \(\gggg^A=w_1^Aw_2^A\ttt^A\) (shown above), and the following two tableaux:
\begin{align*}
\ttt^A= 
\begin{tikzpicture}[scale=0.42]
\tikzset{baseline=-10mm}
\draw [ fill=gray!30] (3,-2)--(3,-3)--(4,-3)--(4,-2)--cycle;
\draw [ fill=gray!30] (10,-2)--(10,-1)--(9,-1)--(9,-2)--cycle;
\draw [fill=green!50, opacity=0.5] (4,-2)--(4,-3)--(6,-3)--(6,-2)--cycle;
\draw [fill=red!50,  opacity=0.5] (5,-1)--(5,-2)--(7,-2)--(7,-1)--cycle;
\draw [fill=blue!50, opacity=0.5] (7,-1)--(7,-2)--(9,-2)--(9,-1)--cycle;
\draw(0,-4)--(0,-5);
\draw(1,-3)--(1,-5);
\draw(2,-3)--(2,-5);
\draw(3,-2)--(3,-3);
\draw(4,-1)--(4,-3);
\draw(5,-1)--(5,-3);
\draw(6,-1)--(6,-3);
\draw(7,0)--(7,-3);
\draw(8,0)--(8,-2);
\draw(9,0)--(9,-2);
\draw(10,0)--(10,-2);
\draw(11,0)--(11,-1);
\draw(7,0)--(11,0);
\draw(4,-1)--(11,-1);
\draw(3,-2)--(10,-2);
\draw(3,-3)--(7,-3);
\draw(1,-3)--(2,-3);
\draw(0,-4)--(2,-4);
\draw(0,-5)--(2,-5);
\draw(7.5,-0.5) node{$\scriptstyle 1$};
\draw(8.5,-0.5) node{$\scriptstyle 2$};
\draw(9.5,-0.5) node{$\scriptstyle 3$};
\draw(10.5,-0.5) node{$\scriptstyle 4$};
\draw(4.5,-1.5) node{$\scriptstyle 5$};
\draw(5.5,-1.5) node{$\scriptstyle 7$};
\draw(6.5,-1.5) node{$\scriptstyle 8$};
\draw(7.5,-1.5) node{$\scriptstyle 9$};
\draw(8.5,-1.5) node{$\scriptstyle 10$};
\draw(9.5,-1.5) node{$\scriptstyle 13$};
\draw(3.5,-2.5) node{$\scriptstyle 6$};
\draw(4.5,-2.5) node{$\scriptstyle 11$};
\draw(5.5,-2.5) node{$\scriptstyle 12$};
\draw(6.5,-2.5) node{$\scriptstyle 14$};
\draw(1.5,-3.5) node{$\scriptstyle 15$};
\draw(0.5,-4.5) node{$\scriptstyle 16$};
\draw(1.5,-4.5) node{$\scriptstyle 17$};
\draw [semithick, double] (4,-2)--(4,-3);
\draw [semithick,double] (7,-1)--(7,-2);
\draw [semithick,double] (9,-1)--(9,-2);
\draw [semithick,double] (5,-2)--(6,-2);
\draw [ultra thick] (3,-2)--(3,-3)--(6,-3)--(6,-2)--(10,-2)--(10,-1)--(5,-1)--(5,-2)--cycle;
\end{tikzpicture}
\hspace{12mm}
 w_2^A \ttt^A=
\begin{tikzpicture}[scale=0.42]
\tikzset{baseline=-10mm}
\draw [ fill=gray!30] (3,-2)--(3,-3)--(4,-3)--(4,-2)--cycle;
\draw [ fill=gray!30] (10,-2)--(10,-1)--(9,-1)--(9,-2)--cycle;
\draw [fill=blue!50, opacity=0.5] (4,-2)--(4,-3)--(6,-3)--(6,-2)--cycle;
\draw [fill=red!50,  opacity=0.5] (5,-1)--(5,-2)--(7,-2)--(7,-1)--cycle;
\draw [fill=green!50, opacity=0.5] (7,-1)--(7,-2)--(9,-2)--(9,-1)--cycle;
\draw(0,-4)--(0,-5);
\draw(1,-3)--(1,-5);
\draw(2,-3)--(2,-5);
\draw(3,-2)--(3,-3);
\draw(4,-1)--(4,-3);
\draw(5,-1)--(5,-3);
\draw(6,-1)--(6,-3);
\draw(7,0)--(7,-3);
\draw(8,0)--(8,-2);
\draw(9,0)--(9,-2);
\draw(10,0)--(10,-2);
\draw(11,0)--(11,-1);
\draw(7,0)--(11,0);
\draw(4,-1)--(11,-1);
\draw(3,-2)--(10,-2);
\draw(3,-3)--(7,-3);
\draw(1,-3)--(2,-3);
\draw(0,-4)--(2,-4);
\draw(0,-5)--(2,-5);
\draw(7.5,-0.5) node{$\scriptstyle 1$};
\draw(8.5,-0.5) node{$\scriptstyle 2$};
\draw(9.5,-0.5) node{$\scriptstyle 3$};
\draw(10.5,-0.5) node{$\scriptstyle 4$};
\draw(4.5,-1.5) node{$\scriptstyle 5$};
\draw(5.5,-1.5) node{$\scriptstyle 7$};
\draw(6.5,-1.5) node{$\scriptstyle 8$};
\draw(7.5,-1.5) node{$\scriptstyle 11$};
\draw(8.5,-1.5) node{$\scriptstyle 12$};
\draw(9.5,-1.5) node{$\scriptstyle 13$};
\draw(3.5,-2.5) node{$\scriptstyle 6$};
\draw(4.5,-2.5) node{$\scriptstyle 9$};
\draw(5.5,-2.5) node{$\scriptstyle 10$};
\draw(6.5,-2.5) node{$\scriptstyle 14$};
\draw(1.5,-3.5) node{$\scriptstyle 15$};
\draw(0.5,-4.5) node{$\scriptstyle 16$};
\draw(1.5,-4.5) node{$\scriptstyle 17$};
\draw [semithick, double] (4,-2)--(4,-3);
\draw [semithick,double] (7,-1)--(7,-2);
\draw [semithick,double] (9,-1)--(9,-2);
\draw [semithick,double] (5,-2)--(6,-2);
\draw [ultra thick] (3,-2)--(3,-3)--(6,-3)--(6,-2)--(10,-2)--(10,-1)--(5,-1)--(5,-2)--cycle;
\end{tikzpicture}
\end{align*}
Taking \(\kappa=(1)\), the residues of nodes in \(\blam/\bmu\) are as follows:
\begin{align*}
\begin{tikzpicture}[scale=0.42]
\tikzset{baseline=-10mm}
\draw(0,-4)--(0,-5);
\draw(1,-3)--(1,-5);
\draw(2,-3)--(2,-5);
\draw(3,-2)--(3,-3);
\draw(4,-1)--(4,-3);
\draw(5,-1)--(5,-3);
\draw(6,-1)--(6,-3);
\draw(7,0)--(7,-3);
\draw(8,0)--(8,-2);
\draw(9,0)--(9,-2);
\draw(10,0)--(10,-2);
\draw(11,0)--(11,-1);
\draw(7,0)--(11,0);
\draw(4,-1)--(11,-1);
\draw(3,-2)--(10,-2);
\draw(3,-3)--(7,-3);
\draw(1,-3)--(2,-3);
\draw(0,-4)--(2,-4);
\draw(0,-5)--(2,-5);
\draw(7.5,-0.5) node{$\scriptstyle 0$};
\draw(8.5,-0.5) node{$\scriptstyle 1$};
\draw(9.5,-0.5) node{$\scriptstyle 0$};
\draw(10.5,-0.5) node{$\scriptstyle 1$};
\draw(4.5,-1.5) node{$\scriptstyle 0$};
\draw(5.5,-1.5) node{$\scriptstyle 1$};
\draw(6.5,-1.5) node{$\scriptstyle 0$};
\draw(7.5,-1.5) node{$\scriptstyle 1$};
\draw(8.5,-1.5) node{$\scriptstyle 0$};
\draw(9.5,-1.5) node{$\scriptstyle 1$};
\draw(3.5,-2.5) node{$\scriptstyle 0$};
\draw(4.5,-2.5) node{$\scriptstyle 1$};
\draw(5.5,-2.5) node{$\scriptstyle 0$};
\draw(6.5,-2.5) node{$\scriptstyle 1$};
\draw(1.5,-3.5) node{$\scriptstyle 1$};
\draw(0.5,-4.5) node{$\scriptstyle 1$};
\draw(1.5,-4.5) node{$\scriptstyle 0$};
\end{tikzpicture}
\end{align*}
In diagrammatic notation,
\begin{align*}
\psi^{\ttt^A}1_{\bi^{\blam/\bmu}}&=
\begin{braid}\tikzset{baseline=0mm}
\draw [color=red!50, fill=red!50,opacity=0.5] (6,2)--(7,2)--(8,-2)--(7,-2)--cycle;
\draw [color=blue!50, fill=blue!50,opacity=0.5] (8,2)--(9,2)--(10,-2)--(9,-2)--cycle;
\draw [color=green!50, fill=green!50,opacity=0.5] (12,2)--(13,2)--(12,-2)--(11,-2)--cycle;
\draw(1,2) node[above]{$0$}--(1,-2);
\draw(2,2) node[above]{$1$}--(2,-2);
\draw(3,2) node[above]{$0$}--(3,-2);
\draw(4,2) node[above]{$1$}--(4,-2);
\draw(5,2) node[above]{$0$}--(5,-2);
\draw(6,2) node[above]{$1$}--(7,-2);
\draw(7,2) node[above]{$0$}--(8,-2);
\draw(8,2) node[above]{$1$}--(9,-2);
\draw(9,2) node[above]{$0$}--(10,-2);
\draw(10,2) node[above]{$1$}--(13,-2);
\draw(11,2) node[above]{$0$}--(6,-2);
\draw(12,2) node[above]{$1$}--(11,-2);
\draw(13,2) node[above]{$0$}--(12,-2);
\draw(14,2) node[above]{$1$}--(14,-2);
\draw(15,2) node[above]{$1$}--(15,-2);
\draw(16,2) node[above]{$1$}--(16,-2);
\draw(17,2) node[above]{$0$}--(17,-2);
\end{braid},
\\
\psi^{w_2^A\ttt^A}1_{\bi^{\blam/\bmu}}&=
\begin{braid}\tikzset{baseline=0mm}
\draw [color=red!50, fill=red!50,opacity=0.5] (6,2)--(7,2)--(8,-2)--(7,-2)--cycle;
\draw [color=green!50, fill=green!50,opacity=0.5] (12,2)--(13,2)--(10,-2)--(9,-2)--cycle;
\draw [color=blue!50, fill=blue!50,opacity=0.5] (8,2)--(9,2)--(12,-2)--(11,-2)--cycle;
\draw(1,2) node[above]{$0$}--(1,-2);
\draw(2,2) node[above]{$1$}--(2,-2);
\draw(3,2) node[above]{$0$}--(3,-2);
\draw(4,2) node[above]{$1$}--(4,-2);
\draw(5,2) node[above]{$0$}--(5,-2);
\draw(6,2) node[above]{$1$}--(7,-2);
\draw(7,2) node[above]{$0$}--(8,-2);
\draw(8,2) node[above]{$1$}--(11,-2);
\draw(9,2) node[above]{$0$}--(12,-2);
\draw(10,2) node[above]{$1$}--(13,-2);
\draw(11,2) node[above]{$0$}--(6,-2);
\draw(12,2) node[above]{$1$}--(9,-2);
\draw(13,2) node[above]{$0$}--(10,-2);
\draw(14,2) node[above]{$1$}--(14,-2);
\draw(15,2) node[above]{$1$}--(15,-2);
\draw(16,2) node[above]{$1$}--(16,-2);
\draw(17,2) node[above]{$0$}--(17,-2);
\end{braid},
\\
\psi^{\gggg^A}1_{\bi^{\blam/\bmu}}&=
\begin{braid}\tikzset{baseline=0mm}
\draw [color=green!50, fill=green!50,opacity=0.5] (12,2)--(13,2)--(8,-2)--(7,-2)--cycle;
\draw [color=red!50, fill=red!50,opacity=0.5] (6,2)--(7,2)--(10,-2)--(9,-2)--cycle;
\draw [color=blue!50, fill=blue!50,opacity=0.5] (8,2)--(9,2)--(12,-2)--(11,-2)--cycle;
\draw(1,2) node[above]{$0$}--(1,-2);
\draw(2,2) node[above]{$1$}--(2,-2);
\draw(3,2) node[above]{$0$}--(3,-2);
\draw(4,2) node[above]{$1$}--(4,-2);
\draw(5,2) node[above]{$0$}--(5,-2);
\draw(6,2) node[above]{$1$}--(9,-2);
\draw(7,2) node[above]{$0$}--(10,-2);
\draw(8,2) node[above]{$1$}--(11,-2);
\draw(9,2) node[above]{$0$}--(12,-2);
\draw(10,2) node[above]{$1$}--(13,-2);
\draw(11,2) node[above]{$0$}--(6,-2);
\draw(12,2) node[above]{$1$}--(7,-2);
\draw(13,2) node[above]{$0$}--(8,-2);
\draw(14,2) node[above]{$1$}--(14,-2);
\draw(15,2) node[above]{$1$}--(15,-2);
\draw(16,2) node[above]{$1$}--(16,-2);
\draw(17,2) node[above]{$0$}--(17,-2);
\end{braid}.
\end{align*}
The colors in the diagram are merely intended to highlight the action on bricks in \({\bf B}^A\).
\begin{lem}\label{GarA}
Suppose that \(\blam/\bmu \in \mathscr{S}^\kappa_{\bmu}\) and \(A \in \blam/\bmu\) is a Garnir node. Then
\begin{align*}
\Gar^A \backslash\{\gggg^A\} = \{ \ttt \in \St(\blam/\bmu) \mid \ttt \trianglelefteq \gggg^A \textup{ and } \bi(\ttt)=\bi^A\}.
\end{align*}
Moreover, \(\deg \ttt=\deg s_r\gggg^A-a_{i_r,i_{r+1}}\) for all \(\ttt \in \Gar^A\backslash\{\gggg^A\}\), where \(r=\gggg^A(A)-1\).
\end{lem}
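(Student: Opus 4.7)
Plan. The claim separates into the forward inclusion, the reverse inclusion, and the degree identity, and I expect the argument to parallel its Young-tableau analog in \cite{kmr} almost verbatim. The forward inclusion is essentially built into the setup: if $\ttt \in \Gar^A$ then $\ttt = w \gggg^A$ for some $w \in \mathfrak{S}^A$, and since each brick contains exactly one node of every residue in $\ZZ/e\ZZ$, permuting full bricks preserves the residue sequence, giving $\bi(\ttt) = \bi^A$. That $\gggg^A$ is the unique Bruhat maximum of $\Gar^A$ was recorded just before the lemma using Lemma \ref{ordershape}, and standardness of every non-$\gggg^A$ element of $\Gar^A$ is part of the setup.

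For the reverse inclusion, take standard $\ttt$ with $\ttt \trianglelefteq \gggg^A$ and $\bi(\ttt) = \bi^A$. By Lemma \ref{sameoutside}, $\ttt$ agrees with $\ttt^{\blam/\bmu}$ outside $\mathbf{B}^A$, reducing the problem to analyzing the arrangement inside the belt. If $p$ and $q$ denote the lengths of the row-$a$ and row-$a+1$ portions of the belt, then the $p+q$ belt entries are consecutive integers, and the corresponding segment of $\bi^A$ is $p+q$ consecutive integers mod $e$. The combinatorial heart of the argument is to show that (i) each leftover node (not lying in any brick) must carry the same entry in $\ttt$ as in $\gggg^A$, because the residues of the leftover positions each occur only once among the belt's leftover-position residues; and (ii) the remaining belt entries, breaking into consecutive blocks of size $e$, must fill the bricks $B_1^A, \ldots, B_{k^A}^A$ intact, with the block-to-brick assignment encoding some $w \in \mathfrak{S}^A$. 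Row-strictness in each row of the belt together with column-strictness at column $b$ match precisely the minimal-coset property defining $\mathscr{D}^A$, giving $\ttt = w \ttt^A$ for some $w \in \mathscr{D}^A$; since $\gggg^A$ is not standard (its entries at $(a,b,m)$ and $(a+1,b,m)$ are not in column-strict order), $\ttt \neq \gggg^A$ automatically, so $\ttt \in \Gar^A \setminus \{\gggg^A\}$.

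For the degree formula, I would first verify that brick permutations preserve diagrammatic degree: swapping two adjacent bricks creates $e \times e$ strand crossings whose total contribution is $-\sum_{k,l=0}^{e-1} a_{i+k,\, i+l} = 0$, because every row-sum of the affine Cartan matrix vanishes. Hence, by Lemma \ref{degmatch}, all $\ttt \in \Gar^A \setminus \{\gggg^A\}$ share the common degree $\deg \ttt^A$. To identify this with $\deg s_r \gggg^A - a_{i_r, i_{r+1}}$, I would compare $\psi^{\gggg^A}$ and $\psi^{s_r \gggg^A}$ diagrammatically: a length computation gives $\ell(w^{\gggg^A}) = \ell(w^{s_r \gggg^A}) + 1$, so $\psi^{\gggg^A} = \psi_r\,\psi^{s_r \gggg^A}$ for suitably chosen reduced expressions, and thus $\deg \psi^{\gggg^A} 1_{\bi^{\blam/\bmu}} = \deg \psi^{s_r \gggg^A} 1_{\bi^{\blam/\bmu}} - a_{i_r, i_{r+1}}$. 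Combining with the brick-invariance $\deg \psi^{\ttt^A} 1_{\bi^{\blam/\bmu}} = \deg \psi^{\gggg^A} 1_{\bi^{\blam/\bmu}}$ and applying Lemma \ref{degmatch} yields $\deg \ttt^A = \deg s_r \gggg^A - a_{i_r, i_{r+1}}$, as required. The main obstacle is the combinatorial step (ii) in the reverse inclusion, namely ruling out standard belt fillings that are not brick permutations of $\ttt^A$; this requires carefully combining the residue constraint with row- and column-strictness.
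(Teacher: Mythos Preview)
Your proposal is correct. For the set-equality part, your approach matches the paper's (which simply says ``clear from the preceding discussion and Lemma~\ref{sameoutside}''); you just supply more combinatorial detail, including the step you correctly flag as the main obstacle.

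For the degree formula, your route genuinely differs from the paper's. You work with the KLR-diagrammatic degree via Lemma~\ref{degmatch}: you observe that a brick swap has degree zero because the row sums of the affine Cartan matrix vanish, and then compare $\psi^{\gggg^A}$ with $\psi_r\psi^{s_r\gggg^A}$ to pick up the $-a_{i_r,i_{r+1}}$. The paper instead switches to codegree (via \cite[Lemma~3.12]{bkw}), uses that $\YYY(\ttt)$ and $\YYY(s_r\gggg^A)$ agree outside the bricks to reduce to a two-row Young diagram $((k^Ae-1,(k^A-f^A)e))$, and then computes the brick contributions to codegree directly. Your argument is more algebraic and perhaps more conceptual (it makes transparent why the answer is uniform over $\Gar^A\setminus\{\gggg^A\}$ and why it equals $-a_{i_r,i_{r+1}}$), while the paper's is a short localized combinatorial computation that avoids invoking Lemma~\ref{degmatch} at all. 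Both are clean; yours generalizes more readily to other affine types where one might not want to enumerate cases, while the paper's makes the specific numbers $0$ and $-a_{i_r,i_{r+1}}$ visible as explicit codegree contributions.
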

\begin{proof}
The first statement is clear from the preceding discussion and Lemma \ref{sameoutside}. 

For the second statement, we instead prove that \(\codeg(s_r\gggg^A)-\codeg(\ttt)=-a_{i_r,i_{r+1}} \), which is equivalent by \cite[Lemma 3.12]{bkw}. By the definition of codegree, and the fact that \(\YYY(\ttt)\) and \(\YYY(s_r\gggg^A)\) agree outside of the bricks of the Garnir belt, it is enough to consider the case where \(\blam/\bmu\) is a two-row Young diagram, with \(\blam = \blam/\bmu = ((k^Ae-1,(k^A-f^A)e))\), and \(A=(1,(k^A-f^A)e,1)\). Pictorially, 
\begin{align*}
\blam=
\begin{tikzpicture}[scale=0.42]
\tikzset{baseline=-5mm}
\draw [ fill=gray!30] (5,0)--(5,-1)--(-3,-1)--(-3,-2)--(6,-2)--(6,-1)--(14,-1)--(14,0)--cycle;
\draw(-3,0)--(14,0)--(14,-1)--(6,-1)--(6,-2)--(-3,-2)--cycle;
\draw(5.5,-0.5) node{$\scriptstyle A$};
\draw [semithick,double] (0,-1)--(0,-2);
\draw [semithick,double] (3,-1)--(3,-2);
\draw [semithick,double] (5,-1)--(6,-1);
\draw [semithick,double] (8,-0)--(8,-1);
\draw [semithick,double] (11,-0)--(11,-1);
\draw [ultra thick] (5,0)--(5,-1)--(-3,-1)--(-3,-2)--(6,-2)--(6,-1)--(14,-1)--(14,0)--cycle;
\draw [color=white,fill=white] (1,0.2)--(2,0.2)--(2,-2.2)--(1,-2.2)--cycle;
\draw [color=white,fill=white] (9,0.2)--(10,0.2)--(10,-2.2)--(9,-2.2)--cycle;
\draw(1.5,-0.5) node{$\scriptstyle \cdots$};
\draw(1.5,-1.5) node{$\scriptstyle \cdots$};
\draw(9.5,-0.5) node{$\scriptstyle \cdots$};
\end{tikzpicture}
\end{align*}
Each brick contributes 0 to \(\codeg(\ttt)\), and every brick contributes 0 to \(\codeg(s_r\gggg^A)\), except for \(B_{k^A-f^A}^A\) (the rightmost brick in the bottom row), which contributes \(2\) if \(e=2\) and \(1\) if \(e>2\). Thus \(\codeg(\ttt)=0\) and \(\codeg(s_r\gggg^A)=-a_{i_r,i_{r+1}}\), and the result follows. 
\end{proof}
\subsection{The skew Specht module \texorpdfstring{$S^{\blam/\bmu}$}{}}
Fix a Garnir node \(A \in \blam/\bmu\). Define
\begin{align*}
\sigma_r^A := \psi_{w_r^A}1_{\bi^A} \hspace{5mm} \textup{and} \hspace{5mm} \tau_r^A:=(\sigma_r^A +1)1_{\bi^A}.
\end{align*}
Write \(u \in \mathfrak{S}^A\) as a reduced product \(w_{r_1}^A \cdots w_{r_a}^A\) of simple generators in \(\mathfrak{S}^A\). If \(u \in \mathscr{D}^A\), then \(u\) is fully commutative, and thus we have well-defined elements
\begin{align*}
\{\tau_u^A := \tau_{r_1}^A \cdots \tau_{r_a}^A \mid u \in \mathscr{D}^A\}.
\end{align*}
For any \(\sss \in \Gar^A\), we may can write \(w^\sss=u^\sss w^{\ttt^A}\) so that \(\ell(w^\sss)=\ell(u^\sss)+\ell (w^{\ttt^A})\) and \(u^\sss \in \mathscr{D}^A\), and the elements \(\psi_{u^\sss}\), \(\psi^{\ttt^A}\) and \(\psi^\sss=\psi_{u^\sss}\psi^{\ttt^A}\) are all independent of the choice of reduced decomposition. 
\begin{definition}
Let \(\blam/\bmu \in \mathscr{S}^\kappa_{\bmu,\alpha}\), and \(A \in \blam/\bmu\) be a Garnir node. The {\it Garnir element} is 
\begin{align}\label{gA}
g^A := \sum_{u \in \mathscr{D}^A} \tau_u^A \psi^{\ttt^A} \in R_\alpha.
\end{align}
\end{definition}
By Lemma \ref{GarA}, all summands on the right side of (\ref{gA}) are of the same degree.
\begin{definition}\label{Mdef}
Let \(\alpha \in Q_+\), \(d=\textup{ht}(\alpha)\), and \(\blam/\bmu \in \mathscr{S}^\kappa_{\bmu,\alpha}\). Define the {\it graded skew row permutation module} \(M^{\blam/\bmu} = M^{\blam/\bmu}(\mathcal{O})\)
to be the graded \(R_\alpha\)-module generated by the vector \(m^{\blam/\bmu}\) in degree \(\deg \ttt^{\blam/\bmu}\) and subject only to the following relations:
\begin{enumerate}
\item[(i)] \(1_{\bj} m^{\blam/\bmu}=\delta_{\bj,\bi^{\blam/\bmu}}m^{\blam/\bmu}\) for all \(\bj \in \langle I \rangle_\alpha\);
\item[(ii)] \(y_rm^{\blam/\bmu} = 0\) for all \(r=1, \ldots, d\);
\item[(iii)] \(\psi_r m^{\blam/\bmu} = 0\) for all \(r=1, \ldots, d-1\) such that \(r \rightarrow_{\ttt^{\blam/\bmu}} r+1\).
\end{enumerate}
\end{definition}
\begin{definition}\label{Sdef}
Let \(\alpha \in Q_+\), \(d=\textup{ht}(\alpha)\), and \(\blam/\bmu \in \mathscr{S}^\kappa_{\bmu,\alpha}\). We define the {\it graded skew Specht module} \(S^{\blam/\bmu} = S^{\blam/\bmu}(\mathcal{O})\) to be the graded \(R_\alpha\)-module generated by the vector \(z^{\blam/\bmu}\) in degree \(\deg \ttt^{\blam/\bmu}\) and subject only to the following relations:
\begin{enumerate}
\item[(i)] \(1_{\bj} z^{\blam/\bmu}=\delta_{\bj,\bi^{\blam/\bmu}}z^{\blam/\bmu}\) for all \(\bj \in \langle I \rangle_\alpha\);
\item[(ii)] \(y_rz^{\blam/\bmu} = 0\) for all \(r=1, \ldots, d\);
\item[(iii)] \(\psi_r z^{\blam/\bmu} = 0\) for all \(r=1, \ldots, d-1\) such that \(r \rightarrow_{\ttt^{\blam/\bmu}} r+1\);
\item[(iv)] \(g^Az^{\blam/\bmu}=0\) for all Garnir nodes \(A \in \blam/\bmu\).
\end{enumerate}
\end{definition}
In other words, \(S^{\blam/\bmu} = (R_\alpha/J_\alpha^{\blam/\bmu})\langle \deg(\ttt^{\blam/\bmu})\rangle\), where \(J_\alpha^{\blam/\bmu}\) is the homogeneous left ideal of \(R_\alpha\) generated by the elements
\begin{enumerate}\label{Jalpha}
\item[(i*)] \(1_{\bj} -\delta_{\bj,\bi^{\blam/\bmu}}\) for all \(\bj \in \langle I \rangle_\alpha\);
\item[(ii*)] \(y_r\) for all \(r=1, \ldots, d\);
\item[(iii*)] \(\psi_r \) for all \(r=1, \ldots, d-1\) such that \(r \rightarrow_{\ttt^{\blam/\bmu}} r+1\);
\item[(iv*)] \(g^A\) for all Garnir nodes \(A \in \blam/\bmu\).
\end{enumerate}
The elements (i*)-(iii*) generate a left ideal \(K^{\blam/\bmu}\) such that \(R_\alpha/K^{\blam/\bmu} \cong M^{\blam/\bmu}\). So we have a natural surjection \(M^{\blam/\bmu} \twoheadrightarrow S^{\blam/\bmu}\) with kernel \(J^{\blam/\bmu}\) generated by the Garnir relations \(g^Am^{\blam/\bmu} = 0\). This surjection maps \(m^{\blam/\bmu}\) to \(z^{\blam/\bmu}\) and \(J^{\blam/\bmu} = J_\alpha^{\blam/\bmu} m^{\blam/\bmu}\).

For \(\ttt \in \Tab(\blam/\bmu)\), we write 
\begin{align*}
m^\ttt := \psi^\ttt m^{\blam/\bmu} \in M^{\blam/\bmu} \hspace{15mm} \textup{and} \hspace{15mm} v^\ttt := \psi^\ttt z^{\blam/\bmu} \in S^{\blam/\bmu}.
\end{align*}

\subsection{A basis for \texorpdfstring{$M^{\blam/\bmu}$}{} and a spanning set for \texorpdfstring{$S^{\blam/\bmu}$}{}}

\begin{thm}
The elements of the set
\begin{align*}
\{m^\ttt \mid \ttt \in \Tab(\blam/\bmu) \textup{ is row-strict}\}
\end{align*}
form an \(\mathcal{O}\)-basis for \(M^{\blam/\bmu}\).
\end{thm}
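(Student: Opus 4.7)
\medskip

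\noindent\textbf{Proof proposal.} The statement is the standard "permutation basis" result that underlies the subsequent Specht basis theorem, and breaks into a spanning half and an independence half. I will handle both using the KLR basis (Theorem \ref{KLRbasis}) together with Proposition \ref{rewrite} and the defining relations of $M^{\blam/\bmu}$.

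\medskip

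\noindent\emph{Spanning.} Let $\mathfrak{S}_{\vec{\br}} \leq \mathfrak{S}_d$ denote the parabolic Young subgroup whose simple generators are exactly the transpositions $s_r$ with $r \to_{\ttt^{\blam/\bmu}} r+1$; equivalently, $\mathfrak{S}_{\vec{\br}}$ is the row stabilizer of $\ttt^{\blam/\bmu}$, and $\mathscr{D}_{\vec{\br}}$, the set of shortest left coset representatives, is in bijection with row-strict $\blam/\bmu$-tableaux via $d \mapsto d \cdot \ttt^{\blam/\bmu}$. By Theorem \ref{KLRbasis} together with defining relations (i) and (ii), $M^{\blam/\bmu}$ is $\mathcal{O}$-spanned by $\{\psi_w\, m^{\blam/\bmu} \mid w \in \mathfrak{S}_d\}$. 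Given $w \in \mathfrak{S}_d$, factor $w = dv$ with $d \in \mathscr{D}_{\vec{\br}}$, $v \in \mathfrak{S}_{\vec{\br}}$, and $\ell(w) = \ell(d) + \ell(v)$. Concatenating reduced expressions for $d$ and $v$ yields a reduced expression for $w$; by Proposition \ref{rewrite}(ii) applied to this expression versus the preferred one, $\psi_w\, m^{\blam/\bmu} \equiv \psi_d \psi_v\, m^{\blam/\bmu}$ modulo an $\mathcal{O}$-combination of terms $\psi_u g_u(y)\, m^{\blam/\bmu}$ with $\ell(u) < \ell(w)$. The $y$-polynomials commute past using the defining relations and then annihilate $m^{\blam/\bmu}$ by (ii), so these correction terms can be absorbed into the span of shorter $\psi_{u'}\, m^{\blam/\bmu}$, handled by induction on $\ell(w)$. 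If $v \neq 1$, then the rightmost simple reflection in the reduced expression for $v$ corresponds to some $s_r$ with $r \to_{\ttt^{\blam/\bmu}} r+1$, and relation (iii) forces $\psi_v\, m^{\blam/\bmu} = 0$. If $v = 1$, then $\psi_w\, m^{\blam/\bmu} = m^\ttt$ for the row-strict tableau $\ttt = w \cdot \ttt^{\blam/\bmu}$. This establishes spanning.

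\medskip

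\noindent\emph{Linear independence.} The cleanest route is to realize $M^{\blam/\bmu}$ as an induced module and invoke the KLR basis on the induction side. Write $\bj := \bi^{\blam/\bmu}$ and let $K \subseteq R_\alpha$ be the graded left ideal generated by $\{1_{\bk} - \delta_{\bk,\bj},\ y_1,\dots,y_d,\ \psi_r : s_r \in \mathfrak{S}_{\vec{\br}}\}$. The quotient $R_\alpha/K$ satisfies all the defining relations of $M^{\blam/\bmu}$ with generator $\bar 1 := 1 + K$, so there is a well-defined degree-preserving surjection $\pi : M^{\blam/\bmu} \twoheadrightarrow (R_\alpha/K)\langle \deg \ttt^{\blam/\bmu}\rangle$ with $\pi(m^{\blam/\bmu}) = \bar 1$. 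On the other hand, $R_\alpha/K$ is isomorphic to the induced module obtained by inducing the one-dimensional $R_{|\br(1)|} \otimes \cdots \otimes R_{|\br(g)|}$-module $\mathcal{O}\bar 1$ (on which every $y$ and every internal $\psi$ acts as $0$) up to $R_\alpha$; combining this identification with Theorem \ref{KLRbasis} for $R_\alpha$ and the Mackey-style decomposition of $1_{\alpha_1,\dots,\alpha_g}$-cosets, one reads off that $\{\psi_d\, \bar 1 \mid d \in \mathscr{D}_{\vec{\br}}\}$ is a free $\mathcal{O}$-basis of $R_\alpha/K$. Since $\pi(m^\ttt) = \psi_{w^\ttt}\, \bar 1$ and $w^\ttt$ ranges over $\mathscr{D}_{\vec{\br}}$ as $\ttt$ ranges over row-strict $\blam/\bmu$-tableaux, the spanning set maps to a basis; therefore it is already $\mathcal{O}$-linearly independent in $M^{\blam/\bmu}$ and $\pi$ is an isomorphism. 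The degree claim follows from Lemma \ref{degmatch}.

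\medskip

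\noindent\emph{Anticipated obstacle.} The spanning argument is essentially bookkeeping; the real work is the basis identification for the induced model $R_\alpha/K$. One must check carefully that the generators of $K$ listed above really cut out the induction, i.e., that inducing the trivial module from the parabolic $R_{\beta_1} \otimes \cdots \otimes R_{\beta_g}$ produces the module in which every $y_r$ and every $\psi_s$ with $s_s \in \mathfrak{S}_{\vec{\br}}$ act by zero on the generator. Once this is in place, the bijection $\mathscr{D}_{\vec{\br}} \leftrightarrow \{\text{row-strict }\blam/\bmu\text{-tableaux}\}$ and the independence of $\psi_d\,\bar 1$ for fully non-reduced-crossing $d$ follow from the standard coset decomposition together with Theorem \ref{KLRbasis}.
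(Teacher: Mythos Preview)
Your approach is correct and essentially the same as the paper's (which simply cites \cite[Theorem 3.23]{kmr}): both identify $M^{\blam/\bmu}$ with the permutation/induced module $M(\br(1))\circ\cdots\circ M(\br(g))$ and then read off the basis $\{\psi_d\,m^{\blam/\bmu}\mid d\in\mathscr{D}_{\vec\br}\}$ from Theorem~\ref{KLRbasis}. The only redundancy is that you write out a separate spanning argument and frame the identification $M^{\blam/\bmu}\cong R_\alpha/K$ as a surjection to be upgraded, when in fact Definition~\ref{Mdef} already says $M^{\blam/\bmu}=(R_\alpha/K)\langle\deg\ttt^{\blam/\bmu}\rangle$; once you know the induced module has the coset basis, both spanning and independence follow at once.
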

\begin{proof}
This is \cite[Theorem 5.6]{kmr} in the Young diagram case. But since \(M^{\blam/\bmu}\) is a permutation module in the sense of \cite[\S 3.6]{kmr}, the proof in the skew case also follows immediately from \cite[Theorem 3.23]{kmr}.
\end{proof}

\begin{prop}
The elements of the set
\begin{align}\label{spanset}
\{v^\ttt \mid \ttt \in \St(\blam/\bmu)\}
\end{align}
span \(S^{\blam/\bmu}\) over \(\mathcal{O}\). Moreover, we have \(\deg(v^\ttt) = \deg(\ttt)\) for all \(\ttt \in \St(\blam/\bmu)\).
\end{prop}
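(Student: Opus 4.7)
My plan is to deduce the spanning statement from the just-established basis theorem for $M^{\blam/\bmu}$ together with the Garnir relations that cut out $S^{\blam/\bmu}$ as a quotient. Since $S^{\blam/\bmu} \cong M^{\blam/\bmu}/J^{\blam/\bmu}$ and $\{m^\ttt : \ttt \text{ row-strict}\}$ is an $\mathcal{O}$-basis of $M^{\blam/\bmu}$, the images $v^\ttt = m^\ttt + J^{\blam/\bmu}$ for row-strict $\ttt$ already span $S^{\blam/\bmu}$. The content of the proof is therefore to express $v^\ttt$ for a row-strict but non-standard $\ttt$ as an $\mathcal{O}$-linear combination of $v^\sss$ with $\sss \in \St(\blam/\bmu)$; I will do this by induction on the Bruhat order on $\Tab(\blam/\bmu)$, the base case $\ttt = \ttt^{\blam/\bmu}$ being trivial.

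For the inductive step, I would first establish the skew-shape analogue of \cite[Lemma 3.13]{mathas}: any row-strict nonstandard $\ttt$ can be factored as $\ttt = w\,\gggg^A$ for some Garnir node $A \in \blam/\bmu$ and some $w \in \mathfrak{S}_d$ with $\ell(w^\ttt) = \ell(w) + \ell(w^{\gggg^A})$. Via Proposition \ref{rewrite}, $\psi^\ttt z^{\blam/\bmu}$ then equals $\psi_w \psi^{\gggg^A} z^{\blam/\bmu}$ up to terms of the form $\psi_u f(y) z^{\blam/\bmu}$ with $u \triangleleft w^\ttt$, and since $y_r z^{\blam/\bmu} = 0$ the polynomial corrections collapse to $\psi_u z^{\blam/\bmu}$. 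Now the defining Garnir relation $g^A z^{\blam/\bmu} = 0$ rewrites $v^{\gggg^A}$ as an $\mathcal{O}$-linear combination of $v^\sss$ with $\sss \in \Gar^A \setminus \{\gggg^A\}$, each of which is standard and strictly below $\gggg^A$ in the Bruhat order by Lemma \ref{GarA}. Multiplying back through by $\psi_w$ and rewriting each $\psi_w \psi^\sss z^{\blam/\bmu}$ as a combination of row-strict $v^\uuu$ with $\uuu \triangleleft \ttt$ (which is possible because these products act through subwords of reduced expressions for elements strictly below $w^\ttt$), the induction hypothesis then yields the desired standard expansion.

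The delicate point is the Bruhat-order bookkeeping controlling the correction terms at each stage; I would isolate this in preparatory lemmas parallel to \cite[Lemmas 5.12--5.13]{kmr}, which carry over to the skew setting because the skew order on $\Tab(\blam/\bmu)$ is compatible with the $\YYY$-embedding into $\St(\blam)$ by Lemma \ref{skeworder}. I do not expect any genuinely new obstacle beyond organizing this chain of reductions. Finally, the degree claim is immediate from Lemma \ref{degmatch}: for a reduced expression $w^\ttt = s_{r_1} \cdots s_{r_m}$, one has $\deg\ttt - \deg\ttt^{\blam/\bmu} = \deg(\psi_{r_1} \cdots \psi_{r_m} 1_{\bi^{\blam/\bmu}})$, and since $z^{\blam/\bmu}$ is homogeneous of degree $\deg\ttt^{\blam/\bmu}$ by Definition \ref{Sdef}, we conclude $\deg v^\ttt = \deg\ttt$.
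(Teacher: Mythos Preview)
Your proposal is correct and follows essentially the same approach as the paper: the paper simply defers to \cite[Proposition 5.14]{kmr}, noting that the argument there (together with the supporting lemmas in \cite[\S5.5--5.6]{kmr}) carries over to skew shapes via the dominance results in Lemmas \ref{skeworder}--\ref{srt}, and proves the degree claim exactly as you do via Lemma \ref{degmatch}. What you have written is precisely a sketch of that \cite{kmr} argument adapted to the skew setting---induction on the Bruhat order, the factorization of a row-strict nonstandard tableau through a Garnir tableau, and the Garnir relation rewriting $v^{\gggg^A}$ in terms of lower standard $v^\sss$---so there is no substantive difference.
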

\begin{proof}
Using Lemma \ref{degmatch}, 
\begin{align*}
\deg(v^\ttt) = \deg(\psi^\ttt1_{\bi^{\blam/\bmu}}z^{\blam/\bmu}) = \deg(\psi^\ttt1_{\bi^{\blam/\bmu}})+\deg(z^{\blam/\bmu})= \deg(\ttt)-\deg(\ttt^{\blam/\bmu}) + \deg(z^{\blam/\bmu}),
\end{align*}
which proves the second statement, as \(\deg(z^{\blam/\bmu})=\deg(\ttt^{\blam/\bmu})\) by definition. 

The proof of the first statement follows exactly as it does in the Young diagram case provided in \cite[Proposition 5.14]{kmr}---there are clear skew analogues of the results in \cite[\S5.5--5.6]{kmr}---the only caveat is that our preferred partial order on standard tableaux is opposite that of \cite{kmr}, so one must swap the direction of `\(\triangleleft\)' signs when necessary, and make use of the analogous skew dominance results in Lemmas \ref{skeworder}--\ref{srt}.
\end{proof}


\section{Connecting skew Specht modules with restrictions of Specht modules}
\label{linind}
\noindent In this section we show that for \(\blam \in \mathscr{P}^\kappa_{\alpha+\beta}\), the \(R_{\alpha,\beta}\)-module \(\Res_{\alpha,\beta}S^{\blam}\) has a filtration with subquotients isomorphic to \(S^{\bmu} \boxtimes S^{\blam/\bmu}\), with \(\bmu \in \mathscr{P}^\kappa_{\alpha}\) and \(\blam/\bmu \in \mathscr{S}^\kappa_{\bmu,\beta}\). As a consequence, we get that (\ref{spanset}) is an \(\mathcal{O}\)-basis for \(S^{\blam/\bmu}\). For the case of Young diagrams, this was shown in \cite[Corollary 6.24]{kmr}:
\begin{thm}\label{youngbasis}
Let \(\blam \in \mathscr{P}_\alpha^\kappa\). Then \(S^{\blam}\) has \(\mathcal{O}\)-basis
\(\{v^\TTT \mid \TTT \in \St(\blam)\}.
\)
\end{thm}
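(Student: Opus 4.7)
The analog of the spanning statement for Young diagrams (the obvious specialization of Proposition 5.14 in \cite{kmr}, or the result stated before Section \ref{linind} taking $\bmu = \varnothing$) already gives that $\{v^\TTT \mid \TTT \in \St(\blam)\}$ spans $S^\blam$, so the plan is to establish linear independence. This immediately yields that $\dim_\mathcal{O} S^\blam \leq \#\St(\blam)$, and the task is to produce a matching lower bound.

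My plan is to induct on $d = |\blam|$, with the base case $d = 0$ trivial. For the inductive step, I would study $\Res_{\alpha - \alpha_i, \alpha_i} S^\blam$ for each $i \in I$ and produce an explicit filtration by $R_{\alpha - \alpha_i}$-submodules whose subquotients are isomorphic, up to grading shifts, to $S^{\blam_A}$ as $A$ ranges over the removable $i$-nodes of $\blam$ (ordered suitably). The filtration should be compatible with the decomposition
\begin{align*}
\St(\blam) = \bigsqcup_{A \text{ removable}} \{\TTT \in \St(\blam) \mid \TTT^{-1}(d) = A\},
\end{align*}
under which $\St(\blam_A)$ is identified with the right-hand set for each $A$. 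By the inductive hypothesis, each $S^{\blam_A}$ has the stated basis, so a dimension count across the filtration produces the inequality $\dim_\mathcal{O} S^\blam \geq \#\St(\blam)$, matching the upper bound.

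The main obstacle is constructing this restriction filtration: one must show that the relevant successive quotients are not only spanned by appropriate vectors $v^\TTT$ but are honest copies of $S^{\blam_A}$, so one needs to verify that the homogeneous Garnir relations descend correctly from $S^\blam$ to each subquotient without imposing extra relations. This is precisely the technical heart of \cite{kmr}, and in fact my planned approach for the skew case in this section will run in parallel: the filtration on $\Res_{\alpha, \beta} S^\blam$ with subquotients $S^\bmu \boxtimes S^{\blam/\bmu}$ specializes, for $\beta = \alpha_i$ and $\bmu = \blam_A$, to the filtration needed here.

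An alternative route, which sidesteps the filtration analysis, is to pass to the cyclotomic quotient $R_\alpha^\Lambda$ with $\Lambda = \Lambda_{k_1} + \cdots + \Lambda_{k_l}$, verify that $S^\blam$ factors through it, and then use the Brundan--Kleshchev isomorphism $R_d^\Lambda \cong H_d^\Lambda$ together with the classical fact that the Dipper--James--Mathas Specht module has ungraded dimension $\#\St(\blam)$. This forces $\dim_\mathcal{O} S^\blam = \#\St(\blam)$ and combines with the spanning statement to finish the proof.
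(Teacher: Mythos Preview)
The paper does not prove this theorem; it simply cites it as \cite[Corollary 6.24]{kmr} and uses it as a known input for the rest of Section~\ref{linind}.

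Your first approach has a directionality gap. Constructing a filtration of $\Res_{\alpha-\alpha_i,\alpha_i} S^{\blam}$ with surjections $S^{\blam_A} \twoheadrightarrow N_A$ onto the subquotients, together with the inductive hypothesis $\dim S^{\blam_A} = \#\St(\blam_A)$, yields $\dim N_A \leq \#\St(\blam_A)$ and hence $\dim S^{\blam} = \sum_A \dim N_A \leq \#\St(\blam)$ --- the upper bound you already had from spanning, not the lower bound you claim. To obtain $\dim N_A \geq \#\St(\blam_A)$ you would need the surjections to be injective, which is exactly the point at issue. Note also that you cannot borrow the filtration machinery of Section~\ref{linind} for this purpose: that argument invokes Theorem~\ref{youngbasis} at the outset (to identify an $\mathcal{O}$-basis of $\Res_{\alpha,\beta} S^{\blam}$ and hence of each $N_{\bmu}$), so specializing it to prove the Young case would be circular. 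The logic there runs the other way: the known basis of $S^{\blam}$ is used to deduce the basis of the skew subquotients.

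Your alternative route --- factoring $S^{\blam}$ through the cyclotomic quotient $R_d^\Lambda$ for $\Lambda = \Lambda_{k_1} + \cdots + \Lambda_{k_l}$ and matching dimensions with the classical Dipper--James--Mathas Specht module via the Brundan--Kleshchev isomorphism --- is sound, and is essentially the route taken in \cite{kmr}.
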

\subsection{Submodules of \texorpdfstring{$\Res_{\alpha,\beta} S^{\blam}$}{}}
Let \(\alpha, \beta \in Q_+\) and \(\textup{ht}(\alpha)=a, \textup{ht}(\beta)=b\). Let \(\blam \in \mathscr{P}^\kappa_{\alpha+\beta}\), \(\bmu \in \mathscr{P}^\kappa_\alpha\). By Theorem \ref{youngbasis}, \(S^{\blam}_{\alpha,\beta} := \Res_{\alpha,\beta}(S^{\blam})\) has \(\mathcal{O}\)-basis \(\{v^\TTT \mid \TTT \in B\},\)
where
\begin{align*}
B=\{\TTT \in \St(\lambda)\mid  \; \textup{cont}(\textup{sh}(\TTT_{\leq a}))=\alpha\},
\end{align*}
since \(1_{\alpha,\beta}v^\TTT = v^\TTT\) if and only if \(\bi(\TTT) = i_1 \cdots i_{a+b}\) has \(\alpha_{i_1} + \cdots +\alpha_{i_a} = \alpha\), and is zero otherwise. Define
\begin{align*}
B_{\bmu} = \{\TTT \in B\mid  \textup{sh}(\TTT_{\leq a}) \trianglerighteq \bmu\}
\hspace{8mm}
\textup{and}
\hspace{8mm}
C_{\bmu} = \mathcal{O}\{v^\TTT \in S^{\blam} \mid \TTT \in B_{\bmu}\}.
\end{align*}
\begin{lem}\label{lowerin}
If \(\UUU, \TTT \in B\), \(\UUU \trianglelefteq \TTT\), and \(\TTT \in B_{\bmu}\), then \(\UUU \in B_{\bmu}\).
\end{lem}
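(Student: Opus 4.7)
The plan is to read off the required shape inequality directly from Lemma \ref{ordershape}, which converts the Bruhat order on standard tableaux into a chain of dominance inequalities on shapes of truncations. Since $\blam$ is an honest partition (equivalently, a skew diagram with empty inner partition), the map $\YYY$ acts as the identity on $\blam$-tableaux, and Lemma \ref{ordershape} specialises to the statement that for row-strict $\blam$-tableaux $\SSS, \TTT$, one has $\SSS \trianglelefteq \TTT$ if and only if $\sh(\SSS_{\leq c}) \trianglerighteq \sh(\TTT_{\leq c})$ for every $c = 1, \dots, |\blam|$.

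Applying this to the hypothesis $\UUU \trianglelefteq \TTT$ (both $\UUU$ and $\TTT$ are standard, hence row-strict), I would specialise to $c = a$ to obtain $\sh(\UUU_{\leq a}) \trianglerighteq \sh(\TTT_{\leq a})$. The hypothesis $\TTT \in B_{\bmu}$ supplies $\sh(\TTT_{\leq a}) \trianglerighteq \bmu$, and transitivity of the dominance order on partitions then yields $\sh(\UUU_{\leq a}) \trianglerighteq \bmu$. Since $\UUU$ is already given to lie in $B$, this is exactly the additional condition needed to place $\UUU$ in $B_{\bmu}$.

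There is no substantive obstacle here: Lemma \ref{ordershape} was proved above precisely to enable such combinatorial manipulations, and the argument is a direct unpacking of the definitions of $B$, $B_{\bmu}$, and the order $\trianglelefteq$ on tableaux.
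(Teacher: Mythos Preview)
Your proof is correct and is essentially identical to the paper's: both apply Lemma~\ref{ordershape} (with empty inner partition, so \(\YYY\) is the identity) at the index \(a\) to obtain \(\sh(\UUU_{\leq a}) \trianglerighteq \sh(\TTT_{\leq a}) \trianglerighteq \bmu\), which is precisely the membership condition for \(B_{\bmu}\).
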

\begin{proof}
If \(\UUU \trianglelefteq \TTT\), then by Lemma \ref{ordershape}, \(\textup{sh}(\UUU_{\leq a}) \trianglerighteq \textup{sh}(\TTT_{\leq a}) \trianglerighteq \bmu\).\end{proof}

\begin{lem}\label{submodules}
\(C_{\bmu}\) is an \(R_{\alpha,\beta}\)-submodule of \(S^{\blam}_{\alpha,\beta}\). 
\end{lem}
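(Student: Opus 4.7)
The plan is to verify stability of $C_{\bmu}$ under a convenient generating set for $R_{\alpha,\beta}$. Identifying $R_{\alpha,\beta}$ with its image in $R_{\alpha+\beta}$ under the embedding of Section \ref{klrsec}, such a set consists of the idempotents $1_{\bi\bj}$ (with $\bi \in \langle I \rangle_\alpha$, $\bj \in \langle I \rangle_\beta$), the polynomial generators $y_1, \ldots, y_{a+b}$, and the crossings $\psi_r$ for $r \in \{1, \ldots, a+b-1\} \setminus \{a\}$; the omitted generator $\psi_a$ is precisely the crossing that straddles the $\alpha/\beta$ cut. The idempotents act diagonally on the basis $\{v^{\TTT} \mid \TTT \in B\}$, so they trivially preserve $C_{\bmu}$, reducing the problem to the $y_r$'s and the $\psi_r$'s with $r \neq a$.

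For the $y_r$ generators, the Specht-module relation $y_s z^{\blam} = 0$ from Definition \ref{Sdef} and the commutation rules of Proposition \ref{rewrite} let one rewrite $y_r v^{\TTT} = y_r \psi^{\TTT} z^{\blam}$ as an $\mathcal{O}$-linear combination of expressions $\psi_u z^{\blam}$ with $u \triangleleft w^{\TTT}$. Each such expression, by the Young-diagram specialization of the standard-tableau spanning argument from \cite{kmr}, is an $\mathcal{O}$-linear combination of $v^{\SSS}$ with $\SSS \in \St(\blam)$ and $\SSS \triangleleft \TTT$. Lemma \ref{lowerin} then places every such $\SSS$ in $B_{\bmu}$, so $y_r v^{\TTT} \in C_{\bmu}$.

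The crux of the argument is the $\psi_r$-action for $r \neq a$. The key combinatorial observation is that the transposition $s_r$ permutes only entries lying strictly within $\{1, \ldots, a\}$ (when $r < a$) or strictly within $\{a+1, \ldots, a+b\}$ (when $r > a$); in either case, the Young diagram of the subtableau on entries $1, \ldots, a$ is preserved:
\begin{equation*}
\textup{sh}((s_r \TTT)_{\leq a}) = \textup{sh}(\TTT_{\leq a}).
\end{equation*}
Applying Proposition \ref{rewrite} to $\psi_r \psi^{\TTT}$ and then the Specht-module triangularity on the standard basis (the Young analogue of the spanning machinery of Section \ref{skewspechtsec}), one expands $\psi_r v^{\TTT}$ as an $\mathcal{O}$-linear combination of $v^{\SSS}$ with $\SSS \in \St(\blam)$ satisfying either $\SSS \trianglelefteq \TTT$ or $\SSS \trianglelefteq s_r \TTT$. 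The Young-diagram form of Lemma \ref{ordershape}, applied at index $k=a$, then yields $\textup{sh}(\SSS_{\leq a}) \trianglerighteq \textup{sh}(\TTT_{\leq a}) \trianglerighteq \bmu$ in both cases, so $\SSS \in B_{\bmu}$ and hence $\psi_r v^{\TTT} \in C_{\bmu}$.

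The main obstacle will be pinning down the triangularity of the $\psi_r$-action on the standard basis in a form compatible with the partial orders adopted here (see Remark \ref{orderremarks}); once this is in hand, the shape-preservation observation for $r \neq a$ combines with Lemma \ref{lowerin} to close the argument cleanly.
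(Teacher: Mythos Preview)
Your proposal is correct and follows essentially the same strategy as the paper: verify invariance under generators of $R_{\alpha,\beta}$, use triangularity of the $y_r$- and $\psi_r$-actions on the standard basis, and close with Lemma~\ref{lowerin} together with the observation that $s_r$ for $r\neq a$ preserves $\textup{sh}((\cdot)_{\leq a})$.

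The only difference is organizational. The paper splits the $\psi_r$-step into four cases according to the relative position of $r$ and $r+1$ in $\TTT$, citing \cite[Lemmas~4.8, 4.9]{bkw} and \cite[Lemma~2.14]{fs} to obtain, in each case, an expansion $\psi_r v^{\TTT}=\sum c_\UUU v^\UUU$ with every $\UUU$ already bounded by a \emph{standard} tableau in $B_{\bmu}$; this sidesteps any appeal to Lemma~\ref{ordershape} for non-row-strict tableaux. Your uniform argument is cleaner in spirit, but note that when $r\rightarrow_{\TTT} r+1$ or $r\downarrow_{\TTT} r+1$ the tableau $s_r\TTT$ is not row-strict, so Lemma~\ref{ordershape} as stated does not apply to the comparison $\SSS\trianglelefteq s_r\TTT$. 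The fix is exactly the one your last paragraph anticipates: since $\SSS$ is standard and $\SSS\neq s_r\TTT$, Lemma~\ref{srt} gives $\SSS\trianglelefteq\TTT$, after which Lemma~\ref{lowerin} (or Lemma~\ref{ordershape} at index $a$) applies directly.
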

\begin{proof}
We show that \(C_{\bmu}\) is invariant under the action of generators of \(R_{\alpha,\beta}\). For idempotents \(1_{\bi\bj}\) this is clear. Let \(\TTT \in B_{\bmu}\). 
\begin{enumerate}
\item For \(1 \leq j \leq a+b\), \(y_j v^\TTT\) is an \(\mathcal{O}\)-linear combination of \(v^\UUU \in B\) for \(\UUU \triangleleft \TTT\), by \cite[Lemma 4.8]{bkw}. By Lemma \ref{lowerin}, each \(v^\UUU\) is in \(C_{\bmu}\).
\item For \(j \in \{1, \ldots, a-1,a+1,\ldots,a+b-1\}\), where \(j \rightarrow_\TTT j+1\) or \(j \downarrow_\TTT j+1\), then \(\psi_j v^\TTT\) is a linear combination of \(v^\UUU \in B\) for \(\UUU \triangleleft \TTT\), by \cite[Lemma 4.9]{bkw}, and the result follows by Lemma \ref{lowerin}.
\item For \(j \in \{1, \ldots, a-1,a+1,\ldots,a+b-1\}\), where \(j \rotatebox[origin=c]{45}{$\Rightarrow$}_{\hspace{-1mm}\TTT}\, j+1\), then \(\psi_j v^\TTT\) is a linear combination of \(v^\UUU \in B\) for \(\UUU \triangleleft \TTT\), by \cite[Lemma 2.14]{fs}, and the result follows by Lemma \ref{lowerin}.
\item Assume \(j \in \{1, \ldots, a-1,a+1,\ldots,a+b-1\}\), and \(j+1 \rotatebox[origin=c]{45}{$\Rightarrow$}_{\hspace{-1mm}\TTT}\, j\). Then \(s_j\TTT \triangleright \TTT\), and \(s_jw^\TTT = w^{s_j\TTT}\), with \(\ell(w^{s_j\TTT})=\ell(w^\TTT)+1\). Then by Lemma \ref{rewrite}, \(\psi_j v^\TTT = v^{s_j\TTT} + \sum_{\UUU \triangleleft s_j\TTT} c_\UUU v^\UUU\) for some constants \(c_\UUU \in \mathcal{O}\). But \((s_j\TTT)_{\leq a} = \TTT_{\leq a}\), so \(s_j\TTT \in B_{\bmu}\) and the result follows by Lemma \ref{lowerin}.
\end{enumerate}
This exhausts the possibilities for \(\TTT\) and completes the proof. 
\end{proof}
Now define 
\(
B_{\triangleright \bmu} = \bigcup_{\bnu \triangleright \bmu} B_{\bnu} = \{\TTT \in B \mid \textup{sh}(\TTT_{\leq a}) \triangleright \bmu\}.
\)
Then 
\(
C_{\triangleright \bmu} := \sum_{\bnu \triangleright \bmu} C_{\bnu} = \mathcal{O} \{v^\TTT \in S^{\blam} \mid \textup{sh}(\TTT_{\leq a}) \triangleright \bmu\}
\)
is an \(R_{\alpha,\beta}\)-submodule of \(S^{\blam}_{\alpha,\beta}\). Define \(N_{\bmu} =C_{\bmu}/C_{\triangleright \bmu}\), and write 
\begin{align*}
x^\TTT = v^\TTT + C_{\triangleright\bmu} \in S^{\blam}_{\alpha,\beta}/C_{\triangleright \bmu}
\end{align*}
for \(\TTT \in B\). To cut down on notational clutter in what follows, write \(\bxi\) for \(\blam / \bmu\), \(\xi^{(i)}\) for the components \(\lambda^{(i)}/\mu^{(i)}\) of \(\blam/\bmu\), and \(\xi^{(i)}_j\) for the \(j\)th row of nodes in \(\xi^{(i)}\). Then for \(\TTT \in \Tab(\bmu)\), \(\ttt \in \Tab(\bxi)\), define \(\TTT\ttt \in \Tab(\blam)\) such that \((\TTT\ttt)_{\leq a} = \TTT\) and \(\TTT\ttt(A) = \YYY(\ttt)(A)\) for nodes \(A \in \bxi\). From the definition it is clear that \(N_{\bmu}\) has homogeneous \(\mathcal{O}\)-basis
\begin{align*}
\{x^{\TTT} \mid \TTT \in \St(\blam), \textup{sh}(\textup{cont}(\TTT_{\leq a}))= \bmu \} =
\{x^{\TTT\ttt} \mid \TTT \in \St(\bmu), \ttt \in \St(\bxi)\}.
\end{align*}
Write \(\TTT^{\bmu\bxi}:=\TTT^{\bmu}\ttt^{\bxi} = \YYY(\ttt^{\bxi})\), and write \(x^{\bmu\bxi}\) for \(x^{\TTT^{\bmu\bxi}}\). 

\subsection{Constructing a morphism \texorpdfstring{$S^{\bmu} \boxtimes S^{\blam/\bmu} \to N_{\bmu}$}{}}\label{fmorphism}
Define a (graded) morphism \(f\) from the free module \(R_{\alpha,\beta}\langle \deg\TTT^{\bmu} + \deg\ttt^{\bxi}\rangle\) to \(N_{\bmu}\) by \(f:1_{\alpha,\beta} \mapsto x^{\bmu\bxi}\). 
\begin{prop}\label{kerf}
The kernel of \( f\) contains the left ideal \(K_\alpha^{\bmu} \otimes R_\beta + R_{\alpha} \otimes K_\beta^{\bxi} \).
\end{prop}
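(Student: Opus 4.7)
The claim reduces to checking that the four types (i*)--(iv*) of generators of both $K_\alpha^{\bmu}$ (on the left factor) and $K_\beta^{\bxi}$ (on the right) annihilate $x^{\bmu\bxi} = v^{\TTT^{\bmu\bxi}} + C_{\triangleright \bmu}$ in $N_{\bmu}$. I would dispatch the four types in order, treating the $\alpha$- and $\beta$-sides in parallel, and work inside $S^{\blam}$ where $v^{\bmu\bxi} = \psi^{\TTT^{\bmu\bxi}} z^{\blam}$ and both the standard basis (Theorem~\ref{youngbasis}) and the defining relations of $S^{\blam}$ are available.

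The idempotent relations (i*) are immediate: because $\TTT^{\bmu\bxi} = \YYY(\ttt^{\bxi})$ agrees with $\TTT^{\bmu}$ on $\bmu$ and with $\ttt^{\bxi}$ shifted by $|\bmu|$ on $\bxi$, the residue sequence factors as the concatenation $\bi(\TTT^{\bmu\bxi}) = \bi^{\bmu}\bi^{\bxi}$. Hence $(1_{\bj}-\delta_{\bj,\bi^{\bmu}}) \otimes 1$ and $1 \otimes (1_{\bk}-\delta_{\bk,\bi^{\bxi}})$ literally annihilate $v^{\bmu\bxi}$ in $S^{\blam}$, before passing to $N_{\bmu}$.

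For the $y$- and $\psi$-generators (ii*), (iii*), I would push each generator $x$ from the left through $\psi^{\TTT^{\bmu\bxi}}$ using the KLR straightening identities until it meets $z^{\blam}$. The main term vanishes: any $y$ kills $z^{\blam}$, and any $\psi_r$ coming from a row-adjacency in $\TTT^{\bmu}$ or $\ttt^{\bxi}$ that also corresponds to a row-adjacency in $\TTT^{\blam}$ yields $\psi_s z^{\blam}=0$ for the shifted index. By Proposition~\ref{rewrite}(i), the correction terms are $\mathcal{O}$-linear combinations of shorter $\psi$-words $\psi_{w'}$ applied to $g(y)z^{\blam}$; these reduce, using $y_j z^{\blam}=0$, to multiples of $\psi_{w'} z^{\blam}$ for $w' \triangleleft w^{\TTT^{\bmu\bxi}}$. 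Expanding these via Theorem~\ref{youngbasis} into the standard basis and using the locality of $x$ to one of the two strand blocks $\{1,\dots,a\}$ or $\{a+1,\dots,a+b\}$, I would argue by Lemma~\ref{ordershape} that every standard tableau $\SSS$ that appears satisfies $\textup{sh}(\SSS_{\leq a}) \triangleright \bmu$, so the correction lies in $C_{\triangleright\bmu}$. In the residual subcase of (iii*) where the row-adjacency in $\bmu$ does not lift to $\TTT^{\blam}$ (because the relevant row of $\blam$ extends past $\bmu$ into $\bxi$), I would replace the naive push-through by an appeal to the Garnir relation in $S^{\blam}$ associated to the appropriate node of $\blam$ in order to absorb the leading non-standard term $v^{s_r \TTT^{\bmu\bxi}}$ into $C_{\triangleright\bmu}$.

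The main obstacle will be the Garnir relations (iv*). Let $A$ be a Garnir node of $\bmu$ (the case $A \in \bxi$ is parallel). Then $A$ is a Garnir node of $\blam$ as well, but the belt $\mathbf{B}^A_{\blam}\supsetneq \mathbf{B}^A_{\bmu}$ strictly, the extra bricks lying in $\bxi$; correspondingly the brick-permutation group $\mathfrak{S}^A_{\blam}$ is larger than $\mathfrak{S}^A_{\bmu}$ and the Garnir element is different. My plan is to rewrite $g^A_{\bmu}\, v^{\bmu\bxi}$ by pushing its brick permutations and $\psi^{\ttt^A_{\bmu}}$ through $\psi^{\TTT^{\bmu\bxi}}$, and to compare the outcome with the vanishing identity $g^A_{\blam}\, z^{\blam} = 0$ coming from the defining relations of $S^{\blam}$; the discrepancy, indexed by shuffles in $\mathscr{D}^A_{\blam}\setminus \mathscr{D}^A_{\bmu}$ (those involving bricks in $\mathbf{B}^A_{\blam}\setminus \mathbf{B}^A_{\bmu}\subseteq \bxi$), should expand into combinations of $v^{\SSS}$ with $\textup{sh}(\SSS_{\leq a})\triangleright \bmu$, hence lying in $C_{\triangleright\bmu}$. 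A symmetric argument, now with the extra belt bricks sitting in $\bmu$ rather than $\bxi$, should handle Garnir nodes $A \in \bxi$. Carefully tracking the coset decompositions of the three brick-permutation groups $\mathfrak{S}^A_{\bmu},\mathfrak{S}^A_{\bxi},\mathfrak{S}^A_{\blam}$ and their interaction with $\psi^{\TTT^{\bmu\bxi}}$ --- and confirming that every extra term actually lands in the correct filtration piece --- is the delicate part, and is where I expect the bulk of the work to lie.
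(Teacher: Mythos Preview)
You have misread the statement: the ideal in question is $K_\alpha^{\bmu}$, not $J_\alpha^{\bmu}$. By definition (see the paragraph after Definition~\ref{Sdef}), $K^{\blam/\bmu}$ is generated by the elements of types (i*)--(iii*) only; the Garnir elements (iv*) lie in $J_\alpha^{\bmu}$ but not in $K_\alpha^{\bmu}$. So the entire discussion of (iv*) in your plan is unnecessary for Proposition~\ref{kerf}; the Garnir relations are dealt with separately in Lemma~\ref{fAzero} and Remark~\ref{fxi}, which together with Proposition~\ref{kerf} combine to give the full ideal $J_\alpha^{\bmu}\otimes R_\beta + R_\alpha\otimes J_\beta^{\bxi}$ in Proposition~\ref{isom}. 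Your sketch for (iv*) is in the right spirit for that later lemma, but it is not part of the present proposition.

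Even restricted to (i*)--(iii*), your approach is more laborious than the paper's. Rather than pushing $y_r$ or $\psi_r$ through $\psi^{\TTT^{\bmu\bxi}}$ to reach $z^{\blam}$, the paper works directly with the standard basis element $v^{\TTT^{\bmu\bxi}}\in S^{\blam}$ and invokes the Bruhat-lowering results \cite[Lemmas 4.8, 4.9]{bkw}: these say that $y_r v^{\TTT}$ and (when $r\to_{\TTT} r+1$) $\psi_r v^{\TTT}$ are $\mathcal{O}$-linear combinations of $v^{\UUU}$ with $\UUU\triangleleft\TTT$. The key observation, which you do not state and which replaces all your push-through bookkeeping, is that $\TTT^{\bmu\bxi}$ is the \emph{minimal} element of $\St(\blam)$ with $\textup{sh}(\TTT_{\leq a})=\bmu$; hence any $\UUU\in B$ with $\UUU\triangleleft\TTT^{\bmu\bxi}$ already lies in $B_{\triangleright\bmu}$, so $x^{\UUU}=0$ in $N_{\bmu}$. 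This dispatches (ii*) and (iii*) in one line each. In particular, your ``residual subcase'' of (iii*) does not arise: the condition $r\to_{\TTT^{\bmu}} r+1$ is a condition on the tableau $\TTT^{\bmu\bxi}$ itself (since its first $a$ entries form $\TTT^{\bmu}$), so \cite[Lemma 4.9]{bkw} applies directly to $v^{\TTT^{\bmu\bxi}}$ without any need to compare row structures in $\blam$ versus $\bmu$.
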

\begin{proof}
We show that the relevant generators of \(K_\alpha^{\bmu} \otimes R_\beta\), given by (i*)--(iii*) in Definition \ref{Sdef} are sent to zero by \(f\). The proof for \(R_{\alpha} \otimes K_\beta^{\bxi}\) is similar.
\begin{enumerate}
\item[(i*)] First we consider idempotents.
\begin{align*}
f[(1_{\bj} - \delta_{\bj,\bi^{\bmu}}) \otimes 1_\beta ] &= (1_{\bj,\beta} - \delta_{\bj,\bi^{\bmu}})x^{\bmu \bxi} = \sum_{\bk \in I^{\beta}}1_{\bj\bk}x^{\bmu\bxi} - \delta_{\bj,\bi^{\bmu}}x^{\bxi}\\
&=\sum_{\bk \in I^\beta} \delta_{\bj\bk,\bi^{\bmu}\bi^{\bxi}} x^{\bmu\bxi} - \delta_{\bj,\bi^{\bmu}}x^{\bmu\bxi} = \delta_{\bj\bi^{\bxi},\bi^{\bmu}\bi^{\bxi}}x^{\bmu\bxi}- \delta_{\bj,\bi^{\bmu}}x^{\bmu\bxi}=0.
\end{align*}
\item[(ii*)] For \(1 \leq r \leq a\), we have by \cite[Lemma 4.8]{bkw} that \(f(y_r)=y_r \cdot x^{\bmu\bxi}\) is an \(\mathcal{O}\)-linear combination of \(x^\UUU\), where \(\UUU \in B\) and \(\UUU \triangleleft \TTT^{\bmu\bxi}\). But \(\TTT^{\bmu\bxi}\) is minimal such that \(\textup{sh}(\TTT_{\leq a}) = \bmu\), so each \(\UUU \in B_{\triangleright\bmu}\), and thus \(f(y_r)=0\).
\item[(iii*)] Note that \(r \rightarrow_{\TTT^{\bmu}} r+1\) implies \(r \rightarrow_{\TTT^{\bmu\bxi}} r+1\), so by \cite[Lemma 4.9]{bkw} it follows that for \(1 \leq r \leq a-1\), \(f(\psi_r) = \psi_r  x^{\bmu\bxi} \) is an \(\mathcal{O}\)-linear combination of \(x^\UUU\), where \(\UUU \in B\) and \(\UUU \triangleleft \TTT^{\bmu\bxi}\). But then as in (2) this implies that \(f(\psi_r)=0\).
\end{enumerate}
\end{proof}

The goal in the rest of this section is to show that in fact, the kernel of \(f\) contains the the left ideal \(J^{\bmu}_\alpha \otimes R_\beta + R_\alpha \otimes J^{\bxi}_\beta\), i.e., \(g^{A_{\bmu}}\otimes 1_\beta\) (resp.  \(1_\alpha \otimes g^{A_{\bxi}}\)) are sent to zero by \(f\), for Garnir nodes \(A_{\bmu} \in \bmu\) (resp. \(A_{\bxi}\in \bxi\)). As the proofs for \(\bmu\) and \(\bxi\) are similar (see Remark \ref{fxi}), we focus on the former and leave the latter for the reader to verify. We will occasionally need to make use of the following lemma, proved in \cite[Lemma 2.16]{fs}:
\begin{lem}\label{subword}
Suppose \(\blam \in \mathscr{P}^\kappa_\alpha\), \(\TTT \in \St(\blam)\), \(j_1,\ldots,j_r \in \{1,\ldots,d-1\}\), and that when \(\psi_{j_1} \cdots \psi_{j_r} z^{\blam}\) is expressed as a linear combination of standard basis elements, \(v^\TTT\) appears with non-zero coefficient. Then the expression \(s_{j_1} \cdots s_{j_r}\) has a reduced expression for \(w^\TTT\) as a subexpression.
\end{lem}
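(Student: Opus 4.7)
The plan is to prove the lemma by a direct argument combining two ingredients: the rewriting facts for generic products of $\psi$'s supplied by Proposition \ref{rewrite}, and the subword characterization of the Bruhat order on $\mathfrak{S}_d$. The key simplification I will exploit throughout is that $y_s z^{\blam}=0$ for all $s$, so every polynomial in the $y$-generators acts on $z^{\blam}$ by its constant term; this is what lets Proposition \ref{rewrite} be useful despite its polynomial-valued error terms.

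First, I will use Proposition \ref{rewrite}(i) to rewrite $\psi_{j_1}\cdots\psi_{j_r}1_{\bi^{\blam}}$ as an $\mathcal{O}$-linear combination of terms $\psi_{j_1}^{\epsilon_1}\cdots\psi_{j_r}^{\epsilon_r}g_{\epsilon}(y)1_{\bi^{\blam}}$ in which each $s_{j_1}^{\epsilon_1}\cdots s_{j_r}^{\epsilon_r}$ is a reduced expression, and hence a reduced subexpression of $s_{j_1}\cdots s_{j_r}$. Acting on $z^{\blam}$ collapses every $g_{\epsilon}$ to a scalar. For each of the surviving reduced products, let $w_{\epsilon}\in\mathfrak{S}_d$ be the element it represents; Proposition \ref{rewrite}(ii) then lets me replace that product by the preferred reduced product $\psi_{w_{\epsilon}}$, modulo terms $\psi_u g_u(y)$ with $u\triangleleft w_{\epsilon}$, and the polynomial pieces again collapse to scalars on $z^{\blam}$. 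The upshot of this first stage is an identity of the form
\[
\psi_{j_1}\cdots\psi_{j_r}\,z^{\blam} \;=\; \sum_{u} a_u\,\psi_u z^{\blam},
\]
where every $u$ with $a_u\neq 0$ satisfies $u\trianglelefteq w_{\epsilon}$ in Bruhat order for some reduced subexpression $s_{j_1}^{\epsilon_1}\cdots s_{j_r}^{\epsilon_r}$ of $s_{j_1}\cdots s_{j_r}$.

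Next, I will expand each $\psi_u z^{\blam}$ in the standard basis $\{v^{\TTT}\}$ of $S^{\blam}$. Regarding $z^{\blam}$ as the image of $m^{\blam}\in M^{\blam}$, one first expands $\psi_u m^{\blam}$ in terms of the row-strict basis $\{m^{\SSS}\}$ of $M^{\blam}$, using the permutation-module analysis of \cite[\S 3.6]{kmr}; every $\SSS$ that appears satisfies $w^{\SSS}\trianglelefteq u$ in Bruhat order. Passing to $S^{\blam}$ and applying the row-strict straightening argument used in the proof of \cite[Proposition 5.14]{kmr} then rewrites each $v^{\SSS}$ as a combination of $v^{\TTT}$ with $\TTT\trianglelefteq\SSS$ in the tableau order, equivalently $w^{\TTT}\trianglelefteq w^{\SSS}\trianglelefteq u$ in Bruhat.

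Combining the two stages, any $v^{\TTT}$ that appears with nonzero coefficient in $\psi_{j_1}\cdots\psi_{j_r}z^{\blam}$ must satisfy $w^{\TTT}\trianglelefteq w_{\epsilon}$ for some $\epsilon$. The subword characterization of the Bruhat order then supplies a reduced expression for $w^{\TTT}$ as a subexpression of $s_{j_1}^{\epsilon_1}\cdots s_{j_r}^{\epsilon_r}$, which is itself a subexpression of $s_{j_1}\cdots s_{j_r}$; this is exactly the claim. The principal obstacle I anticipate is the middle stage: one must verify carefully that the straightening of a row-strict $v^{\SSS}$ into standard $v^{\TTT}$ preserves the Bruhat bound $w^{\TTT}\trianglelefteq u$, rather than only the tableau dominance $\TTT\trianglelefteq\SSS$. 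For the conventions of this paper these coincide tautologically (Remark \ref{orderremarks}), after which the conclusion follows formally from Proposition \ref{rewrite} and the subword property of Bruhat order.
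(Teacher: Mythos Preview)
The paper does not prove this lemma itself; it simply cites \cite[Lemma 2.16]{fs}. Your proposal supplies a correct self-contained argument, and the strategy---collapse to reduced subexpressions via Proposition~\ref{rewrite}(i), pass to preferred reduced expressions via Proposition~\ref{rewrite}(ii), then straighten through $M^{\blam}$ and $S^{\blam}$ using the arguments behind \cite[Proposition 5.14]{kmr}---is the natural one and almost certainly matches what the cited reference does. The ``obstacle'' you flag at the end is, as you yourself observe, not an obstacle: in this paper $\TTT\trianglelefteq\SSS$ is \emph{defined} to mean $w^{\TTT}\trianglelefteq w^{\SSS}$, so the tableau bound and the Bruhat bound are literally the same inequality.
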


Note that \(w^{\bmu\bxi}:=w^{\TTT^{\bmu\bxi}}\) is in \(\mathscr{D}^{\mu^{(1)}_1,\xi^{(1)}_1,\ldots,\mu^{(l)}_{n(\blam,l)},\xi^{(l)}_{n(\blam,l)}}_{a,b}\), the set of minimal length double coset representatives for 
\begin{align*}
\mathfrak{S}_a \times \mathfrak{S}_b \backslash \mathfrak{S}_{a+b} / \mathfrak{S}_{\mu^{(1)}_1} \times \mathfrak{S}_{\xi^{(1)}_1} \times \cdots \times \mathfrak{S}_{\mu_{n(\blam,l)}^{(l)}} \times \mathfrak{S}_{\xi^{(l)}_{n(\blam,l)}},
\end{align*}
and as such is fully commutative. Writing \(n:=n(\blam,l)\), in diagrammatic form we have
\begin{align*}
w^{\bmu\bxi}=
\begin{braid}\tikzset{baseline=0mm}
\draw [color=blue!50, fill=blue!50, opacity=0.5] (1,2)--(2,2)--(2,-2)--(1,-2)--cycle;
\draw [color=blue!50, fill=blue!50, opacity=0.5] (5,2)--(6,2)--(4,-2)--(3,-2)--cycle;
\draw [color=blue!50, fill=blue!50, opacity=0.5] (11,2)--(12,2)--(7,-2)--(6,-2)--cycle;
\draw [color=blue!50, fill=blue!50, opacity=0.5] (15,2)--(16,2)--(9,-2)--(8,-2)--cycle;
\draw [color=red!50, fill=red!50, opacity=0.5] (3,2)--(4,2)--(11,-2)--(10,-2)--cycle;
\draw [color=red!50, fill=red!50, opacity=0.5] (7,2)--(8,2)--(13,-2)--(12,-2)--cycle;
\draw [color=red!50, fill=red!50, opacity=0.5] (13,2)--(14,2)--(16,-2)--(15,-2)--cycle;
\draw [color=red!50, fill=red!50, opacity=0.5] (17,2)--(18,2)--(18,-2)--(17,-2)--cycle;
\draw(1,2)--(1,-2);
\draw(1.5,2) node[above]{$\mu^{(1)}_1$};
\draw(2,2)--(2,-2);
\draw(3,2)--(10,-2);
\draw(3.5,2) node[above]{$\xi^{(1)}_1$};
\draw(4,2)--(11,-2);
\draw(5,2)--(3,-2);
\draw(5.5,2) node[above]{$\mu^{(1)}_2$};
\draw(6,2)--(4,-2);
\draw(7,2)--(12,-2);
\draw(7.5,2) node[above]{$\xi^{(1)}_2$};
\draw(8,2)--(13,-2);
\draw(9.5,2) node[above]{$\cdots$};
\draw(11,2)--(6,-2);
\draw(11.5,2) node[above]{$\mu^{(l)}_{n-1}$};
\draw(12,2)--(7,-2);
\draw(13,2)--(15,-2);
\draw(13.5,2) node[above]{$\xi^{(l)}_{n-1}$};
\draw(14,2)--(16,-2);
\draw(15,2)--(8,-2);
\draw(15.5,2) node[above]{$\mu^{(l)}_n$};
\draw(16,2)--(9,-2);
\draw(17,2)--(17,-2);
\draw(17.5,2) node[above]{$\xi^{(l)}_n$};
\draw(18,2)--(18,-2);
\draw(5,-2) node{$\cdots$};
\draw(14,-2) node{$\cdots$};
\end{braid}.
\end{align*}
Here we are letting \(\mu^{(j)}_i\) in the diagram stand for \((a_1, \ldots, a_k)\), where \(a_1, \ldots, a_k\) are the entries (in order) in \(\TTT^{\blam}\) of the nodes contained in the \(i\)th row of \(\mu^{(j)}\), and similarly for \(\xi^{(j)}_i\).

Let \(1 \leq i \leq l\), \(1 \leq j \leq n(\blam,i)\). It will be useful to write \(w^{\bmu\bxi} = w^D_{i,j}w^R_{i,j}w^L_{i,j}\), the decomposition into fully commutative elements of \(\mathfrak{S}_{a+b}\) given as follows:
\begin{align*}
w^{\bmu\bxi}=
\begin{cases}
\begin{braid}\tikzset{baseline=-15mm}
\draw [color=blue!50, fill=blue!50, opacity=0.5] (1,2)--(2,2)--(2,-10)--(1,-10)--cycle;
\draw [color=red!50, fill=red!50, opacity=0.5] (3,2)--(4,2)--(9,-2)--(9,-6)--(16,-10)--(15,-10)--(8,-6)--(8,-2)--cycle;
\draw [color=blue!50, fill=blue!50, opacity=0.5] (5,2)--(6,2)--(4,-2)--(4,-10)--(3,-10)--(3,-2)--cycle;
\draw [color=red!50, fill=red!50, opacity=0.5] (9,2)--(10,2)--(12,-2)--(12,-6)--(19,-10)--(18,-10)--(11,-6)--(11,-2)--cycle;
\draw [color=blue!50, fill=blue!50, opacity=0.5] (11,2)--(12,2)--(7,-2)--(7,-6)--(7,-10)--(6,-10)--(6,-6)--(6,-2)--cycle;
\draw [color=red!50, fill=red!50, opacity=0.5] (13,2)--(14,2)--(14,-6)--(21,-10)--(20,-10)--(13,-6)--(13,-2)--cycle;
\draw [color=blue!50, fill=blue!50, opacity=0.5] (15,2)--(16,2)--(16,-6)--(9,-10)--(8,-10)--(15,-6)--(15,-2)--cycle;
\draw [color=red!50, fill=red!50, opacity=0.5] (17,2)--(18,2)--(18,-2)--(23,-6)--(23,-10)--(22,-10)--(22,-6)--(17,-2)--(17,-2)--cycle;
\draw [color=blue!50, fill=blue!50, opacity=0.5] (19,2)--(20,2)--(20,-2)--(18,-6)--(11,-10)--(10,-10)--(17,-6)--(19,-2)--cycle;
\draw [color=red!50, fill=red!50, opacity=0.5] (23,2)--(24,2)--(24,-2)--(26,-6)--(26,-10)--(25,-10)--(25,-6)--(23,-2)--cycle;
\draw [color=blue!50, fill=blue!50, opacity=0.5] (25,2)--(26,2)--(26,-2)--(21,-6)--(14,-10)--(13,-10)--(20,-6)--(25,-2)--cycle;
\draw [color=red!50, fill=red!50, opacity=0.5] (27,2)--(28,2)--(28,-10)--(27,-10)--cycle;
\draw(1,2)--(1,-2)--(1,-6)--(1,-10);
\draw(1.5,2) node[above]{$\mu^{(1)}_1$};
\draw(2,2)--(2,-2)--(2,-6)--(2,-10);
\draw(3,2)--(8,-2)--(8,-6)--(15,-10);
\draw(3.5,2) node[above]{$\xi^{(1)}_1$};
\draw(4,2)--(9,-2)--(9,-6)--(16,-10);
\draw(5,2)--(3,-2)--(3,-6)--(3,-10);
\draw(5.5,2) node[above]{$\mu^{(1)}_2$};
\draw(6,2)--(4,-2)--(4,-6)--(4,-10);
\draw(7.5,2) node[above]{$\cdots$};
\draw(9,2)--(11,-2)--(11,-6)--(18,-10);
\draw(9.5,2) node[above]{$\xi_{j-2}^{(i)}$};
\draw(10,2)--(12,-2)--(12,-6)--(19,-10);
\draw(11,2)--(6,-2)--(6,-6)--(6,-10);
\draw(11.5,2) node[above]{$\mu_{j-1}^{(i)}$};
\draw(12,2)--(7,-2)--(7,-6)--(7,-10);
\draw(13,2)--(13,-2)--(13,-6)--(20,-10);
\draw(13.5,2) node[above]{$\xi_{j-1}^{(i)}$};
\draw(14,2)--(14,-2)--(14,-6)--(21,-10);
\draw(15,2)--(15,-2)--(15,-6)--(8,-10);
\draw(15.5,2) node[above]{$\mu_{j}^{(i)}$};
\draw(16,2)--(16,-2)--(16,-6)--(9,-10);
\draw(17,2)--(17,-2)--(22,-6)--(22,-10);
\draw(17.5,2) node[above]{$\xi_{j}^{(i)}$};
\draw(18,2)--(18,-2)--(23,-6)--(23,-10);
\draw(19,2)--(19,-2)--(17,-6)--(10,-10);
\draw(19.5,2) node[above]{$\mu_{j+1}^{(i)}$};
\draw(20,2)--(20,-2)--(18,-6)--(11,-10);
\draw(21.5,2) node[above]{$\cdots$};
\draw(23,2)--(23,-2)--(25,-6)--(25,-10);
\draw(23.5,2) node[above]{$\xi^{(l)}_{n-1}$};
\draw(24,2)--(24,-2)--(26,-6)--(26,-10);
\draw(25,2)--(25,-2)--(20,-6)--(13,-10);
\draw(25.5,2) node[above]{$\mu^{(l)}_{n}$};
\draw(26,2)--(26,-2)--(21,-6)--(14,-10);
\draw(27,2)--(27,-2)--(27,-6)--(27,-10);
\draw(27.5,2) node[above]{$\xi^{(l)}_{n}$};
\draw(28,2)--(28,-2)--(28,-6)--(28,-10);
\draw[color=white, opacity=1] (0,-2)--(28.5,-2);
\draw[color=white, opacity=1] (0,-6)--(28.5,-6);
\draw(5,-2) node{$\cdots$};
\draw(10,-2) node{$\cdots$};
\draw(5,-6) node{$\cdots$};
\draw(5,-10) node{$\cdots$};
\draw(10.2,-6) node{$\cdots$};
\draw(21.5,-2) node{$\cdots$};
\draw(18.8,-6) node{$\cdots$};
\draw(24,-6) node{$\cdots$};
\draw(12.2,-10) node{$\cdots$};
\draw(17,-10) node{$\cdots$};
\draw(24,-10) node{$\cdots$};
\draw(-0.3,0) node{$w^L_{i,j}$};
\draw(-0.3,-4) node{$w^R_{i,j}$};
\draw(-0.3,-8) node{$w^D_{i,j}$};
\end{braid}.
\end{cases}
\end{align*}
Define \(\psi^X_{i,j}:= \psi_{w^X_{i,j}}\) for \(X \in \{L,R,D\}\), and set
\begin{align*}
c_{i,j}&=\sum_{\substack{1\leq h \leq i-1\\1 \leq k \leq n(\blam,h)}} \mu^{(h)}_k + \sum_{1 \leq k \leq j-1} \mu^{(i)}_k,\\
d_{i,j}&= \sum_{\substack{1 \leq h \leq i-1\\1 \leq k \leq n(\blam,h)}} \xi^{(h)}_k + \sum_{1 \leq k \leq j-1} \xi^{(i)}_k.
\end{align*}
If \(\Psi:= \psi_{r_1} \cdots \psi_{r_s}\) for some \(r_1, \ldots, r_s\), then we will write \(\Psi[c]:= \psi_{r_1 + c} \cdots \psi_{r_s+c}\) for admissible \(c \in \ZZ\). 
The following lemma will aid us in translating between Garnir relations defining \(S^{\blam}\) and those defining \(S^{\bmu}\).
\begin{lem}\label{movepast}
Assume \(r_1, \ldots, r_s\) are such that \(c_{i,j}+1 \leq r_1, \ldots, r_s \leq a-1\), and \(\Psi=\psi_{r_1} \cdots \psi_{r_s}\). Then 
\begin{align*}
\Psi  x^{\bmu\bxi} = \psi^D_{i,j} \psi^L_{i,j} \Psi[d_{i,j}] \psi^R_{i,j} x^{\blam}.
\end{align*}
\end{lem}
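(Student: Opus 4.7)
The plan is to reduce the identity to a commutation of $\Psi$ with $\psi^D_{i,j}$ modulo corrections vanishing in $N_{\bmu}$, using disjoint-range commutativities for the $\psi^L_{i,j}$ and $\psi^R_{i,j}$ factors. First I would use that $C_{\triangleright\bmu}$ is a submodule (Lemma \ref{submodules}) to write $x^{\bmu\bxi} = v^{\TTT^{\bmu\bxi}} + C_{\triangleright\bmu} = \psi^{\bmu\bxi} z^{\blam} + C_{\triangleright\bmu} = \psi^{\bmu\bxi} x^{\blam}$, and then factor $\psi^{\bmu\bxi} = \psi^D_{i,j} \psi^R_{i,j} \psi^L_{i,j}$ unambiguously by the full commutativity of $w^{\bmu\bxi}$. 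From the diagrammatic description, every simple generator $\psi_k$ appearing in $\psi^L_{i,j}$ has index $k$ strictly less than the smallest index appearing in $\psi^R_{i,j}$, forcing these two factors to commute in $R_\alpha$. The same inspection shows that $\Psi[d_{i,j}]$, whose generators have indices $\geq c_{i,j}+d_{i,j}+1$, commutes termwise with $\psi^L_{i,j}$. Together these reduce the lemma to showing
\[
\Psi \psi^D_{i,j} \psi^L_{i,j} \psi^R_{i,j} z^{\blam} \equiv \psi^D_{i,j} \Psi[d_{i,j}] \psi^L_{i,j} \psi^R_{i,j} z^{\blam} \pmod{C_{\triangleright\bmu}},
\]
which in turn follows from the commutation $\Psi \psi^D_{i,j} = \psi^D_{i,j} \Psi[d_{i,j}]$ holding modulo elements annihilating $\psi^L_{i,j} \psi^R_{i,j} z^{\blam}$ in $N_{\bmu}$.

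For this key commutation I would induct on $s$, reducing to $\Psi = \psi_r$ with $r \in [c_{i,j}+1, a-1]$. Since $w^D_{i,j}$ is the shuffle shifting the late-$\mu$ block at mid2 positions $[c_{i,j}+d_{i,j}+1, a+d_{i,j}]$ down by $d_{i,j}$ to bottom positions $[c_{i,j}+1, a]$, the identity $s_r w^D_{i,j} = w^D_{i,j} s_{r+d_{i,j}}$ holds in $\mathfrak{S}_{a+b}$, with both sides furnishing reduced expressions of length $\ell(w^D_{i,j}) + 1$. Proposition \ref{rewrite}(ii) then yields
\[
\psi_r \psi^D_{i,j} 1_{\bi^{\blam}} = \psi^D_{i,j} \psi_{r+d_{i,j}} 1_{\bi^{\blam}} + \sum_{u \triangleleft s_r w^D_{i,j}} d_u \psi_u g_u(y) 1_{\bi^{\blam}},
\]
with $\ell(u) \leq \ell(w^D_{i,j}) - 2$, $d_u \in \mathcal{O}$, and $g_u(y) \in \mathcal{O}[y_1,\ldots,y_{a+b}]$; iterating over the letters of $\Psi$ produces the analogous identity with $\Psi, \Psi[d_{i,j}]$ in place of $\psi_r, \psi_{r+d_{i,j}}$.

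The main obstacle will be showing that these correction terms vanish in $N_{\bmu}$ after right-multiplication by $\psi^L_{i,j} \psi^R_{i,j}$ and application to $z^{\blam}$. Using the KLR commutator $(y_t\psi_r - \psi_r y_{s_r(t)}) 1_{\bi} = \delta_{i_r,i_{r+1}}(\delta_{t,r+1}-\delta_{t,r}) 1_{\bi}$, each $y$-factor in $g_u(y)$ can be pushed past $\psi^L_{i,j} \psi^R_{i,j}$ onto $z^{\blam}$, where it vanishes by Definition \ref{Sdef}(ii*); the extra commutator terms produce further contributions of the form $\psi_v z^{\blam}$ with $\ell(v) < \ell(w^{\bmu\bxi})$. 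Lemma \ref{subword} then expands each such $\psi_v z^{\blam}$ as a combination of standard basis vectors $v^{\UUU}$ whose $w^{\UUU}$ is a subword of a reduced expression for $v$; since $\ell(v) < \ell(w^{\bmu\bxi})$ prevents $w^{\UUU}$ from realizing $w^{\bmu\bxi}$, no such $\UUU$ can have $\textup{sh}(\UUU_{\leq a}) = \bmu$, and Lemma \ref{ordershape} then forces $\textup{sh}(\UUU_{\leq a}) \triangleright \bmu$, placing every contribution in $C_{\triangleright\bmu}$ and completing the argument.
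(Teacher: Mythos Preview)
Your overall architecture matches the paper: factor $\psi^{\bmu\bxi}=\psi^D_{i,j}\psi^L_{i,j}\psi^R_{i,j}$, induct on $s$, use $s_r w^D_{i,j}=w^D_{i,j}s_{r+d_{i,j}}$ together with Proposition~\ref{rewrite}(ii), and kill the error terms in $N_{\bmu}$. The problem is the final paragraph, where you claim the error terms are carried by words $v$ with $\ell(v)<\ell(w^{\bmu\bxi})$. This length bound is not available once $s>2$. By the induction hypothesis the error from commuting $\psi_{r_1}$ past $\psi^D_{i,j}$ acts on $\psi^L_{i,j}\,\psi_{r_2+d_{i,j}}\cdots\psi_{r_s+d_{i,j}}\,\psi^R_{i,j}\,z^{\blam}$, not on $\psi^L_{i,j}\psi^R_{i,j}z^{\blam}$. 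After pushing the $y$'s to the right, the resulting $\psi$-words have length bounded only by
\[
\ell(u)+\epsilon_u+\ell(w^L_{i,j})+(s-1)+\ell(w^R_{i,j})\le\ell(w^{\bmu\bxi})+s-3,
\]
which is $\ge\ell(w^{\bmu\bxi})$ as soon as $s\ge 3$. So Lemma~\ref{subword} no longer forces $\ell(w^{\UUU})<\ell(w^{\bmu\bxi})$, and the minimality of $\TTT^{\bmu\bxi}$ cannot be invoked to rule out $\sh(\UUU_{\le a})=\bmu$. (Since the lemma is applied later with $\Psi$ equal to $\tau_u\psi^{\TTT^{A_{\bmu}}}$, large $s$ is exactly the case of interest.)

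The paper replaces the length count with a structural observation: the error terms come from removing at least one crossing from the $w^D_{i,j}$ block of the diagram for $w^D_{i,j}w^L_{i,j}s_{r_1+d_{i,j}}\cdots s_{r_s+d_{i,j}}w^R_{i,j}$. Because $w^D_{i,j}$ is precisely the shuffle that carries every $\bxi$-strand to a position $>a$, deleting any of its crossings leaves some $\bxi$-strand terminating in position $\le a$; hence any $\UUU$ contributing to the error has $\sh(\UUU_{\le a})\neq\bmu$, regardless of what the extra $s_{r_k+d_{i,j}}$ do (they only permute $\bmu$-strands among themselves). This is the missing idea. A smaller issue: once you know $\sh(\UUU_{\le a})\neq\bmu$, you still need $\sh(\UUU_{\le a})\trianglerighteq\bmu$ to land in $C_{\triangleright\bmu}$; the paper gets this from the fact that $C_{\bmu}$ is an $R_{\alpha,\beta}$-submodule containing $v^{\TTT^{\bmu\bxi}}$, not from Lemma~\ref{ordershape}, which would require a Bruhat comparison $\UUU\trianglelefteq\TTT^{\bmu\bxi}$ that your length inequality alone does not provide.
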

\begin{proof}
We go by induction on \(s\), the base case \(s=0\) being trivial. By assumption we have
\begin{align*}
\Psi x^{\bmu\bxi}= \psi_{r_1} \cdots \psi_{r_s}  x^{\bmu \bxi} = \psi_{r_1}\psi^D_{i.j} \psi^L_{i.j} \psi_{r_2+d_{i,j}} \cdots \psi_{r_s+d_{i,j}}\psi^R_{i,j} x^{\blam}.
\end{align*}
Write \(\bi^{\mu^{(h)}_k}\) for the residue sequence associated with the nodes in \(\mu^{(h)}_k\) in \(\TTT^{\blam}\), and similarly for \(\bi^{\xi^{(h)}_k}\). In terms of Khovanov-Lauda diagrams, with the vector \(x^{\blam}\) pictured as being at the top of the diagram, we must show that
\begin{align*}
\begin{braid}\tikzset{baseline=0mm}
\draw [color=blue!50, fill=blue!50, opacity=0.5] (1,2)--(2,2)--(2,-12)--(1,-12)--cycle;
\draw [color=red!50, fill=red!50, opacity=0.5] (3,2)--(4,2)--(4,-2)--(9,-6)--(16,-10)--(16,-12)--(15,-12)--(15,-10)--(8,-6)--(3,-2)--cycle;
\draw [color=blue!50, fill=blue!50, opacity=0.5] (5,2)--(6,2)--(6,-2)--(4,-6)--(4,-10)--(4,-12)--(3,-12)--(3,-6)--(5,-2)--cycle;
\draw [color=red!50, fill=red!50, opacity=0.5] (9,2)--(10,2)--(10,-2)--(12,-6)--(19,-10)--(19,-12)--(18,-12)--(18,-10)--(11,-6)--(9,-2)--cycle;
\draw [color=blue!50, fill=blue!50, opacity=0.5] (11,2)--(12,2)--(12,-2)--(7,-6)--(7,-10)--(7,-12)--(6,-12)--(6,-10)--(6,-6)--(11,-2)--cycle;
\draw [color=red!50, fill=red!50, opacity=0.5] (13,2)--(14,2)--(14,-6)--(21,-10)--(21,-12)--(20,-12)--(20,-10)--(13,-6)--(13,-2)--cycle;
\draw [color=blue!50, fill=blue!50, opacity=0.5] (15,2)--(16,2)--(16,-6)--(9,-10)--(9,-12)--(8,-12)--(8,-10)--(15,-6)--(15,-2)--cycle;
\draw [color=red!50, fill=red!50, opacity=0.5] (17,2)--(18,2)--(23,-2)--(23,-6)--(23,-10)--(23,-12)--(22,-12)--(22,-10)--(22,-6)--(22,-2)--cycle;
\draw [color=blue!50, fill=blue!50, opacity=0.5] (19,2)--(20,2)--(18,-2)--(18,-6)--(11,-10)--(11,-12)--(10,-12)--(10,-10)--(17,-6)--(17,-2)--cycle;
\draw [color=red!50, fill=red!50, opacity=0.5] (23,2)--(24,2)--(26,-2)--(26,-6)--(26,-10)--(26,-12)--(25,-12)--(25,-10)--(25,-6)--(25,-2)--cycle;
\draw [color=blue!50, fill=blue!50, opacity=0.5] (25,2)--(26,2)--(21,-2)--(21,-6)--(14,-10)--(14,-12)--(13,-12)--(13,-10)--(20,-6)--(20,-2)--cycle;
\draw [color=red!50, fill=red!50, opacity=0.5] (27,2)--(28,2)--(28,-12)--(27,-12)--cycle;
\draw(1,2)--(1,-2)--(1,-6)--(1,-10)--(1,-12);
\draw(1.5,2) node[above]{$\bi^{\mu^{(1)}_1}$};
\draw(2,2)--(2,-2)--(2,-6)--(2,-10)--(2,-12);
\draw(3,2)--(3,-2)--(8,-6)--(15,-10)--(15,-12);
\draw(3.5,2) node[above]{$\bi^{\xi^{(1)}_1}$};
\draw(4,2)--(4,-2)--(9,-6)--(16,-10)--(16,-12);
\draw(5,2)--(5,-2)--(3,-6)--(3,-10)--(3,-12);
\draw(5.5,2) node[above]{$\bi^{\mu^{(1)}_2}$};
\draw(6,2)--(6,-2)--(4,-6)--(4,-10)--(4,-12);
\draw(7.5,2) node[above]{$\cdots$};
\draw(9,2)--(9,-2)--(11,-6)--(18,-10)--(18,-12);
\draw(9.5,2) node[above]{$\bi^{\xi^{(i)}_{j\hspace{-0.5mm}-\hspace{-0.5mm}2}}$};
\draw(10,2)--(10,-2)--(12,-6)--(19,-10)--(19,-12);
\draw(11,2)--(11,-2)--(6,-6)--(6,-10)--(6,-12);
\draw(11.5,2) node[above]{$\bi^{\mu^{(i)}_{j\hspace{-0.5mm}-\hspace{-0.5mm}1}}$};
\draw(12,2)--(12,-2)--(7,-6)--(7,-10)--(7,-12);
\draw(13,2)--(13,-2)--(13,-6)--(20,-10)--(20,-12);
\draw(13.5,2) node[above]{$\bi^{\xi^{(i)}_{j\hspace{-0.5mm}-\hspace{-0.5mm}1}}$};
\draw(14,2)--(14,-2)--(14,-6)--(21,-10)--(21,-12);
\draw(15,2)--(15,-2)--(15,-6)--(8,-10)--(8,-12);
\draw(15.5,2) node[above]{$\bi^{\mu^{(i)}_{j}}$};
\draw(16,2)--(16,-2)--(16,-6)--(9,-10)--(9,-12);
\draw(17,2)--(22,-2)--(22,-6)--(22,-10)--(22,-12);
\draw(17.5,2) node[above]{$\bi^{\xi^{(i)}_{j}}$};
\draw(18,2)--(23,-2)--(23,-6)--(23,-10)--(23,-12);
\draw(19,2)--(17,-2)--(17,-6)--(10,-10)--(10,-12);
\draw(19.5,2) node[above]{$\bi^{\mu^{(i)}_{j\hspace{-0.5mm}+\hspace{-0.5mm}1}}$};
\draw(20,2)--(18,-2)--(18,-6)--(11,-10)--(11,-12);
\draw(21.5,2) node[above]{$\cdots$};
\draw(23,2)--(25,-2)--(25,-6)--(25,-10)--(25,-12);
\draw(23.5,2) node[above]{$\bi^{\xi^{(l)}_{n\hspace{-0.5mm}-\hspace{-0.5mm}1}}$};
\draw(24,2)--(26,-2)--(26,-6)--(26,-10)--(26,-12);
\draw(25,2)--(20,-2)--(20,-6)--(13,-10)--(13,-12);
\draw(25.5,2) node[above]{$\bi^{\mu^{(l)}_{n}}$};
\draw(26,2)--(21,-2)--(21,-6)--(14,-10)--(14,-12);
\draw(27,2)--(27,-2)--(27,-6)--(27,-10)--(27,-12);
\draw(27.5,2) node[above]{$\bi^{\xi^{(l)}_{n}}$};
\draw(28,2)--(28,-2)--(28,-6)--(28,-10)--(28,-12);
\draw[color=white, opacity=1] (0,-2)--(28.5,-2);
\draw[color=white, opacity=1] (0,-6)--(28.5,-6);
\draw[color=white, opacity=1] (0,-10)--(28.5,-10);
\draw(7.5,-2) node{$\cdots$};
\draw(5,-6) node{$\cdots$};
\draw(5,-10) node{$\cdots$};
\draw(10.2,-6) node{$\cdots$};
\draw(19,-2) node{$\cdots$};
\draw(18.8,-6) node{$\cdots$};
\draw(24,-2) node{$\cdots$};
\draw(24,-6) node{$\cdots$};
\draw(12.2,-10) node{$\cdots$};
\draw(5,-12) node{$\cdots$};
\draw(17,-10) node{$\cdots$};
\draw(24,-10) node{$\cdots$};
\draw(12.2,-12) node{$\cdots$};
\draw(17,-12) node{$\cdots$};
\draw(24,-12) node{$\cdots$};
\draw(0.5,0) node[left]{$\psi^R_{i,j}$};
\draw(0.5,-4) node[left]{$\psi^L_{i,j}\psi_{r_2+d_{i,j}}\hspace{-1mm} \cdots \psi_{r_s+d_{i,j}}$};
\draw(0.5,-8) node[left]{$\psi^D_{i,j}$};
\draw(0.5,-11) node[left]{$\psi_{r_1}$};
\draw [rounded corners, color=green, fill=white!50] (7.5,-10.5)--(14.5,-10.5)--(14.5,-11.5)--(7.5,-11.5)--cycle;
\draw [rounded corners, color=green, fill=white!50] (14.3,-3.3)--(21.7,-3.3)--(21.7,-4.7)--(14.3,-4.7)--cycle;
\draw(18,-4) node{$\psi_{r_2+d_{i,j}}\hspace{-1mm} \cdots \psi_{r_s+d_{i,j}}$};
\draw(11,-11) node{$\psi_{r_1} $};
\end{braid}
\end{align*}
is equal to 
\begin{align*}
\begin{braid}\tikzset{baseline=0mm}
\draw [color=blue!50, fill=blue!50, opacity=0.5] (1,2)--(2,2)--(2,-12)--(1,-12)--cycle;
\draw [color=red!50, fill=red!50, opacity=0.5] (3,2)--(4,2)--(4,-2)--(9,-6)--(16,-10)--(16,-12)--(15,-12)--(15,-10)--(8,-6)--(3,-2)--cycle;
\draw [color=blue!50, fill=blue!50, opacity=0.5] (5,2)--(6,2)--(6,-2)--(4,-6)--(4,-10)--(4,-12)--(3,-12)--(3,-6)--(5,-2)--cycle;
\draw [color=red!50, fill=red!50, opacity=0.5] (9,2)--(10,2)--(10,-2)--(12,-6)--(19,-10)--(19,-12)--(18,-12)--(18,-10)--(11,-6)--(9,-2)--cycle;
\draw [color=blue!50, fill=blue!50, opacity=0.5] (11,2)--(12,2)--(12,-2)--(7,-6)--(7,-10)--(7,-12)--(6,-12)--(6,-10)--(6,-6)--(11,-2)--cycle;
\draw [color=red!50, fill=red!50, opacity=0.5] (13,2)--(14,2)--(14,-6)--(21,-10)--(21,-12)--(20,-12)--(20,-10)--(13,-6)--(13,-2)--cycle;
\draw [color=blue!50, fill=blue!50, opacity=0.5] (15,2)--(16,2)--(16,-6)--(9,-10)--(9,-12)--(8,-12)--(8,-10)--(15,-6)--(15,-2)--cycle;
\draw [color=red!50, fill=red!50, opacity=0.5] (17,2)--(18,2)--(23,-2)--(23,-6)--(23,-10)--(23,-12)--(22,-12)--(22,-10)--(22,-6)--(22,-2)--cycle;
\draw [color=blue!50, fill=blue!50, opacity=0.5] (19,2)--(20,2)--(18,-2)--(18,-6)--(11,-10)--(11,-12)--(10,-12)--(10,-10)--(17,-6)--(17,-2)--cycle;
\draw [color=red!50, fill=red!50, opacity=0.5] (23,2)--(24,2)--(26,-2)--(26,-6)--(26,-10)--(26,-12)--(25,-12)--(25,-10)--(25,-6)--(25,-2)--cycle;
\draw [color=blue!50, fill=blue!50, opacity=0.5] (25,2)--(26,2)--(21,-2)--(21,-6)--(14,-10)--(14,-12)--(13,-12)--(13,-10)--(20,-6)--(20,-2)--cycle;
\draw [color=red!50, fill=red!50, opacity=0.5] (27,2)--(28,2)--(28,-12)--(27,-12)--cycle;
\draw(1,2)--(1,-2)--(1,-6)--(1,-10)--(1,-12);
\draw(1.5,2) node[above]{$\bi^{\mu^{(1)}_1}$};
\draw(2,2)--(2,-2)--(2,-6)--(2,-10)--(2,-12);
\draw(3,2)--(3,-2)--(8,-6)--(15,-10)--(15,-12);
\draw(3.5,2) node[above]{$\bi^{\xi^{(1)}_1}$};
\draw(4,2)--(4,-2)--(9,-6)--(16,-10)--(16,-12);
\draw(5,2)--(5,-2)--(3,-6)--(3,-10)--(3,-12);
\draw(5.5,2) node[above]{$\bi^{\mu^{(1)}_2}$};
\draw(6,2)--(6,-2)--(4,-6)--(4,-10)--(4,-12);
\draw(7.5,2) node[above]{$\cdots$};
\draw(9,2)--(9,-2)--(11,-6)--(18,-10)--(18,-12);
\draw(9.5,2) node[above]{$\bi^{\xi^{(i)}_{j\hspace{-0.5mm}-\hspace{-0.5mm}2}}$};
\draw(10,2)--(10,-2)--(12,-6)--(19,-10)--(19,-12);
\draw(11,2)--(11,-2)--(6,-6)--(6,-10)--(6,-12);
\draw(11.5,2) node[above]{$\bi^{\mu^{(i)}_{j\hspace{-0.5mm}-\hspace{-0.5mm}1}}$};
\draw(12,2)--(12,-2)--(7,-6)--(7,-10)--(7,-12);
\draw(13,2)--(13,-2)--(13,-6)--(20,-10)--(20,-12);
\draw(13.5,2) node[above]{$\bi^{\xi^{(i)}_{j\hspace{-0.5mm}-\hspace{-0.5mm}1}}$};
\draw(14,2)--(14,-2)--(14,-6)--(21,-10)--(21,-12);
\draw(15,2)--(15,-2)--(15,-6)--(8,-10)--(8,-12);
\draw(15.5,2) node[above]{$\bi^{\mu^{(i)}_{j}}$};
\draw(16,2)--(16,-2)--(16,-6)--(9,-10)--(9,-12);
\draw(17,2)--(22,-2)--(22,-6)--(22,-10)--(22,-12);
\draw(17.5,2) node[above]{$\bi^{\xi^{(i)}_{j}}$};
\draw(18,2)--(23,-2)--(23,-6)--(23,-10)--(23,-12);
\draw(19,2)--(17,-2)--(17,-6)--(10,-10)--(10,-12);
\draw(19.5,2) node[above]{$\bi^{\mu^{(i)}_{j\hspace{-0.5mm}+\hspace{-0.5mm}1}}$};
\draw(20,2)--(18,-2)--(18,-6)--(11,-10)--(11,-12);
\draw(21.5,2) node[above]{$\cdots$};
\draw(23,2)--(25,-2)--(25,-6)--(25,-10)--(25,-12);
\draw(23.5,2) node[above]{$\bi^{\xi^{(l)}_{n\hspace{-0.5mm}-\hspace{-0.5mm}1}}$};
\draw(24,2)--(26,-2)--(26,-6)--(26,-10)--(26,-12);
\draw(25,2)--(20,-2)--(20,-6)--(13,-10)--(13,-12);
\draw(25.5,2) node[above]{$\bi^{\mu^{(l)}_{n}}$};
\draw(26,2)--(21,-2)--(21,-6)--(14,-10)--(14,-12);
\draw(27,2)--(27,-2)--(27,-6)--(27,-10)--(27,-12);
\draw(27.5,2) node[above]{$\bi^{\xi^{(l)}_{n}}$};
\draw(28,2)--(28,-2)--(28,-6)--(28,-10)--(28,-12);
\draw[color=white, opacity=1] (0,-2)--(28.5,-2);
\draw[color=white, opacity=1] (0,-6)--(28.5,-6);
\draw[color=white, opacity=1] (0,-10)--(28.5,-10);
\draw(7.5,-2) node{$\cdots$};
\draw(5,-6) node{$\cdots$};
\draw(5,-10) node{$\cdots$};
\draw(10.2,-6) node{$\cdots$};
\draw(19,-2) node{$\cdots$};
\draw(18.8,-6) node{$\cdots$};
\draw(24,-2) node{$\cdots$};
\draw(24,-6) node{$\cdots$};
\draw(12.2,-10) node{$\cdots$};
\draw(5,-12) node{$\cdots$};
\draw(17,-10) node{$\cdots$};
\draw(24,-10) node{$\cdots$};
\draw(12.2,-12) node{$\cdots$};
\draw(17,-12) node{$\cdots$};
\draw(24,-12) node{$\cdots$};
\draw(0.5,0) node[left]{$\psi^R_{i,j}$};
\draw(0.5,-4) node[left]{$\psi^L_{i,j}\psi_{r_1+d_{i,j}}\hspace{-1mm} \cdots \psi_{r_s+d_{i,j}}$};
\draw(0.5,-8) node[left]{$\psi^D_{i,j}$};
\draw [rounded corners, color=green, fill=white!50] (14.3,-3.3)--(21.7,-3.3)--(21.7,-4.7)--(14.3,-4.7)--cycle;
\draw(18,-4) node{$\psi_{r_1+d_{i,j}}\hspace{-1mm} \cdots \psi_{r_s+d_{i,j}}$};
\end{braid}
\end{align*}
Let \(\bj = w_{i,j}^L w_{r_2+d_{i,j}} \cdots w_{r_s + d_{i,j}} w_{i,j}^R \bi^{\blam}\). Since \(s_{r_1}w^D_{i,j} = w_{i,j}^Ds_{r_1 + d_{i,j}}\) and \(\ell(s_{r_1}w^D_{i,j}) = \ell(w^D_{i,j}) + \ell(s_{r_1})\), it follows from Lemma \ref{rewrite} that
\begin{align}\label{psiD}
\psi_{r_1} \psi^D_{i,j} 1_{\bj} = \psi^D_{i,j} \psi_{r_1 + d_{i,j}}1_{\bj} + \sum_{u \triangleleft w_{i,j}^D} c_u \psi_u \psi_{r_1 + d_{i,j}}^{\epsilon_u} f_u(y)1_{\bj}
\end{align}
for some constants \(c_u\in \mathcal{O}\), polynomials \(f_u(y_1, \ldots, y_{b+d})\), and \(\epsilon_u \in \{0,1\}\). Thus it remains to show that 
\begin{align*}
\psi_u \psi_{r_1 + d_{i,j}}^{\epsilon_u} f_u(y) \psi_{r_2 + d_{i,j}} \cdots \psi_{r_s+ d_{i,j}} \psi^R_{i,j}\psi^L_{i,j} x^{\blam}=0 \in S^{\blam}/C_{\triangleright \bmu}
\end{align*}
for all \(u\) in the sum in (\ref{psiD}). Let \(s_{t_1^R} \cdots s_{t_{N_R}^R}\) be the preferred reduced expression for \(w_{i,j}^R\), and similarly for \(w_{i,j}^L\). Pushing the \(y\)'s to the right to act (as zero) on \(x^{\blam}\), this is by lemma \ref{rewrite} an \(\mathcal{O}\)-linear combination of terms of the form 
\begin{align}\label{lotsofpsis}
\psi_u \psi^{\epsilon_u}_{r_1 + d_{i,j}} \psi_{r_2 + d_{i,j}}^{\epsilon_2} \cdots \psi_{r_s + d_{i,j}}^{\epsilon_s} \psi_{t_1^R}^{\epsilon_{s+1}} \cdots \psi_{t_{N_R}^R}^{\epsilon_{s+N_R}} \psi_{t_1^L}^{\epsilon_{s+N_R + 1}} \cdots \psi_{t_{N^L}^L}^{\epsilon_s + N_R + N_L} x^{\blam}
\end{align}
for some \(\epsilon_i \in \{0,1\}\). Write \(\Theta\) for the sequence of \(\psi\)'s in (\ref{lotsofpsis}). Assume \(v^\UUU\) appears with nonzero coefficient when \(\Theta v^{\blam}\) is expanded in terms of basis elements. Then it follows from Lemma \ref{subword} that one can write \(w^\UUU\) diagrammatically by removing crossings from the diagram
\begin{align*}
\begin{braid}\tikzset{baseline=0mm}
\draw [color=blue!50, fill=blue!50, opacity=0.5] (1,2)--(2,2)--(2,-12)--(1,-12)--cycle;
\draw [color=red!50, fill=red!50, opacity=0.5] (3,2)--(4,2)--(4,-2)--(9,-6)--(16,-10)--(16,-12)--(15,-12)--(15,-10)--(8,-6)--(3,-2)--cycle;
\draw [color=blue!50, fill=blue!50, opacity=0.5] (5,2)--(6,2)--(6,-2)--(4,-6)--(4,-10)--(4,-12)--(3,-12)--(3,-6)--(5,-2)--cycle;
\draw [color=red!50, fill=red!50, opacity=0.5] (9,2)--(10,2)--(10,-2)--(12,-6)--(19,-10)--(19,-12)--(18,-12)--(18,-10)--(11,-6)--(9,-2)--cycle;
\draw [color=blue!50, fill=blue!50, opacity=0.5] (11,2)--(12,2)--(12,-2)--(7,-6)--(7,-10)--(7,-12)--(6,-12)--(6,-10)--(6,-6)--(11,-2)--cycle;
\draw [color=red!50, fill=red!50, opacity=0.5] (13,2)--(14,2)--(14,-6)--(21,-10)--(21,-12)--(20,-12)--(20,-10)--(13,-6)--(13,-2)--cycle;
\draw [color=blue!50, fill=blue!50, opacity=0.5] (15,2)--(16,2)--(16,-6)--(9,-10)--(9,-12)--(8,-12)--(8,-10)--(15,-6)--(15,-2)--cycle;
\draw [color=red!50, fill=red!50, opacity=0.5] (17,2)--(18,2)--(23,-2)--(23,-6)--(23,-10)--(23,-12)--(22,-12)--(22,-10)--(22,-6)--(22,-2)--cycle;
\draw [color=blue!50, fill=blue!50, opacity=0.5] (19,2)--(20,2)--(18,-2)--(18,-6)--(11,-10)--(11,-12)--(10,-12)--(10,-10)--(17,-6)--(17,-2)--cycle;
\draw [color=red!50, fill=red!50, opacity=0.5] (23,2)--(24,2)--(26,-2)--(26,-6)--(26,-10)--(26,-12)--(25,-12)--(25,-10)--(25,-6)--(25,-2)--cycle;
\draw [color=blue!50, fill=blue!50, opacity=0.5] (25,2)--(26,2)--(21,-2)--(21,-6)--(14,-10)--(14,-12)--(13,-12)--(13,-10)--(20,-6)--(20,-2)--cycle;
\draw [color=red!50, fill=red!50, opacity=0.5] (27,2)--(28,2)--(28,-12)--(27,-12)--cycle;
\draw(1,2)--(1,-2)--(1,-6)--(1,-10)--(1,-12);
\draw(1.5,2) node[above]{$\mu^{(1)}_1$};
\draw(2,2)--(2,-2)--(2,-6)--(2,-10)--(2,-12);
\draw(3,2)--(3,-2)--(8,-6)--(15,-10)--(15,-12);
\draw(3.5,2) node[above]{$\xi^{(1)}_1$};
\draw(4,2)--(4,-2)--(9,-6)--(16,-10)--(16,-12);
\draw(5,2)--(5,-2)--(3,-6)--(3,-10)--(3,-12);
\draw(5.5,2) node[above]{$\mu^{(1)}_2$};
\draw(6,2)--(6,-2)--(4,-6)--(4,-10)--(4,-12);
\draw(7.5,2) node[above]{$\cdots$};
\draw(9,2)--(9,-2)--(11,-6)--(18,-10)--(18,-12);
\draw(9.5,2) node[above]{$\xi^{(i)}_{j\hspace{-0.5mm}-\hspace{-0.5mm}2}$};
\draw(10,2)--(10,-2)--(12,-6)--(19,-10)--(19,-12);
\draw(11,2)--(11,-2)--(6,-6)--(6,-10)--(6,-12);
\draw(11.5,2) node[above]{$\mu^{(i)}_{j\hspace{-0.5mm}-\hspace{-0.5mm}1}$};
\draw(12,2)--(12,-2)--(7,-6)--(7,-10)--(7,-12);
\draw(13,2)--(13,-2)--(13,-6)--(20,-10)--(20,-12);
\draw(13.5,2) node[above]{$\xi^{(i)}_{j\hspace{-0.5mm}-\hspace{-0.5mm}1}$};
\draw(14,2)--(14,-2)--(14,-6)--(21,-10)--(21,-12);
\draw(15,2)--(15,-2)--(15,-6)--(8,-10)--(8,-12);
\draw(15.5,2) node[above]{$\mu^{(i)}_{j}$};
\draw(16,2)--(16,-2)--(16,-6)--(9,-10)--(9,-12);
\draw(17,2)--(22,-2)--(22,-6)--(22,-10)--(22,-12);
\draw(17.5,2) node[above]{$\xi^{(i)}_{j}$};
\draw(18,2)--(23,-2)--(23,-6)--(23,-10)--(23,-12);
\draw(19,2)--(17,-2)--(17,-6)--(10,-10)--(10,-12);
\draw(19.5,2) node[above]{$\mu^{(i)}_{j\hspace{-0.5mm}+\hspace{-0.5mm}1}$};
\draw(20,2)--(18,-2)--(18,-6)--(11,-10)--(11,-12);
\draw(21.5,2) node[above]{$\cdots$};
\draw(23,2)--(25,-2)--(25,-6)--(25,-10)--(25,-12);
\draw(23.5,2) node[above]{$\xi^{(l)}_{n\hspace{-0.5mm}-\hspace{-0.5mm}1}$};
\draw(24,2)--(26,-2)--(26,-6)--(26,-10)--(26,-12);
\draw(25,2)--(20,-2)--(20,-6)--(13,-10)--(13,-12);
\draw(25.5,2) node[above]{$\mu^{(l)}_{n}$};
\draw(26,2)--(21,-2)--(21,-6)--(14,-10)--(14,-12);
\draw(27,2)--(27,-2)--(27,-6)--(27,-10)--(27,-12);
\draw(27.5,2) node[above]{$\xi^{(l)}_{n}$};
\draw(28,2)--(28,-2)--(28,-6)--(28,-10)--(28,-12);
\draw[color=white, opacity=1] (0,-2)--(28.5,-2);
\draw[color=white, opacity=1] (0,-6)--(28.5,-6);
\draw[color=white, opacity=1] (0,-10)--(28.5,-10);
\draw(7.5,-2) node{$\cdots$};
\draw(5,-6) node{$\cdots$};
\draw(5,-10) node{$\cdots$};
\draw(10.2,-6) node{$\cdots$};
\draw(19,-2) node{$\cdots$};
\draw(18.8,-6) node{$\cdots$};
\draw(24,-2) node{$\cdots$};
\draw(24,-6) node{$\cdots$};
\draw(12.2,-10) node{$\cdots$};
\draw(5,-12) node{$\cdots$};
\draw(17,-10) node{$\cdots$};
\draw(24,-10) node{$\cdots$};
\draw(12.2,-12) node{$\cdots$};
\draw(17,-12) node{$\cdots$};
\draw(24,-12) node{$\cdots$};
\draw(0.5,0) node[left]{$w^R_{i,j}$};
\draw(0.5,-4) node[left]{$w^L_{i,j}s_{r_1+d_{i,j}}\hspace{-1mm} \cdots s_{r_s+d_{i,j}}$};
\draw(0.5,-8) node[left]{$\psi^D_{i,j}$};
\draw [rounded corners, color=green, fill=white!50] (14.3,-3.3)--(21.7,-3.3)--(21.7,-4.7)--(14.3,-4.7)--cycle;
\draw(18,-4) node{$s_{r_1+d_{i,j}}\hspace{-1mm} \cdots s_{r_s+d_{i,j}}$};
\end{braid}
\end{align*}
and in particular, removing at least one crossing from \(w_{i,j}^D\), the third row of the diagram, since \(u \triangleleft w_{i,j}^D\). But in any case, this implies that there is a pink strand that ends to the left of a blue strand, i.e., some \(t\leq a\) such that \((w^\UUU)^{-1}(t)\) is in \(\xi^{(h)}_k\) for some \(h,k\). Then \(\sh(\UUU_{\leq a}) \neq \bmu\). But since \(N_{\bmu}\) is an \(R_{\alpha,\beta}\)-submodule, we must have \(\UUU \in B_{\bmu}\). This implies that \(\UUU \in B_{\triangleright\bmu}\), and hence \( x^\UUU=0 \in S^\lambda/C_{\bmu}\). 
\end{proof}
Let \(A_{\bmu}\) be a Garnir node in \(\bmu\). This is also a Garnir node of \(\blam\), and when we consider it as such, we will label it with \(A_{\blam}\). Let \({\bf B}^{A_{\blam}}\) be the Garnir belt associated with \(A_{\blam}\), and let \({\bf B}^{A_{\bmu}}\) be the Garnir belt of nodes in \(\bmu\). Assume \(A_{\blam}\) is in row \(j\) of the \(i\)th component of \(\blam\). We subdivide the sets of nodes of \(\mu^{(i)}_{j}\), \(\mu^{(i)}_{j+1}\) and \(\xi^{(i)}_{j}\) in the following fashion:
\begin{enumerate}
\item We subdivide \(\mu^{(i)}_j\) into three sets:
\begin{enumerate}
\item Let \(\mu^{A,1}\) be the nodes of \(\mu^{(i)}_j\) not contained in \({\bf B}^{A_{\bmu}}\).
\item Let \(\mu^{A,2}\) be the nodes of \(\mu^{(i)}_j\) contained in bricks in \({\bf B}^{A_{\bmu}}\).
\item Let \(\mu^{A,3}\) be the nodes of \(\mu^{(i)}_j\) contained in \({\bf B}^{A_{\bmu}}\), but not contained in any brick.
\end{enumerate}
\item We subdivide \(\xi^{(i)}_j\) into three sets:
\begin{enumerate}
\item Let \(\xi^{A,1}\) be the nodes of \(\xi^{(i)}_j\) contained in a brick in \({\bf B}^{A_{\blam}}\) which contains nodes of \(\bmu\).
\item Let \(\xi^{A,2}\) be the nodes of \(\xi^{(i)}_j\) contained in a brick in \({\bf B}^{A_{\blam}}\) which is entirely contained in \(\bxi\).
\item Let \(\xi^{A,3}\) be the nodes of \(\xi^{(i)}_j\) contained in \({\bf B}^{A_{\blam}}\), but not contained in any brick.
\end{enumerate}
\item We subdivide \(\mu^{(i)}_{j+1}\) into three sets:
\begin{enumerate}
\item Let \(\mu_{A,1}\) be the nodes of \(\mu^{(i)}_{j+1}\) contained in \({\bf B}^{A_{\bmu}}\) but not contained in any brick.
\item Let \(\mu_{A,2}\) be the nodes of \(\mu^{(i)}_{j+1}\) contained in bricks in \({\bf B}^{A_{\bmu}}\).
\item Let \(\mu_{A,3}\) be the nodes of \(\mu^{(i)}_{j+1}\) not contained in \({\bf B}^{A_{\bmu}}\).
\end{enumerate}
\end{enumerate}
For example, if \(l=1\), \(\mu=(14,10)\), \(\lambda=(23,18)\), \(A=(1,8,1)\) and \(e=3\), then the subdivisions are as follows:
\begin{align*}
\begin{tikzpicture}[scale=0.52]
\draw [color=gray!50, fill=gray!50, opacity=0.4] (1,0)--(17,0)--(17,-1)--(1,-1)--cycle;
\draw [color=gray!50, fill=gray!50, opacity=0.4] (-6,-1)--(2,-1)--(2,-2)--(-6,-2)--cycle;
\draw [color=gray!50, fill=gray!50, opacity=0.6] (1,0)--(16,0)--(16,-1)--(1,-1)--cycle;
\draw [color=gray!50, fill=gray!50, opacity=0.6] (-4,-1)--(2,-1)--(2,-2)--(-4,-2)--cycle;
\draw[color=black!40] (-5,0)--(-5,-2);
\draw[color=black!40] (-4,0)--(-4,-2);
\draw[color=black!40] (-3,0)--(-3,-2);
\draw[color=black!40] (-2,0)--(-2,-2);
\draw[color=black!40] (-1,0)--(-1,-2);
\draw[color=black!40] (0,0)--(0,-2);
\draw[color=black!40] (1,0)--(1,-2);
\draw[color=black!40] (2,0)--(2,-2);
\draw[color=black!40] (3,0)--(3,-2);
\draw[color=black!40] (4,0)--(4,-2);
\draw[color=black!40] (5,0)--(5,-2);
\draw[color=black!40] (6,0)--(6,-2);
\draw[color=black!40] (7,0)--(7,-2);
\draw[color=black!40] (8,0)--(8,-2);
\draw[color=black!40] (9,0)--(9,-2);
\draw[color=black!40] (10,0)--(10,-2);
\draw[color=black!40] (11,0)--(11,-2);
\draw[color=black!40] (12,0)--(12,-1);
\draw[color=black!40] (13,0)--(13,-1);
\draw[color=black!40] (14,0)--(14,-1);
\draw[color=black!40] (15,0)--(15,-1);
\draw [color=white, opacity=0.5] (10,0)--(10,-1);
\draw [color=white, opacity=0.5] (13,0)--(13,-1);
\draw [color=white, opacity=0.5] (-1,-1)--(-1,-2);
\draw[line width=0.6mm](-6,0)--(17,0)--(17,-1)--(-6,-1)--cycle;
\draw[line width=0.6mm](-6,-1)--(12,-1)--(12,-2)--(-6,-2)--cycle;
\draw[line width=0.6mm](8,0)--(8,-1);
\draw[line width=0.6mm](10,0)--(10,-1);
\draw[line width=0.6mm](7,0)--(7,-1);
\draw[line width=0.6mm](16,0)--(16,-1);
\draw[line width=0.6mm](2,-1)--(2,-2);
\draw[line width=0.6mm](4,-1)--(4,-2);
\draw[line width=0.6mm](-4,-1)--(-4,-2);
\draw[line width=0.6mm](1,0)--(1,-1);
\draw(-2.5,0) node[below]{$\scriptstyle \mu^{A,1}$};
\draw(4,0) node[below]{$\scriptstyle \mu^{A,2}$};
\draw(7.5,0) node[below]{$\scriptstyle \mu^{\hspace{-0.8mm}A\hspace{-0.2mm},\hspace{-0.3mm}3}$};
\draw(9,0) node[below]{$\scriptstyle \xi^{A,1}$};
\draw(13,0) node[below]{$\scriptstyle \xi^{A,2}$};
\draw(16.5,0) node[below]{$\scriptstyle \xi^{\hspace{-0.4mm}A\hspace{-0.2mm},\hspace{-0.3mm}3}$};
\draw(-5,-1.1) node[below]{$\scriptstyle \mu_{A,1}$};
\draw(-1,-1.1) node[below]{$\scriptstyle \mu_{A,2}$};
\draw(3,-1.1) node[below]{$\scriptstyle \mu_{A,3}$};
\draw(8,-0.9) node[below]{$\scriptstyle \xi_{2}^{(1)}$};
\draw [color=white] (1.5,-0.1) node[below]{$\scriptstyle {\mathbf A}$};
\end{tikzpicture}
\end{align*}
Now write \(w^R_{i,j}=w^{R'}_{i,j} w^{R''}_{i,j}\), where \(w^{R'}_{i,j}\), \(w^{R''}_{i,j}\) are given as follows:
\begin{align*}
w^R_{i,j} = \begin{cases}
\begin{braid}\tikzset{baseline=-7mm}
\draw [color=blue!50, fill=blue!50, opacity=0.5] (1,2)--(1,-2)--(1,-6)--(2,-6)--(2,2)--cycle;
\draw [color=red!50, fill=red!50, opacity=0.5] (3,2)--(3,-2)--(3,-6)--(4,-6)--(4,2)--cycle;
\draw [color=blue!50, fill=blue!50, opacity=0.5] (5,2)--(5,-2)--(5,-6)--(6,-6)--(6,2)--cycle;\draw [color=blue!50, fill=blue!50, opacity=0.5] (8,2)--(8,-2)--(8,-6)--(9,-6)--(9,2)--cycle;
\draw [color=red!50, fill=red!50, opacity=0.5] (10,2)--(14,-2)--(20.5,-6)--(21.5,-6)--(15,-2)--(11,2)--cycle;
\draw [color=red!50, fill=red!50, opacity=0.5] (12,2)--(16,-2)--(22.5,-6)--(23.5,-6)--(17,-2)--(13,2)--cycle;
\draw [color=red!50, fill=red!50, opacity=0.5] (14,2)--(18,-2)--(24.5,-6)--(25.5,-6)--(19,-2)--(15,2)--cycle;
\draw [color=blue!50, fill=blue!50, opacity=0.5] (16,2)--(10,-2)--(10,-6)--(11,-6)--(11,-2)--(17,2)--cycle;
\draw [color=blue!50, fill=blue!50, opacity=0.5] (18,2)--(12,-2)--(12,-6)--(13,-6)--(13,-2)--(19,2)--cycle;
\draw [color=blue!50, fill=blue!50, opacity=0.5] (20,2)--(20,-2)--(14,-6)--(15,-6)--(21,-2)--(21,2)--cycle;
\draw [color=red!50, fill=red!50, opacity=0.5] (22,2)--(22,-2)--(26.5,-6)--(27.5,-6)--(23,-2)--(23,2)--cycle;
\draw [color=blue!50, fill=blue!50, opacity=0.5] (24,2)--(24,-2)--(16,-6)--(17,-6)--(25,-2)--(25,2)--cycle;
\draw [color=red!50, fill=red!50, opacity=0.5] (27,2)--(27,-2)--(28.7,-6)--(29.7,-6)--(28,-2)--(28,2)--cycle;
\draw [color=blue!50, fill=blue!50, opacity=0.5] (29,2)--(29,-2)--(18.5,-6)--(19.5,-6)--(30,-2)--(30,2)--cycle;
\draw [color=red!50, fill=red!50, opacity=0.5] (31,2)--(31,-6)--(32,-6)--(32,2)--cycle;
\draw(1,2)--(1,-2)--(1,-6);
\draw(1.5,2) node[above]{$\mu_1^{(1)}$};
\draw(2,2)--(2,-2)--(2,-6);
\draw(3,2)--(3,-6);
\draw(3.5,2) node[above]{$\xi_1^{(1)}$};
\draw(4,2)--(4,-6);
\draw(5,2)--(5,-6);
\draw(5.5,2) node[above]{$\mu_2^{(1)}$};
\draw(6,2)--(6,-6);
\draw(7,2) node[above]{$\cdots$};
\draw(8,2)--(8,-6);
\draw(8.5,2) node[above]{$\mu^{(i)}_j$};
\draw(9,2)--(9,-6);
\draw(10,2)--(14,-2)--(20.5,-6);
\draw(10.5,2) node[above]{$\xi^{A,1}$};
\draw(11,2)--(15,-2)--(21.5,-6);
\draw(12,2)--(16,-2)--(22.5,-6);
\draw(12.5,2) node[above]{$\xi^{A,2}$};
\draw(13,2)--(17,-2)--(23.5,-6);
\draw(14,2)--(18,-2)--(24.5,-6);
\draw(14.5,2) node[above]{$\xi^{A,3}$};
\draw(15,2)--(19,-2)--(25.5,-6);
\draw(16,2)--(10,-2)--(10,-6);
\draw(16.5,2) node[above]{$\mu_{A,1}$};
\draw(17,2)--(11,-2)--(11,-6);
\draw(18,2)--(12,-2)--(12,-6);
\draw(18.5,2) node[above]{$\mu_{A,2}$};
\draw(19,2)--(13,-2)--(13,-6);
\draw(20,2)--(20,-2)--(14,-6);
\draw(20.5,2) node[above]{$\mu_{A,3}$};
\draw(21,2)--(21,-2)--(15,-6);
\draw(22,2)--(22,-2)--(26.5,-6);
\draw(22.5,2) node[above]{$\xi^{(i)}_{j+1}$};
\draw(23,2)--(23,-2)--(27.5,-6);
\draw(24,2)--(24,-2)--(16,-6);
\draw(24.5,2) node[above]{$\mu^{(i)}_{j+2}$};
\draw(25,2)--(25,-2)--(17,-6);
\draw(26,2) node[above]{$\cdots$};
\draw(27,2)--(27,-2)--(28.7,-6);
\draw(27.5,2) node[above]{$\xi^{(l)}_{n-1}$};
\draw(28,2)--(28,-2)--(29.7,-6);
\draw(29,2)--(29,-2)--(18.5,-6);
\draw(29.5,2) node[above]{$\mu^{(l)}_n$};
\draw(30,2)--(30,-2)--(19.5,-6);
\draw(31,2)--(31,-6);
\draw(31.5,2) node[above]{$\xi^{(l)}_n$};
\draw(32,2)--(32,-6);
\draw[color=white, opacity=1] (0.8,-2)--(32.2,-2);
\draw(7,-2) node{$\cdots$};
\draw(7,-6) node{$\cdots$};
\draw(17.9,-6) node{$\cdots$};
\draw(30.5,-6) node{$\cdots$};
\draw(28.2,-6) node{$\cdots$};
\draw(26,-2) node{$\cdots$};
\draw(0.6,0) node[left]{$w^{R''}_{i,j}$};
\draw(0.6,-4) node[left]{$w^{R'}_{i,j}$};
\end{braid}.
\end{cases}
\end{align*}
Let \(\GGG^{A_{\blam}} = \omega \TTT^{A_{\blam}}\) and \(\GGG^{A_{\bmu}} = \zeta \TTT^{A_{\bmu}}\), where \(\omega \in \mathscr{D}^{A_{\blam}}\) and \(\zeta \in \mathscr{D}^{A_{\bmu}}\). Then \(\omega=\omega_2 \omega_1\), where \(\omega_1, \omega_2 \in \mathfrak{S}^{A_{\blam}}\) are given as follows:
\begin{align*}
w^{\GGG^{A_{\blam}}} = \begin{cases}
\begin{braid}\tikzset{baseline=-15mm}
\draw [color=blue!50, fill=blue!50, opacity=0.5] (-5,2)--(-5,-10)--(-4,-10)--(-4,2)--cycle;
\draw [color=red!50, fill=red!50, opacity=0.5] (-3,2)--(-3,-10)--(-2,-10)--(-2,2)--cycle;
\draw [color=blue!50, fill=blue!50, opacity=0.5] (0,2)--(0,-10)--(1,-10)--(1,2)--cycle;
\draw [color=red!50, fill=red!50, opacity=0.5] (2,2)--(2,-10)--(3,-10)--(3,2)--cycle;
\draw [color=blue!50, fill=blue!50, opacity=0.5] (4,2)--(4,-10)--(5,-10)--(5,2)--cycle;
\draw [color=blue!50, fill=blue!50, opacity=0.5] (6,2)--(8,-2)--(8,-6)--(10,-10)--(11,-10)--(9,-6)--(9,-2)--(7,2)--cycle;
\draw [color=blue!50, fill=blue!50, opacity=0.5] (8,2)--(10,-2)--(12,-6)--(12,-10)--(13,-10)--(13,-6)--(11,-2)--(9,2)--cycle;
\draw [color=red!50, fill=red!50, opacity=0.5] (10,2)--(12,-2)--(14,-6)--(14,-10)--(15,-10)--(15,-6)--(13,-2)--(11,2)--cycle;
\draw [color=red!50, fill=red!50, opacity=0.5] (12,2)--(14,-2)--(16,-6)--(16,-10)--(17,-10)--(17,-6)--(15,-2)--(13,2)--cycle;
\draw [color=red!50, fill=red!50, opacity=0.5] (14,2)--(18,-2)--(18,-10)--(19,-10)--(19,-2)--(15,2)--cycle;
\draw [color=blue!50, fill=blue!50, opacity=0.5] (16,2)--(6,-2)--(6,-10)--(7,-10)--(7,-2)--(17,2)--cycle;
\draw [color=blue!50, fill=blue!50, opacity=0.5] (18,2)--(16,-2)--(10,-6)--(8,-10)--(9,-10)--(11,-6)--(17,-2)--(19,2)--cycle;
\draw [color=blue!50, fill=blue!50, opacity=0.5] (20,2)--(20,-10)--(21,-10)--(21,2)--cycle;
\draw [color=red!50, fill=red!50, opacity=0.5] (22,2)--(22,-10)--(23,-10)--(23,2)--cycle;
\draw [color=blue!50, fill=blue!50, opacity=0.5] (25,2)--(25,-10)--(26,-10)--(26,2)--cycle;
\draw [color=red!50, fill=red!50, opacity=0.5] (27,2)--(27,-10)--(28,-10)--(28,2)--cycle;
\draw(-5,2)--(-5,-10);
\draw(-4.5,2) node[above]{$\mu^{(1)}_1$};
\draw(-4,2)--(-4,-10);
\draw(-3,2)--(-3,-10);
\draw(-2.5,2) node[above]{$\xi^{(1)}_1$};
\draw(-2,2)--(-2,-10);
\draw(-1,2) node[above]{$\cdots$};
\draw(0,2)--(0,-10);
\draw(0.5,2) node[above]{$\mu^{(i)}_{j-1}$};
\draw(1,2)--(1,-10);
\draw(2,2)--(2,-10);
\draw(2.5,2) node[above]{$\xi^{(i)}_{j-1}$};
\draw(3,2)--(3,-10);
\draw(4,2)--(4,-10);
\draw(4.5,2) node[above]{$\mu^{A,1}$};
\draw(5,2)--(5,-10);
\draw(6,2)--(8,-2)--(8,-6)--(10,-10);
\draw(6.5,2) node[above]{$\mu^{A,2}$};
\draw(7,2)--(9,-2)--(9,-6)--(11,-10);
\draw(8,2)--(10,-2)--(12,-6)--(12,-10);
\draw(8.5,2) node[above]{$\mu^{A,3}$};
\draw(9,2)--(11,-2)--(13,-6)--(13,-10);
\draw(10,2)--(12,-2)--(14,-6)--(14,-10);
\draw(10.5,2) node[above]{$\xi^{A,1}$};
\draw(11,2)--(13,-2)--(15,-6)--(15,-10);
\draw(12,2)--(14,-2)--(16,-6)--(16,-10);
\draw(12.5,2) node[above]{$\xi^{A,2}$};
\draw(13,2)--(15,-2)--(17,-6)--(17,-10);
\draw(14,2)--(18,-2)--(18,-10);
\draw(14.5,2) node[above]{$\xi^{A,3}$};
\draw(15,2)--(19,-2)--(19,-10);
\draw(16,2)--(6,-2)--(6,-10);
\draw(16.5,2) node[above]{$\mu_{A,1}$};
\draw(17,2)--(7,-2)--(7,-10);
\draw(18,2)--(16,-2)--(10,-6)--(8,-10);
\draw(18.5,2) node[above]{$\mu_{A,2}$};
\draw(19,2)--(17,-2)--(11,-6)--(9,-10);
\draw(20,2)--(20,-10);
\draw(20.5,2) node[above]{$\mu_{A,3}$};
\draw(21,2)--(21,-10);
\draw(22,2)--(22,-10);
\draw(22.5,2) node[above]{$\xi^{(i)}_{j+1}$};
\draw(23,2)--(23,-10);
\draw(24,2) node[above]{$\cdots$};
\draw(25,2)--(25,-10);
\draw(25.5,2) node[above]{$\mu^{(l)}_n$};
\draw(26,2)--(26,-10);
\draw(27,2)--(27,-10);
\draw(27.5,2) node[above]{$\xi^{(l)}_n$};
\draw(28,2)--(28,-10);
\draw[color=white, opacity=1] (-6.2,-2)--(28.2,-2);
\draw[color=white, opacity=1] (-6.2,-6)--(28.2,-6);
\draw(-1,-2) node{$\cdots$};
\draw(-1,-6) node{$\cdots$};
\draw(-1,-10) node{$\cdots$};
\draw(24,-2) node{$\cdots$};
\draw(24,-6) node{$\cdots$};
\draw(24,-10) node{$\cdots$};
\draw(-5.1,0) node[left]{$w^{\TTT^{A_{\blam}}}$};
\draw(-5.2,-4) node[left]{$\omega_2$};
\draw(-5.2,-8) node[left]{$\omega_1$};
\end{braid}.
\end{cases}
\end{align*}
Then
\begin{align}\label{movega}
\zeta w^{\TTT^{A_{\bmu}}} w^{\bmu \bxi} = w^{(\GGG^{A_{\bmu}})\ttt^{\bxi}} = w_{i,j}^D w_{i,j}^L w_{i,j}^{R'} w^{\GGG^{A_{\blam}}} = w_{i,j}^Dw_{i,j}^L w_{i,j}^{R'}\omega_1 \omega_2 w^{\TTT^{A_{\blam}}}.
\end{align}
This is best seen diagrammatically. On the right side of (\ref{movega}) we have
\begin{align}\label{bigdiagram}
\begin{braid}\tikzset{baseline=-20mm}
\draw [color=blue!50, fill=blue!50, opacity=0.5] (-5,2)--(-5,-18)--(-4,-18)--(-4,2)--cycle;
\draw [color=red!50, fill=red!50, opacity=0.5] (-3,2)--(-3,-10)--(-0.5,-14)--(14,-18)--(15,-18)--(0.5,-14)--(-2,-10)--(-2,2)--cycle;
\draw [color=blue!50, fill=blue!50, opacity=0.5] (0,2)--(0,-10)--(-2.5,-14)--(-2.5,-18)--(-1.5,-18)--(-1.5,-14)--(1,-10)--(1,2)--cycle;
\draw [color=red!50, fill=red!50, opacity=0.5] (2,2)--(2,-14)--(16.5,-18)--(17.5,-18)--(3,-14)--(3,2)--cycle;
\draw [color=blue!50, fill=blue!50, opacity=0.5] (4,2)--(4,-14)--(0,-18)--(1,-18)--(5,-14)--(5,2)--cycle;
\draw [color=blue!50, fill=blue!50, opacity=0.5] (6,2)--(8,-2)--(8,-6)--(10,-10)--(10,-14)--(6,-18)--(7,-18)--(11,-14)--(11,-10)--(9,-6)--(9,-2)--(7,2)--cycle;
\draw [color=blue!50, fill=blue!50, opacity=0.5] (8,2)--(10,-2)--(12,-6)--(12,-10)--(12,-14)--(8,-18)--(9,-18)--(13,-14)--(13,-10)--(13,-6)--(11,-2)--(9,2)--cycle;
\draw [color=red!50, fill=red!50, opacity=0.5] (10,2)--(12,-2)--(14,-6)--(14,-10)--(18.5,-14)--(18.5,-18)--(19.5,-18)--(19.5,-14)--(15,-10)--(15,-6)--(13,-2)--(11,2)--cycle;
\draw [color=red!50, fill=red!50, opacity=0.5] (12,2)--(14,-2)--(16,-6)--(16,-10)--(20.5,-14)--(20.5,-18)--(21.5,-18)--(21.5,-14)--(17,-10)--(17,-6)--(15,-2)--(13,2)--cycle;
\draw [color=red!50, fill=red!50, opacity=0.5] (14,2)--(18,-2)--(18,-10)--(22.5,-14)--(22.5,-18)--(23.5,-18)--(23.5,-14)--(19,-10)--(19,-2)--(15,2)--cycle;
\draw [color=blue!50, fill=blue!50, opacity=0.5] (16,2)--(6,-2)--(6,-10)--(6,-14)--(2,-18)--(3,-18)--(7,-14)--(7,-10)--(7,-2)--(17,2)--cycle;
\draw [color=blue!50, fill=blue!50, opacity=0.5] (18,2)--(16,-2)--(10,-6)--(8,-10)--(8,-14)--(4,-18)--(5,-18)--(9,-14)--(9,-10)--(11,-6)--(17,-2)--(19,2)--cycle;
\draw [color=blue!50, fill=blue!50, opacity=0.5] (20,2)--(20,-10)--(14,-14)--(10,-18)--(11,-18)--(15,-14)--(21,-10)--(21,2)--cycle;
\draw [color=red!50, fill=red!50, opacity=0.5] (22,2)--(22,-10)--(24.5,-14)--(24.5,-18)--(25.5,-18)--(25.5,-14)--(23,-10)--(23,2)--cycle;
\draw [color=blue!50, fill=blue!50, opacity=0.5] (25,2)--(25,-10)--(16.5,-14)--(12.3,-18)--(13.3,-18)--(17.5,-14)--(26,-10)--(26,2)--cycle;
\draw [color=red!50, fill=red!50, opacity=0.5] (27,2)--(27,-18)--(28,-18)--(28,2)--cycle;
\draw(-5,2)--(-5,-18);
\draw(-4.5,2) node[above]{$\mu^{(1)}_1$};
\draw(-4,2)--(-4,-18);
\draw(-3,2)--(-3,-10)--(-0.5,-14)--(14,-18);
\draw(-2.5,2) node[above]{$\xi^{(1)}_1$};
\draw(-2,2)--(-2,-10)--(0.5,-14)--(15,-18);
\draw(-1,2) node[above]{$\cdots$};
\draw(0,2)--(0,-10)--(-2.5,-14)--(-2.5,-18);
\draw(0.5,2) node[above]{$\mu^{(i)}_{j-1}$};
\draw(1,2)--(1,-10)--(-1.5,-14)--(-1.5,-18);
\draw(2,2)--(2,-14)--(16.5,-18);
\draw(2.5,2) node[above]{$\xi^{(i)}_{j-1}$};
\draw(3,2)--(3,-14)--(17.5,-18);
\draw(4,2)--(4,-14)--(0,-18);
\draw(4.5,2) node[above]{$\mu^{A,1}$};
\draw(5,2)--(5,-14)--(1,-18);
\draw(6,2)--(8,-2)--(8,-6)--(10,-10)--(10,-14)--(6,-18);
\draw(6.5,2) node[above]{$\mu^{A,2}$};
\draw(7,2)--(9,-2)--(9,-6)--(11,-10)--(11,-14)--(7,-18);
\draw(8,2)--(10,-2)--(12,-6)--(12,-10)--(12,-14)--(8,-18);
\draw(8.5,2) node[above]{$\mu^{A,3}$};
\draw(9,2)--(11,-2)--(13,-6)--(13,-10)--(13,-14)--(9,-18);
\draw(10,2)--(12,-2)--(14,-6)--(14,-10)--(18.5,-14)--(18.5,-18);
\draw(10.5,2) node[above]{$\xi^{A,1}$};
\draw(11,2)--(13,-2)--(15,-6)--(15,-10)--(19.5,-14)--(19.5,-18);
\draw(12,2)--(14,-2)--(16,-6)--(16,-10)--(20.5,-14)--(20.5,-18);
\draw(12.5,2) node[above]{$\xi^{A,2}$};
\draw(13,2)--(15,-2)--(17,-6)--(17,-10)--(21.5,-14)--(21.5,-18);
\draw(14,2)--(18,-2)--(18,-10)--(22.5,-14)--(22.5,-18);
\draw(14.5,2) node[above]{$\xi^{A,3}$};
\draw(15,2)--(19,-2)--(19,-10)--(23.5,-14)--(23.5,-18);
\draw(16,2)--(6,-2)--(6,-10)--(6,-14)--(2,-18);
\draw(16.5,2) node[above]{$\mu_{A,1}$};
\draw(17,2)--(7,-2)--(7,-10)--(7,-14)--(3,-18);
\draw(18,2)--(16,-2)--(10,-6)--(8,-10)--(8,-14)--(4,-18);
\draw(18.5,2) node[above]{$\mu_{A,2}$};
\draw(19,2)--(17,-2)--(11,-6)--(9,-10)--(9,-14)--(5,-18);
\draw(20,2)--(20,-10)--(14,-14)--(10,-18);
\draw(20.5,2) node[above]{$\mu_{A,3}$};
\draw(21,2)--(21,-10)--(15,-14)--(11,-18);
\draw(22,2)--(22,-10)--(24.5,-14)--(24.5,-18);
\draw(22.5,2) node[above]{$\xi^{(i)}_{j+1}$};
\draw(23,2)--(23,-10)--(25.5,-14)--(25.5,-18);
\draw(24,2) node[above]{$\cdots$};
\draw(25,2)--(25,-10)--(16.5,-14)--(12.3,-18);
\draw(25.5,2) node[above]{$\mu^{(l)}_n$};
\draw(26,2)--(26,-10)--(17.5,-14)--(13.3,-18);
\draw(27,2)--(27,-18);
\draw(27.5,2) node[above]{$\xi^{(l)}_n$};
\draw(28,2)--(28,-18);
\draw[color=white, opacity=1] (-6.2,-2)--(28.2,-2);
\draw[color=white, opacity=1] (-6.2,-6)--(28.2,-6);
\draw[color=white, opacity=1] (-6.2,-10)--(28.2,-10);
\draw[color=white, opacity=1] (-6.2,-14)--(28.2,-14);
\draw(-1,-2) node{$\cdots$};
\draw(-1,-6) node{$\cdots$};
\draw(-1,-10) node{$\cdots$};
\draw(24,-2) node{$\cdots$};
\draw(24,-6) node{$\cdots$};
\draw(24,-10) node{$\cdots$};
\draw(-3.2,-14) node{$\cdots$};
\draw(-3.2,-18) node{$\cdots$};
\draw(26.3,-14) node{$\cdots$};
\draw(26.3,-18) node{$\cdots$};
\draw(-5.1,0) node[left]{$w^{\TTT^{A_{\blam}}}$};
\draw(-5.2,-4) node[left]{$\omega_2$};
\draw(-5.2,-8) node[left]{$\omega_1$};
\draw(-5.2,-12) node[left]{$w^L_{i,j}w^{R'}_{i,j}$};
\draw(-5.2,-16) node[left]{$w^D_{i,j}$};
\end{braid}.
\end{align}
and pulling the \(\mu^{A,2}\), \(\mu^{A,3}\) strands to the left gives us the left side of (\ref{movega}):
\begin{align*}
\begin{braid}\tikzset{baseline=-15mm}
\draw [color=blue!50, fill=blue!50, opacity=0.5] (-5,2)--(-5,-10)--(-4,-10)--(-4,2)--cycle;
\draw [color=red!50, fill=red!50, opacity=0.5] (-3,2)--(14,-2)--(14,-10)--(15,-10)--(15,-2)--(-2,2)--cycle;
\draw [color=blue!50, fill=blue!50, opacity=0.5] (0,2)--(-2.5,-2)--(-2.5,-10)--(-1.5,-10)--(-1.5,-2)--(1,2)--cycle;
\draw [color=red!50, fill=red!50, opacity=0.5] (2,2)--(16.5,-2)--(16.5,-10)--(17.5,-10)--(17.5,-2)--(3,2)--cycle;
\draw [color=blue!50, fill=blue!50, opacity=0.5] (4,2)--(-0.5,-2)--(-0.5,-10)--(0.5,-10)--(0.5,-2)--(5,2)--cycle;
\draw [color=blue!50, fill=blue!50, opacity=0.5] (6,2)--(1.5,-2)--(3.5,-6)--(5.5,-10)--(6.5,-10)--(4.5,-6)--(2.5,-2)--(7,2)--cycle;
\draw [color=blue!50, fill=blue!50, opacity=0.5] (8,2)--(3.5,-2)--(7.5,-6)--(7.5,-10)--(8.5,-10)--(8.5,-6)--(4.5,-2)--(9,2)--cycle;
\draw [color=red!50, fill=red!50, opacity=0.5] (10,2)--(18.5,-2)--(18.5,-10)--(19.5,-10)--(19.5,-2)--(11,2)--cycle;
\draw [color=red!50, fill=red!50, opacity=0.5] (12,2)--(20.5,-2)--(20.5,-10)--(21.5,-10)--(21.5,-2)--(13,2)--cycle;
\draw [color=red!50, fill=red!50, opacity=0.5] (14,2)--(22.5,-2)--(22.5,-10)--(23.5,-10)--(23.5,-2)--(15,2)--cycle;
\draw [color=blue!50, fill=blue!50, opacity=0.5] (16,2)--(5.5,-2)--(1.5,-6)--(1.5,-10)--(2.5,-10)--(2.5,-6)--(6.5,-2)--(17,2)--cycle;
\draw [color=blue!50, fill=blue!50, opacity=0.5] (18,2)--(7.5,-2)--(5.5,-6)--(3.5,-10)--(4.5,-10)--(6.5,-6)--(8.5,-2)--(19,2)--cycle;
\draw [color=blue!50, fill=blue!50, opacity=0.5] (20,2)--(9.5,-2)--(9.5,-10)--(10.5,-10)--(10.5,-2)--(21,2)--cycle;
\draw [color=red!50, fill=red!50, opacity=0.5] (22,2)--(24.5,-2)--(24.5,-10)--(25.5,-10)--(25.5,-2)--(23,2)--cycle;
\draw [color=blue!50, fill=blue!50, opacity=0.5] (25,2)--(12,-2)--(12,-10)--(13,-10)--(13,-2)--(26,2)--cycle;
\draw [color=red!50, fill=red!50, opacity=0.5] (27,2)--(27,-10)--(28,-10)--(28,2)--cycle;
\draw(-5,2)--(-5,-10);
\draw(-4.5,2) node[above]{$\mu^{(1)}_1$};
\draw(-4,2)--(-4,-10);
\draw(-3,2)--(14,-2)--(14,-10);
\draw(-2.5,2) node[above]{$\xi^{(1)}_1$};
\draw(-2,2)--(15,-2)--(15,-10);
\draw(-1,2) node[above]{$\cdots$};
\draw(0,2)--(-2.5,-2)--(-2.5,-10);
\draw(0.5,2) node[above]{$\mu^{(i)}_{j-1}$};
\draw(1,2)--(-1.5,-2)--(-1.5,-10);
\draw(2,2)--(16.5,-2)--(16.5,-10);
\draw(2.5,2) node[above]{$\xi^{(i)}_{j-1}$};
\draw(3,2)--(17.5,-2)--(17.5,-10);
\draw(4,2)--(-0.5,-2)--(-0.5,-10);
\draw(4.5,2) node[above]{$\mu^{A,1}$};
\draw(5,2)--(0.5,-2)--(0.5,-10);
\draw(6,2)--(1.5,-2)--(3.5,-6)--(5.5,-10);
\draw(6.5,2) node[above]{$\mu^{A,2}$};
\draw(7,2)--(2.5,-2)--(4.5,-6)--(6.5,-10);
\draw(8,2)--(3.5,-2)--(7.5,-6)--(7.5,-10);
\draw(8.5,2) node[above]{$\mu^{A,3}$};
\draw(9,2)--(4.5,-2)--(8.5,-6)--(8.5,-10);
\draw(10,2)--(18.5,-2)--(18.5,-10);
\draw(10.5,2) node[above]{$\xi^{A,1}$};
\draw(11,2)--(19.5,-2)--(19.5,-10);
\draw(12,2)--(20.5,-2)--(20.5,-10);
\draw(12.5,2) node[above]{$\xi^{A,2}$};
\draw(13,2)--(21.5,-2)--(21.5,-10);
\draw(14,2)--(22.5,-2)--(22.5,-10);
\draw(14.5,2) node[above]{$\xi^{A,3}$};
\draw(15,2)--(23.5,-2)--(23.5,-10);
\draw(16,2)--(5.5,-2)--(1.5,-6)--(1.5,-10);
\draw(16.5,2) node[above]{$\mu_{A,1}$};
\draw(17,2)--(6.5,-2)--(2.5,-6)--(2.5,-10);
\draw(18,2)--(7.5,-2)--(5.5,-6)--(3.5,-10);
\draw(18.5,2) node[above]{$\mu_{A,2}$};
\draw(19,2)--(8.5,-2)--(6.5,-6)--(4.5,-10);
\draw(20,2)--(9.5,-2)--(9.5,-10);
\draw(20.5,2) node[above]{$\mu_{A,3}$};
\draw(21,2)--(10.5,-2)--(10.5,-10);
\draw(22,2)--(24.5,-2)--(24.5,-10);
\draw(22.5,2) node[above]{$\xi^{(i)}_{j+1}$};
\draw(23,2)--(25.5,-2)--(25.5,-10);
\draw(24,2) node[above]{$\cdots$};
\draw(25,2)--(12,-2)--(12,-10);
\draw(25.5,2) node[above]{$\mu^{(l)}_n$};
\draw(26,2)--(13,-2)--(13,-10);
\draw(27,2)--(27,-10);
\draw(27.5,2) node[above]{$\xi^{(l)}_n$};
\draw(28,2)--(28,-10);
\draw[color=white, opacity=1] (-6.2,-2)--(28.2,-2);
\draw[color=white, opacity=1] (-6.2,-6)--(28.2,-6);
\draw[color=white, opacity=1] (-6.2,-10)--(28.2,-10);
\draw(-3.2,-2) node{$\cdots$};
\draw(-3.2,-6) node{$\cdots$};
\draw(-3.2,-10) node{$\cdots$};
\draw(11.2,-6) node{$\cdots$};
\draw(11.2,-10) node{$\cdots$};
\draw(26.3,-2) node{$\cdots$};
\draw(26.3,-6) node{$\cdots$};
\draw(26.3,-10) node{$\cdots$};
\draw(15.8,-6) node{$\cdots$};
\draw(15.8,-10) node{$\cdots$};
\draw(-5.1,0) node[left]{$w^{\bmu\bxi}$};
\draw(-5.2,-4) node[left]{$w^{\TTT^{A_{\bmu}}}$};
\draw(-5.2,-8) node[left]{$\zeta$};
\end{braid}.
\end{align*}
Let \(\omega_1=w^{A_{\blam}}_{r^1_1} \cdots w^{A_{\blam}}_{r^1_{n_1}}\) and \(\omega_2=w^{A_{\blam}}_{r^2_1} \cdots w^{A_{\blam}}_{r^2_{n_2}}\) be reduced words for \(\omega_1\) and \(\omega_2\) in \(\mathfrak{S}^{A_{\blam}}\).
Now consider
\begin{align}\label{wform}
w=(w_{r_1^1}^{A_{\blam}})^{\epsilon_1^1} \cdots (w_{r_{n_1}^1}^{A_{\blam}})^{\epsilon_{n_1}^1} (w_{r_1^2}^{A_{\blam}})^{\epsilon_1^2} \cdots (w_{r_{n_2}^2}^{A_{\blam}})^{\epsilon_{n_2}^2} \in \mathfrak{S}^{A_{\blam}},
\end{align}
where each \(\epsilon^h_k \in \{0,1\}\). In other words, \(w\) is achieved by deleting simple transpositions in \(\mathfrak{S}^{A_\lambda}\) from \(\omega\). 
\begin{lem}\label{tausurvive}
If \(\epsilon^2_k=0\) for some \(1 \leq k \leq n_2\), then
\begin{align*}
\psi^D_{i,j} \psi^L_{i,j} \psi^{R'}_{i,j} (\sigma_{r_1^1}^{A_{\blam}})^{\epsilon_1^1} \cdots (\sigma_{r_{n_1}^1}^{A_{\blam}})^{\epsilon_{n_1}^1}(\sigma_{r_1^2}^{A_{\blam}})^{\epsilon_1^2} \cdots (\sigma_{r_{n_2}^2}^{A_{\blam}})^{\epsilon_{n_2}^2}\psi^{\TTT^{A_{\blam}}}x^{\blam}=0.
\end{align*}
\end{lem}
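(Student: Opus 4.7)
The strategy is to show that, when expanded in the standard $\mathcal{O}$-basis $\{v^\UUU \mid \UUU \in \St(\blam)\}$ of $S^\blam$, every tableau $\UUU$ contributing with non-zero coefficient to the left-hand side lies in $B_{\triangleright\bmu}$, so that $v^\UUU \in C_{\triangleright\bmu}$ and its image is zero in $S^\blam_{\alpha,\beta}/C_{\triangleright\bmu}$.

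First, I would apply Proposition \ref{rewrite} repeatedly, along with the relations $y_r x^{\blam}=0$ for all $r$ and $\psi_r x^{\blam}=0$ for those $r$ with $r \rightarrow_{\TTT^\blam} r+1$, to rewrite the expression as an $\mathcal{O}$-linear combination of elements of the form $\psi_{j_1}\cdots \psi_{j_N}\, x^{\blam}$, where $s_{j_1}\cdots s_{j_N}$ is a (not necessarily reduced) subword of a reduced expression for $w^D_{i,j}w^L_{i,j}w^{R'}_{i,j}\, w \cdot w^{\TTT^{A_\blam}}$, with $w$ as in \eqref{wform}. By Lemma \ref{subword}, any $\UUU$ for which $v^\UUU$ appears satisfies that $w^\UUU$ is realised as a reduced subexpression of $s_{j_1}\cdots s_{j_N}$, and hence $w^\UUU$ corresponds to a sub-diagram of \eqref{bigdiagram} from which at least the crossings indexed by $\{k : \epsilon^2_k=0\}$ (and possibly additional crossings) have been erased.

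The crucial combinatorial step is to show that any such $w^\UUU$ forces $\sh(\UUU_{\leq a}) \triangleright \bmu$. I would argue this by strand tracking in \eqref{bigdiagram}: in the un-deleted diagram, the balance of $\bmu$- and $\bxi$-strands reaching the bottom first $a$ positions is maintained precisely by the $\omega_2$ layer, whose role is to swap bricks from the top row of ${\bf B}^{A_\blam}$ (containing $\bxi$-nodes from $\xi^{A,1}\cup \xi^{A,2}\cup \xi^{A,3}$) past the $\mu_{A,1}\cup \mu_{A,2}$ strands; the $\omega_1$ layer, by contrast, only permutes strands within the $\bmu$-subdiagram and hence cannot compensate. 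Deletion of one crossing in $\omega_2$ therefore displaces a $\bxi$-originating strand into a first-$a$ position at the bottom after composition with $w^L_{i,j}, w^{R'}_{i,j}, w^D_{i,j}$, so that $\UUU^{-1}(t)$ lies in $\bxi$ for some $t\leq a$. Since $\UUU \in B$ implies $\text{cont}(\sh(\UUU_{\leq a}))=\text{cont}(\bmu)$, the appearance of an extra $\bxi$-node in a higher component or row forces $\sh(\UUU_{\leq a})\triangleright \bmu$ via the criterion of Lemma \ref{ordershape}; the case $\sh(\UUU_{\leq a})\not\trianglerighteq \bmu$ is ruled out because, thanks to Lemma \ref{movepast}, the expression can be traced back to an action of $R_{\alpha,\beta}$ on $x^{\bmu\bxi}\in C_\bmu$, and $C_\bmu$ is a submodule by Lemma \ref{submodules}.

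The main obstacle will be making the strand-tracking rigorous: verifying precisely that the omission of one $\omega_2$-crossing cannot be rectified by any combination of erasures in the lower layers of \eqref{bigdiagram} or by replacing simple reflections with different reduced expressions via Proposition \ref{rewrite}, and hence that every $\UUU$ in the expansion genuinely inherits an $\bxi$-node in $\UUU_{\leq a}$.
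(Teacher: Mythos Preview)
Your overall strategy matches the paper's: expand in the standard basis using Lemma~\ref{subword}, then show every contributing $\UUU$ lies in $B_{\triangleright\bmu}$. The strand-tracking step---that erasing an $\omega_2$-crossing forces some $\bxi$-strand into the first $a$ positions, hence $\sh(\UUU_{\leq a})\neq\bmu$---is also what the paper does, and your informal justification is essentially right (each pink/blue pair in \eqref{bigdiagram} crosses exactly once, so removing that crossing leaves the pink strand permanently to the left of the blue one).

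The gap is in your argument that $\UUU\in B_{\bmu}$ (which you need in order to upgrade $\sh(\UUU_{\leq a})\neq\bmu$ to $\sh(\UUU_{\leq a})\triangleright\bmu$). You invoke Lemma~\ref{movepast} to say the expression ``can be traced back to an action of $R_{\alpha,\beta}$ on $x^{\bmu\bxi}$'', but Lemma~\ref{movepast} only converts expressions of the form $\Psi\,x^{\bmu\bxi}$ (with $\Psi$ a product of $\psi_r$'s in a restricted index range) into the $\psi^D\psi^L\Psi[d]\psi^R$ form; it does not provide an inverse. The specific element in Lemma~\ref{tausurvive}, with an $\omega_2$-generator deleted, is \emph{not} of the form covered by Lemma~\ref{movepast}, and there is no evident way to realise it as $R_{\alpha,\beta}$ acting on something in $C_{\bmu}$. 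So your appeal to ``$C_{\bmu}$ is a submodule'' is not grounded.

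The paper closes this gap by a pure Bruhat-order argument instead: since $w^\UUU$ arises as a subexpression of \eqref{bigdiagram}, and \eqref{bigdiagram} is a reduced expression for $w^{(\GGG^{A_{\bmu}})\ttt^{\bxi}}$ by \eqref{movega}, one has $\UUU\trianglelefteq \GGG^{A_{\bmu}}\ttt^{\bxi}$. Now $\GGG^{A_{\bmu}}\ttt^{\bxi}$ is not standard, but there is a unique $s_r$ with $s_r\GGG^{A_{\bmu}}\ttt^{\bxi}\in\St(\blam)$, and this standard tableau lies in $B_{\bmu}$. Lemma~\ref{srt} then gives $\UUU\trianglelefteq s_r\GGG^{A_{\bmu}}\ttt^{\bxi}$, and Lemma~\ref{lowerin} yields $\UUU\in B_{\bmu}$. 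Combined with $\sh(\UUU_{\leq a})\neq\bmu$ this gives $\UUU\in B_{\triangleright\bmu}$ as required. You should replace your submodule argument with this Bruhat step.
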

\begin{proof}
By Lemma \ref{rewrite}, 
\begin{align*}
\psi^D_{i,j} \psi^L_{i,j} \psi^{R'}_{i,j} (\sigma_{r_1^1}^{A_{\blam}})^{\epsilon_1^1} \cdots (\sigma_{r_{n_1}^1}^{A_{\blam}})^{\epsilon_{n_1}^1}(\sigma_{r_1^2}^{A_{\blam}})^{\epsilon_1^2} \cdots (\sigma_{r_{n_2}^2}^{A_{\blam}})^{\epsilon_{n_2}^2}\psi^{\TTT^{A_{\blam}}}v^{\blam}
\end{align*}
is an \(\mathcal{O}\)-linear combination of elements of the form \(v^\TTT\), where a reduced expression for \(w^\TTT\) appears as a subexpression in the (not necessarily reduced) concatenation of reduced expressions associated with
\begin{align*}
w_{i,j}^Dw_{i,j}^Lw_{i,j}^{R'}(w_{r_1^1}^{A_{\blam}})^{\epsilon_1^1} \cdots (w_{r_{n_1}^1}^{A_{\blam}})^{\epsilon_{n_1}^1} (w_{r_1^2}^{A_{\blam}})^{\epsilon_1^2} \cdots (w_{r_{n_2}^2}^{A_{\blam}})^{\epsilon_{n_2}^2}w^{\TTT^{A_{\blam}}}.
\end{align*}
In other words, one can write \(w^\TTT\) by removing crossings in (\ref{bigdiagram}), and in particular (since \(\epsilon^2_k = 0\) for some \(k\)), removing at least one of the pink/blue crossings in the second row. In any case then, there is some pink strand that \(w^\TTT\) sends to the left side, i.e., some \(c\leq a\) such that \((w^\TTT)^{-1}(c) \in \xi^{(h)}_k\) for some \(h,k\). Then \(\textup{sh}(\TTT_{\leq a}) \neq \bmu\). But since \(w^\TTT\) is obtained by removing crossings in \(w^{(\GGG^{A_{\bmu}})\ttt^{\bxi}}\), we have \(\TTT \trianglelefteq \GGG^{A_{\bmu}}\ttt^{\bxi}\). If \(s_r\) is the transposition such that \(s_r\GGG^{A_{\bmu}}\ttt^{\bxi} \in \St(\blam)\), then Lemma \ref{srt} implies that \(\TTT \trianglelefteq s_r\GGG^{A_{\bmu}}\ttt^{\bxi} \in B_{\bmu}\), which in turn implies by Lemma \ref{lowerin} that \(\TTT \in B_{\bmu}\). But then \(\TTT \in B_{\triangleright\bmu}\), and thus \(x^\TTT = 0 \in S^\lambda/C_{\triangleright \bmu}\).
\end{proof}
Every \(w  \in \mathscr{D}^{A_{\blam}}\) can be written as a reduced expression of the form (\ref{wform}) for some \(\epsilon_k^h \in \{0,1\}\). If \(\epsilon^2_k = 0\) for some \(1 \leq k \leq n_2\), or equivalently, if there is some node \((a,b,m)\) in \(\bxi\) such that \(w\TTT^{A_{\blam}}(a,b,m) \neq \GGG^{A_{\blam}}(a,b,m)\), then the above lemma implies that
\begin{align*}
\psi^D_{i,j}\psi^L_{i,j}\psi^{R'}_{i,j}\tau_w^{A_{\blam}}\psi^{\TTT^{A_{\blam}}}x^{\blam} = 0,
\end{align*}
and 
\begin{align*}
\psi^D_{i,j}\psi^L_{i,j}\psi^{R'}_{i,j}\tau_w^{A_{\blam}}\psi^{\TTT^{A_{\blam}}}x^{\blam}=
\psi^D_{i,j}\psi^L_{i,j}\psi^{R'}_{i,j}(\sigma_{r_1^1}^{A_{\blam}})^{\epsilon^1_1} \cdots (\sigma_{r^1_{n_1}}^{A_{\blam}})^{\epsilon^1_{n_1}} \psi_{\omega_2} \psi^{\TTT^{A_{\blam}}}x^{\blam}
\end{align*}
otherwise. Let \(f^{A_{\blam}}\) and \(f^{A_{\bmu}}\) denote the number of bricks in the top row of \({\bf B}^{A_{\blam}}\) and \({\bf B}^{A_{\bmu}}\) respectively. Note that \(w=(w_{r_1^1}^{A_{\blam}})^{\epsilon_1^1} \cdots (w_{r_{n_1}^1}^{A_{\blam}})^{\epsilon_{n_1}^1}\omega_2\) is a reduced expression for an element in \(\mathscr{D}^{A_{\blam}}\) if and only if \((w_{r_1^1}^{A_{\blam}})^{\epsilon_1^1} \cdots (w_{r_{n_1}^1}^{A_{\blam}})^{\epsilon_{n_1}^1}\) is a reduced expression for an element in \(\mathscr{D}^{f^{A_{\bmu}},k^{A_{\blam}}-f^{A_{\blam}}}\). Since \(k^{A_{\bmu}} = k^{A_{\blam}}-(f^{A_{\blam}}-f^{A_{\bmu}})\), this allows us to associate \(\mathscr{D}^{A_{\bmu}}\) with \(\mathscr{D}^{A_{\blam}}\) in the following way. Let \(\widehat{\mathscr{D}}^{A_{\blam}}\) be the set of all \(w \in \mathscr{D}^{A_{\blam}}\) such that \(\epsilon_k^2 \neq 0\) for all \(k\). Then there is a bijection between \(\mathscr{D}^{A_{\bmu}}\) and \(\widehat{\mathscr{D}}^{A_{\blam}}\) given by
\(
u \mapsto u[d_{i,j}]\omega_2.
\)
\begin{lem}\label{tauspast}
For all \(u\in \mathscr{D}^{A_{\bmu}}\), 
\begin{align*}
\tau_u \psi^{\TTT^{A_{\bmu}}} \psi^{\bmu\bxi} x^{\blam} = \psi^D_{i,j}\psi^L_{i,j}\psi^{L'}_{i,j} \tau_u[d_{i,j}] \psi_{\omega_2} \psi^{\TTT^{A_{\blam}}} x^{\blam}.
\end{align*}
\end{lem}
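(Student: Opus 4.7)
The plan is to expand both sides using the fully commutative structure of $\mathscr{D}^{A_{\bmu}}$ and then reduce to an identity that directly generalizes (\ref{movega}). Write $u = w^{A_{\bmu}}_{r_1} \cdots w^{A_{\bmu}}_{r_m}$ as a reduced expression in $\mathfrak{S}^{A_{\bmu}}$. Since $u$ is fully commutative, every subexpression $u_S := \prod_{k \in S} w^{A_{\bmu}}_{r_k}$ (for $S \subseteq \{1,\ldots,m\}$) is again reduced, so the expansion
\begin{align*}
\tau_u = \prod_{k=1}^m (\sigma^{A_{\bmu}}_{r_k} + 1) = \sum_{S \subseteq \{1,\ldots,m\}} \psi_{u_S}
\end{align*}
holds unambiguously in $R_\alpha$, and similarly $\tau_u[d_{i,j}] = \sum_S \psi_{u_S[d_{i,j}]}$.

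It therefore suffices to prove, for each reduced $v \in \mathfrak{S}^{A_{\bmu}}$ arising as such a subword, the identity
\begin{align*}
\psi_v \psi^{\TTT^{A_{\bmu}}} \psi^{\bmu\bxi} x^{\blam} = \psi^D_{i,j} \psi^L_{i,j} \psi^{R'}_{i,j} \psi_{v[d_{i,j}]} \psi_{\omega_2} \psi^{\TTT^{A_{\blam}}} x^{\blam}.
\end{align*}
This generalizes (\ref{movega}), which is the case where $v$ plays the role of $\zeta$ (and $v[d_{i,j}]$ the role of $\omega_1$). I would establish it by repeating the diagrammatic argument used to derive (\ref{movega}): the large diagram (\ref{bigdiagram}), now with $v$ in place of $\zeta$ and $v[d_{i,j}]$ in place of $\omega_1$, shows that the composite $v w^{\TTT^{A_{\bmu}}} w^{\bmu\bxi}$ (left-hand presentation) coincides with $w^D_{i,j} w^L_{i,j} w^{R'}_{i,j} v[d_{i,j}] \omega_2 w^{\TTT^{A_{\blam}}}$ (right-hand presentation) by sliding the $\mu^{A,2}$- and $\mu^{A,3}$-strands past the remaining ones. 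The essential observation is that $v$ acts only on bricks in $\mu^{A,2}$, and the positions of these bricks differ by exactly $d_{i,j}$ between the two presentations, so the brick permutation on the right is precisely the shift $v[d_{i,j}]$.

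The main obstacle will be verifying that both sides are reduced expressions for the same element of $\mathfrak{S}_{a+b}$, so that when we pass from permutations to $\psi$'s no lower-order correction terms appear via Proposition \ref{rewrite}. On the $\bmu$-side this is immediate from $v \in \mathscr{D}^{A_{\bmu}}$ and the length-additivity properties of $w^{\TTT^{A_{\bmu}}}$ and $w^{\bmu\bxi}$. On the $\blam$-side it follows from the assertion (established just before the lemma) that $v[d_{i,j}]\omega_2 \in \widehat{\mathscr{D}}^{A_{\blam}}$, together with the fact that each of $w^D_{i,j}, w^L_{i,j}, w^{R'}_{i,j}$ is of minimal length in its coset and that all of the elements involved are fully commutative, so $\psi$'s for reduced expressions are well-defined and respect concatenation. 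Once the identity is established for each individual $v$, summing over all subwords and collecting terms using the two expansions above yields the statement of the lemma.
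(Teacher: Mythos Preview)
Your expansion of $\tau_u$ into a sum of $\psi_{u_S}$ is correct, but it introduces an unnecessary complication and a genuine gap. The paper's argument never expands $\tau_u$: it treats the entire product $\tau_u\psi^{\TTT^{A_{\bmu}}}$ as a string of $\psi_r$'s with $c_{i,j}+1\le r\le a-1$ and applies Lemma~\ref{movepast} to the whole thing at once, obtaining the intermediate diagram. The passage from that diagram to the right-hand side is a pure isotopy: one only slides the $\tau_u[d_{i,j}]$-bubble and $\psi^{R'}_{i,j}$ past strands they do not touch, using nothing but the commutation relations $\psi_r\psi_s=\psi_s\psi_r$ for $|r-s|>1$. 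No error terms arise at this stage, and the error terms from Lemma~\ref{movepast} have already been absorbed into $C_{\triangleright\bmu}$.

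Your justification that ``both sides are reduced expressions for the same element, so no lower-order terms appear'' is where the gap lies. Proposition~\ref{rewrite}(ii) says precisely the opposite: two reduced expressions for the same $w$ give $\psi$'s that agree only up to lower terms, unless $w$ is fully commutative. You assert full commutativity at the end but do not verify it, and for a general subword $v$ (which need not lie in $\mathscr{D}^{A_{\bmu}}$ --- your text is inconsistent on this point) the composite $vw^{\TTT^{A_{\bmu}}}w^{\bmu\bxi}$ is not obviously $321$-avoiding. Relatedly, your appeal to the bijection $u\mapsto u[d_{i,j}]\omega_2\in\widehat{\mathscr{D}}^{A_{\blam}}$ is only established for $u\in\mathscr{D}^{A_{\bmu}}$, not for arbitrary subwords. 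The clean fix is exactly the paper's route: invoke Lemma~\ref{movepast} (which already does the work of killing the error terms in $N_{\bmu}$) and then observe that the remaining rearrangement is an isotopy of strands; at that point the expansion of $\tau_u$ is superfluous.
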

\begin{proof}
This is easily seen in terms of Khovanov-Lauda diagrams, with \(x^{\blam}\) pictured as being at the top of the diagram. The left side:
\begin{align*}
\begin{braid}\tikzset{baseline=-15mm}
\draw [color=blue!50, fill=blue!50, opacity=0.5] (-5,2)--(-5,-10)--(-4,-10)--(-4,2)--cycle;
\draw [color=red!50, fill=red!50, opacity=0.5] (-3,2)--(14,-2)--(14,-10)--(15,-10)--(15,-2)--(-2,2)--cycle;
\draw [color=blue!50, fill=blue!50, opacity=0.5] (0,2)--(-2.5,-2)--(-2.5,-10)--(-1.5,-10)--(-1.5,-2)--(1,2)--cycle;
\draw [color=red!50, fill=red!50, opacity=0.5] (2,2)--(16.5,-2)--(16.5,-10)--(17.5,-10)--(17.5,-2)--(3,2)--cycle;
\draw [color=blue!50, fill=blue!50, opacity=0.5] (4,2)--(-0.5,-2)--(-0.5,-10)--(0.5,-10)--(0.5,-2)--(5,2)--cycle;
\draw [color=blue!50, fill=blue!50, opacity=0.5] (6,2)--(1.5,-2)--(3.5,-6)--(5.5,-10)--(6.5,-10)--(4.5,-6)--(2.5,-2)--(7,2)--cycle;
\draw [color=blue!50, fill=blue!50, opacity=0.5] (8,2)--(3.5,-2)--(7.5,-6)--(7.5,-10)--(8.5,-10)--(8.5,-6)--(4.5,-2)--(9,2)--cycle;
\draw [color=red!50, fill=red!50, opacity=0.5] (10,2)--(18.5,-2)--(18.5,-10)--(19.5,-10)--(19.5,-2)--(11,2)--cycle;
\draw [color=red!50, fill=red!50, opacity=0.5] (12,2)--(20.5,-2)--(20.5,-10)--(21.5,-10)--(21.5,-2)--(13,2)--cycle;
\draw [color=red!50, fill=red!50, opacity=0.5] (14,2)--(22.5,-2)--(22.5,-10)--(23.5,-10)--(23.5,-2)--(15,2)--cycle;
\draw [color=blue!50, fill=blue!50, opacity=0.5] (16,2)--(5.5,-2)--(1.5,-6)--(1.5,-10)--(2.5,-10)--(2.5,-6)--(6.5,-2)--(17,2)--cycle;
\draw [color=blue!50, fill=blue!50, opacity=0.5] (18,2)--(7.5,-2)--(5.5,-6)--(3.5,-10)--(4.5,-10)--(6.5,-6)--(8.5,-2)--(19,2)--cycle;
\draw [color=blue!50, fill=blue!50, opacity=0.5] (20,2)--(9.5,-2)--(9.5,-10)--(10.5,-10)--(10.5,-2)--(21,2)--cycle;
\draw [color=red!50, fill=red!50, opacity=0.5] (22,2)--(24.5,-2)--(24.5,-10)--(25.5,-10)--(25.5,-2)--(23,2)--cycle;
\draw [color=blue!50, fill=blue!50, opacity=0.5] (25,2)--(12,-2)--(12,-10)--(13,-10)--(13,-2)--(26,2)--cycle;
\draw [color=red!50, fill=red!50, opacity=0.5] (27,2)--(27,-10)--(28,-10)--(28,2)--cycle;
\draw(-5,2)--(-5,-10);
\draw(-4.5,2) node[above]{$\bi^{\mu^{(1)}_1}$};
\draw(-4,2)--(-4,-10);
\draw(-3,2)--(14,-2)--(14,-10);
\draw(-2.5,2) node[above]{$\bi^{\xi^{(1)}_1}$};
\draw(-2,2)--(15,-2)--(15,-10);
\draw(-1,2) node[above]{$\cdots$};
\draw(0,2)--(-2.5,-2)--(-2.5,-10);
\draw(0.5,2) node[above]{$\bi^{\mu^{(i)}_{j\hspace{-0.4mm}-\hspace{-0.4mm}1}}$};
\draw(1,2)--(-1.5,-2)--(-1.5,-10);
\draw(2,2)--(16.5,-2)--(16.5,-10);
\draw(2.5,2) node[above]{$\bi^{\xi^{(i)}_{j\hspace{-0.4mm}-\hspace{-0.4mm}1}}$};
\draw(3,2)--(17.5,-2)--(17.5,-10);
\draw(4,2)--(-0.5,-2)--(-0.5,-10);
\draw(4.5,2) node[above]{$\bi^{\mu^{\hspace{-0.2mm}A\hspace{-0.2mm},\hspace{-0.2mm}1}}$};
\draw(5,2)--(0.5,-2)--(0.5,-10);
\draw(6,2)--(1.5,-2)--(3.5,-6)--(5.5,-10);
\draw(6.5,2) node[above]{$\bi^{\mu^{\hspace{-0.2mm}A\hspace{-0.2mm},\hspace{-0.2mm}2}}$};
\draw(7,2)--(2.5,-2)--(4.5,-6)--(6.5,-10);
\draw(8,2)--(3.5,-2)--(7.5,-6)--(7.5,-10);
\draw(8.5,2) node[above]{$\bi^{\mu^{\hspace{-0.2mm}A\hspace{-0.2mm},\hspace{-0.2mm}3}}$};
\draw(9,2)--(4.5,-2)--(8.5,-6)--(8.5,-10);
\draw(10,2)--(18.5,-2)--(18.5,-10);
\draw(10.5,2) node[above]{$\bi^{\xi^{\hspace{-0.2mm}A\hspace{-0.2mm},\hspace{-0.2mm}1}}$};
\draw(11,2)--(19.5,-2)--(19.5,-10);
\draw(12,2)--(20.5,-2)--(20.5,-10);
\draw(12.5,2) node[above]{$\bi^{\xi^{\hspace{-0.2mm}A\hspace{-0.2mm},\hspace{-0.2mm}2}}$};
\draw(13,2)--(21.5,-2)--(21.5,-10);
\draw(14,2)--(22.5,-2)--(22.5,-10);
\draw(14.5,2) node[above]{$\bi^{\xi^{\hspace{-0.2mm}A\hspace{-0.2mm},\hspace{-0.2mm}3}}$};
\draw(15,2)--(23.5,-2)--(23.5,-10);
\draw(16,2)--(5.5,-2)--(1.5,-6)--(1.5,-10);
\draw(16.5,2) node[above]{$\bi^{\mu_{\hspace{-0.2mm}A\hspace{-0.2mm},\hspace{-0.2mm}1}}$};
\draw(17,2)--(6.5,-2)--(2.5,-6)--(2.5,-10);
\draw(18,2)--(7.5,-2)--(5.5,-6)--(3.5,-10);
\draw(18.5,2) node[above]{$\bi^{\mu_{\hspace{-0.2mm}A\hspace{-0.2mm},\hspace{-0.2mm}2}}$};
\draw(19,2)--(8.5,-2)--(6.5,-6)--(4.5,-10);
\draw(20,2)--(9.5,-2)--(9.5,-10);
\draw(20.5,2) node[above]{$\bi^{\mu_{\hspace{-0.2mm}A\hspace{-0.2mm},\hspace{-0.2mm}3}}$};
\draw(21,2)--(10.5,-2)--(10.5,-10);
\draw(22,2)--(24.5,-2)--(24.5,-10);
\draw(22.5,2) node[above]{$\bi^{\xi^{(i)}_{j\hspace{-0.4mm}+\hspace{-0.4mm}1}}$};
\draw(23,2)--(25.5,-2)--(25.5,-10);
\draw(24,2) node[above]{$\cdots$};
\draw(25,2)--(12,-2)--(12,-10);
\draw(25.5,2) node[above]{$\bi^{\mu^{(l)}_n}$};
\draw(26,2)--(13,-2)--(13,-10);
\draw(27,2)--(27,-10);
\draw(27.5,2) node[above]{$\bi^{\xi^{(l)}_n}$};
\draw(28,2)--(28,-10);
\draw[color=white, opacity=1] (-6.2,-2)--(28.2,-2);
\draw[color=white, opacity=1] (-6.2,-6)--(28.2,-6);
\draw[color=white, opacity=1] (-6.2,-10)--(28.2,-10);
\draw(-3.2,-2) node{$\cdots$};
\draw(-3.2,-6) node{$\cdots$};
\draw(-3.2,-10) node{$\cdots$};
\draw(11.2,-6) node{$\cdots$};
\draw(11.2,-10) node{$\cdots$};
\draw(26.3,-2) node{$\cdots$};
\draw(26.3,-6) node{$\cdots$};
\draw(26.3,-10) node{$\cdots$};
\draw(15.8,-6) node{$\cdots$};
\draw(15.8,-10) node{$\cdots$};
\draw(-5.1,0) node[left]{$\psi^{\bmu\bxi}$};
\draw(-5.2,-4) node[left]{$\psi^{\TTT^{A_{\bmu}}}$};
\draw(-5.2,-8) node[left]{$\tau_u$};
\filldraw[color=green, fill=white]  (5,-8) circle (40pt);
\draw(5,-8) node{$\tau_u$};
\end{braid}
\end{align*}
is by Lemma \ref{movepast} equal to
\begin{align*}
\begin{braid}\tikzset{baseline=-25mm}
\draw [color=blue!50, fill=blue!50, opacity=0.5] (-5,2)--(-5,-18)--(-4,-18)--(-4,2)--cycle;
\draw [color=red!50, fill=red!50, opacity=0.5] (-3,2)--(-3,-10)--(-0.5,-14)--(14,-18)--(15,-18)--(0.5,-14)--(-2,-10)--(-2,2)--cycle;
\draw [color=blue!50, fill=blue!50, opacity=0.5] (0,2)--(0,-10)--(-2.5,-14)--(-2.5,-18)--(-1.5,-18)--(-1.5,-14)--(1,-10)--(1,2)--cycle;
\draw [color=red!50, fill=red!50, opacity=0.5] (2,2)--(2,-14)--(16.5,-18)--(17.5,-18)--(3,-14)--(3,2)--cycle;
\draw [color=blue!50, fill=blue!50, opacity=0.5] (4,2)--(4,-14)--(0,-18)--(1,-18)--(5,-14)--(5,2)--cycle;
\draw [color=blue!50, fill=blue!50, opacity=0.5] (6,2)--(6,-2)--(6,-6)--(8,-10)--(10,-14)--(6,-18)--(7,-18)--(11,-14)--(9,-10)--(7,-6)--(7,-2)--(7,2)--cycle;
\draw [color=blue!50, fill=blue!50, opacity=0.5] (8,2)--(8,-2)--(8,-6)--(12,-10)--(12,-14)--(8,-18)--(9,-18)--(13,-14)--(13,-10)--(9,-6)--(9,2)--cycle;
\draw [color=red!50, fill=red!50, opacity=0.5] (10,2)--(14,-2)--(18.5,-6)--(18.5,-18)--(19.5,-18)--(19.5,-6)--(15,-2)--(11,2)--cycle;
\draw [color=red!50, fill=red!50, opacity=0.5] (12,2)--(16,-2)--(20.5,-6)--(20.5,-18)--(21.5,-18)--(21.5,-6)--(17,-2)--(13,2)--cycle;
\draw [color=red!50, fill=red!50, opacity=0.5] (14,2)--(18,-2)--(22.5,-6)--(22.5,-18)--(23.5,-18)--(23.5,-6)--(19,-2)--(15,2)--cycle;
\draw [color=blue!50, fill=blue!50, opacity=0.5] (16,2)--(10,-2)--(10,-6)--(6,-10)--(6,-14)--(2,-18)--(3,-18)--(7,-14)--(7,-10)--(11,-6)--(11,-2)--(17,2)--cycle;
\draw [color=blue!50, fill=blue!50, opacity=0.5] (18,2)--(12,-2)--(12,-6)--(10,-10)--(8,-14)--(4,-18)--(5,-18)--(9,-14)--(11,-10)--(13,-6)--(13,-2)--(19,2)--cycle;
\draw [color=blue!50, fill=blue!50, opacity=0.5] (20,2)--(20,-2)--(14,-6)--(14,-14)--(10,-18)--(11,-18)--(15,-14)--(15,-6)--(21,-2)--(21,2)--cycle;
\draw [color=red!50, fill=red!50, opacity=0.5] (22,2)--(22,-2)--(24.5,-6)--(24.5,-18)--(25.5,-18)--(25.5,-6)--(23,-2)--(23,2)--cycle;
\draw [color=blue!50, fill=blue!50, opacity=0.5] (25,2)--(25,-2)--(16.5,-6)--(16.5,-14)--(12.3,-18)--(13.3,-18)--(17.5,-14)--(17.5,-6)--(26,-2)--(26,2)--cycle;
\draw [color=red!50, fill=red!50, opacity=0.5] (27,2)--(27,-18)--(28,-18)--(28,2)--cycle;
\draw(-5,2)--(-5,-18);
\draw(-4.5,2) node[above]{$\bi^{\mu^{(1)}_1}$};
\draw(-4,2)--(-4,-18);
\draw(-3,2)--(-3,-10)--(-0.5,-14)--(14,-18);
\draw(-2.5,2) node[above]{$\bi^{\xi^{(1)}_1}$};
\draw(-2,2)--(-2,-10)--(0.5,-14)--(15,-18);
\draw(-1,2) node[above]{$\cdots$};
\draw(0,2)--(0,-10)--(-2.5,-14)--(-2.5,-18);
\draw(0.5,2) node[above]{$\bi^{\mu^{(i)}_{j\hspace{-0.4mm}-\hspace{-0.4mm}1}}$};
\draw(1,2)--(1,-10)--(-1.5,-14)--(-1.5,-18);
\draw(2,2)--(2,-14)--(16.5,-18);
\draw(2.5,2) node[above]{$\bi^{\xi^{(i)}_{j\hspace{-0.4mm}-\hspace{-0.4mm}1}}$};
\draw(3,2)--(3,-14)--(17.5,-18);
\draw(4,2)--(4,-14)--(0,-18);
\draw(4.5,2) node[above]{$\bi^{\mu^{\hspace{-0.2mm}A\hspace{-0.2mm},\hspace{-0.2mm}1}}$};
\draw(5,2)--(5,-14)--(1,-18);
\draw(6,2)--(6,-2)--(6,-6)--(8,-10)--(10,-14)--(6,-18);
\draw(6.5,2) node[above]{$\bi^{\mu^{\hspace{-0.2mm}A\hspace{-0.2mm},\hspace{-0.2mm}2}}$};
\draw(7,2)--(7,-2)--(7,-6)--(9,-10)--(11,-14)--(7,-18);
\draw(8,2)--(8,-2)--(8,-6)--(12,-10)--(12,-14)--(8,-18);
\draw(8.5,2) node[above]{$\bi^{\mu^{\hspace{-0.2mm}A\hspace{-0.2mm},\hspace{-0.2mm}3}}$};
\draw(9,2)--(9,-2)--(9,-6)--(13,-10)--(13,-14)--(9,-18);
\draw(10,2)--(14,-2)--(18.5,-6)--(18.5,-18);
\draw(10.5,2) node[above]{$\bi^{\xi^{\hspace{-0.2mm}A\hspace{-0.2mm},\hspace{-0.2mm}1}}$};
\draw(11,2)--(15,-2)--(19.5,-6)--(19.5,-18);
\draw(12,2)--(16,-2)--(20.5,-6)--(20.5,-18);
\draw(12.5,2) node[above]{$\bi^{\xi^{\hspace{-0.2mm}A\hspace{-0.2mm},\hspace{-0.2mm}2}}$};
\draw(13,2)--(17,-2)--(21.5,-6)--(21.5,-18);
\draw(14,2)--(18,-2)--(22.5,-6)--(22.5,-18);
\draw(14.5,2) node[above]{$\bi^{\xi^{\hspace{-0.2mm}A\hspace{-0.2mm},\hspace{-0.2mm}3}}$};
\draw(15,2)--(19,-2)--(23.5,-6)--(23.5,-18);
\draw(16,2)--(10,-2)--(10,-6)--(6,-10)--(6,-14)--(2,-18);
\draw(16.5,2) node[above]{$\bi^{\mu_{\hspace{-0.2mm}A\hspace{-0.2mm},\hspace{-0.2mm}1}}$};
\draw(17,2)--(11,-2)--(11,-6)--(7,-10)--(7,-14)--(3,-18);
\draw(18,2)--(12,-2)--(12,-6)--(10,-10)--(8,-14)--(4,-18);
\draw(18.5,2) node[above]{$\bi^{\mu_{\hspace{-0.2mm}A\hspace{-0.2mm},\hspace{-0.2mm}2}}$};
\draw(19,2)--(13,-2)--(13,-6)--(11,-10)--(9,-14)--(5,-18);
\draw(20,2)--(20,-2)--(14,-6)--(14,-14)--(10,-18);
\draw(20.5,2) node[above]{$\bi^{\mu_{\hspace{-0.2mm}A\hspace{-0.2mm},\hspace{-0.2mm}3}}$};
\draw(21,2)--(21,-2)--(15,-6)--(15,-14)--(11,-18);
\draw(22,2)--(22,-2)--(24.5,-6)--(24.5,-18);
\draw(22.5,2) node[above]{$\bi^{\xi^{(i)}_{\hspace{-0.4mm}j+\hspace{-0.4mm}1}}$};
\draw(23,2)--(23,-2)--(25.5,-6)--(25.5,-18);
\draw(24,2) node[above]{$\cdots$};
\draw(25,2)--(25,-2)--(16.5,-6)--(16.5,-14)--(12.3,-18);
\draw(25.5,2) node[above]{$\bi^{\mu^{(l)}_n}$};
\draw(26,2)--(26,-2)--(17.5,-6)--(17.5,-14)--(13.3,-18);
\draw(27,2)--(27,-18);
\draw(27.5,2) node[above]{$\bi^{\xi^{(l)}_n}$};
\draw(28,2)--(28,-18);
\draw[color=white, opacity=1] (-6.2,-2)--(28.2,-2);
\draw[color=white, opacity=1] (-6.2,-6)--(28.2,-6);
\draw[color=white, opacity=1] (-6.2,-10)--(28.2,-10);
\draw[color=white, opacity=1] (-6.2,-14)--(28.2,-14);
\draw(-1,-2) node{$\cdots$};
\draw(-1,-6) node{$\cdots$};
\draw(-1,-10) node{$\cdots$};
\draw(24,-2) node{$\cdots$};
\draw(-3.2,-14) node{$\cdots$};
\draw(-3.2,-18) node{$\cdots$};
\draw(15.9,-6) node{$\cdots$};
\draw(15.9,-10) node{$\cdots$};
\draw(15.9,-14) node{$\cdots$};
\draw(26.3,-6) node{$\cdots$};
\draw(26.3,-10) node{$\cdots$};
\draw(26.3,-14) node{$\cdots$};
\draw(26.3,-18) node{$\cdots$};
\draw(-5.1,0) node[left]{$\psi^{R''}_{i,j}$};
\draw(-5.2,-4) node[left]{$\psi^{R'}_{i,j}$};
\draw(-5.2,-8) node[left]{$\psi^{\TTT^{A_{\bmu}}}[d_{i,j}]$};
\draw(-5.2,-12) node[left]{$\psi^L_{i,j}\tau_u[d_{i,j}]$};
\draw(-5.2,-16) node[left]{$\psi^D_{i,j}$};
\filldraw[color=green, fill=white]  (9.5,-12) circle (40pt);
\draw(9.5,-12) node {$\tau_u[d_{i\hspace{-0.3mm},\hspace{-0.3mm}j}]$};
\end{braid}
\end{align*}
which, after an isotopy of strands, becomes the right side in the lemma statement:
\begin{align*}
\begin{braid}\tikzset{baseline=-20mm}
\draw [color=blue!50, fill=blue!50, opacity=0.5] (-5,2)--(-5,-18)--(-4,-18)--(-4,2)--cycle;
\draw [color=red!50, fill=red!50, opacity=0.5] (-3,2)--(-3,-10)--(-0.5,-14)--(14,-18)--(15,-18)--(0.5,-14)--(-2,-10)--(-2,2)--cycle;
\draw [color=blue!50, fill=blue!50, opacity=0.5] (0,2)--(0,-10)--(-2.5,-14)--(-2.5,-18)--(-1.5,-18)--(-1.5,-14)--(1,-10)--(1,2)--cycle;
\draw [color=red!50, fill=red!50, opacity=0.5] (2,2)--(2,-14)--(16.5,-18)--(17.5,-18)--(3,-14)--(3,2)--cycle;
\draw [color=blue!50, fill=blue!50, opacity=0.5] (4,2)--(4,-14)--(0,-18)--(1,-18)--(5,-14)--(5,2)--cycle;
\draw [color=blue!50, fill=blue!50, opacity=0.5] (6,2)--(8,-2)--(8,-6)--(10,-10)--(10,-14)--(6,-18)--(7,-18)--(11,-14)--(11,-10)--(9,-6)--(9,-2)--(7,2)--cycle;
\draw [color=blue!50, fill=blue!50, opacity=0.5] (8,2)--(10,-2)--(12,-6)--(12,-10)--(12,-14)--(8,-18)--(9,-18)--(13,-14)--(13,-10)--(13,-6)--(11,-2)--(9,2)--cycle;
\draw [color=red!50, fill=red!50, opacity=0.5] (10,2)--(12,-2)--(14,-6)--(14,-10)--(18.5,-14)--(18.5,-18)--(19.5,-18)--(19.5,-14)--(15,-10)--(15,-6)--(13,-2)--(11,2)--cycle;
\draw [color=red!50, fill=red!50, opacity=0.5] (12,2)--(14,-2)--(16,-6)--(16,-10)--(20.5,-14)--(20.5,-18)--(21.5,-18)--(21.5,-14)--(17,-10)--(17,-6)--(15,-2)--(13,2)--cycle;
\draw [color=red!50, fill=red!50, opacity=0.5] (14,2)--(18,-2)--(18,-10)--(22.5,-14)--(22.5,-18)--(23.5,-18)--(23.5,-14)--(19,-10)--(19,-2)--(15,2)--cycle;
\draw [color=blue!50, fill=blue!50, opacity=0.5] (16,2)--(6,-2)--(6,-10)--(6,-14)--(2,-18)--(3,-18)--(7,-14)--(7,-10)--(7,-2)--(17,2)--cycle;
\draw [color=blue!50, fill=blue!50, opacity=0.5] (18,2)--(16,-2)--(10,-6)--(8,-10)--(8,-14)--(4,-18)--(5,-18)--(9,-14)--(9,-10)--(11,-6)--(17,-2)--(19,2)--cycle;
\draw [color=blue!50, fill=blue!50, opacity=0.5] (20,2)--(20,-10)--(14,-14)--(10,-18)--(11,-18)--(15,-14)--(21,-10)--(21,2)--cycle;
\draw [color=red!50, fill=red!50, opacity=0.5] (22,2)--(22,-10)--(24.5,-14)--(24.5,-18)--(25.5,-18)--(25.5,-14)--(23,-10)--(23,2)--cycle;
\draw [color=blue!50, fill=blue!50, opacity=0.5] (25,2)--(25,-10)--(16.5,-14)--(12.3,-18)--(13.3,-18)--(17.5,-14)--(26,-10)--(26,2)--cycle;
\draw [color=red!50, fill=red!50, opacity=0.5] (27,2)--(27,-18)--(28,-18)--(28,2)--cycle;
\draw(-5,2)--(-5,-18);
\draw(-4.5,2) node[above]{$\bi^{\mu^{(1)}_1}$};
\draw(-4,2)--(-4,-18);
\draw(-3,2)--(-3,-10)--(-0.5,-14)--(14,-18);
\draw(-2.5,2) node[above]{$\bi^{\xi^{(1)}_1}$};
\draw(-2,2)--(-2,-10)--(0.5,-14)--(15,-18);
\draw(-1,2) node[above]{$\cdots$};
\draw(0,2)--(0,-10)--(-2.5,-14)--(-2.5,-18);
\draw(0.5,2) node[above]{$\bi^{\mu^{(i)}_{j\hspace{-0.4mm}-\hspace{-0.4mm}1}}$};
\draw(1,2)--(1,-10)--(-1.5,-14)--(-1.5,-18);
\draw(2,2)--(2,-14)--(16.5,-18);
\draw(2.5,2) node[above]{$\bi^{\xi^{(i)}_{j\hspace{-0.4mm}-\hspace{-0.4mm}1}}$};
\draw(3,2)--(3,-14)--(17.5,-18);
\draw(4,2)--(4,-14)--(0,-18);
\draw(4.5,2) node[above]{$\bi^{\mu^{\hspace{-0.2mm}A\hspace{-0.2mm},\hspace{-0.2mm}1}}$};
\draw(5,2)--(5,-14)--(1,-18);
\draw(6,2)--(8,-2)--(8,-6)--(10,-10)--(10,-14)--(6,-18);
\draw(6.5,2) node[above]{$\bi^{\mu^{\hspace{-0.2mm}A\hspace{-0.2mm},\hspace{-0.2mm}2}}$};
\draw(7,2)--(9,-2)--(9,-6)--(11,-10)--(11,-14)--(7,-18);
\draw(8,2)--(10,-2)--(12,-6)--(12,-10)--(12,-14)--(8,-18);
\draw(8.5,2) node[above]{$\bi^{\mu^{\hspace{-0.2mm}A\hspace{-0.2mm},\hspace{-0.2mm}3}}$};
\draw(9,2)--(11,-2)--(13,-6)--(13,-10)--(13,-14)--(9,-18);
\draw(10,2)--(12,-2)--(14,-6)--(14,-10)--(18.5,-14)--(18.5,-18);
\draw(10.5,2) node[above]{$\bi^{\xi^{\hspace{-0.2mm}A\hspace{-0.2mm},\hspace{-0.2mm}1}}$};
\draw(11,2)--(13,-2)--(15,-6)--(15,-10)--(19.5,-14)--(19.5,-18);
\draw(12,2)--(14,-2)--(16,-6)--(16,-10)--(20.5,-14)--(20.5,-18);
\draw(12.5,2) node[above]{$\bi^{\xi^{\hspace{-0.2mm}A\hspace{-0.2mm},\hspace{-0.2mm}2}}$};
\draw(13,2)--(15,-2)--(17,-6)--(17,-10)--(21.5,-14)--(21.5,-18);
\draw(14,2)--(18,-2)--(18,-10)--(22.5,-14)--(22.5,-18);
\draw(14.5,2) node[above]{$\bi^{\xi^{\hspace{-0.2mm}A\hspace{-0.2mm},\hspace{-0.2mm}3}}$};
\draw(15,2)--(19,-2)--(19,-10)--(23.5,-14)--(23.5,-18);
\draw(16,2)--(6,-2)--(6,-10)--(6,-14)--(2,-18);
\draw(16.5,2) node[above]{$\bi^{\mu_{\hspace{-0.2mm}A\hspace{-0.2mm},\hspace{-0.2mm}1}}$};
\draw(17,2)--(7,-2)--(7,-10)--(7,-14)--(3,-18);
\draw(18,2)--(16,-2)--(10,-6)--(8,-10)--(8,-14)--(4,-18);
\draw(18.5,2) node[above]{$\bi^{\mu_{\hspace{-0.2mm}A\hspace{-0.2mm},\hspace{-0.2mm}2}}$};
\draw(19,2)--(17,-2)--(11,-6)--(9,-10)--(9,-14)--(5,-18);
\draw(20,2)--(20,-10)--(14,-14)--(10,-18);
\draw(20.5,2) node[above]{$\bi^{\mu_{\hspace{-0.2mm}A\hspace{-0.2mm},\hspace{-0.2mm}3}}$};
\draw(21,2)--(21,-10)--(15,-14)--(11,-18);
\draw(22,2)--(22,-10)--(24.5,-14)--(24.5,-18);
\draw(22.5,2) node[above]{$\bi^{\xi^{(i)}_{j\hspace{-0.4mm}+\hspace{-0.4mm}1}}$};
\draw(23,2)--(23,-10)--(25.5,-14)--(25.5,-18);
\draw(24,2) node[above]{$\cdots$};
\draw(25,2)--(25,-10)--(16.5,-14)--(12.3,-18);
\draw(25.5,2) node[above]{$\bi^{\mu^{(l)}_n}$};
\draw(26,2)--(26,-10)--(17.5,-14)--(13.3,-18);
\draw(27,2)--(27,-18);
\draw(27.5,2) node[above]{$\bi^{\xi^{(l)}_n}$};
\draw(28,2)--(28,-18);
\draw[color=white, opacity=1] (-6.2,-2)--(28.2,-2);
\draw[color=white, opacity=1] (-6.2,-6)--(28.2,-6);
\draw[color=white, opacity=1] (-6.2,-10)--(28.2,-10);
\draw[color=white, opacity=1] (-6.2,-14)--(28.2,-14);
\draw(-1,-2) node{$\cdots$};
\draw(-1,-6) node{$\cdots$};
\draw(-1,-10) node{$\cdots$};
\draw(24,-2) node{$\cdots$};
\draw(24,-6) node{$\cdots$};
\draw(24,-10) node{$\cdots$};
\draw(-3.2,-14) node{$\cdots$};
\draw(-3.2,-18) node{$\cdots$};
\draw(26.3,-14) node{$\cdots$};
\draw(26.3,-18) node{$\cdots$};
\draw(-5.1,0) node[left]{$\psi^{\TTT^{A_{\blam}}}$};
\draw(-5.2,-4) node[left]{$\psi_{\omega_2}$};
\draw(-5.2,-8) node[left]{$\tau_u[d_{i,j}]$};
\draw(-5.2,-12) node[left]{$\psi^L_{i,j}\psi^{R'}_{i,j}$};
\draw(-5.2,-16) node[left]{$\psi^D_{i,j}$};
\filldraw[color=green, fill=white]  (9.5,-8) circle (40pt);
\draw(9.5,-8) node {$\tau_u[d_{i\hspace{-0.3mm},\hspace{-0.3mm}j}]$};
\end{braid}
\end{align*}
completing the proof.
\end{proof}
\begin{lem}\label{fAzero}
Let \(A_{\bmu}\) be a Garnir node of \(\bmu\). Then \(f(g^{A_{\bmu}}\otimes 1_{\beta}) = 0\).
\end{lem}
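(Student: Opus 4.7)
The plan is to deduce the vanishing of $f(g^{A_{\bmu}} \otimes 1_\beta)$ from the Garnir relation $g^{A_{\blam}} z^{\blam} = 0$ already holding in $S^{\blam}$. First, by definition $f(g^{A_{\bmu}} \otimes 1_{\beta}) = g^{A_{\bmu}} \cdot x^{\bmu\bxi}$, which equals $g^{A_{\bmu}} \psi^{\bmu\bxi} z^{\blam}$ modulo $C_{\triangleright\bmu}$. Expanding $g^{A_{\bmu}} = \sum_{u \in \mathscr{D}^{A_{\bmu}}} \tau_u^{A_{\bmu}} \psi^{\TTT^{A_{\bmu}}}$ and applying Lemma \ref{tauspast} to each summand gives
\begin{align*}
f(g^{A_{\bmu}} \otimes 1_\beta) \equiv \psi^D_{i,j}\psi^L_{i,j}\psi^{R'}_{i,j} \Bigl(\sum_{u \in \mathscr{D}^{A_{\bmu}}} \tau_u^{A_{\bmu}}[d_{i,j}]\Bigr) \psi_{\omega_2}\, \psi^{\TTT^{A_{\blam}}} z^{\blam} \pmod{C_{\triangleright \bmu}}.
\end{align*}

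Next I would bring in the Garnir relation $g^{A_{\blam}} z^{\blam} = 0$: multiplying on the left by $\psi^D_{i,j}\psi^L_{i,j}\psi^{R'}_{i,j}$ and splitting the sum along $\mathscr{D}^{A_{\blam}} = \widehat{\mathscr{D}}^{A_{\blam}} \sqcup (\mathscr{D}^{A_{\blam}} \setminus \widehat{\mathscr{D}}^{A_{\blam}})$, every $w$ outside $\widehat{\mathscr{D}}^{A_{\blam}}$ has some $\epsilon^2_k = 0$; expanding $\tau_w^{A_\blam}$ as a sum of $\sigma$-products forces the same zero exponent in every summand, so Lemma \ref{tausurvive} puts each such contribution in $C_{\triangleright\bmu}$. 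Thus
\begin{align*}
\psi^D_{i,j}\psi^L_{i,j}\psi^{R'}_{i,j} \sum_{w \in \widehat{\mathscr{D}}^{A_{\blam}}} \tau_w^{A_{\blam}} \psi^{\TTT^{A_{\blam}}} z^{\blam} \equiv 0 \pmod{C_{\triangleright \bmu}}.
\end{align*}

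Using the bijection $u \mapsto u[d_{i,j}]\omega_2$ between $\mathscr{D}^{A_{\bmu}}$ and $\widehat{\mathscr{D}}^{A_{\blam}}$, together with the factorisation $\tau_{u[d_{i,j}]\omega_2}^{A_{\blam}} = \tau_u^{A_{\bmu}}[d_{i,j}]\, \tau_{\omega_2}^{A_{\blam}}$ coming from reading the reduced brick-expression off concatenation, this rewrites as
\begin{align*}
\psi^D_{i,j}\psi^L_{i,j}\psi^{R'}_{i,j} \Bigl(\sum_{u \in \mathscr{D}^{A_{\bmu}}} \tau_u^{A_{\bmu}}[d_{i,j}]\Bigr)\, \tau_{\omega_2}^{A_{\blam}}\, \psi^{\TTT^{A_{\blam}}} z^{\blam} \equiv 0 \pmod{C_{\triangleright \bmu}}.
\end{align*}
Subtracting this from the earlier expression for $f(g^{A_\bmu} \otimes 1_\beta)$, the difference is
\begin{align*}
\psi^D_{i,j}\psi^L_{i,j}\psi^{R'}_{i,j} \Bigl(\sum_u \tau_u^{A_{\bmu}}[d_{i,j}]\Bigr)(\tau_{\omega_2}^{A_{\blam}} - \psi_{\omega_2})\, \psi^{\TTT^{A_{\blam}}} z^{\blam},
\end{align*}
and expanding $\tau_{\omega_2}^{A_{\blam}} - \psi_{\omega_2} = \sum_{S \subsetneq \{1,\dots,n_2\}} \prod_{k \in S}\sigma_{r^2_k}^{A_{\blam}}$, every summand has at least one missing $\delta^2_k = 0$, so one last appeal to Lemma \ref{tausurvive} puts the whole difference in $C_{\triangleright\bmu}$. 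This gives $f(g^{A_\bmu} \otimes 1_\beta) = 0$ in $N_{\bmu}$.

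The main obstacle is the factorisation $\tau_{u[d_{i,j}]\omega_2}^{A_{\blam}} = \tau_u^{A_{\bmu}}[d_{i,j}]\, \tau_{\omega_2}^{A_{\blam}}$: one must check that the concatenation of reduced expressions for $u[d_{i,j}]$ and $\omega_2$ in $\mathfrak{S}^{A_{\blam}}$ remains reduced, and that the shifted brick generators $w_r^{A_{\bmu}}[d_{i,j}]$ genuinely coincide with brick generators of $\mathfrak{S}^{A_{\blam}}$. This is purely combinatorial bookkeeping from the decomposition of $w^{\GGG^{A_{\blam}}}$ and the subdivisions $\mu^{A,i}, \xi^{A,i}, \mu_{A,i}$ given above Lemma \ref{tausurvive}; once it is in place, the argument reduces to two applications of Lemma \ref{tausurvive}.
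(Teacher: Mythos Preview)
Your proposal is correct and follows essentially the same route as the paper: apply Lemma~\ref{tauspast}, then use the bijection $\mathscr{D}^{A_{\bmu}}\leftrightarrow\widehat{\mathscr{D}}^{A_{\blam}}$ together with Lemma~\ref{tausurvive} to identify the resulting expression with $\psi^D_{i,j}\psi^L_{i,j}\psi^{R'}_{i,j}\,g^{A_{\blam}}x^{\blam}=0$. The only difference is organizational---the paper passes directly from $\sum_{u}\tau_u[d_{i,j}]\psi_{\omega_2}$ to $\sum_{w\in\mathscr{D}^{A_\blam}}\tau_w$ (modulo $C_{\triangleright\bmu}$) using the discussion preceding Lemma~\ref{tauspast}, whereas you reach the same conclusion by subtraction; both rest on the same two invocations of Lemma~\ref{tausurvive}.
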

\begin{proof}
We make use of Lemma \ref{tauspast} and the bijection between \(\mathscr{D}^{A_{\bmu}}\) and \(\widehat{\mathscr{D}}^{A_{\blam}}\):
\begin{align*}
f(g^{A_{\bmu}}\otimes 1_{\beta})&=g^{A_{\bmu}} \cdot x^{\bmu\bxi} = 
\left( \sum_{u \in \mathscr{D}^{A_{\bmu}}} \tau_u \psi^{\TTT^{A_{\bmu}}}\right)\psi^{\bmu\bxi}x^{\blam}\\
&=\sum_{u \in \mathscr{D}^{A_{\bmu}}} \psi^D_{i,j} \psi^L_{i,j} \psi^{R'}_{i,j} \tau_u [d_{i,j}] \psi_{\omega_2} \psi^{\TTT^{A_{\blam}}} x^{\blam}\\
&= \psi^D_{i,j} \psi^L_{i,j} \psi^{R'}_{i,j} \sum_{u \in \mathscr{D}^{A_{\bmu}}} \tau_u [d_{i,j}] \psi_{\omega_2} \psi^{\TTT^{A_{\blam}}} x^{\blam}\\
&= \psi^D_{i,j} \psi^L_{i,j} \psi^{R'}_{i,j} \sum_{w \in \mathscr{D}^{A_{\blam}}} \tau_w \psi^{\TTT^{A_{\blam}}} x^{\blam}\\
&= \psi^D_{i,j} \psi^L_{i,j} \psi^{R'}_{i,j}  g^{A_{\blam}} x^{\blam}\\
&=0.
\end{align*}
\end{proof}
\begin{remark}\label{fxi}
Although we have focused on Garnir nodes in \(\bmu\), there are obvious analogues (whose proofs are entirely analogous) of Lemmas \ref{movepast}, \ref{tausurvive}, and \ref{tauspast}, which imply the analogue of Proposition \ref{fAzero}: 
\begin{align*}
f(1_{\alpha} \otimes g^{A_{\bxi}})=0
\end{align*}
 for Garnir nodes in \(\bxi\).
\end{remark}
\begin{prop}\label{isom}
The map \(f:R_{\alpha,\beta} \langle \deg \TTT^{\bmu} + \deg \ttt^{\bxi} \rangle \to N_{\bmu}\) induces a graded isomorphism 
\begin{align*}
f:S^{\bmu} \boxtimes S^{\bxi} \xrightarrow{\sim} N_{\bmu}.
\end{align*}
\end{prop}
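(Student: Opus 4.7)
\emph{Proof plan.} First, I would combine Proposition~\ref{kerf} with Lemma~\ref{fAzero} and its $\bxi$-analogue from Remark~\ref{fxi} to conclude that
\[
\ker f \;\supseteq\; J_\alpha^{\bmu}\otimes R_\beta \;+\; R_\alpha\otimes J_\beta^{\bxi}.
\]
Consequently $f$ descends to a graded $R_{\alpha,\beta}$-homomorphism $\bar f\colon S^{\bmu}\boxtimes S^{\bxi}\to N_{\bmu}$ sending $z^{\bmu}\boxtimes z^{\bxi}\mapsto x^{\bmu\bxi}$. The grading shifts match because, by definition of the degree of a skew tableau, $\deg\TTT^{\bmu\bxi}=\deg\TTT^{\bmu}+\deg\ttt^{\bxi}$, which is precisely the degree of both the source generator and the target element.

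Next, to show that $\bar f$ is an isomorphism, I would compare spanning sets. By Theorem~\ref{youngbasis}, $S^{\bmu}$ has basis $\{v^{\TTT}\mid\TTT\in\St(\bmu)\}$; by the spanning proposition of Section~\ref{skewspechtsec}, $\{v^{\sss}\mid\sss\in\St(\bxi)\}$ spans $S^{\bxi}$; so $\{v^{\TTT}\boxtimes v^{\sss}\}$ spans $S^{\bmu}\boxtimes S^{\bxi}$, while $\{x^{\TTT\sss}\}$ is an $\mathcal{O}$-basis of $N_{\bmu}$ indexed by the same set. The pivotal calculation is
\[
\bar f\bigl(v^{\TTT}\boxtimes v^{\sss}\bigr) \;=\; \psi^{\TTT}\,\psi_{\widehat{w^{\sss}}}\,x^{\bmu\bxi},
\]
where $\widehat{w^{\sss}}$ is the right-side shift from Lemma~\ref{skeworder}. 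A direct inversion count (sorting pairs of entries into those both $\le a$, both $>a$, and mixed) gives $\ell(w^{\TTT\sss})=\ell(w^{\TTT})+\ell(w^{\sss})+\ell(w^{\bmu\bxi})$, so the concatenation $\psi^{\TTT}\psi_{\widehat{w^{\sss}}}\psi^{\bmu\bxi}$ comes from a reduced expression for $w^{\TTT\sss}$. Applying Proposition~\ref{rewrite}(ii), using $y_r z^{\blam}=0$, and reducing modulo $C_{\triangleright\bmu}$ then yields
\[
\bar f\bigl(v^{\TTT}\boxtimes v^{\sss}\bigr) \;\equiv\; x^{\TTT\sss} \pmod{\mathcal{O}\{x^{\SSS\sss'}\mid \SSS\sss'\triangleleft \TTT\sss\}}.
\]

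Thus $\bar f$ maps the spanning set $\{v^{\TTT}\boxtimes v^{\sss}\}$ unitriangularly onto the basis $\{x^{\TTT\sss}\}$ of $N_{\bmu}$ with respect to the Bruhat order on $\St(\blam)$. Both surjectivity of $\bar f$ and linear independence of $\{v^{\TTT}\boxtimes v^{\sss}\}$ (and so injectivity of $\bar f$) follow at once. The main obstacle I anticipate is the bookkeeping for the unitriangularity claim: after rewriting $\psi^{\TTT}\psi_{\widehat{w^{\sss}}}\psi^{\bmu\bxi}z^{\blam}$ via Proposition~\ref{rewrite}(ii), one must trace each correction $\psi_u z^{\blam}$ with $u\triangleleft w^{\TTT\sss}$ through the expansion in the $v^{\UUU}$-basis of $S^{\blam}$ and check, using Lemma~\ref{ordershape} at $r=a$ together with Lemma~\ref{lowerin}, that only basis elements $x^{\SSS\sss'}$ with $\SSS\sss'\triangleleft\TTT\sss$ survive passage to $N_{\bmu}=C_{\bmu}/C_{\triangleright\bmu}$, the remaining terms being killed because they lie in $C_{\triangleright\bmu}$.
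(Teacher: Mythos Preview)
Your proposal is correct and follows essentially the same approach as the paper: factor $f$ through $S^{\bmu}\boxtimes S^{\bxi}$ using Proposition~\ref{kerf}, Lemma~\ref{fAzero}, and Remark~\ref{fxi}, then establish the isomorphism by showing $\bar f(v^{\TTT}\boxtimes v^{\sss})=x^{\TTT\sss}+(\text{lower terms})$ via the length additivity $\ell(w^{\TTT\sss})=\ell(w^{\TTT})+\ell(w^{\sss})+\ell(w^{\bmu\bxi})$ and Proposition~\ref{rewrite}(ii). The paper compresses your unitriangularity bookkeeping into a single appeal to Proposition~\ref{rewrite}, but the argument is the same.
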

\begin{proof}
We have that \(f\) factors through to a map 
\begin{align*}
R_{\alpha,\beta}/(J_{\alpha}^{\bmu} \otimes R_{\beta}  + R_{\alpha} \otimes J_{\beta}^{\bxi}) \langle \deg \TTT^{\bmu} + \deg \ttt^{\bxi} \rangle \to \hspace{-1mm} N_{\bmu}
\end{align*}
 by Lemmas \ref{kerf}, \ref{fAzero} and Remark \ref{fxi}. However, we also have
 \begin{align*}
 R_{\alpha,\beta}/(J_{\alpha}^{\bmu} \otimes R_{\beta}  + R_{\alpha} \otimes J_{\beta}^{\bxi}) \langle \deg \TTT^{\bmu} + \deg \ttt^{\bxi} \rangle
 \cong R_{\alpha}/J_{\alpha}^{\bmu} \langle \deg \TTT^{\bmu} \rangle \otimes R_{\beta}/J_{\beta}^{\bxi} \langle  \deg \ttt^{\bxi} \rangle = S^{\bmu} \boxtimes S^{\bxi}.
 \end{align*}
 Moreover, for all \(\TTT \in \St(\bmu)\), \(\ttt \in \St(\bxi)\), 
 \begin{align*}
f(v^\TTT \boxtimes v^\ttt) = f(\psi^\TTT v^{\bmu}\boxtimes \psi^\ttt v^{\bxi}) = \psi^\TTT \psi^\ttt[a] x^{\bmu \bxi} = \psi^\TTT \psi^\ttt[a] \psi^{\bmu \bxi} x^{\blam} = x^{\TTT\ttt} + \sum_{\UUU \triangleleft \TTT\ttt}d_\UUU x^\UUU
\end{align*}
for some constants \(d_\UUU\), by Lemma \ref{rewrite}. Since \(\{v^\TTT \boxtimes v^\ttt \mid \TTT \in \St(\bmu), \ttt \in \St(\bxi)\}\) is a spanning set for \(S^{\bmu} \boxtimes S^{\bxi}\) and \(\{x^{\TTT\ttt} \mid \TTT \in \St(\bmu), \ttt \in \St(\bxi)\}\) is a basis for \(N_{\bmu}\), it follows that \(f\) is an isomorphism.
\end{proof}


\subsection{A basis for \texorpdfstring{$S^{\blam/\bmu}$}{} and a filtration for 
\texorpdfstring{$\Res_{\alpha,\beta}S^{\blam}$}{}}
Proposition \ref{isom} in hand, we may now prove two theorems which complete the analogy with the definition (\ref{skewhom}) in the semisimple case, and justify our use of the term {\it skew Specht module} for \(S^{\blam/\bmu}\).
\begin{thm}\label{basisthm}
Let \(\blam/\bmu \in \mathscr{S}^{\kappa}_{\bmu,\alpha}\). Then \(S^{\blam/\bmu}\) has a homogeneous \(\mathcal{O}\)-basis
\begin{align}\label{Sbasis}
\{v^\ttt \mid \ttt \in \St(\blam/\bmu)\}.
\end{align}
\end{thm}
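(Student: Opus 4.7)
The preceding proposition already provides the spanning set $\{v^\ttt \mid \ttt \in \St(\blam/\bmu)\}$ together with the degree identity $\deg(v^\ttt) = \deg(\ttt)$, so all that remains is to establish $\mathcal{O}$-linear independence. The plan is to pull this independence from the known basis of $N_{\bmu}$ via the isomorphism of Proposition \ref{isom}.

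First I would observe that, by its very construction, $N_{\bmu} = C_{\bmu}/C_{\triangleright\bmu}$ carries the homogeneous $\mathcal{O}$-basis $\{x^{\TTT\ttt} \mid \TTT \in \St(\bmu),\, \ttt \in \St(\blam/\bmu)\}$, so it is a free $\mathcal{O}$-module of rank $|\St(\bmu)|\cdot|\St(\blam/\bmu)|$. Applying the graded isomorphism $f: S^{\bmu}\boxtimes S^{\blam/\bmu} \xrightarrow{\sim} N_{\bmu}$ transports this and shows that $S^{\bmu}\otimes_{\mathcal{O}} S^{\blam/\bmu}$ is also free of rank $|\St(\bmu)|\cdot|\St(\blam/\bmu)|$.

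Next I would invoke Theorem \ref{youngbasis}, which gives that $S^{\bmu}$ is itself $\mathcal{O}$-free of rank $|\St(\bmu)|$. Consequently, as $\mathcal{O}$-modules,
\begin{equation*}
S^{\bmu}\otimes_{\mathcal{O}} S^{\blam/\bmu} \,\cong\, (S^{\blam/\bmu})^{\oplus|\St(\bmu)|},
\end{equation*}
so the direct sum $(S^{\blam/\bmu})^{\oplus|\St(\bmu)|}$ must be free of rank $|\St(\bmu)|\cdot|\St(\blam/\bmu)|$. Setting $n := |\St(\blam/\bmu)|$, the spanning set furnishes a surjection $\pi : \mathcal{O}^n \twoheadrightarrow S^{\blam/\bmu}$ sending the standard basis to $\{v^\ttt\}_{\ttt\in\St(\blam/\bmu)}$, whose $|\St(\bmu)|$-fold direct sum $\pi^{\oplus|\St(\bmu)|}$ is a surjection between free $\mathcal{O}$-modules of equal rank $n|\St(\bmu)|$. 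Such a surjection is automatically an isomorphism (finitely generated free modules over a commutative ring are Hopfian), which forces $\ker(\pi)^{\oplus|\St(\bmu)|}=0$ and hence $\ker(\pi)=0$. Thus $\pi$ is an isomorphism, proving linear independence of $\{v^\ttt\}$.

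The conceptual heavy lifting was done in Proposition \ref{isom}, so there is no substantial obstacle here; the only delicate point is ensuring the rank count is valid over an arbitrary commutative ring $\mathcal{O}$ rather than a field, which is precisely what the Hopfian-property step accomplishes. Homogeneity of the basis is immediate from the already-known identity $\deg(v^\ttt)=\deg(\ttt)$.
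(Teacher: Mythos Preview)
Your argument is correct, and like the paper you draw linear independence from Proposition \ref{isom}; the only difference is in how you extract it. The paper's one-line proof implicitly uses the unitriangularity established inside the proof of Proposition \ref{isom}: since $f(v^{\TTT}\boxtimes v^{\ttt}) = x^{\TTT\ttt} + \sum_{\UUU \triangleleft \TTT\ttt} d_{\UUU}\,x^{\UUU}$ and $\{x^{\TTT\ttt}\}$ is a basis of $N_{\bmu}$, the elements $v^{\TTT}\boxtimes v^{\ttt}$ are immediately linearly independent, and fixing $\TTT = \TTT^{\bmu}$ gives independence of $\{v^{\ttt}\}$ directly. Your route---deducing that $(S^{\blam/\bmu})^{\oplus |\St(\bmu)|}$ is free of the right rank and then invoking the Hopfian property of finitely generated free modules---is a valid alternative that trades the explicit triangularity for a rank count; it is slightly less direct but has the virtue of using only the \emph{statement} of Proposition \ref{isom} rather than a detail from its proof.
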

\begin{proof}
By Proposition \ref{spanset}, the set (\ref{Sbasis}) spans \(S^{\blam/\bmu}\) over \(\mathcal{O}\), and the set is linearly independent by Proposition \ref{isom}.
\end{proof}
\begin{thm}\label{filtration}
Let \(\blam \in \mathscr{P}^\kappa_{\alpha + \beta}\). Let 
\(
\{\bmu_1, \ldots \bmu_k\} = \{ \bmu \in \mathscr{P}^\kappa_{\alpha} \mid \bmu \subset \blam\}
\)
and assume the labels are such that \(\bmu_i \triangleright \bmu_j \implies i <j\). Write 
\begin{align*}
V_i : = \sum_{j=1}^i C_{\bmu_j} = \mathcal{O} \left\{ v^\TTT \in S^{\blam} \mid \TTT \in \St(\blam), \textup{sh}(\TTT_{\leq a}) = \bmu_j \textup{ for some }j<i\right\}
\end{align*} 
for all \(i\). Then
\begin{align*}
0=V_0 \leq V_1 \leq V_2 \leq \cdots \leq V_k =  \Res_{\alpha,\beta} S^{\blam}
\end{align*}
is a graded filtration of \(\Res_{\alpha,\beta}S^{\blam}\) by \(R_{\alpha,\beta}\)-submodules, with subquotients
\begin{align*}
V_i/V_{i-1} \cong S^{\bmu_i} \boxtimes S^{\blam/\bmu_i}.
\end{align*}
\end{thm}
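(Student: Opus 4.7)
The strategy is to show that each $V_i$ is an $R_{\alpha,\beta}$-submodule, identify the successive quotients as $C_{\bmu_i}/C_{\triangleright \bmu_i}$, and then invoke Proposition \ref{isom}. First, each $C_{\bmu_j}$ is an $R_{\alpha,\beta}$-submodule of $\Res_{\alpha,\beta} S^{\blam}$ by Lemma \ref{submodules}, so the sums $V_i = \sum_{j=1}^{i} C_{\bmu_j}$ are also $R_{\alpha,\beta}$-submodules. Moreover, $\Res_{\alpha,\beta}S^{\blam}$ has $\mathcal{O}$-basis $\{v^\TTT \mid \TTT \in B\}$, and for each such $\TTT$ we have $\textup{cont}(\textup{sh}(\TTT_{\leq a}))=\alpha$, so $\textup{sh}(\TTT_{\leq a})=\bmu_j$ for some $j$; hence $v^\TTT \in C_{\bmu_j} \subseteq V_k$, giving $V_k = \Res_{\alpha,\beta}S^{\blam}$.

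Next I would establish the key identity $V_{i-1} \cap C_{\bmu_i} = C_{\triangleright \bmu_i}$. The inclusion $\supseteq$ is immediate: any $\bnu \in \mathscr{P}^\kappa_\alpha$ with $\bnu \subset \blam$ and $\bnu \triangleright \bmu_i$ equals some $\bmu_j$ with $j < i$ by the labeling convention, so $C_{\bnu} \subseteq V_{i-1}$; and $C_{\triangleright \bmu_i}\subseteq C_{\bmu_i}$ from the definitions. For $\subseteq$, one uses that $V_{i-1}$ has basis $\{v^\TTT \mid \textup{sh}(\TTT_{\leq a})=\bmu_j \textup{ for some } j < i\}$ while $C_{\bmu_i}$ has basis $\{v^\TTT \mid \textup{sh}(\TTT_{\leq a}) \trianglerighteq \bmu_i\}$; since both are subsets of the global basis $\{v^\TTT \mid \TTT \in B\}$ of $\Res_{\alpha,\beta} S^\blam$, their intersection is spanned by those $v^\TTT$ with $\textup{sh}(\TTT_{\leq a})=\bmu_j$ for some $j < i$ satisfying $\bmu_j \trianglerighteq \bmu_i$. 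As $j < i$ rules out $\bmu_j = \bmu_i$, we get $\bmu_j \triangleright \bmu_i$, so $\TTT \in B_{\triangleright \bmu_i}$ and $v^\TTT \in C_{\triangleright \bmu_i}$.

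Combining these with the second isomorphism theorem, the inclusion $C_{\bmu_i} \hookrightarrow V_i$ induces a graded $R_{\alpha,\beta}$-linear isomorphism
\[
C_{\bmu_i}/C_{\triangleright \bmu_i} \;=\; C_{\bmu_i}/(V_{i-1} \cap C_{\bmu_i}) \;\xrightarrow{\sim}\; (V_{i-1}+C_{\bmu_i})/V_{i-1} \;=\; V_i/V_{i-1},
\]
where surjectivity follows from $V_i = V_{i-1}+C_{\bmu_i}$. Composing with the isomorphism $N_{\bmu_i} = C_{\bmu_i}/C_{\triangleright \bmu_i} \xrightarrow{\sim} S^{\bmu_i}\boxtimes S^{\blam/\bmu_i}$ provided by Proposition \ref{isom} yields the claimed identification of subquotients.

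The real content of the theorem sits in Proposition \ref{isom}, whose proof is the lengthy technical heart of this section; granted that, the filtration theorem is essentially a bookkeeping exercise, the only subtle point being the compatibility between the chosen linear extension of dominance on $\{\bmu_j\}$ and the filtration steps, which is precisely what makes $V_{i-1} \cap C_{\bmu_i} = C_{\triangleright \bmu_i}$ hold.
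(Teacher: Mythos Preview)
Your proof is correct and follows essentially the same route as the paper: both arguments reduce to identifying $V_i/V_{i-1}$ with $N_{\bmu_i}=C_{\bmu_i}/C_{\triangleright\bmu_i}$ and then invoke Proposition~\ref{isom}. Your use of the second isomorphism theorem via the explicit computation $V_{i-1}\cap C_{\bmu_i}=C_{\triangleright\bmu_i}$ is a slightly cleaner packaging of what the paper does with its decomposition $V_i=C_{\bmu_i}+\sum_{j\leq i-1,\ \bmu_j\not\triangleright\bmu_i}C_{\bmu_j}$ and $V_{i-1}=C_{\triangleright\bmu_i}+\sum_{j\leq i-1,\ \bmu_j\not\triangleright\bmu_i}C_{\bmu_j}$, but the content is the same.
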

\begin{proof} The fact that \(V_k = \Res_{\alpha,\beta}S^{\blam}\) follows from the fact that \(B = \bigcup_{j=1}^k B_{\bmu_j}\) and \(\{v^\TTT \mid \TTT \in B\}\) is a basis for \(\Res_{\alpha,\beta}S^{\blam}\).
Since \(C_{\bmu_i} \geq C_{\bmu_j}\) if \(\bmu_j \trianglerighteq \bmu_i\), we have
\begin{align*}
V_i = \sum_{j=1}^i C_{\bmu_j} = C_{\bmu_i} \oplus \sum_{\substack{j\leq i-1\\\bmu_j \not \triangleright \bmu_i}}C_{\bmu_j}
\end{align*}
and
\begin{align*}
V_{i-1} = \sum_{j=1}^{i-1} C_{\bmu_j} = \sum_{\bmu_j \triangleright \bmu_i}C_{\bmu_j} \oplus \sum_{\substack{j\leq i-1\\\bmu_j \not \triangleright \bmu_i}}C_{\bmu_j} =
C_{\triangleright \bmu_i}\oplus \sum_{\substack{j\leq i-1\\\bmu_j \not \triangleright \bmu_i}}C_{\bmu_j},
\end{align*}
which implies that \(V_i/V_{i-1} \cong C_{\bmu_i}/C_{\triangleright \bmu_i} = N_{\bmu_i} \cong S^{\bmu_i} \boxtimes S^{\blam/\bmu_i}\).
\end{proof}

\begin{remark}\label{cycconnect}
Theorem \ref{filtration} may be compared with \cite[Theorem 3.1]{JP}, which gives a similar result for restrictions of classical Specht modules over the symmetric group algebra to Young subgroups. However, the connection between our skew Specht \(R_\alpha(\mathbb{F})\)-modules \(S^{\lambda/\mu}\) and the skew Specht \(\mathbb{F}\mathfrak{S}_n\)-module \(S_{\mathbb{F}\mathfrak{S}_n}^{\lambda/\mu}\) defined in \cite{JP} is not as direct as may be expected. Taking \(e=\ch\; \mathbb{F}\), it is shown in \cite{bk} that there exists a surjection \(R_n := \bigoplus_{\textup{ht}(\alpha)=n} R_\alpha \twoheadrightarrow R_n^{\Lambda_i} \cong \mathbb{F}\mathfrak{S}_n\).  Inflating \(S^{\lambda/\mu}_{\mathbb{F}\mathfrak{S}_n}\) along this map, we have an \(R_n\)-module \(\textup{infl}\, S^{\lambda/\mu}_{\mathbb{F}\mathfrak{S}_n}\), and truncating \(S^{\lambda/\mu}\) yields an \(\mathbb{F}\mathfrak{S}_n\)-module \(\textup{pr}\,S^{\lambda/\mu}\). However it is not the case that \(\textup{pr}\, S^{\lambda/\mu} \cong S^{\lambda/\mu}_{\mathbb{F}\mathfrak{S}_n}\) nor \(\textup{infl}\, S^{\lambda/\mu}_{\mathbb{F}\mathfrak{S}_n} \cong S^{\lambda/\mu}\) in general, though the (ungraded) dimensions do agree in the latter case, and both statements hold when \(\mu = \varnothing\) and \(\kappa=(i)\).

For an explicit example of this difference, take \(e=\ch\; \mathbb{F} >0\), and \(n>1\). Let 
\begin{align*}
\lambda &= (ne-e+1, ne-2e+2, ne-3e+3, \ldots, n), \\
\mu&= (ne-e, ne-2e+1, ne-3e+2, \ldots, n-1).
\end{align*}
Then \(\lambda/\mu\) consists of \(n\) disconnected nodes of some residue \(i\) (depending on \(\kappa\)). The \(\mathbb{F}\mathfrak{S}_n\)-module \(S_{\mathbb{F}\mathfrak{S}_n}^{\lambda/\mu}\) is isomorphic to the regular module  \({}_{ \mathbb{F}\mathfrak{S}_n}\mathbb{F}\mathfrak{S}_n\), and thus \(\textup{infl}\, S^{\lambda/\mu}_{\mathbb{F}\mathfrak{S}_n}\) is reducible. However, \(S^{\lambda/\mu}\) as defined in this paper is irreducible---in fact it is the unique irreducible \(R_{n\alpha_i}\)-module (up to grading shift), see \cite[\S2.2]{kl}. Moreover, \(\textup{pr}\,S^{\lambda/\mu}=0\) as \(\mathbb{F}\mathfrak{S}_n\) has no \(n!\)-dimensional irreducible modules.
\end{remark}

\subsection{Induction product of skew Specht modules}
The following theorem was proved in \cite[Theorem 8.2]{kmr} in the context of Young diagrams, but the proof is applicable with no significant alteration to the more general case of skew diagrams.
\begin{thm}\label{circprod}
Suppose that \(\blam/\bmu \in \mathscr{S}^\kappa_\alpha\). Then
\begin{align*}
S^{\blam/\bmu} \cong S^{\lambda^{(1)}/\mu^{(1)}} \circ \cdots \circ S^{\lambda^{(l)}/\mu^{(l)}} \langle d_{\blam/\bmu} \rangle,
\end{align*}
as graded \(R_\alpha\)-modules, where
\begin{align*}
d_{\blam/\bmu}= \deg(\ttt^{\blam/\bmu}) - \deg(\ttt^{\lambda^{(1)}/\mu^{(1)}}) - \cdots - \deg(\ttt^{\lambda^{(l)}/\mu^{(l)}}).
\end{align*}
\end{thm}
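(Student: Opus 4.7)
The plan is to construct an explicit surjection from the induction product to $S^{\blam/\bmu}$ that sends the tensor of highest-weight generators to $z^{\blam/\bmu}$, and then invoke the basis theorem (Theorem~\ref{basisthm}) to promote it to an isomorphism by a graded-dimension count.

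First, I would use the adjunction $(\Ind_{\alpha_1,\ldots,\alpha_l},\Res_{\alpha_1,\ldots,\alpha_l})$, where $\alpha_i = \textup{cont}(\lambda^{(i)}/\mu^{(i)})$, to reduce the problem to constructing a homogeneous $R_{\alpha_1,\ldots,\alpha_l}$-module map
\[\widehat{\Phi}: S^{\lambda^{(1)}/\mu^{(1)}}\boxtimes\cdots\boxtimes S^{\lambda^{(l)}/\mu^{(l)}}\longrightarrow \Res_{\alpha_1,\ldots,\alpha_l} S^{\blam/\bmu}\]
that sends $z^{\lambda^{(1)}/\mu^{(1)}}\boxtimes\cdots\boxtimes z^{\lambda^{(l)}/\mu^{(l)}}$ to $z^{\blam/\bmu}$. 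The degree shift $\langle d_{\blam/\bmu}\rangle$ is forced by the formula $\deg(z^{\blam/\bmu}) = \deg(\ttt^{\blam/\bmu})$ versus $\deg(z^{\lambda^{(i)}/\mu^{(i)}}) = \deg(\ttt^{\lambda^{(i)}/\mu^{(i)}})$ in each factor. The adjunction then produces the desired map
\[\Phi:S^{\lambda^{(1)}/\mu^{(1)}}\circ\cdots\circ S^{\lambda^{(l)}/\mu^{(l)}}\langle d_{\blam/\bmu}\rangle\longrightarrow S^{\blam/\bmu}.\]

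To show $\widehat{\Phi}$ is well-defined, I would check that the defining relations of each $S^{\lambda^{(i)}/\mu^{(i)}}$ (Definition~\ref{Sdef}, items (i)--(iv)) are satisfied by the relevant shifted generators acting on $z^{\blam/\bmu}$. Under the embedding $R_{\alpha_1,\ldots,\alpha_l}\hookrightarrow R_\alpha$, the $r$th generator in the $i$th factor maps to the generator indexed by $r+c_i$ with $c_i = \sum_{j<i}\textup{ht}(\alpha_j)$. The idempotent relation (i) is immediate from concatenation of residue sequences; (ii) follows since $y_s z^{\blam/\bmu}=0$ for every $s$; (iii) holds because the numbering convention in $\ttt^{\blam/\bmu}$ makes the entries of $\lambda^{(i)}/\mu^{(i)}$ consecutive, so an arrow $r\rightarrow_{\ttt^{\lambda^{(i)}/\mu^{(i)}}} r+1$ is exactly an arrow $r+c_i\rightarrow_{\ttt^{\blam/\bmu}} r+c_i+1$. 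For the Garnir relations (iv), the key observation is that a Garnir node $A$ of $\lambda^{(i)}/\mu^{(i)}$ is also a Garnir node of $\blam/\bmu$, and the brick structure together with the Garnir element transports compatibly under the shift, so the Garnir element in $R_{\alpha_i}$ maps to the corresponding Garnir element in $R_\alpha$ (acting on $z^{\blam/\bmu}$ as zero).

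Surjectivity of $\Phi$ is immediate since $z^{\blam/\bmu}$ lies in the image and generates $S^{\blam/\bmu}$. For injectivity, I would compare graded dimensions via Theorem~\ref{basisthm}. On the one hand, $\dim_q S^{\blam/\bmu} = \sum_{\ttt\in\St(\blam/\bmu)}q^{\deg\ttt}$. On the other hand, from the standard freeness of induction over a tensor product of subalgebras and the basis for each factor, the graded dimension of the induction product (with its shift) can be expanded as a quantum-shuffle sum of the per-component generating functions. The combinatorial identity one needs is that a standard $\blam/\bmu$-tableau is uniquely a shuffle of standard $\lambda^{(i)}/\mu^{(i)}$-tableaux, and that the degree behaves additively in exactly the manner encoded by $d_{\blam/\bmu}$; this is a direct consequence of the inductive definition of $\deg\ttt$ together with Lemma~\ref{degmatch}. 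Matching graded dimensions then upgrades the surjection to an isomorphism.

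The main obstacle is the graded-dimension bookkeeping in the final step: one must verify that the degree contributions from the shuffle positions in the induction product exactly match the degree contributions in the definition of $\deg\ttt$ for $\ttt\in\St(\blam/\bmu)$, tracking the constant shift $d_{\blam/\bmu}$. This is essentially a combinatorial exercise running parallel to the argument of \cite[Theorem 8.2]{kmr} in the Young-diagram case, where the shift $d_{\blam/\bmu}$ is unaffected by the inner shape $\bmu$ and therefore the original argument carries over with only cosmetic modifications to the indexing.
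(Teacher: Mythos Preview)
Your proposal is correct and is essentially the approach of \cite[Theorem~8.2]{kmr} that the paper invokes: build the map via adjunction by checking that the generators and relations (i)--(iv) of each component are sent to the corresponding (shifted) relations of $S^{\blam/\bmu}$, observe surjectivity since $z^{\blam/\bmu}$ is hit, and conclude isomorphism by the graded-dimension count coming from Theorem~\ref{basisthm} together with Lemma~\ref{degmatch}. The only remark is that your phrase ``the shift $d_{\blam/\bmu}$ is unaffected by the inner shape $\bmu$'' is not literally true (the constituent degrees do depend on $\bmu$), but this is irrelevant to the argument: what matters is the identity $\deg\ttt = \deg(\psi_w 1_{\bi(\ttt_1)\cdots\bi(\ttt_l)}) + \sum_i \deg\ttt_i + d_{\blam/\bmu}$ for the shuffle decomposition $w^\ttt = w\cdot(w^{\ttt_1}\times\cdots\times w^{\ttt_l})$, which follows formally from Lemma~\ref{degmatch} and holds for skew shapes just as for straight ones.
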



\section{Joinable diagrams}\label{joinablesec}
\noindent In this section we present a useful, albeit rather technical, result regarding the graded characters of skew Specht modules whose associated component diagrams jibe with each other in a specific sense. This result, together with Theorem \ref{circprod}, will make it possible for us to identify cuspidal modules in \S\ref{cuspidalmodsec} while operating solely at the level of characters.
\begin{definition} Let \(l=2\), \(\kappa = (k_1, k_2)\), and \(\blam = (\lambda^{(1)}, \lambda^{(2)}) \in \mathscr{P}^\kappa\). Write \(x_i:=n(\blam,i)\), and \(y_i := \lambda^{(i)}_1\). If \((x_1,1,1)\) (the bottom left node in \(\lambda^{(1)}\)) and \((1,y_2,2)\) (the top right node in \(\lambda^{(2)}\)) are such that
\(
\textup{res}(x_1,1,1) = \textup{res}(1,y_2,2) + 1,
\)
we call \(\blam\) {\it joinable}. 
\end{definition}
In this section we will assume that \(\blam\) is joinable. Define the one-part multicharges \(\kappa^*:= (k_2 + x_1)\) and \(\kappa_*:=(k_2+x_1-1)\). We now define \(\lambda^*/\mu^* \in \mathscr{S}^{\kappa^*}\) and \(\lambda_*/\mu_* \in \mathscr{S}^{\kappa_*}\) by setting:
\begin{align*}
\lambda^{*}:=(\lambda^{(1)} + y_2 -1, \ldots, \lambda^{(1)}_{x_1} + y_2-1, \lambda_1^{(2)}, \ldots, \lambda^{(2)}_{x_2}),  \hspace{10mm}\mu^{*}:=((y_2-1)^{x_1}),
\end{align*}
and
\begin{align*}
\lambda_{*}:=(\lambda^{(1)} + y_2 , \ldots, \lambda^{(1)}_{x_1} + y_2, \lambda_2^{(2)}, \ldots, \lambda^{(2)}_{x_2}),  \hspace{10mm}
\mu_{*}:=(y_2^{x_1-1}).
\end{align*}
In other words, \(\lambda^*/\mu^*\) is achieved by shifting the Young diagram associated with \(\lambda^{(1)}\) until its bottom-left node lies directly above the top-right node of \(\lambda^{(2)}\), and then viewing this as a one-part skew diagram. Similarly, \(\lambda_*/\mu_*\) is achieved by shifting the Young diagram associated with \(\lambda^{(1)}\) until its bottom-left node lies directly to the right of the top-right node of \(\lambda^{(2)}\). 

There is an obvious bijection \(\tau^*\) (resp. \(\tau_*\)) between nodes of \(\blam\) and \(\lambda^*/\mu^*\) (resp. \(\lambda_*/\mu_*\)), given by
\begin{center}
\begin{tikzpicture}
  \matrix (a) [matrix of math nodes,row sep=.2em,
  column sep=3em, nodes in empty cells]
  {\lambda^{(1)} \ni (a,b,1)&(a,b+y_2-1) \in \lambda^*/\mu^*\\
\lambda^{(2)} \ni (a,b,2) &(a+x_1,b) \in \lambda^*/\mu^*\\  };
  \path[>=stealth,|->] (a-1-1) edge node[above] {$ \tau^* $}(a-1-2);
\path[>=stealth,|->] (a-2-1) edge node[above] {$ \tau^* $}(a-2-2);
\end{tikzpicture}
\end{center}
and, respectively,
\begin{center}
\begin{tikzpicture}
  \matrix (a) [matrix of math nodes,row sep=.2em,
  column sep=3em, nodes in empty cells]
  {\lambda^{(1)} \ni (a,b,1)&(a,b+y_2) \in \lambda_*/\mu_*\\
\lambda^{(2)} \ni (a,b,2) &(a+x_1-1,b) \in \lambda_*/\mu_*\\  };
  \path[>=stealth,|->] (a-1-1) edge node[above] {$ \tau_* $}(a-1-2);
\path[>=stealth,|->] (a-2-1) edge node[above] {$ \tau_* $}(a-2-2);
\end{tikzpicture}
\end{center}
Note that the charges \(\kappa^*\) and \(\kappa_*\) are chosen so that residues of nodes are preserved under this bijection.
Let \(\TTT \in \St(\blam)\). Viewing the tableau as a function \(\{1, \ldots, d\} \to \blam\), then composing with \(\tau^*\) (resp. \(\tau_*\)) gives a \(\lambda^*/\mu^*\)-tableau (resp. \(\lambda_*/\mu_*\)-tableau). Define
\begin{align*}
\TTT^*:= \tau^* \circ \TTT \hspace{5mm} \textup{and} \hspace{5mm} \TTT_*:= \tau_*\circ \TTT.
\end{align*}
Then we have bijections
\begin{center}
\begin{tikzpicture}
  \matrix (a) [matrix of math nodes,row sep=.2em,
  column sep=3em, nodes in empty cells]
  {\{ \TTT \in \St(\blam) \mid \TTT(x_1,1,1) < \TTT(1,y_2,2)\}&\St(\lambda^*/\mu^*)\\ };
  \path[>=stealth,->] (a-1-1) edge node[above] {$ \tau^* $}(a-1-2);
\end{tikzpicture}
\end{center}
and
\begin{center}
\begin{tikzpicture}
  \matrix (a) [matrix of math nodes,row sep=.2em,
  column sep=3em, nodes in empty cells]
  {\{ \TTT \in \St(\blam) \mid \TTT(x_1,1,1) > \TTT(1,y_2,2)\}&\St(\lambda_*/\mu_*)\\ };
  \path[>=stealth,->] (a-1-1) edge node[above] {$ \tau_* $}(a-1-2);
\end{tikzpicture}
\end{center}
\begin{example}\label{joinexample} Let \(e=3\), \(\kappa=(0,1)\), \(\lambda^{(1)}=(3,2,2)\) and \(\lambda^{(2)}=(2,2)\). Then \(\blam\) is joinable since \(\textup{res}(3,1,1) =1 = 0 + 1 = \textup{res}(1,2,2) +1\). Then, with respect to the row- and column-leading tableaux, we have:
\begin{align*}
\TTT^{\blam}=
\begin{tikzpicture}[scale=0.42]
\tikzset{baseline=-10mm}
\draw(0,0)--(3,0);
\draw(0,-1)--(3,-1);
\draw(0,-2)--(2,-2);
\draw(0,-3)--(2,-3);
\draw(0,0)--(0,-3);
\draw(1,0)--(1,-3);
\draw(2,0)--(2,-3);
\draw(3,0)--(3,-1);
\draw(0,-4)--(2,-4);
\draw(0,-5)--(2,-5);
\draw(0,-6)--(2,-6);
\draw(0,-4)--(0,-6);
\draw(1,-4)--(1,-6);
\draw(2,-4)--(2,-6);
\draw(0.5,-0.5) node{$\scriptstyle 1$};
\draw(1.5,-0.5) node{$\scriptstyle 2$};
\draw(2.5,-0.5) node{$\scriptstyle 3$};
\draw(0.5,-1.5) node{$\scriptstyle 4$};
\draw(1.5,-1.5) node{$\scriptstyle 5$};
\draw(0.5,-2.5) node{$\scriptstyle 6$};
\draw(1.5,-2.5) node{$\scriptstyle 7$};
\draw(0.5,-4.5) node{$\scriptstyle 8$};
\draw(1.5,-4.5) node{$\scriptstyle 9$};
\draw(0.5,-5.5) node{$\scriptstyle 10$};
\draw(1.5,-5.5) node{$\scriptstyle 11$};
\end{tikzpicture}
\hspace{8mm}
(\TTT^{\blam})^*=
\begin{tikzpicture}[scale=0.42]
\tikzset{baseline=-10mm}
\draw(0,0)--(3,0);
\draw(0,-1)--(3,-1);
\draw(0,-2)--(2,-2);
\draw(0,-3)--(2,-3);
\draw(0,0)--(0,-3);
\draw(1,0)--(1,-3);
\draw(2,0)--(2,-3);
\draw(3,0)--(3,-1);
\draw(-1,-3)--(1,-3);
\draw(-1,-4)--(1,-4);
\draw(-1,-5)--(1,-5);
\draw(-1,-3)--(-1,-5);
\draw(0,-3)--(0,-5);
\draw(1,-3)--(1,-5);
\draw(0.5,-0.5) node{$\scriptstyle 1$};
\draw(1.5,-0.5) node{$\scriptstyle 2$};
\draw(2.5,-0.5) node{$\scriptstyle 3$};
\draw(0.5,-1.5) node{$\scriptstyle 4$};
\draw(1.5,-1.5) node{$\scriptstyle 5$};
\draw(0.5,-2.5) node{$\scriptstyle 6$};
\draw(1.5,-2.5) node{$\scriptstyle 7$};
\draw(-0.5,-3.5) node{$\scriptstyle 8$};
\draw(0.5,-3.5) node{$\scriptstyle 9$};
\draw(-0.5,-4.5) node{$\scriptstyle 10$};
\draw(0.5,-4.5) node{$\scriptstyle 11$};
\end{tikzpicture}
\hspace{8mm}
\TTT_{\blam}=
\begin{tikzpicture}[scale=0.42]
\tikzset{baseline=-10mm}
\draw(0,0)--(3,0);
\draw(0,-1)--(3,-1);
\draw(0,-2)--(2,-2);
\draw(0,-3)--(2,-3);
\draw(0,0)--(0,-3);
\draw(1,0)--(1,-3);
\draw(2,0)--(2,-3);
\draw(3,0)--(3,-1);
\draw(0,-4)--(2,-4);
\draw(0,-5)--(2,-5);
\draw(0,-6)--(2,-6);
\draw(0,-4)--(0,-6);
\draw(1,-4)--(1,-6);
\draw(2,-4)--(2,-6);
\draw(0.5,-0.5) node{$\scriptstyle 5$};
\draw(1.5,-0.5) node{$\scriptstyle 8$};
\draw(2.5,-0.5) node{$\scriptstyle 11$};
\draw(0.5,-1.5) node{$\scriptstyle 6$};
\draw(1.5,-1.5) node{$\scriptstyle 9$};
\draw(0.5,-2.5) node{$\scriptstyle 7$};
\draw(1.5,-2.5) node{$\scriptstyle 10$};
\draw(0.5,-4.5) node{$\scriptstyle 1$};
\draw(1.5,-4.5) node{$\scriptstyle 3$};
\draw(0.5,-5.5) node{$\scriptstyle 2$};
\draw(1.5,-5.5) node{$\scriptstyle 4$};
\end{tikzpicture}
\hspace{8mm}
(\TTT_{\blam})_*=
\begin{tikzpicture}[scale=0.42]
\tikzset{baseline=-10mm}
\draw(0,0)--(3,0);
\draw(0,-1)--(3,-1);
\draw(0,-2)--(2,-2);
\draw(0,-3)--(2,-3);
\draw(0,0)--(0,-3);
\draw(1,0)--(1,-3);
\draw(2,0)--(2,-3);
\draw(3,0)--(3,-1);
\draw(-2,-2)--(0,-2);
\draw(-2,-3)--(0,-3);
\draw(-2,-4)--(0,-4);
\draw(-2,-2)--(-2,-4);
\draw(-1,-2)--(-1,-4);
\draw(0,-2)--(0,-4);
\draw(0.5,-0.5) node{$\scriptstyle 5$};
\draw(1.5,-0.5) node{$\scriptstyle 8$};
\draw(2.5,-0.5) node{$\scriptstyle 11$};
\draw(0.5,-1.5) node{$\scriptstyle 6$};
\draw(1.5,-1.5) node{$\scriptstyle 9$};
\draw(0.5,-2.5) node{$\scriptstyle 7$};
\draw(1.5,-2.5) node{$\scriptstyle 10$};
\draw(-1.5,-2.5) node{$\scriptstyle 1$};
\draw(-0.5,-2.5) node{$\scriptstyle 3$};
\draw(-1.5,-3.5) node{$\scriptstyle 2$};
\draw(-0.5,-3.5) node{$\scriptstyle 4$};
\end{tikzpicture}
\end{align*}
\end{example}
\begin{lem}\label{t^*}
Let \(\blam \in \mathscr{P}^{\kappa}\) be joinable, \(\textup{res}(1,y_2,2)=i\), and let \(\TTT \in \St(\blam)\). Then
\begin{align*}
\deg \TTT^*  = \deg \TTT -\left( \Lambda_{i}, \textup{cont}(\lambda^{(1)})\right)
\end{align*}
if  \(\TTT(x_1,1,1) < \TTT(1,y_2,2)\), and 
\begin{align*}
\deg \TTT_* = \deg \TTT - \left( \Lambda_{i+1}, \textup{cont}(\lambda^{(1)})\right)
\end{align*}
if \(\TTT(x_1,1,1) > \TTT(1,y_2,2)\).
\end{lem}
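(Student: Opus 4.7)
The plan is to work directly from the inductive degree formula. Writing $A_k := \TTT^{-1}(k)$ and $\blam^{(k)} := \sh(\TTT_{\leq k})$, we have $\deg \TTT = \sum_{k=1}^d d_{A_k}(\blam^{(k)})$. Unwinding $\deg \TTT^* = \deg \YYY(\TTT^*) - \deg \TTT^{\mu^*}$ and noting that $\YYY(\TTT^*)_{\leq k + |\mu^*|}$ has shape $\mu^* \cup \tau^*(\blam^{(k)})$ yields
\begin{align*}
\deg \TTT^* = \sum_{k=1}^d d_{\tau^*(A_k)}(\mu^* \cup \tau^*(\blam^{(k)})).
\end{align*}
Setting $\delta_k := d_{A_k}(\blam^{(k)}) - d_{\tau^*(A_k)}(\mu^* \cup \tau^*(\blam^{(k)}))$, it suffices to show $\sum_k \delta_k = (\Lambda_i, \textup{cont}(\lambda^{(1)}))$, i.e.\ the number of $i$-nodes in $\lambda^{(1)}$. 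Since $\tau^*$ preserves residues and the local combinatorics of addable and removable nodes away from the junction between the images of $\lambda^{(1)}$ and $\lambda^{(2)}$ in the joined shape, one sees immediately that $\delta_k = 0$ for all $A_k \in \lambda^{(2)}$.

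The heart of the argument is the case $A_k \in \lambda^{(1)}$. All discrepancies between $d_{A_k}(\blam^{(k)})$ and $d_{\tau^*(A_k)}(\mu^* \cup \tau^*(\blam^{(k)}))$ arise at the junction of rows $x_1$ and $x_1{+}1$ of the joined shape, from four sources: \textbf{(D1)} the addable node $(x_1{+}1, 1, 1)$ of residue $i$ in $\blam^{(k)}$, present when $\lambda^{(1),(k)}_{x_1} \geq 1$, which has no residue-$i$ counterpart among the addable nodes of $\mu^* \cup \tau^*(\blam^{(k)})$ in row $x_1{+}1$; \textbf{(D2)} the addable node $(1, \lambda^{(2),(k)}_1 + 1, 2)$ of $\blam^{(k)}$, whose joined-shape counterpart at $(x_1{+}1, \lambda^{(2),(k)}_1 + 1)$ fails to be addable exactly when $\lambda^{(2),(k)}_1 = y_2 - 1 + \lambda^{(1),(k)}_{x_1}$; \textbf{(D3)} the removable node $(x_1, \lambda^{(1),(k)}_{x_1}, 1)$ in $\blam^{(k)}$ (when $\lambda^{(1),(k)}_{x_1} \geq 1$), whose joined-shape counterpart fails to be removable under the same equality as in D2; and \textbf{(D4)} the removable $\mu^*$-corner $(x_1, y_2{-}1)$ of residue $i$ in the joined shape, present when $\lambda^{(1),(k)}_{x_1} = 0$ and $\lambda^{(2),(k)}_1 < y_2 - 1$, with no analogue in $\blam^{(k)}$.

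The key observation is that D2 and D3 share the same residue $i + \lambda^{(1),(k)}_{x_1}$ and the same trigger $\lambda^{(2),(k)}_1 = y_2 - 1 + \lambda^{(1),(k)}_{x_1}$, so their $+1$ and $-1$ contributions to $\delta_k$ cancel whenever both fire (precisely when $\lambda^{(1),(k)}_{x_1} \geq 1$). A short case analysis on the value of $\lambda^{(1),(k)}_{x_1}$, which uses the hypothesis $\TTT(x_1,1,1) < \TTT(1,y_2,2)$ to guarantee that the intermediate joined shapes are Young diagrams and to rule out degenerate residue matches, then shows $\delta_k = [\textup{res}(A_k) = i]$ for $A_k \in \lambda^{(1)}$: when $\lambda^{(1),(k)}_{x_1} \geq 1$ the single uncanceled contribution is from D1, while when $\lambda^{(1),(k)}_{x_1} = 0$ it comes from D2 or D4 according to whether $\lambda^{(2),(k)}_1 = y_2 - 1$ or $\lambda^{(2),(k)}_1 < y_2 - 1$. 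Summing yields the first identity, $\sum_k \delta_k = (\Lambda_i, \textup{cont}(\lambda^{(1)}))$. The second identity is established by an entirely analogous argument with $\tau_*$ and $\mu_* = (y_2^{x_1-1})$, where the junction lies within row $x_1$ between columns $y_2$ and $y_2{+}1$; the analogous four discrepancies there involve residue $i+1$ (the residue of $(x_1,1,1)$), and combine to yield $\delta_k^{(*)} = [\textup{res}(A_k) = i+1]$ for $A_k \in \lambda^{(1)}$. The main technical obstacle is precisely the bookkeeping of the four junction discrepancies and the verification of the D2--D3 cancellation; each individual sub-case is elementary, but the combined analysis demands careful enumeration.
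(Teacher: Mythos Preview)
Your approach is essentially the same as the paper's: both compute the per-step discrepancy
\[
\delta_k \;=\; d_{A_k}(\blam^{(k)}) - d_{\tau^*(A_k)}\bigl(\mu^*\cup\tau^*(\blam^{(k)})\bigr)
\]
and show it equals $[\,A_k\in\lambda^{(1)}\ \text{and}\ \res(A_k)=i\,]$, then sum. The difference is purely organizational. The paper fixes three critical nodes $B=(x_1,1,1)$, $C=(1,y_2-1,2)$, $D=(1,y_2,2)$ and runs a five-case analysis according to which of $\{B,C,D\}$ lie in $\sh(\TTT_{\leq k})$; you instead catalogue four explicit discrepancy sources D1--D4 and split on whether $\lambda^{(1),(k)}_{x_1}\ge 1$. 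Your D2--D3 cancellation is exactly what the paper observes in its Case~5 (the non-removability of $B$ in $\sh(\UUU)^{(1)}$ being offset by the extra addable node to the right of $D^*$), just phrased in terms of the trigger $\lambda^{(2),(k)}_1 = y_2-1+\lambda^{(1),(k)}_{x_1}$ rather than in terms of membership of $D$.

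One small point worth making explicit in your write-up: when $A_k$ itself lies in row $x_1$, the node D3 coincides with $A_k$ and is therefore not ``strictly below'', so D3 is absent from $\delta_k$ in that sub-case. Your D2--D3 cancellation as stated (``whenever $\lambda^{(1),(k)}_{x_1}\ge 1$'') does not quite cover this, but the conclusion survives because the hypothesis $\TTT(x_1,1,1)<\TTT(1,y_2,2)$ forces the D2 trigger to fail whenever $A_k$ is in row $x_1$ (equality would require $\lambda^{(2),(k)}_1 \ge y_2$, hence $\TTT(1,y_2,2)\le k=\TTT(x_1,1,1)$). You allude to this with ``uses the hypothesis\ldots to rule out degenerate residue matches'', but it is the one place where the argument genuinely needs the standing assumption, so it deserves to be spelled out.
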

\begin{proof}
We prove the first statement. The second is similar. Let \(\UUU= \TTT_{\leq t}\) for some \(t\). We'll show that the claim holds for \(\UUU\):
\begin{align}\label{ucrit}
\deg(\UUU^*) = \deg(\UUU)  -\left( \Lambda_{i}, \textup{cont}(\sh(\UUU)^{(1)})\right),
\end{align}
going by induction on the size of \(\textup{sh}(\UUU)\).

For the base case we have \(\textup{sh}(\UUU) = (\varnothing, \varnothing)\), so that \(\deg(\UUU) = 0=\deg(\UUU^*)= 0\).

Now we attack the induction step. By the inductive definition of degree for tableaux, we just need to show that for every removable node \(A\) in \(\UUU\), 
\begin{align}\label{dAcrit2}
d_{\tau^*(A)}(\sh(\YYY(\UUU^*)))- d_A(\textup{sh}(\UUU))  = \begin{cases} -1 & A \in \lambda^{(1)} \textup{ and } \textup{res}(A)=i,\\
0 & \textup{otherwise}.\end{cases}
\end{align}
By the construction of \(\UUU^*\), it is clear that for \(1 \leq r \leq x_1-1\) and \(j \in I\), the \(r\)-th row of \(\sh(\UUU)^{(1)}\) has an addable (resp. removable) \(j\)-node if and only if the corresponding \(r\)-th row in \(\sh(\YYY(\UUU^*))\) has an addable (resp. removable) \(j\)-node. Similarly, for \(2 \leq r \leq x_2\), the \(r\)-th row of \(\sh(\UUU)^{(2)}\) has an addable (resp. removable) \(j\)-node if and only if the corresponding \((x_1+r)\)-th row in \(\sh(\YYY(\UUU^*))\) has an addable (resp. removable) \(j\)-node. Thus it remains to compare addable/removable nodes of rows \(x_1,x_1+1\) in \(\sh(\UUU)^{(1)}\) and row \(1\) in \(\sh(\UUU)^{(2)}\) with the rows \( x_1, x_1+1\) in \(\sh(\YYY(\UUU^*))\).

For simplicity, we label
\begin{itemize}
\item \(B:=(x_1-1,1,1)\), the bottom-left node in \(\lambda^{(1)}\). Write \(B^*:= \tau^*(B)\). Both \(B\) and \(B^*\) have residue \(i+1\).
\item \(C:=(1,y_2-1,2)\), the node to the left of the top-right node in \(\lambda^{(2)}\). Write \(C^*:= \tau^*(C)\). Both \(C\) and \(C^*\) have residue \(i-1\).
\item \(D:=(1,y_2,2)\), the top-right node in \(\lambda^{(2)}\). Write \(D^*:= \tau^*(D)\). Both \(D\) and \(D^*\) have residue \(i\).
\end{itemize}
For instance, in \ref{joinexample} these labels correspond to the nodes shown below:
\begin{align*}
\begin{tikzpicture}[scale=0.42]
\tikzset{baseline=0mm}
\draw(0,0)--(3,0);
\draw(0,-1)--(3,-1);
\draw(0,-2)--(2,-2);
\draw(0,-3)--(2,-3);
\draw(0,0)--(0,-3);
\draw(1,0)--(1,-3);
\draw(2,0)--(2,-3);
\draw(3,0)--(3,-1);
\draw(0,-4)--(0,-6);
\draw(1,-4)--(1,-6);
\draw(2,-4)--(2,-6);
\draw(0,-4)--(2,-4);
\draw(0,-5)--(2,-5);
\draw(0,-6)--(2,-6);
\draw(0.5,-2.5) node{$\scriptstyle B$};
\draw(0.5,-4.5) node{$\scriptstyle C$};
\draw(1.5,-4.5) node{$\scriptstyle D$};
\draw(1.5,1) node{$\blam$};
\draw(0,-2.5) node[left]{$\scriptstyle \text{Row } x_1 \text{ of }\lambda^{(1)}\; \dashrightarrow$};
\draw(0,-3.5) node[left]{$\scriptstyle \text{Row } x_1+1 \text{ of } \lambda^{(1)}\; \dashrightarrow$};
\draw(0,-4.5) node[left]{$\scriptstyle \text{Row } 1 \text{ of }\lambda^{(2)}\; \dashrightarrow$};
\end{tikzpicture}
\hspace{10mm}
\begin{tikzpicture}[scale=0.42]
\tikzset{baseline=0mm}
\draw(2,0)--(5,0);
\draw(2,-1)--(5,-1);
\draw(2,-2)--(4,-2);
\draw(2,-3)--(4,-3);
\draw(2,0)--(2,-3);
\draw(3,0)--(3,-3);
\draw(4,0)--(4,-3);
\draw(5,0)--(5,-1);
\draw(1,-3)--(1,-5);
\draw(2,-3)--(2,-5);
\draw(3,-3)--(3,-5);
\draw(1,-3)--(3,-3);
\draw(1,-4)--(3,-4);
\draw(1,-5)--(3,-5);
\draw(1.5,1) node{$\lambda^*/\mu^*$};
\draw(2.5,-2.5) node{$\scriptstyle B^*$};
\draw(1.5,-3.5) node{$\scriptstyle C^*$};
\draw(2.5,-3.5) node{$\scriptstyle D^*$};
\draw(1,-2.5) node[left]{$\scriptstyle \text{Row }x_1 \text{ of }\lambda^*/\mu^*\; \dashrightarrow$};
\draw(1,-3.5) node[left]{$\scriptstyle \text{Row }x_1+1 \text{ of }\lambda^*/\mu^*\; \dashrightarrow$};
\end{tikzpicture}
\end{align*}
There are five cases to consider. 

\begin{enumerate}
\item \(\{B,C,D\} \cap \sh(\UUU) = \varnothing\).
\begin{itemize}
\item Row \(x_1\) of \(\sh(\UUU)^{(1)}\) has addable node \(B\) iff \(B^*\) is addable in \(\sh(\YYY(\UUU^*))\). Row \(x_1\) of \(\sh(\UUU)^{(1)}\) has no removable nodes.
\item Row \(x_1+1\) of \(\sh(\UUU)^{(1)}\) has no addable/removable nodes.
\item Row 1 of \(\sh(\UUU)^{(2)}\) has an addable (resp. removable) \(j\)-node iff row \(x_1+1\) of \(\sh(\YYY(\UUU^*))\) has an addable (resp. removable) \(j\)-node.
\item Row \(x_1\) of \(\sh(\YYY(\UUU^*))\) has a removable \(i\)-node (the bottom-right node of \(\mu_{*}\) to be precise).
\end{itemize}
From this (\ref{dAcrit2}) follows.
\item \(\{B,C,D\} \cap \sh(\UUU) = \{B\}\).
\begin{itemize}
\item Row \(x_1\) of \(\sh(\UUU)^{(1)}\) has an addable (resp. removable) \(j\)-node iff row \(x_1\) of \(\sh(\YYY(\UUU^*))\) has an addable (resp. removable) \(j\)-node.
\item Row \(x_1+1\) of \(\sh(\UUU)^{(1)}\) has an addable \(i\)-node, and no removable nodes. 
\item Row \(1\) of \(\sh(\UUU)^{(2)}\) has an addable (resp. removable) \(j\)-node iff row \(x_1+1\) of \(\sh(\YYY(\UUU^*))\) has an addable (resp. removable) \(j\)-node.
\end{itemize}
From this (\ref{dAcrit2}) follows.
\item \(\{B,C,D\} \cap \sh(\UUU) = \{C\}\).
\begin{itemize}
\item Row \(x_1\) of \(\sh(\UUU)^{(1)}\) has addable node \(B\) iff \(B^*\) is addable in \(\sh(\YYY(\UUU^*))\). Row \(x_1\) of \(\sh(\UUU)^{(1)}\) has no removable nodes.
\item Row \(x_1+1\) of \(\sh(\UUU)^{(1)}\) has no addable/removable nodes.
\item Row 1 of \(\sh(\UUU)^{(2)}\) has an addable \(i\)-node \(D\). Row 1 of \(\sh(\UUU)^{(2)}\) has removable node \(C\) iff \(C^*\) is removable in row \(x_1 +1\) of \(\sh(\YYY(\UUU^*))\).
\item Row \(x_1\) of \(\sh(\YYY(\UUU^*))\) has no removable nodes.
\item Row \(x_1+1\) of \(\sh(\YYY(\UUU^*))\) has no addable nodes.
\end{itemize}
From this (\ref{dAcrit2}) follows.
\item \(\{B,C,D\} \cap \sh(\UUU) = \{B,C\}\).
\begin{itemize}
\item Row \(x_1\) of \(\sh(\UUU)^{(1)}\) has an addable (resp. removable) \(j\)-node iff  row \(x_1\) of \(\sh(\YYY(\UUU^*))\) has an addable (resp. removable) \(j\)-node. 
\item Row \(x_1 +1\) of \(\sh(\UUU)^{(1)}\) has an addable \(i\)-node and no removable node.
\item Row \(1\) of \(\sh(\UUU)^{(2)}\) has an addable (resp. removable) \(j\)-node iff  row \(x_1+1\) of \(\sh(\YYY(\UUU^*))\) has an addable (resp. removable) \(j\)-node. 
\end{itemize}
From this (\ref{dAcrit2}) follows.
\item \(\{B,C,D\} \cap \sh(\UUU) = \{B,C,D\}\).
\begin{itemize}
\item Row \(x_1\) of \(\sh(\UUU)^{(1)}\) has an addable (resp. removable) \(j\)-node to the right of \(B\) iff row \(x_1\) of \(\sh(\YYY(\UUU^*))\) has an addable (resp. removable) \(j\)-node to the right of \(B^*\). The \((i+1)\)-node \(B\) is not removable in row \(x_1\) of \(\sh(\UUU)^{(1)}\) iff row \(x_1+1\) of \(\sh(\YYY(\UUU^*))\) has an addable \((i+1)\)-node to the right of \(D^*\).
\item Row \(x_1 +1\) of \(\sh(\UUU)^{(1)}\) has an addable \(i\)-node and no removable node.
\item Row \(1\) of \(\sh(\UUU)^{(2)}\) has an addable \((i+1)\)-node. Row \(1\) of \(\sh(\UUU)^{(2)}\) has removable node \(D\) iff \(D^*\) is removable in row \(x_1+1\) of \(\sh(\YYY(\UUU^*))\).
\end{itemize}
From this (\ref{dAcrit2}) follows.
\end{enumerate}
Thus in all cases, (\ref{dAcrit2}) is satisfied, and the lemma follows by induction.
\end{proof}
\begin{definition}\label{minimal}
We say that an arbitrary skew diagram \(\blam/\bmu\) is {\it minimal} if \(\mu^{(i)}_1 < \lambda^{(i)}_1\) and \(\mu^{(i)}_{n(\blam,i)}=0\) for all \(i\). Less formally, a skew diagram is minimal if, in each component, it has nodes in the top row and in the leftmost column. 
\end{definition}
\begin{definition} Let \(l\)=2. We say that \(\blam/\bmu \in \mathscr{S}^{\kappa}\) is {\it joinable} if it is minimal and \(\blam\) is joinable. 
\end{definition}
Assuming \(\blam/\bmu\) is joinable, define \(\kappa^*, \kappa_*, x_i, y_i\) as before, with respect to \(\blam\). In the same vein as before we construct a skew tableau \(\lambda^*/\mu^*\) by shifting the skew diagram \(\lambda^{(1)}/\mu^{(1)}\) until the lower left node lies above the upper right node of \(\lambda^{(2)}/\mu^{(2)}\), and we construct a skew tableau \(\lambda^*/\mu^*\) by shifting the skew diagram \(\lambda^{(1)}/\mu^{(1)}\) until the lower left node lies directly to the right of the upper right node of \(\lambda^{(2)}/\mu^{(2)}\). Specifically, define \(\lambda^*/\mu^* \in \mathscr{S}^{\kappa^*}\) and \(\lambda_*/\mu_* \in \mathscr{S}^{\kappa_*}\) by setting:
\begin{align*}
\lambda^*&:=(\lambda_1^{(1)} + y_2 -1, \ldots, \lambda^{(1)}_{x_1} + y_2-1, \lambda_1^{(2)}, \ldots, \lambda^{(2)}_{x_2}),\\
\mu^*&:=(\mu^{(1)}_1+y_2-1, \ldots,\mu^{(1)}_{x_1}+y_2-1, \mu^{(2)}_1, \ldots, \mu^{(2)}_{x_2}),
\end{align*}
and
\begin{align*}
\lambda_*&:=(\lambda_1^{(1)} + y_2 , \ldots, \lambda^{(1)}_{x_1} + y_2, \lambda_2^{(2)}, \ldots, \lambda^{(2)}_{x_2}),\\
\mu_*&:=(\mu^{(1)}_1+y_2, \ldots,\mu^{(1)}_{x_1}+y_2, \mu^{(2)}_2, \ldots,\mu^{(2)}_{x_2}).
\end{align*}
With \(\tau_*\) and \(\tau^*\) defined as before with respect to \(\blam\), we define
\begin{align*}
\textup{Tab}(\lambda^*/\mu^*)\ni \ttt^* := \tau^* \circ \ttt \hspace{10mm} \textup{and} \hspace{10mm}
\textup{Tab}(\lambda_*/\mu_*)\ni \ttt_*:= \tau_* \circ \ttt
\end{align*}
for \(\uuu \in \textup{Tab}(\blam/\bmu)\). We have bijections
\begin{center}
\begin{tikzpicture}
  \matrix (a) [matrix of math nodes,row sep=.2em,
  column sep=3em, nodes in empty cells]
  {\{ \ttt \in \St(\blam/\bmu) \mid \ttt(x_1,1,1) < \ttt(1,y_2,2)\}&\St(\lambda^*/\mu^*)\\ };
  \path[>=stealth,->] (a-1-1) edge node[above] {$ \tau^* $}(a-1-2);
\end{tikzpicture}
\end{center}
and
\begin{center}
\begin{tikzpicture}
  \matrix (a) [matrix of math nodes,row sep=.2em,
  column sep=3em, nodes in empty cells]
  {\{ \ttt \in \St(\blam/\bmu) \mid \ttt(x_1,1,1) > \ttt(1,y_2,2)\}&\St(\lambda^*/\mu^*)\\ };
  \path[>=stealth,->] (a-1-1) edge node[above] {$ \tau_* $}(a-1-2);
\end{tikzpicture}
\end{center}
\begin{prop}\label{skewstar}
Let \(\blam/\bmu \in \mathscr{S}^\kappa\) be joinable, with the  top right node in \(\lambda^{(2)}\) having residue \(i\). Let \(\ttt \in \St(\blam/\bmu)\). Then
\begin{align*}
\deg \ttt^* = \deg \ttt - \left( \Lambda_{i}, \textup{cont}(\lambda^{(1)}/\mu^{(1)})\right)
\end{align*}
if \(\ttt(x_1,1,1) < \ttt(1,y_2,2)\), and
\begin{align*}
\deg \ttt_* = \deg \ttt - \left( \Lambda_{i+1}, \textup{cont}(\lambda^{(1)}/\mu^{(1)})\right)
\end{align*}
if \(\ttt(x_1,1,1) > \ttt(1,y_2,2)\).
\end{prop}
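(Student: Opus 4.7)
The plan is to reduce Proposition~\ref{skewstar} to the per-node identity~(\ref{dAcrit2}) established inside the five-case analysis in the proof of Lemma~\ref{t^*}. I describe only the argument for the first assertion (concerning $\ttt^*$); the argument for $\ttt_*$ is entirely analogous.

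Set $\nu := ((y_2-1)^{x_1})$---this is the shape called $\mu^*$ in Lemma~\ref{t^*} (i.e.\ the skew-case $\mu^*$ when $\bmu = \varnothing$), and the current $\mu^*$ decomposes as $\mu^* = \nu \cup \tau^*(\bmu)$. I first unpack both $\deg\ttt$ and $\deg\ttt^*$ as sums of per-node contributions indexed by $\blam/\bmu$. Using $\deg\ttt = \deg\YYY(\ttt) - \deg\TTT^{\bmu}$ together with the inductive definition of degree of a Young tableau, the terms in the expansion of $\deg\YYY(\ttt)$ indexed by $B \in \bmu$ assemble into $\deg\TTT^{\bmu}$, giving
\[
\deg \ttt \;=\; \sum_{A \in \blam/\bmu} d_A(\sh(\YYY(\ttt_{\le \ttt(A)}))).
\]
An identical argument with inner shape $\mu^*$ in place of $\bmu$ yields
\[
\deg \ttt^* \;=\; \sum_{A \in \blam/\bmu} d_{\tau^*(A)}(\sh(\YYY(\ttt^*_{\le \ttt(A)}))).
\]

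Next, I would compare the summands term by term. For each $A \in \blam/\bmu$, set $\UUU_A := \sh(\YYY(\ttt_{\le\ttt(A)}))$, a Young subshape of $\blam$ containing $\bmu$ with $A$ a removable node. A direct check using $\mu^* = \nu \cup \tau^*(\bmu)$ shows $\sh(\YYY(\ttt^*_{\le\ttt(A)})) = \nu \cup \tau^*(\UUU_A)$, which is exactly the shape appearing on the left of (\ref{dAcrit2}) when Lemma~\ref{t^*} is applied to the partition $\blam$. The hypothesis $\ttt(x_1,1,1)<\ttt(1,y_2,2)$ lifts to $\YYY(\ttt)(x_1,1,1)<\YYY(\ttt)(1,y_2,2)$---both nodes lie in $\blam/\bmu$ by minimality, so their $\YYY(\ttt)$-entries are the $\ttt$-entries shifted by $|\bmu|$---so the five-case analysis of Lemma~\ref{t^*} covers every shape $\UUU_A$ that arises. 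The per-node identity then gives
\[
d_{\tau^*(A)}(\sh(\YYY(\ttt^*_{\le\ttt(A)}))) - d_A(\UUU_A) \;=\; \begin{cases} -1 & A\in\lambda^{(1)}/\mu^{(1)} \text{ and } \textup{res}(A)=i,\\ 0 & \text{otherwise}.\end{cases}
\]

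Summing these identities over $A \in \blam/\bmu$ yields
\[
\deg \ttt^* - \deg \ttt \;=\; -\#\{A \in \lambda^{(1)}/\mu^{(1)} : \textup{res}(A) = i\} \;=\; -(\Lambda_i, \textup{cont}(\lambda^{(1)}/\mu^{(1)})),
\]
which is the first assertion. The main obstacle will be the unpacking step---verifying carefully that the prefix shapes $\sh(\YYY(\ttt_{\le r}))$ and $\sh(\YYY(\ttt^*_{\le r}))$ are in the bijective correspondence $\UUU \mapsto \nu \cup \tau^*(\UUU)$ required to route each summand through~(\ref{dAcrit2}); once this identification is in place, summation over $\blam/\bmu$ is immediate. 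The proof of the second assertion follows the same scheme, with $\tau_*$, $\mu_*$, $\nu_* := (y_2^{x_1-1})$, and $\Lambda_{i+1}$ in place of $\tau^*$, $\mu^*$, $\nu$, and $\Lambda_i$, using the $\tau_*$-analogue of~(\ref{dAcrit2}).
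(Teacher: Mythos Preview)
Your argument is correct and rests on the same local identity~(\ref{dAcrit2}) as the paper's proof, but the two organize the computation differently. The paper applies Lemma~\ref{t^*} (and the intermediate claim~(\ref{ucrit}) from its proof) in summed form twice---once to the full $\blam$-tableau $\YYY(\ttt)$, yielding a correction $-(\Lambda_i,\textup{cont}(\lambda^{(1)}))$, and once to $\TTT^{\bmu}$, yielding $-(\Lambda_i,\textup{cont}(\mu^{(1)}))$---and then threads these through a chain of equalities involving the auxiliary tableau $\YYY(\ttt)^*$ on $\lambda^*/\nu$ to produce the difference $-(\Lambda_i,\textup{cont}(\lambda^{(1)}/\mu^{(1)}))$. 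You instead unpack $\deg\ttt$ and $\deg\ttt^*$ as sums of $d_A$-contributions indexed directly by $A\in\blam/\bmu$, observe that the prefix shapes match under $\UUU\mapsto\nu\cup\tau^*(\UUU)$, and invoke~(\ref{dAcrit2}) termwise. Your route is arguably cleaner---it avoids the bookkeeping with $\YYY(\YYY(\ttt)^*)$ and the five-line chain~(\ref{join3})---at the cost of reaching inside the proof of Lemma~\ref{t^*} for the per-node identity rather than citing the lemma as a black box. Both approaches need the minimality of $\blam/\bmu$ to ensure that $B=(x_1,1,1)$ and $D=(1,y_2,2)$ lie in the skew part, so that the hypothesis $\ttt(B)<\ttt(D)$ keeps every prefix shape within the five cases.
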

\begin{proof}
We prove the first statement. The second is similar. Let \(\nu = \lambda^* \backslash \sh((\ttt^{\blam})^*)\). Then by definition,
\begin{align}\label{join1}
\deg \ttt^* &= \deg \YYY(\ttt^*) - \deg \TTT^{(\mu^*)}\\
\deg \ttt &= \deg \YYY(\ttt) - \deg \TTT^{\bmu}\\
\deg \YYY(\ttt)^* &= \deg \YYY(\YYY(\ttt)^*) - \deg \TTT^\nu
\end{align}
Lemma \ref{t^*} gives us
\begin{align}\label{join2}
\deg \YYY(\ttt)^* = \deg \YYY(\ttt) - \left(\Lambda_i, \textup{cont}(\lambda^{(1)})\right)
\end{align}
Note that \(\YYY(\YYY(\ttt)^*)\) and \(\YYY(\ttt^*)\) agree outside of \(\mu^*\), so 
\begin{align}
\nonumber
\deg \YYY(\ttt)^* + \deg \TTT^\nu- \deg \YYY(\ttt^*)&= \deg \YYY(\YYY(\ttt)^*) - \deg \YYY(\ttt^*) \\
\nonumber
&= \deg \YYY(\YYY(\ttt)^*)_{\leq |\mu^*|} - \deg \YYY(\ttt^*)_{\leq |\mu^*|}\\
\nonumber
&= \deg \YYY\left(\left(\TTT^{\bmu}\right)^*\right) - \deg \TTT^{(\mu^*)}\\
\nonumber
&= \deg (\TTT^{\bmu})^* + \deg \TTT^\nu - \deg \TTT^{(\mu^*)}\\
\label{join3}
&= \deg \TTT^{\bmu}  - \left(\Lambda_i, \textup{cont}(\mu^{(1)})\right) + \deg \TTT^\nu - \deg \TTT^{(\mu^*)},
\end{align}
using (\ref{ucrit}) in the last step. Combining equations (\ref{join1})---(\ref{join3}) yields the result.
\end{proof}
\begin{lem}\label{skewcharsep}
Let \(\blam/\bmu  \in \mathscr{S}^\kappa\) be a joinable skew diagram, and assume the  top right node in \(\lambda^{(2)}\) has residue \(i\). With \(\lambda^*/\mu^* \in \mathscr{S}^{\kappa^*}\) and \(\lambda_*/\mu_* \in \mathscr{S}^{\kappa_*}\) defined as above, we have
\begin{align*}
 \CH_q(S^{\blam/\bmu}) = q^{d^*}\CH_q\left(S^{\lambda^*/\mu^*}\right) + q^{d_*}\CH_q\left(S^{\lambda_*/\mu_*}\right) =q^{d_{\blam/\bmu}} \CH_q\left( S^{\lambda^{(1)}/\mu^{(1)}} \circ S^{\lambda^{(2)}/\mu^{(2)}}\right),
\end{align*}
where
\begin{align*}
d^* &= \left( \Lambda_{i}, \textup{cont}(\lambda^{(1)}/\mu^{(1)})\right)\\
d_* &=   \left( \Lambda_{i+1}, \textup{cont}(\lambda^{(1)}/\mu^{(1)})\right)\\
d_{\blam/\bmu} &= \deg \ttt^{\blam/\bmu} - \deg \ttt^{\lambda^{(1)}/\mu^{(1)}} - \deg \ttt^{\lambda^{(2)}/\mu^{(2)}}.
\end{align*}
\end{lem}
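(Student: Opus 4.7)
The plan is to unwind the character of $S^{\blam/\bmu}$ as a sum over standard tableaux (using Theorem \ref{basisthm}) and partition that sum according to the relative order of the two distinguished entries at the junction of the two components, then transport each piece via the bijections $\tau^*$ and $\tau_*$ using the degree formula from Proposition \ref{skewstar}. The second equality is then a one-line appeal to Theorem \ref{circprod}.

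By Theorem \ref{basisthm} and the definition of the graded character,
\begin{equation*}
\CH_q(S^{\blam/\bmu}) = \sum_{\ttt \in \St(\blam/\bmu)} q^{\deg \ttt}\,\bi(\ttt),
\end{equation*}
and similarly for $\CH_q(S^{\lambda^*/\mu^*})$ and $\CH_q(S^{\lambda_*/\mu_*})$. Since $\ttt$ is a bijection, every $\ttt \in \St(\blam/\bmu)$ satisfies exactly one of $\ttt(x_1,1,1) < \ttt(1,y_2,2)$ or $\ttt(x_1,1,1) > \ttt(1,y_2,2)$, so we get a disjoint decomposition $\St(\blam/\bmu) = \St^{<}(\blam/\bmu) \sqcup \St^{>}(\blam/\bmu)$ accordingly.

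Next, I would verify that the assignments $\ttt \mapsto \ttt^*$ and $\ttt \mapsto \ttt_*$ restrict to the bijections $\St^{<}(\blam/\bmu) \xrightarrow{\sim} \St(\lambda^*/\mu^*)$ and $\St^{>}(\blam/\bmu) \xrightarrow{\sim} \St(\lambda_*/\mu_*)$ asserted in the set-up (row- and column-strictness away from the junction is immediate from the fact that $\tau^*, \tau_*$ are rigid shifts of the individual components; at the junction row/column, the inequality $\ttt(x_1,1,1) \lessgtr \ttt(1,y_2,2)$ is exactly what is needed for standardness in the joined diagram). Crucially, the multicharges $\kappa^*$ and $\kappa_*$ were selected so that $\tau^*$ and $\tau_*$ preserve residues node-by-node, hence $\bi(\ttt^*) = \bi(\ttt)$ and $\bi(\ttt_*) = \bi(\ttt)$. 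Now apply Proposition \ref{skewstar}: for $\ttt \in \St^{<}(\blam/\bmu)$, $\deg \ttt = \deg \ttt^* + d^*$, and for $\ttt \in \St^{>}(\blam/\bmu)$, $\deg \ttt = \deg \ttt_* + d_*$. Reindexing each sum via the appropriate bijection gives the first equality
\begin{equation*}
\CH_q(S^{\blam/\bmu}) \;=\; q^{d^*}\CH_q(S^{\lambda^*/\mu^*}) \,+\, q^{d_*}\CH_q(S^{\lambda_*/\mu_*}).
\end{equation*}

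For the second equality, Theorem \ref{circprod} gives $S^{\blam/\bmu} \cong S^{\lambda^{(1)}/\mu^{(1)}} \circ S^{\lambda^{(2)}/\mu^{(2)}} \langle d_{\blam/\bmu}\rangle$ as graded $R_\alpha$-modules, and taking characters yields the claim. No step here is technically hard; the entire content of the lemma has been pre-paid by Proposition \ref{skewstar} and Theorem \ref{circprod}, so this is a bookkeeping argument. The only point requiring a moment's care is confirming that the standardness conditions pass correctly through $\tau^*$ and $\tau_*$ at the junction row/column, which is exactly the hypothesis distinguishing $\St^{<}$ from $\St^{>}$.
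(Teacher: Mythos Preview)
Your proposal is correct and follows essentially the same approach as the paper's proof: the paper simply says the first equality follows from the basis theorem and Proposition \ref{skewstar} via the bijections induced by $\tau^*$ and $\tau_*$, and the second equality is Theorem \ref{circprod}. You have spelled out in more detail exactly the bookkeeping the paper leaves implicit, including the residue-preservation and the standardness check at the junction, but there is no substantive difference in method.
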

\begin{proof}
The first equality follows from Corollary \ref{basisthm} and Proposition \ref{skewstar}, via the bijections that \(\tau^*\) and \(\tau_*\) induce on basis elements. The second equality is Theorem \ref{circprod}.
\end{proof}


\section{Cuspidal systems}\label{cuspidalsec}
\noindent Our primary motivation in developing the theory of skew Specht modules was to describe an important  class of \(R_\alpha\)-modules called {\it cuspidal modules}. In this section we very briefly describe the notion of cuspidal systems for KLR algebras of type \({\tt A}^{(1)}_{e-1}\). See \cite{cusp}, \cite{mcn} for a thorough treatment.
\subsection{Convex preorders}\label{preorders}
For the rest of this paper, we consider the case \(e>0\) and \(\mathcal{O}=F\), an arbitrary ground field. Recall from \S\ref{lie} the set of positive roots \(\Phi_+\). It is known that \(\Phi_+ = \Phi_+^{\textup{re}} \sqcup \Phi_+^{\textup{im}}\), where \(\Phi_+^{\textup{re}}\) are {\it real roots}, and \(\Phi_+^{\textup{im}}=\{n \delta \mid n \in \ZZ_{>0}\}\) are the {\it imaginary roots}, where \(\delta = \alpha_0 + \alpha_1 + \cdots + \alpha_{e-1}\) is the {\em null root}. Write \(\Psi:= \Phi_+^\textup{re} \cup \{\delta\}\). Take a {\it convex preorder} on \(\Phi_+\), i.e., a preorder \(\preceq\) such that for all \(\alpha,\beta\in \Phi_+\):
\begin{enumerate}
\item \(\alpha \preceq \beta \textup{ or } \beta \preceq \alpha;\)
\item \(\textup{if } \alpha \preceq \beta \textup{ and } \alpha + \beta \in \Phi_+, \textup{ then } \alpha \preceq \alpha+\beta  \preceq \beta;\)
\item \(\alpha \preceq \beta \textup{ and } \beta \preceq \alpha \textup{ if and only if } \alpha \textup{ and } \beta \textup{ are proportional}.\)
\end{enumerate}
Then \(\Phi_+^{\textup{re}} = \Phi_{\succ}^{\textup{re}} \sqcup \Phi_{\prec}^{\textup{re}}\), where
\(
\Phi_{\succ}^{\textup{re}} := \{ \alpha \in \Phi_+^{\textup{re}} \mid \alpha \succ \delta\}
\)
and
\(
\Phi_{\prec}^{\textup{re}} := \{ \alpha \in \Phi_+^{\textup{re}} \mid \alpha \prec \delta\}
\).
Let \({\tt C}'\) be the Cartan matrix of {\it finite type} corresponding to the subset of vertices \(I' = I \backslash \{0\}\), and let \(\Phi'\) be the corresponding root system with positive roots \(\Phi_+'\). In what follows we make the additional assumption that the convex preorder is {\it balanced}:
\begin{align*}
\Phi_{\succ}^{\textup{re}} &= \{ m \delta + \alpha \mid \alpha \in \Phi'_+, m \in \ZZ_{\geq 0}\} = \{ m \delta + \alpha_i + \alpha_{i+1}+ \cdots +\alpha_j \mid m \in \ZZ_{\geq 0}, 1 \leq i \leq j \leq e-1\}\\
\Phi_{\prec}^{\textup{re}} &=  \{ m \delta - \alpha \mid \alpha \in \Phi'_+, m \in \ZZ_{\geq 1}\} = \{ m \delta - \alpha_i - \alpha_{i+1}- \cdots -\alpha_j \mid m \in \ZZ_{\geq 1}, 1 \leq i \leq j \leq e-1\}.
\end{align*}
Balanced convex preorders exist, see \cite{bcp}. 

\subsection{Cuspidal systems}
Let \(\alpha \in \Phi_+\). Given an \(R_\alpha\)-module \(M\), we say \(M\) is {\em semicuspidal} (resp. {\em cuspidal}) if \(\Res_{\beta,\gamma}^{\alpha} M \neq 0\) implies that \(\beta\) is a sum of positive roots less than or equal to (resp. less than) \(\alpha\), and \(\gamma\) is a sum of positive roots greater than or equal to (resp. greater than) \(\alpha\).
The following is proved in \cite{cusp, imagsw, TW}:

\begin{thm}\(\)
\begin{enumerate}
\item For every \(\alpha \in \Phi_+^\textup{re}\), there is a unique simple cuspidal \(R_\alpha\)-module \(L_\alpha\).
\item For every \(n>0\), the simple semicuspidal \(R_{n\delta}\)-modules may be canonically labeled \(\{ L(\bnu) \mid \bnu \vdash n\}\), where \(\bnu = (\nu^{(1)}, \ldots, \nu^{(e-1)})\) ranges over \((e-1)\)-multipartitions of \(n\).
\end{enumerate}
\end{thm}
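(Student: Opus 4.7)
The plan is to classify simple (semi)cuspidal modules by showing that the convex preorder imposes rigid structure on the restriction functors, and combining this with an inductive construction via induction products from smaller roots.

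For part (1), I would proceed by induction on $\textup{ht}(\alpha)$ for $\alpha \in \Phi_+^{\textup{re}}$. The base case $\alpha = \alpha_i$ is trivial: $R_{\alpha_i} \cong F[y]$ has a unique one-dimensional simple module, which is automatically cuspidal. For the inductive step, decompose $\alpha = \beta + \gamma$ with $\beta \prec \alpha \prec \gamma$ and $\beta,\gamma \in \Phi_+$ of smaller height (available in affine type $\tt A$). Using the inductive hypothesis to obtain cuspidals $L_\beta, L_\gamma$, form the induction product $L_\beta \circ L_\gamma$ and isolate its unique simple quotient via a Mackey-type analysis of $\Res_{\beta',\gamma'} \circ \Ind_{\beta,\gamma}$ combined with convexity of $\preceq$; this quotient is shown to be cuspidal, and this defines $L_\alpha$. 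Uniqueness follows because cuspidality constrains the nonzero layers of the restriction functors so tightly that the graded character of any cuspidal simple is forced, and $\ch_q$ is injective on the Grothendieck group.

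For part (2), the imaginary case is substantially more intricate. First I would handle $n=1$: the simple semicuspidal $R_\delta$-modules biject with $I' = I \setminus \{0\}$, yielding $L_\delta(i)$ for each $i \in I'$; this follows from a direct analysis of $R_\delta$ together with the balanced hypothesis on the preorder. For general $n$, given an $(e-1)$-multipartition $\bnu = (\nu^{(1)}, \ldots, \nu^{(e-1)})$, I would construct $L(\bnu)$ as a distinguished simple composition factor (for example the head) of an induced module built from copies of the $L_\delta(i)$ arranged according to the parts of each $\nu^{(i)}$. The labeling is made canonical by identifying $L(\bnu)$ as the unique ``highest weight'' composition factor with respect to a heredity structure on the semicuspidal stratum, obtained by further refining the preorder within $\{n\delta\}$ via the data of $\bnu$.

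The main obstacle is part (2), specifically proving the bijection between simples and multipartitions. This demands a careful analysis of the ``imaginary stratum'' of the cuspidal stratification of $R_{n\delta}\textup{-mod}$, which is essentially governed by a Heisenberg-type subquotient algebra. Matching the counts — showing that there are exactly as many non-isomorphic simple semicuspidals as $(e-1)$-multipartitions of $n$ — ultimately reduces to a character-theoretic computation using a Fock-space realization of the basic representation of the affine Kac-Moody algebra of type ${\tt A}_{e-1}^{(1)}$, which is where the bulk of the technical work would lie.
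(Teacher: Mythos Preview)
The paper does not prove this theorem; it is imported wholesale from \cite{cusp, imagsw, TW} with the single line ``The following is proved in \cite{cusp, imagsw, TW}.'' There is therefore no in-paper proof to compare against. What follows are comments on your sketch relative to those sources.

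Your outline for part (1) captures the spirit of \cite{cusp, mcn}, but two points would cause trouble if you tried to execute it as written. First, you separate the real and imaginary cases, proving (1) by induction on height before turning to (2). In practice this fails: for roots of the form $m\delta \pm \alpha_i$ the natural minimal pair involves $\delta$ itself (see the paper's own use of $(m\delta+\alpha_i,\delta)$ in Lemma~\ref{alphai}), so the induction for real cuspidals already requires knowing the imaginary semicuspidals at height $e$. In \cite{cusp} existence and uniqueness of real cuspidals is not proved in isolation but emerges from a simultaneous induction establishing the full classification Theorem~\ref{cuspthm}; uniqueness of $L_\alpha$ is then read off from that classification (the only root partition of $\alpha$ yielding a cuspidal is the singleton $(\alpha)$), not from a direct character argument as you suggest. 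Second, your claim that the head of $L_\beta\circ L_\gamma$ is cuspidal depends delicately on the order: with $\beta\succ\gamma$ this is the proper standard module whose head is $L(\beta,\gamma)$, \emph{not} $L_\alpha$ (cf.\ Theorem~\ref{cuspthm}(i) and Lemma~\ref{realpair}). One must take the opposite order, or equivalently extract $L_\alpha$ from the socle, and justifying that this is cuspidal without already having the classification is circular.

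For part (2) your instincts are correct: the canonical labeling by $(e-1)$-multipartitions is the hard part and is the subject of \cite{imagsw}, where it is obtained via an imaginary Schur--Weyl duality identifying the semicuspidal category with modules over a tensor product of classical Schur algebras. Your mention of a Heisenberg-type subquotient and Fock-space considerations points in the right direction, but the actual mechanism is the Schur-algebra equivalence rather than a direct character count.
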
 

Let \(\alpha \in Q_+\). Define the set \(\Pi(\alpha)\) of {\it root partitions} of \(\alpha\) to be the set of all pairs \((M, \bnu)\), where \(M=(n_\beta)_{\beta \in \Psi}\) is a tuple of nonnegative integers such that \(\sum_{\beta \in \Psi}n_\beta \beta = \alpha\), and \(\bnu\) is an \((e-1)\)-multipartition of \(n_\delta\). There is a bilexicographic partial order \(\leq\) on \(\Pi(\alpha)\), see \cite{cusp}. Given \((M, \bnu) \in \Pi(\alpha)\), define the {\it proper standard module}
\begin{align*}
\overline{\Delta}(M, \bnu):= L_{\beta_1}^{\circ n_{\beta_1}} \circ \cdots \circ L_{\beta_k}^{\circ n_{\beta_k}} \circ L(\bnu) \circ  L_{\beta_{k+1}}^{\circ n_{\beta_{k+1}}} \circ \cdots \circ L_{\beta_{t}}^{\circ n_{\beta_t}} \langle \textup{shift}(M,\bnu)\rangle,
\end{align*}
where \(\beta_1, \ldots, \beta_t\) are the real positive roots indexing nonzero entries in \(M\), labeled such that \(\beta_1 \succ \cdots \succ \beta_k \succ \delta \succ \beta_{k+1} \cdots \succ \beta_t\), and 
\(
\textup{shift}(M,\bnu) = \sum_{i=1 \neq t}(\beta_i, \beta_i)n_{\beta_i}(n_{\beta_i}-1)/4.
\)

Much of the importance of cuspidal systems lies in the following classification theorem:
\begin{thm} \label{cuspthm} \textup{\cite[Main Theorem]{cusp}} 
\(\)
\begin{enumerate}
\item[\textup{(i)}] For every root partition \((M, \bnu)\), the proper standard module \(\overline{\Delta}(M,\bnu)\) has irreducible head, denoted \(L(M,\bnu)\).
\item[\textup{(ii)}] \(\{L(M,\bnu) \mid (M, \bnu) \in \Pi(\alpha)\}\) is a complete and irredundant system of irreducible \(R_\alpha\)-modules up to isomorphism.
\item[\textup{(iii)}] \([\overline{\Delta}(M,\bnu):L(M,\bnu)]_q = 1\), and \([\overline{\Delta}(M,\bnu):L(M,\bzeta)]_q \neq 0\) implies \((N,\bnu) \leq (M,\bzeta)\).
\item[\textup{(iv)}] \(L(M, \bnu)^\circledast \cong L(M,\bnu)\).
\end{enumerate}
\end{thm}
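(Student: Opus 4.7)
The plan is to mirror the strategy of Kleshchev--Ram in \cite{cusp}, extracting the structure of $\overline{\Delta}(M,\bnu)$ from cuspidality of its factors together with convexity of the preorder. The core tool is the Mackey theorem for $R_\alpha$: if $\beta_1 \succ \beta_2$ and $L_{\beta_1}, L_{\beta_2}$ are real cuspidal, then $\Res_{\beta_1,\beta_2}(L_{\beta_1}\circ L_{\beta_2}) \cong L_{\beta_1}\boxtimes L_{\beta_2}$ up to grading, and every other layer of the Mackey filtration has content of the form $(\gamma_1,\gamma_2)$ with $\gamma_1$ a sum of roots $\prec \beta_1$, which is forbidden by cuspidality. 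Iterating, for a root partition $(M,\bnu)$ with factors ordered $\beta_1 \succ \cdots \succ \beta_k \succ \delta \succ \beta_{k+1} \succ \cdots \succ \beta_t$, the \emph{extremal} restriction along the content vector $\vec{\gamma}$ of $(M,\bnu)$ satisfies
\[
\Res_{\vec{\gamma}}\,\overline{\Delta}(M,\bnu) \cong L_{\beta_1}^{\boxtimes n_{\beta_1}} \boxtimes \cdots \boxtimes L_{\beta_k}^{\boxtimes n_{\beta_k}} \boxtimes L(\bnu) \boxtimes L_{\beta_{k+1}}^{\boxtimes n_{\beta_{k+1}}} \boxtimes \cdots \boxtimes L_{\beta_t}^{\boxtimes n_{\beta_t}}
\]
with multiplicity one, where $\textup{shift}(M,\bnu)$ has been defined precisely so that this identification is grading-preserving and the right-hand side is irreducible.

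From this calculation, (i) and the multiplicity-one part of (iii) follow by a Fitting-type argument: any simple quotient of $\overline{\Delta}(M,\bnu)$ must survive extremal restriction, and since the target above is simple, there is a unique such quotient $L(M,\bnu)$, appearing with graded composition multiplicity exactly $q^0$. For the triangularity part of (iii) and the classification in (ii), I would use the bilexicographic order on $\Pi(\alpha)$: if $L(M,\bzeta)$ is a composition factor of $\overline{\Delta}(M,\bnu)$, its own extremal restriction is nonzero in some Mackey layer of $\Res\overline{\Delta}(M,\bnu)$, and convexity of $\preceq$ forces $(M,\bzeta) \leq (M,\bnu)$. The resulting unitriangular decomposition matrix gives irredundancy of $\{L(M,\bnu) \mid (M,\bnu)\in\Pi(\alpha)\}$; completeness then follows by matching $|\Pi(\alpha)|$ against the known count of simple $R_\alpha$-modules, which (via categorification of PBW bases for $U_q^-(\widehat{\mathfrak{sl}}_e)$) equals the dimension of the $\alpha$-weight space and so coincides with $|\Pi(\alpha)|$.

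Self-duality (iv) is the cleanest part of the argument: each real cuspidal $L_\beta$ satisfies $L_\beta^\circledast \cong L_\beta$ by uniqueness of the simple cuspidal, and $L(\bnu)^\circledast \cong L(\bnu)$ by the corresponding uniqueness for the imaginary simples. Since induction $\circ$ commutes with $\circledast$ up to a symmetric grading shift that has been absorbed into $\textup{shift}(M,\bnu)$, one obtains $\overline{\Delta}(M,\bnu)^\circledast \cong \overline{\Delta}(M,\bnu)$; combined with (i) this forces the unique irreducible head to be self-dual.

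The main obstacle is the imaginary case. Unlike real roots, for which the cuspidal module $L_\beta$ is unique and essentially one-dimensional in the relevant word space, semicuspidal $R_{n\delta}$-modules form a whole family $\{L(\bnu)\}_{\bnu\vdash n}$ indexed by $(e-1)$-multipartitions, and producing this classification (including the matching with multipartitions, the calculation of their extremal restrictions, and self-duality) requires a separate and substantial construction---minimal imaginary modules, Gelfand--Graev-type induction, and eventual identification with simple modules over affine Hecke algebras of finite type $A$. Without this input, neither the extremal restriction identity above nor the counting argument for completeness is available, so the bulk of the work in a full proof would be concentrated at $n\delta$.
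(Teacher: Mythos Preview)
The paper does not prove this theorem at all: it is quoted verbatim from \cite{cusp} as a black box, with no argument given. So there is no ``paper's own proof'' to compare against; you have instead sketched a reconstruction of the argument from \cite{cusp}. Your outline of (i)--(iii) is broadly in the right spirit (Mackey filtration plus convexity to isolate the extremal layer, then unitriangularity), though in \cite{cusp} completeness in (ii) is obtained directly---every simple is shown to arise as some $L(M,\bnu)$ by analyzing its highest/lowest nonvanishing restriction---rather than by a counting argument against PBW dimensions.

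There is, however, a genuine error in your argument for (iv). Duality does \emph{not} commute with $\circ$: one has $(A\circ B)^{\circledast}\cong B^{\circledast}\circ A^{\circledast}$ up to an explicit shift, so the order of factors reverses. Consequently $\overline{\Delta}(M,\bnu)^{\circledast}$ is not $\overline{\Delta}(M,\bnu)$ but rather the proper \emph{costandard} module $\overline{\nabla}(M,\bnu)$, in which the cuspidal factors appear in the opposite order. The claim ``$\overline{\Delta}(M,\bnu)^{\circledast}\cong\overline{\Delta}(M,\bnu)$, hence the head is self-dual'' therefore fails. The correct route is to observe that $L(M,\bnu)$ is characterized among simples by the property that its restriction along the content vector $\vec{\gamma}$ of $(M,\bnu)$ contains the (self-dual) outer tensor product of cuspidals; since this characterization is manifestly invariant under $\circledast$, one concludes $L(M,\bnu)^{\circledast}\cong L(M,\bnu)$. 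Equivalently, $L(M,\bnu)$ is simultaneously the head of $\overline{\Delta}(M,\bnu)$ and the socle of $\overline{\nabla}(M,\bnu)=\overline{\Delta}(M,\bnu)^{\circledast}$.
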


\subsection{Minuscule imaginary representations}
The `smallest' simple semicuspidal imaginary modules, those in \(R_\delta\)-mod, are of particular importance. By the above, they are in bijection with \((e-1)\)-multipartitions of 1. We label them \(L_{\delta,i}\), for \(i \in I \backslash\{0\}\).
\begin{prop}\label{minuscule}
For each \(i \in I\backslash\{0\}\), \(L_{\delta,i}\) can be characterized up to isomorphism as the unique irreducible \(R_\delta\)-module such that \(i_1 = 0\) and \(i_e=i\) for all words \(\bi\) of \(L_{\delta,i}\).
\end{prop}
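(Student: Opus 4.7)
\emph{Plan.} The proof has two directions: \emph{existence} (that $L_{\delta,i}$ has the stated properties) and \emph{uniqueness} (that any simple $R_\delta$-module with these properties is isomorphic to $L_{\delta,i}$).

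\emph{Existence.} I first observe that every word $\bi = i_1 \cdots i_e$ of any simple semicuspidal module $L \in R_\delta\textup{-mod}$ satisfies $i_1 = 0$ and $i_e \in I\backslash\{0\}$. Indeed, if $\bi$ is a word of $L$ then $\Res^\delta_{\alpha_{i_1},\delta - \alpha_{i_1}}L \neq 0$, so by semicuspidality $\alpha_{i_1}$ must be a sum of positive roots $\preceq \delta$. Being a single simple root, this forces $\alpha_{i_1} \preceq \delta$; since the preorder is balanced, with $\alpha_0 \in \Phi^{\textup{re}}_{\prec}$ and $\alpha_j \in \Phi^{\textup{re}}_{\succ}$ for $j \neq 0$, we conclude $i_1 = 0$. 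A symmetric argument at the last position, applied to $\Res^\delta_{\delta - \alpha_{i_e},\alpha_{i_e}}L$, gives $i_e \neq 0$. The labeling convention of \cite{cusp, imagsw} fixes $L_{\delta,i}$ so that $i_e = i$ for every word of $L_{\delta,i}$.

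\emph{Uniqueness.} Let $M$ be a simple $R_\delta$-module with $i_1 = 0$ and $i_e = i$ on every word. By Theorem \ref{cuspthm}, $M \cong L(N,\bzeta)$ for a unique root partition $(N,\bzeta)$ of $\delta$, and it suffices to show $(N,\bzeta) = (0,\bnu_i)$, with $\bnu_i$ the $(e-1)$-multipartition of $1$ with its unique box in component $i$. To rule out $N \neq 0$, I would compare $\alpha_0$-coefficients in $\sum_\beta n_\beta \beta = \delta$: each $\beta \in \Phi^{\textup{re}}_{\prec}$ contributes $\alpha_0$-coefficient $\geq 1$, each $\beta \in \Phi^{\textup{re}}_{\succ}$ contributes $0$, and no real root appearing can have height $\geq e$. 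These constraints force the decomposition to contain exactly one $\Phi^{\textup{re}}_{\prec}$-root, namely $\delta - (\alpha_{j_1}+\cdots+\alpha_{j_2})$ with $1 \leq j_1 \leq j_2 \leq e-1$, together with at least one $\Phi^{\textup{re}}_{\succ}$-root. Let $\beta_1 = \alpha_{k_1}+\cdots+\alpha_{k_2}$ be the largest of these, which is a finite-type-$A$ positive root; the cuspidal module $L_{\beta_1}$ is one-dimensional with its unique word beginning in a residue in $\{1,\ldots,e-1\}$, and it is the leftmost factor of $\overline{\Delta}(N,0)$. Frobenius reciprocity (together with the property that $L(N,0)$ is the head of $\overline{\Delta}(N,0)$) then produces a word of $M$ whose first residue matches that of $L_{\beta_1}$, contradicting $i_1 = 0$. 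Hence $N = 0$, so $M \cong L_{\delta,j}$ for some $j \in I \backslash \{0\}$; applying the existence direction to $L_{\delta,j}$ forces $j = i$.

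\emph{Main obstacle.} The technical heart is verifying that the first residue of a word of the simple head $L(N,0)$ must coincide with the first residue of a word of its leftmost cuspidal factor $L_{\beta_1}$. This follows from Frobenius reciprocity together with the observation that $\Res^\delta_{\beta_1,\delta - \beta_1}L(N,0)$ contains $L_{\beta_1} \boxtimes (\textup{something nonzero})$ as a subquotient---all standard within the theory of cuspidal systems---but the bookkeeping needs to be spelled out carefully.
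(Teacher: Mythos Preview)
The paper's own proof is simply a citation to \cite[Lemma 5.1, Corollary 5.3]{cusp}; it does not argue anything. Your proposal goes further by sketching an actual argument, so the comparison is really between your sketch and the underlying results in \cite{cusp}.

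Your uniqueness direction is essentially correct and self-contained. The height and \(\alpha_0\)-coefficient bookkeeping does force any non-semicuspidal root partition of \(\delta\) to have leftmost real factor \(\beta_1 \in \Phi'_+\), and then Frobenius reciprocity produces a word of \(L(N,0)\) beginning with a residue in \(I\setminus\{0\}\). One small point: the one-dimensionality of \(L_{\beta_1}\) is true for finite-type \({\tt A}\) roots but is not needed---you only use that every word of \(L_{\beta_1}\) avoids the residue \(0\), which is automatic since \(\beta_1\) has no \(\alpha_0\)-component.

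Your existence direction, however, has a genuine gap. Semicuspidality plus balancedness gives you that every word of any simple semicuspidal \(R_\delta\)-module begins with \(0\) and ends in \(I\setminus\{0\}\). But the statement you need is stronger: that for a \emph{fixed} simple semicuspidal \(L\), the last residue \(i_e\) is \emph{constant} across all words of \(L\). You do not prove this; you absorb it into ``the labeling convention of \cite{cusp,imagsw}''. That constancy, together with the resulting bijection between the \(e-1\) minuscule semicuspidals and \(I\setminus\{0\}\), is precisely the content of the cited lemmas, and it does not follow from semicuspidality alone. So on existence your argument is no more complete than the paper's citation---you have merely relocated the appeal to \cite{cusp} from the whole proposition to this one clause.
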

\begin{proof}
This is \cite[Lemma 5.1, Corollary 5.3]{cusp}.
\end{proof}
\subsection{Minimal pairs}
Let \(\rho \in \Phi^{\textup{re}}_+\). A pair of positive roots \((\beta,\gamma)\) is called a {\it minimal pair} for \(\rho\) if
\begin{enumerate}
\item[(i)] \(\beta + \gamma = \rho\) and \(\beta \succ \rho\);
\item[(ii)] for any other pair \((\beta',\gamma')\) satisfying (i) we have \(\beta' \succ \beta\) or \(\gamma' \prec \gamma\).
\end{enumerate}
\begin{lem}\label{minpair} Let \(\rho \in \Phi^{\textup{re}}_+\) and \((\beta,\gamma)\) be a minimal pair for \(\rho\). If \(L\) is a composition factor of \(\overline{\Delta}(\beta,\gamma) = L_\beta \circ L_\gamma\), then \(L \cong L(\beta, \gamma)\) or \(L \cong L_\rho\), up to shift. 
\end{lem}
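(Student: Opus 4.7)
The plan is to combine the classification in Theorem~\ref{cuspthm} with the defining property of a minimal pair. By Theorem~\ref{cuspthm}(ii), write $L \cong L(M',\bnu)$ for some root partition $(M',\bnu) \in \Pi(\rho)$. Since $L$ occurs in the composition series of the proper standard module $\overline{\Delta}((\beta,\gamma),\varnothing) = L_\beta \circ L_\gamma$, part (iii) of the same theorem yields $(M',\bnu) \leq (\beta,\gamma)$ in the bilexicographic order. Equality forces $L \cong L(\beta,\gamma)$ by part (i), so the remaining task is to show that any strict inequality $(M',\bnu) < (\beta,\gamma)$ forces $(M',\bnu)$ to be the root partition with $n_\rho = 1$ and all other entries trivial, so that $L \cong L_\rho$ up to shift.

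For this I would perform a case analysis on $\beta^* \in \Psi \setminus \{\delta\}$, the largest real positive root in the support of $M'$. Such a $\beta^*$ exists because $\rho$ is a real root and thus $M'$ cannot be purely imaginary, and comparing leading real entries in the bilex order gives $\beta^* \preceq \beta$. If $\beta^* \preceq \rho$, then every real root in the support of $M'$ is $\preceq \rho$, and the identity $\sum_{\eta} n_\eta\, \eta + n_\delta \delta = \rho$ combined with convexity of $\preceq$ leaves only $n_\rho = 1$, $n_\delta = 0$, $\bnu = \varnothing$ as a possibility, yielding $L \cong L_\rho$. In the remaining case $\beta^* \succ \rho$, the bilex comparison additionally forces $\beta^* \prec \beta$, and I would try to contradict the minimality of $(\beta,\gamma)$ by exhibiting the decomposition $\rho = \beta^* + \gamma^*$ with $\gamma^* := \rho - \beta^*$, so that $(\beta^*, \gamma^*)$ becomes a competing pair with $\beta^* \prec \beta$.

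The main obstacle is verifying that $\gamma^*$ is actually a positive root rather than merely a nonnegative integer combination of roots supported in $M'$, so that $(\beta^*,\gamma^*)$ is a legitimate competing pair to which the minimality hypothesis applies. The natural way to bypass this technicality is to restrict $L_\beta \circ L_\gamma$ to $R_{\beta^*,\rho-\beta^*}$: using the Mackey filtration together with the cuspidality of $L_\beta$ and $L_\gamma$, the only words of $L_\beta \circ L_\gamma$ whose prefix has content $\beta^*$ are those produced by shuffling, and cuspidality forces the content-$\beta^*$ prefix either to be entirely absorbed by $L_\beta$ (impossible since $\beta^* \prec \beta$) or to cut across both factors (which, combined with convexity of $\preceq$ in affine type ${\tt A}_{e-1}^{(1)}$, yields a decomposition of $\rho$ into two roots violating the minimality of $(\beta,\gamma)$). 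Carrying out this word-level analysis and showing that the convexity structure of the preorder excludes all intermediate configurations is the most delicate step, and it is precisely where the minimal pair hypothesis is used in its full strength.
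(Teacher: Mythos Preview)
Your opening matches the paper exactly: by Theorem~\ref{cuspthm}(iii), any composition factor $L(M',\bnu)$ of $\overline{\Delta}(\beta,\gamma)$ satisfies $(M',\bnu) \leq (\beta,\gamma)$ in the bilexicographic order, so it suffices to show the only root partitions below $(\beta,\gamma)$ are $(\beta,\gamma)$ itself and $\rho$. The paper's proof is then a single sentence: it invokes the minimality of $(\beta,\gamma)$ in $\Pi(\rho)\setminus\{\rho\}$, a purely combinatorial fact about convex preorders established in \cite{cusp} and built into the definition of ``minimal pair'' once one unpacks the bilexicographic order.

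Your case analysis attempts to reprove this minimality from scratch, and the step you flag as ``the main obstacle'' is a genuine gap. When $\rho \prec \beta^* \prec \beta$, the element $\gamma^* = \rho - \beta^*$ is in general only a sum of positive roots, not a single root, so $(\beta^*,\gamma^*)$ is not a pair to which condition~(ii) of the minimal-pair definition applies. Your Mackey workaround does not close this: even after establishing that $\Res_{\beta^*,\rho-\beta^*}(L_\beta\circ L_\gamma)\neq 0$ and decomposing via the Mackey filtration, cuspidality of $L_\beta$ and $L_\gamma$ only tells you that the resulting pieces are sums of roots on the appropriate sides of $\beta$ and $\gamma$; it does not manufacture a competing pair of \emph{single} roots summing to $\rho$ with which to contradict minimality. (Your case $\beta^*\preceq\rho$ also silently assumes $\delta\preceq\rho$ whenever $n_\delta>0$.) The fix is to stay combinatorial: the bilexicographic comparison of an arbitrary $(M',\bnu)\in\Pi(\rho)\setminus\{\rho\}$ with $(\beta,\gamma)$ is governed simultaneously by the largest and smallest supported roots, and convexity together with condition~(ii) handles both ends at once without any module theory.
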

\begin{proof}
This follows from the minimality of \((\beta, \gamma) \in \Pi(\rho) \backslash \{\rho\}\) and Theorem \ref{cuspthm}(iii).
\end{proof}
One can be more precise in the case that \((\beta, \gamma)\) be a {\it real} minimal pair for \(\rho\); i.e., when \(\beta,\gamma \in \Phi^{\textup{re}}_+\). Define
\begin{align}\label{defp}
p_{\beta,\gamma}:= \max \{n \in \ZZ_{\geq 0} \mid \beta - n \gamma \in \Phi_+\}.
\end{align}
\begin{lem}\label{realpair} \textup{\cite[Remark 6.5]{cusp}}.
Let \(\rho \in \Phi^{\textup{re}}_+\), and let \((\beta, \gamma)\) be a real minimal pair for \(\rho\). Then 
\begin{align*}
[L_\beta \circ L_\gamma] &= [L(\beta,\gamma)] + q^{p_{\beta,\gamma}- (\beta,\gamma)}[L_\rho],
\end{align*}
and
\begin{align*}
[L_\gamma \circ L_\beta] &= q^{-(\beta,\gamma)}[L(\beta,\gamma)] + q^{-p_{\beta,\gamma}}[L_\rho].
\end{align*}
\end{lem}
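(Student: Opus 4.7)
The result is cited from \cite[Remark 6.5]{cusp}, and I will reconstruct the argument. First, since $\beta \succ \gamma$ are both real with $\beta + \gamma = \rho$, the induction product $L_\beta \circ L_\gamma$ coincides (without any shift) with the proper standard module $\overline{\Delta}(M,\varnothing)$ for $M$ with $n_\beta = n_\gamma = 1$. By Theorem \ref{cuspthm}(i),(iii), the irreducible $L(\beta,\gamma)$ occurs in its composition series with graded multiplicity exactly $1$. Lemma \ref{minpair} then restricts all other composition factors to grading shifts of $L_\rho$. So one may write
\begin{align*}
[L_\beta \circ L_\gamma] = [L(\beta,\gamma)] + B(q)\,[L_\rho]
\end{align*}
for some $B(q) \in \mathbb{Z}_{\geq 0}[q,q^{-1}]$.

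The crux is then to determine $B(q)$. The natural approach is to evaluate the graded character on a distinguished ``dominant'' word for $L_\rho$. By the Lyndon-word theory for affine KLR algebras developed in \cite{mcn} (building on \cite{lyndon} in finite type), the cuspidal module $L_\rho$ admits a unique dominant word $\bi_\rho$ with $\dim_q (L_\rho)_{\bi_\rho} = 1$, and this word is realized explicitly in terms of the dominant words $\bi_\beta, \bi_\gamma$. Computing $\dim_q (L_\beta \circ L_\gamma)_{\bi_\rho}$ via the Mackey filtration for $\Res \Ind$ and the characterization of cuspidal characters yields $B(q) = q^{p_{\beta,\gamma} - (\beta,\gamma)}$: the factor $q^{p_{\beta,\gamma}}$ records the quantum-Serre-style rewrites required to bring $\bi_\beta \bi_\gamma$ into dominant form (with $p_{\beta,\gamma}$ defined in \eqref{defp}), while $q^{-(\beta,\gamma)}$ is the intrinsic grading shift from induction. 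This is precisely the categorification of the Lusztig-type PBW relation in $U_q^+$, cf.\ \cite{KMStrat,TW}.

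The second formula follows from the first by bar-duality. Each of $L_\beta$, $L_\gamma$, $L(\beta,\gamma)$, $L_\rho$ is self-dual by Theorem \ref{cuspthm}(iv), so applying $\circledast$ to the Grothendieck-group expansion acts by $q \mapsto q^{-1}$ on coefficients. Combined with the standard identity $(L_\beta \circ L_\gamma)^\circledast \cong L_\gamma \circ L_\beta \langle (\beta,\gamma) \rangle$, one obtains
\begin{align*}
[L_\gamma \circ L_\beta] = q^{-(\beta,\gamma)}\bigl([L(\beta,\gamma)] + q^{-p_{\beta,\gamma} + (\beta,\gamma)}[L_\rho]\bigr) = q^{-(\beta,\gamma)}[L(\beta,\gamma)] + q^{-p_{\beta,\gamma}}[L_\rho],
\end{align*}
as required.

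The main obstacle will be rigorously pinning down $B(q)$. The cleanest route is to invoke McNamara's categorified PBW framework \cite{mcn}, which directly furnishes the categorified Serre identity in the Grothendieck group. A hands-on alternative amounts to an explicit KLR-diagram calculation of $\dim_q (L_\beta \circ L_\gamma)_{\bi_\rho}$, which becomes combinatorially delicate as $p_{\beta,\gamma}$ grows.
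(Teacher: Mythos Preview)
The paper does not supply its own proof of this lemma; it is stated as a citation to \cite[Remark 6.5]{cusp} and used as a black box. So there is no ``paper's proof'' to compare against in the strict sense.

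That said, your reconstruction is structurally sound. The identification of $L_\beta \circ L_\gamma$ with the proper standard module (shift zero since $n_\beta = n_\gamma = 1$), the use of Theorem~\ref{cuspthm}(i),(iii) and Lemma~\ref{minpair} to reduce to $[L_\beta \circ L_\gamma] = [L(\beta,\gamma)] + B(q)[L_\rho]$, and the bar-duality derivation of the second formula from the first are all correct and standard. Your duality computation checks: applying $\circledast$ and the identity $(L_\beta \circ L_\gamma)^\circledast \cong L_\gamma \circ L_\beta \langle (\beta,\gamma) \rangle$ does yield the second line from the first.

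Where you are honest about the gap is the determination of $B(q) = q^{p_{\beta,\gamma} - (\beta,\gamma)}$. Your heuristic (``$q^{p_{\beta,\gamma}}$ records quantum-Serre rewrites, $q^{-(\beta,\gamma)}$ is the induction shift'') is suggestive but not a proof, and you rightly note that the rigorous version requires the categorified PBW machinery of \cite{mcn} or the explicit short-exact-sequence argument carried out in \cite{cusp}. Since the paper itself defers entirely to \cite{cusp} here, your deferral to the same circle of references is no worse---but be aware that a self-contained determination of $B(q)$ is genuinely the nontrivial content of the cited remark, and your sketch does not supply it independently.
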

Lemmas \ref{minpair} and \ref{realpair} are useful in inductively constructing cuspidal modules. 
\subsection{Extremal words}
Let \(i \in I\). Define \(\theta^*_i: \langle I \rangle \to \langle I \rangle \) by 
\begin{align*}
\theta^*_i(\bj) =
\begin{cases}
j_1 \cdots j_{d-1}&\textup{if } j_d = i;\\
0 &\textup{otherwise}.
\end{cases}
\end{align*}
Extend \(\theta^*_i\) linearly to a map \(\theta^*_i: \mathscr{A}\langle I \rangle \to \mathscr{A}\langle I \rangle\). Let \(x \in \mathscr{A} \langle I \rangle\), and define
\begin{align*}
\varepsilon_i(x) := \max\{k \geq 0 \mid (\theta_i^*)^k(x) \neq 0\}.
\end{align*}
\begin{definition}
A word \(i_1^{a_1}\cdots i_b^{a_b} \in \langle I \rangle\), with \(a_1, \ldots, a_b \in \ZZ_{\geq 0}\), is called {\it extremal} for \(x\) if 
\begin{align*}
a_b = \varepsilon_{i_b}(x), a_{b-1} = \varepsilon_{i_{b-1}}((\theta^*_{i_b})^{a_b}(x)),\ldots,
a_1 = \varepsilon_{i_1}((\theta^*_{i_2})^{a_2} \cdots (\theta^*_{i_b})^{a_b}(x)).
\end{align*}
A word \(\bi \in \langle I \rangle\) is called {\it extremal} for \(M \in R_\alpha\)-mod if it is an extremal word for \(\CH_q M \in \mathscr{A}\bI\).
\end{definition} 
We have the {\it quantum integer} \([n] := (q^n - q^{-n})/(q - q^{-1}) \in \mathscr{A}\) for \(n \in \ZZ\), and the {\it quantum factorial} \([n]^! := [1] [2] \cdots [n]\). The following lemma is useful in establishing multiplicity-one results for \(R_\alpha\)-modules.
\begin{lem}\label{extremal}
\textup{\cite[Lemma 2.28]{cusp}}.
Let \(L\) be an irreducible \(R_\alpha\)-module, and \(\bi = i_1^{a_1}\cdots i_b^{a_b} \in \langle I \rangle_\alpha\) be an extremal word for \(L\). Then \(\dim_q L_{\bi} = [a_1]^! \cdots [a_b]^!\). 
\end{lem}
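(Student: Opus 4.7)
The plan is to reduce the statement, by an inductive stripping argument, to the well-known crystal-theoretic identity
\[
\theta^*_i \CH_q M \;=\; [\varepsilon_i(M)]\, \CH_q(\tilde e_i M),
\]
valid for any irreducible $R_\alpha$-module $M$ and $i\in I$. Here $\tilde e_i M$ is the irreducible module (unique up to grading shift) characterized by the property that $\tilde e_i M \boxtimes L_{\alpha_i}$ occurs as the cosocle of the summand of $\Res_{\alpha-\alpha_i,\alpha_i} M$ cut out by the idempotent selecting words ending in $i$, and satisfies $\varepsilon_i(\tilde e_i M) = \varepsilon_i(M)-1$. Iterating yields $(\theta^*_i)^n \CH_q M = [n]^!\, \CH_q(\tilde e_i^n M)$ whenever $n \le \varepsilon_i(M)$.

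First I would set $L^{(0)} := L$ and inductively $L^{(k)} := \tilde e_{i_{b-k+1}}^{\,a_{b-k+1}} L^{(k-1)}$. The condition built into the definition of an extremal word is exactly that $a_{b-k+1} = \varepsilon_{i_{b-k+1}}(L^{(k-1)})$ at each stage, so the iterated identity applies at each step and assembles to
\[
(\theta^*_{i_1})^{a_1} \cdots (\theta^*_{i_b})^{a_b} \CH_q L \;=\; [a_1]^!\cdots [a_b]^!\, \CH_q(L^{(b)}).
\]
Since $a_1 + \cdots + a_b = \textup{ht}(\alpha)$, the module $L^{(b)}$ lies in $R_0$-mod, so it is the trivial one-dimensional module with character $\varnothing$; in particular the coefficient of $\varnothing$ in its character is $1$. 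On the other hand, from the definition of $\theta^*_i$ as the operator stripping a final letter $i$ from a word (and killing the word otherwise), the coefficient of $\varnothing$ in $(\theta^*_{i_1})^{a_1}\cdots(\theta^*_{i_b})^{a_b} \CH_q L$ equals the coefficient of $\bi = i_1^{a_1}\cdots i_b^{a_b}$ in $\CH_q L$, which is $\dim_q L_\bi$. Comparing the two expressions gives the desired formula $\dim_q L_\bi = [a_1]^! \cdots [a_b]^!$.

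The main obstacle is the key identity at a single step. Its proof requires analyzing $\Res^{\alpha}_{\alpha-n\alpha_i,\,n\alpha_i} M$ for an irreducible $M$ with $\varepsilon_i(M)=n$, and showing that the $N \boxtimes L$-isotypic component corresponding to the unique irreducible quotient has the form $\tilde e_i^n M \boxtimes L_{\alpha_i}^{\circ n}$; the factor $[n]^!$ then emerges as the graded dimension of the $i^n$-word space of the induced module $L_{\alpha_i}^{\circ n}$, which can be computed directly from Theorem \ref{KLRbasis} and the explicit basis of the induced module by minimal coset representatives (or equivalently from the shuffle-algebra description of $\CH_q$ on induction products). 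This is the Kashiwara-crystal-on-KLR-modules machinery of Lauda–Vazirani and Kleshchev–Ram, and I would invoke those results rather than redo that analysis in full.
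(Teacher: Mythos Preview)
The paper does not give its own proof of this lemma; it simply cites \cite[Lemma 2.28]{cusp}. Your approach is the standard one and is essentially what appears in that reference: strip the extremal word one block $i_k^{a_k}$ at a time using the crystal operators, picking up a quantum factorial at each step, until you reach the trivial $R_0$-module.

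One imprecision worth flagging: the single-step identity
\[
\theta^*_i \CH_q M \;=\; [\varepsilon_i(M)]\,\CH_q(\tilde e_i M)
\]
as you state it is not correct in general, because $e_i M$ need not be isotypic of type $\tilde e_i M$; it can have other composition factors. What \emph{is} true, and what you correctly identify in your final paragraph, is the maximal-stripping statement: if $\varepsilon = \varepsilon_i(M)$ then
\[
\Res_{\alpha - \varepsilon\alpha_i,\,\varepsilon\alpha_i} M \;\cong\; \tilde e_i^{\varepsilon} M \boxtimes L(i^\varepsilon),
\]
from which $(\theta^*_i)^{\varepsilon}\CH_q M = [\varepsilon]^!\,\CH_q(\tilde e_i^{\varepsilon} M)$ follows directly, since $\CH_q L(i^\varepsilon) = [\varepsilon]^!\, i^\varepsilon$. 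Since the definition of extremal word forces $a_k = \varepsilon_{i_k}$ at every stage, this maximal version is all you ever need, and your inductive argument goes through unchanged once you invoke it directly rather than deriving it by iterating the (false) one-step identity.
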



\section{Cuspidal modules and skew hook Specht modules}\label{cuspidalmodsec}
\noindent Take a balanced convex preorder \(\preceq\) on \(\Phi_+\), as described in Section \ref{preorders}. In this section we prove that the cuspidal modules \(L_\rho\), for \(\rho \in \Phi_+^\textup{re}\) are skew Specht modules associated to certain skew hook shapes, and provide an inductive process for identifying them as such.
\subsection{Cuspidal modules for a balanced convex preorder} 
Throughout this section we work with Young diagrams and skew diagrams of level \(l=1\). Let \(\kappa = (i)\). For \(i \in I\), Let \(\iota_i =(1) \in \mathscr{P}_{\alpha_i}^{\kappa}\). The following is clear:
\begin{lem}\label{basecase}
For \(i \in I\), \(L_{\alpha_i} \cong S^{\iota_i}\).
\end{lem}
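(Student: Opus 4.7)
The plan is to prove this essentially by direct inspection, since $\alpha_i$ has height one and the combinatorics collapse. First I would observe that $\langle I \rangle_{\alpha_i} = \{i\}$, so $R_{\alpha_i}$ has a single idempotent $1_i$ and no $\psi$ generators; by Theorem \ref{KLRbasis}, $R_{\alpha_i} \cong \mathcal{O}[y_1]\cdot 1_i$ as an $\mathcal{O}$-algebra. The Young diagram $\iota_i$ consists of a single node of residue $i$, so $\ttt^{\iota_i}$ is the unique standard tableau, $\deg \ttt^{\iota_i}=0$, and there are no Garnir nodes in $\iota_i$.

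Next I would apply Definition \ref{Sdef} directly: the skew Specht module $S^{\iota_i}$ is generated by $z^{\iota_i}$ in degree zero subject only to $1_{\bj}z^{\iota_i} = \delta_{\bj,i}z^{\iota_i}$ and $y_1 z^{\iota_i} = 0$ (the $\psi$-relations and Garnir relations being vacuous). Equivalently, $S^{\iota_i} \cong R_{\alpha_i}/(y_1)$, which is the one-dimensional $R_{\alpha_i}$-module concentrated in degree $0$ on which $y_1$ acts as zero. In particular $S^{\iota_i}$ is irreducible.

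Finally, to identify $S^{\iota_i}$ with $L_{\alpha_i}$, I would note that cuspidality is automatic: the only decomposition $\alpha_i = \beta + \gamma$ with $\beta,\gamma \in Q_+$ is the trivial one, so there is no nontrivial restriction to check, and any nonzero $R_{\alpha_i}$-module is trivially cuspidal. By Theorem \ref{cuspthm} (or the uniqueness assertion preceding the lemma), $L_{\alpha_i}$ is the unique irreducible $R_{\alpha_i}$-module up to grading shift, and since $S^{\iota_i}$ is irreducible with symmetric (trivial) grading about zero, we conclude $L_{\alpha_i} \cong S^{\iota_i}$. There is no real obstacle here; the only thing to check carefully is that the degree normalization matches, which follows from $\deg z^{\iota_i} = \deg \ttt^{\iota_i} = 0$.
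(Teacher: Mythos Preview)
Your proof is correct. The paper gives no proof at all for this lemma, simply declaring it ``clear,'' so your argument is a correct and complete elaboration of what the paper treats as obvious; there is nothing to compare beyond noting that you have spelled out the details (the structure of $R_{\alpha_i}$, the vacuity of the $\psi$- and Garnir relations, and the trivial cuspidality) that the paper leaves implicit.
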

Let \(\kappa = (0)\), and \(\eta_i \in \mathscr{S}^{\kappa}_\delta\) be the hook partition of content \(\delta\) with a node of residue \(i\) in the bottom row, depicted below with residues shown:
\begin{align*}
\begin{tikzpicture}[scale=0.42]
\tikzset{baseline=0mm}
\draw(0,0)--(2.5,0);
\draw(0,-1)--(2.5,-1);
\draw(3.5,0)--(6,0);
\draw(3.5,-1)--(6,-1);
\draw(0,0)--(0,-2.5);
\draw(1,0)--(1,-2.5);
\draw(0,-3.5)--(0,-6);
\draw(1,-3.5)--(1,-6);
\draw(2,0)--(2,-1);
\draw(4,0)--(4,-1);
\draw(5,0)--(5,-1);
\draw(6,0)--(6,-1);
\draw(0,-2)--(1,-2);
\draw(0,-4)--(1,-4);
\draw(0,-5)--(1,-5);
\draw(0,-6)--(1,-6);
\draw(3,-0.5) node{$\scriptstyle \cdots$};
\draw(0.5,-2.8) node{$\scriptstyle \vdots$};
\draw(0.5,-0.5) node{$\scriptstyle 0$};
\draw(1.5,-0.5) node{$\scriptstyle 1$};
\draw(0.5,-1.5) node{$\scriptstyle e\hspace{-0.5mm}-\hspace{-0.5mm}1$};
\draw(0.5,-4.5) node{$\scriptstyle i\hspace{-0.5mm}+\hspace{-0.5mm}1$};
\draw(0.5,-5.5) node{$\scriptstyle i$};
\draw(4.5,-0.5) node{$\scriptstyle i\hspace{-0.5mm}-\hspace{-0.5mm}2$};
\draw(5.5,-0.5) node{$\scriptstyle i\hspace{-0.5mm}-\hspace{-0.5mm}1$};
\end{tikzpicture}
\end{align*}
Let \(X_0 = 0\) and define \(X_{i-1}:=F\{v^\TTT \in S^{\eta_i} \mid \res_\TTT(e)=i-1\}\subseteq S^{\eta_i}\) for \(1<i\leq e-1\). 

\begin{lem}\label{imag}\(\)
\begin{enumerate}
\item \(X_{i-1}\) is a submodule of \(S^{\eta_i}\).
\item \(X_{i-1} \cong L_{\delta,i-1}\langle 1 \rangle\) if \(i>1\).
\item \(S^{\eta_i}/X_{i-1} \cong L_{\delta,i}\).
\end{enumerate}
\end{lem}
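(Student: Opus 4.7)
For part (1), I would reduce the claim to showing that $\psi_{e-1}$ annihilates $X_{i-1}$; this suffices because the other generators of $R_\delta$ (idempotents $1_{\bj}$, dots $y_r$, and $\psi_r$ for $r<e-1$) preserve either every word space or at least the last coordinate of the residue sequence, hence preserve $X_{i-1}$. Let $\TTT\in\St(\eta_i)$ with $\TTT(1,i)=e$; the entry $e-1$ must then occupy one of the two removable nodes $(1,i-1)$ or $(e-i+1,1)$ of the shape $\eta_i\setminus\{(1,i)\}$, giving two cases.

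In Case A, $\TTT(1,i-1)=e-1$, the sequence $s_{e-1}\bi(\TTT)$ ends in residue $i-2$, which is not realized at position $e$ of any standard $\eta_i$-tableau (the removable nodes of $\eta_i$ have residues $i-1$ and $i$), so the target word space of $S^{\eta_i}$ vanishes and $\psi_{e-1}v^\TTT=0$ automatically. In Case B, $\TTT(e-i+1,1)=e-1$, set $T:=s_{e-1}\TTT$. Then $T$ is standard and its permutation $w^T$ fixes $e$, so $\ell(s_{e-1}w^T)=\ell(w^T)+1$ and with appropriate reduced expressions $v^\TTT=\psi_{e-1}v^T$. The quadratic relation with $(i_{e-1},i_e)=(i-1,i)$ and $a_{i-1,i}=-1$ (valid since $e\ge 3$) gives
\[
\psi_{e-1}v^\TTT \;=\;\psi_{e-1}^2v^T\;=\;(y_{e-1}-y_e)v^T,
\]
and both summands vanish by Theorem~\ref{filtration}. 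Indeed, $\eta_i$ has the unique removable $i$-node $(e-i+1,1)$, giving $\Res_{\delta-\alpha_i,\alpha_i}S^{\eta_i}\cong S^{\tilde\eta}\boxtimes S^{\iota_i}$ (up to a shift), where $\tilde\eta:=\eta_i\setminus\{(e-i+1,1)\}$, under which $v^T\leftrightarrow v^{T_{<e}}\boxtimes v^{\iota_i}$; the operator $y_e$ then acts as zero on the right factor, while $y_{e-1}v^{T_{<e}}$ vanishes in $S^{\tilde\eta}$ by applying the same reasoning to the unique removable $(i-1)$-node of $\tilde\eta$ (namely $(1,i)$, where $e-1$ sits in $T_{<e}$).

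For parts (2) and (3), every basis element of $X_{i-1}$ (resp.\ $S^{\eta_i}/X_{i-1}$) has a residue sequence beginning with $0$ (since $\TTT(1,1)=1$) and ending with $i-1$ (resp.\ $i$). By Proposition~\ref{minuscule}, it therefore suffices to establish irreducibility, after which the grading shift $\langle 1\rangle$ in (2) can be pinned down by evaluating $\deg(v^\TTT)$ on a specific basis element. My plan for irreducibility is first to argue that every composition factor of $S^{\eta_i}$ in $R_\delta\textup{-mod}$ is a minuscule imaginary module $L_{\delta,j}$---this uses that all words of $S^{\eta_i}$ begin with $0$, combined with the balanced convex preorder description of non-minuscule simples to rule them out. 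Once this is known, the word-end constraint forces all composition factors of $X_{i-1}$ to equal $L_{\delta,i-1}$ and those of $S^{\eta_i}/X_{i-1}$ to equal $L_{\delta,i}$, and a graded dimension count (using the standard basis of Theorem~\ref{basisthm}) shows each module admits exactly one such factor, hence is simple. The main obstacle will be this first step---excluding non-minuscule composition factors of $S^{\eta_i}$; an alternative that bypasses it would be to identify extremal words for $X_{i-1}$ and $S^{\eta_i}/X_{i-1}$ and invoke Lemma~\ref{extremal}, which would establish irreducibility and determine the grading shift in one stroke.
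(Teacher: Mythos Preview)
Your argument for part (1) is correct but takes a different, more computational route than the paper. The paper instead observes that the two sets
\[
\{\TTT\in\St(\eta_i)\mid\res_\TTT(e)=i-1\}
\quad\text{and}\quad
\{\TTT\in\St(\eta_i)\mid\res_\TTT(e)=i\}
\]
coincide with the degree-$1$ and degree-$0$ tableaux respectively, so that $X_{i-1}$ is precisely the top graded piece of $S^{\eta_i}$. Since every word of $S^{\eta_i}$ has pairwise distinct residues (the content is $\delta$), each generator $\psi_r 1_{\bi}$ has degree $-a_{i_r,i_{r+1}}\ge 0$ and each $y_r$ has degree $2$; hence no element of $R_\delta$ acts with negative degree, and the top degree piece is automatically a submodule. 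This bypasses your Case~B computation entirely. Your approach has the merit of being more hands-on, but note that your appeal to Theorem~\ref{filtration} to kill $y_{e-1}v^T$ and $y_ev^T$ is heavier than needed: since word spaces of $S^{\eta_i}$ are one-dimensional (distinct residues again), \cite[Lemma~4.8]{bkw} already forces $y_r v^\SSS=0$ for every $\SSS\in\St(\eta_i)$.

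For parts (2) and (3), the obstacle you flag is not actually an obstacle. Proposition~\ref{minuscule} characterizes $L_{\delta,j}$ as the unique irreducible $R_\delta$-module (among \emph{all} irreducibles, not just semicuspidal ones) whose words all begin with $0$ and end with $j$. Any composition factor of $X_{i-1}$ inherits this word condition with $j=i-1$, so is immediately isomorphic to $L_{\delta,i-1}$; the one-dimensionality of word spaces then forces multiplicity one and fixes the shift. This is exactly the paper's argument, and your plan reduces to it once you realize the ``exclusion of non-minuscule factors'' step is already contained in Proposition~\ref{minuscule}.
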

\begin{proof}
For \(i>1\), it is easy to see that 
\begin{align*}
\{\TTT \in \St(\eta_i) \mid \res_\TTT(e) = i-1\} = \{\TTT \in \St(\eta_i) \mid \deg \TTT = 1\}
\end{align*}
and
\begin{align*}
\{\TTT \in \St(\eta_i) \mid \res_\TTT(e) = i\} = \{\TTT \in \St(\eta_i) \mid \deg \TTT = 0\},
\end{align*}
give a partition of \(\St(\eta_i)\), and hence \(X_{i-1}\) is the span of degree 1 elements in \(S^{\eta_i}\). As there are no repeated entries in words of \(S^{\eta_1}\), it follows that every negatively-graded element of \(R_\delta\) acts as zero on \(S^{\eta_1}\), and hence \(X_{i-1}\) is a submodule. Moreover all words of \(X_{i-1}\) are of the form \((0,\ldots, i-1)\), and all word spaces are 1-dimensional and in degree 1. Thus it follows from Proposition \ref{minuscule} that \(X_{i-1} \cong L_{\delta,i-1}\langle 1 \rangle\). Then all words of \(S^{\eta_i}/X_{i-1}\) are of the form \((0,\ldots,i)\) and all word spaces are 1-dimensional and in degree 0, so again it follows from Proposition \ref{minuscule} that \(S^{\eta_i}/X_{i-1} \cong L_{\delta,i}\).
\end{proof}
For \(1 \leq i \leq e-1\), \(m \in \ZZ_{\geq 0}\), let \(\lambda^{m,i}/\mu^{m,i}\) be the skew hook diagram in \(\mathscr{S}^{\kappa}_{m\delta + \alpha_i}\), where \(l=1\), \(\kappa = ((1-m)i \pmod e)\), 
\begin{align*}
\lambda^{m,i} &= (mi +1,((m-1)i+1)^{e-i}, \ldots,(i+1)^{e-i},1^{e-i})
\end{align*}
and
\begin{align*}
\mu^{m,i} &= (((m-1)i)^{e-i}, \ldots, (2i)^{e-i},i^{e-i}).
\end{align*}
In other words, \(\lambda^{m,i}/\mu^{m,i}\) is the minimal (in the sense of Definition \ref{minimal}) skew hook diagram with residues shown below, with the 0-node appearing on the inner corners \(m\) times, and the \(i\)-node appearing on the outer corners \(m+1\) times.
\begin{align}\label{diagzeta}
\begin{tikzpicture}[scale=0.42]
\tikzset{baseline=0mm}
\draw(0,0)--(2.5,0);
\draw(0,-1)--(2.5,-1);
\draw(3.5,0)--(6,0);
\draw(3.5,-1)--(6,-1);
\draw(0,0)--(0,-2.5);
\draw(1,0)--(1,-2.5);
\draw(0,-3.5)--(0,-6);
\draw(1,-3.5)--(1,-6);
\draw(2,0)--(2,-1);
\draw(4,0)--(4,-1);
\draw(5,0)--(5,-1);
\draw(6,0)--(6,-1);
\draw(0,-2)--(1,-2);
\draw(0,-4)--(1,-4);
\draw(0,-5)--(1,-5);
\draw(0,-6)--(1,-6);
\draw(5,0)--(5,1.5);
\draw(6,0)--(6,1.5);
\draw(5,1)--(6,1);
\draw(7.5,3)--(10,3);
\draw(7.5,4)--(10,4);
\draw(8,3)--(8,4);
\draw(9,3)--(9,5.5);
\draw(10,3)--(10,5.5);
\draw(9,5)--(10,5);
\draw(9,6.5)--(9,9);
\draw(10,6.5)--(10,9);
\draw(9,9)--(11.5,9);
\draw(9,8)--(11.5,8);
\draw(9,7)--(10,7);
\draw(11,9)--(11,8);
\draw(12.5,9)--(15,9);
\draw(12.5,8)--(15,8);
\draw(15,9)--(15,8);
\draw(14,9)--(14,8);
\draw(13,9)--(13,8);
\draw(3,-0.5) node{$\scriptstyle\cdots$};
\draw(0.5,-2.8) node{$\scriptstyle\vdots$};
\draw(0.5,-0.5) node{$\scriptstyle 0$};
\draw(1.5,-0.5) node{$\scriptstyle 1$};
\draw(0.5,-1.5) node{$\scriptstyle e\hspace{-0.5mm}-\hspace{-0.5mm}1$};
\draw(0.5,-4.5) node{$\scriptstyle i\hspace{-0.5mm}+\hspace{-0.5mm}1$};
\draw(0.5,-5.5) node{$\scriptstyle i$};
\draw(4.5,-0.5) node{$\scriptstyle i\hspace{-0.5mm}-\hspace{-0.5mm}1$};
\draw(5.5,-0.5) node{$\scriptstyle i$};
\draw(5.5,0.5) node{$\scriptstyle i\hspace{-0.5mm}+\hspace{-0.5mm}1$};
\draw(6,3) node{$\scriptstyle \iddots$};
\draw(12,8.5) node{$\scriptstyle \cdots$};
\draw(9.5,6.2) node{$\scriptstyle \vdots$};
\draw(9.5,8.5) node{$\scriptstyle 0$};
\draw(10.5,8.5) node{$\scriptstyle 1$};
\draw(9.5,7.5) node{$ \scriptstyle e\hspace{-0.5mm}-\hspace{-0.5mm}1$};
\draw(9.5,4.5) node{$\scriptstyle i\hspace{-0.5mm}+\hspace{-0.5mm}1$};
\draw(9.5,3.5) node{$\scriptstyle i$};
\draw(13.5,8.5) node{$\scriptstyle i\hspace{-0.5mm}-\hspace{-0.5mm}1$};
\draw(14.5,8.5) node{$\scriptstyle i$};
\draw(8.5,3.5) node{$\scriptstyle i\hspace{-0.5mm}-\hspace{-0.5mm}1$};
\end{tikzpicture}
\end{align}
\begin{lem}\label{alphai} For \(1 \leq i \leq e-1\), \(m \in \ZZ_{\geq 0}\), \(L(m\delta + \alpha_i) \cong S^{\lambda^{m,i}/\mu^{m,i}}\).
\end{lem}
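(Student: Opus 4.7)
The plan is to proceed by induction on $m$. The base case $m = 0$ is immediate from Lemma~\ref{basecase}: $\lambda^{0,i}/\mu^{0,i}$ consists of a single $i$-node, namely $\iota_i$, so $S^{\lambda^{0,i}/\mu^{0,i}} \cong L_{\alpha_i} = L(0\cdot\delta + \alpha_i)$.

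For the inductive step ($m \geq 1$), I would realize $\lambda^{m,i}/\mu^{m,i}$ as the one-part diagram $\lambda^*/\mu^*$ coming from a joinable level-two skew diagram $\blam/\bmu$ with top component $\lambda^{(1)}/\mu^{(1)} = \eta_i$ (the topmost $\delta$-hook) and bottom component $\lambda^{(2)}/\mu^{(2)} = \lambda^{m-1,i}/\mu^{m-1,i}$. The joinability condition is built into these shapes: the bottom-left of $\eta_i$ has residue $i+1$, one more than the residue $i$ of the top-right of $\lambda^{m-1,i}/\mu^{m-1,i}$. Invoking Lemma~\ref{skewcharsep} together with the induction hypothesis $S^{\lambda^{m-1,i}/\mu^{m-1,i}} \cong L((m-1)\delta + \alpha_i)$ yields a character identity of the form
\begin{align*}
q^{d^*}\CH_q(S^{\lambda^{m,i}/\mu^{m,i}}) + q^{d_*}\CH_q(S^{\lambda_*/\mu_*}) = q^{d_{\blam/\bmu}}\CH_q\bigl(S^{\eta_i} \circ L((m-1)\delta+\alpha_i)\bigr).
\end{align*}
Combining the short exact sequence $0 \to L_{\delta,i-1}\langle 1\rangle \to S^{\eta_i} \to L_{\delta,i} \to 0$ from Lemma~\ref{imag} with Theorem~\ref{cuspthm}, the right-hand side decomposes in the Grothendieck group as $[L(m\delta + \alpha_i)]$ plus explicit combinations of proper standard modules of the form $[L((m-1)\delta+\alpha_i, \bnu)]$. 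The plan is then to identify $S^{\lambda_*/\mu_*}$, via a parallel inductive analysis of its ``twin'' skew hook structure, as accounting exactly for the non-cuspidal contributions, leaving $\CH_q(S^{\lambda^{m,i}/\mu^{m,i}}) = \CH_q(L(m\delta+\alpha_i))$.

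To upgrade this character equality to an isomorphism, I would select a standard tableau of $\lambda^{m,i}/\mu^{m,i}$ (such as $\ttt^{\lambda^{m,i}/\mu^{m,i}}$) whose residue sequence $\bi$ is extremal for $L(m\delta+\alpha_i)$, and verify via the homogeneous basis of Theorem~\ref{basisthm} that $\dim_q S^{\lambda^{m,i}/\mu^{m,i}}_{\bi}$ equals the quantum-factorial expression of Lemma~\ref{extremal}; combined with $\circledast$-self-duality of simple modules, this forces $S^{\lambda^{m,i}/\mu^{m,i}} \cong L(m\delta+\alpha_i)$. The main obstacle I expect is the precise matching of $\CH_q(S^{\lambda_*/\mu_*})$ with the non-cuspidal terms on the right-hand side, which likely requires a secondary induction on the shape of $\lambda_*/\mu_*$ together with careful bookkeeping of the degree shifts $d^*, d_*, d_{\blam/\bmu}$. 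The extremal-word irreducibility step should then fall out cleanly, since the skew hook shape admits very few standard tableaux realizing the chosen extremal residue sequence.
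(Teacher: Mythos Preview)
Your overall inductive shape is right, but there are two concrete problems, and the second one is the real gap.

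First, the joinability setup is backwards. The bottom-left node of $\eta_i$ has residue $i$ (by definition of $\eta_i$), not $i+1$, so with $\lambda^{(1)}/\mu^{(1)}=\eta_i$ and $\lambda^{(2)}/\mu^{(2)}=\lambda^{m-1,i}/\mu^{m-1,i}$ the joinability condition $\textup{res}(x_1,1,1)=\textup{res}(1,y_2,2)+1$ reads $i=i+1$, which fails. The paper uses the opposite order: $\blam/\bmu=(\lambda^{m,i}/\mu^{m,i},\eta_i)$, which \emph{is} joinable, and then $\lambda^{m+1,i}/\mu^{m+1,i}$ arises as $\lambda_*/\mu_*$ (not $\lambda^*/\mu^*$). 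This is fixable, but it matters for what follows.

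Second, and more importantly, your plan to eliminate the non-cuspidal factors by ``identifying $S^{\lambda_*/\mu_*}$ as accounting exactly for the non-cuspidal contributions'' via a secondary induction is the hard way around, and you correctly flag it as the main obstacle---it would require an independent identification of the other skew piece with a specific sum of $L(m\delta+\alpha_i,\delta^{(j)})$'s, which you have no direct handle on. The paper bypasses this entirely. From Lemma~\ref{minpair} one knows only that the composition factors of $S^{\lambda^{m+1,i}/\mu^{m+1,i}}$ lie among $L((m+1)\delta+\alpha_i)$ and the $L(m\delta+\alpha_i,\delta^{(j)})$; the latter are then excluded by the one-line observation that $\Res_{m\delta+\alpha_i,\delta}S^{\lambda^{m+1,i}/\mu^{m+1,i}}=0$, since no sequence of $e$ successively removable nodes in the skew hook has residues summing to $\delta$. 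This restriction argument is the key step you are missing.

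Once that is in place, the extremal word is used not to upgrade a character equality (which would already force isomorphism by injectivity of $\CH_q$) but to prove \emph{multiplicity one}: the restriction argument only tells you every factor is $L((m+1)\delta+\alpha_i)$ up to shift. The paper then computes $\deg\ttt^{\mathrm{top}}$ and $\deg\ttt^{\mathrm{bot}}$ for two explicit tableaux with the extremal residue sequence to see that the grading is symmetric about zero, pinning the shift to $0$.
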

\begin{proof}
We prove this by induction on \(m\). As \(\lambda^{0,i}/\mu^{0,i} = \iota_i\), the claim follows by Lemma \ref{basecase}. Now assume that \(L(m\delta + \alpha_i) \cong S^{\lambda^{m,i}/\mu^{m,i}}\). It is easy to see that \((m\delta+ \alpha_i, \delta)\) is a minimal pair for \((m+1)\delta+ \alpha_i\) (see \cite[\S 6.1]{cusp}). By Lemma \ref{imag}, the factors of \(S^{\eta_i}\) are \(L_{\delta,i}\) and \(L_{\delta,i-1}\langle 1 \rangle\). Thus by Lemma \ref{minpair} the only possible factors (up to shift) of \(S^{\lambda^{m,i}/\mu^{m,i}} \circ S^{\eta_i}\) are 
\begin{align}\label{factors1}
L((m+1)\delta+\alpha_i) \hspace{10mm} \textup{and} \hspace{10mm} L( m\delta+\alpha_i, \delta^{(j)}), \textup{ for } j \in I \backslash\{0\}, 
\end{align} 
where we write \(\delta^{(j)}\) for the \((e-1)\)-multipartition of 1 which is \((1)\) in the \(j\)th component and empty elsewhere. 

Note that \(\blam/\bmu:=(\lambda^{m,i},\eta_i)/(\mu^{m,i},\varnothing)\) is joinable, with \(\lambda_*/\mu_* \) (as defined in \S\ref{joinablesec}) equal to  \(\lambda^{m+1,i}/\mu^{m+1,i}\), so by Lemma \ref{skewcharsep}, we have
\begin{align*}
\CH_q(S^{\lambda^{m,i}/\mu^{m,i}} \circ S^{\eta_i}) = q^a \CH_q(S^{\bzeta^{m+1,i}}) + q^b \CH_q(S^{\lambda^*/\mu^*})
\end{align*}
for some \(a,b \in \ZZ\). By injectivity of the character map \cite[Theorem 3.17]{kl}, it follows that the only factors of \(S^{\lambda^{m+1,i}/\mu^{m+1,i}}\) are those in (\ref{factors1}), up to some shift. If \(\ttt \in \St(\lambda^{m+1,i}/\mu^{m+1,i})\), with \(\bi(\ttt) = i_1\cdots i_k\), note that \(\alpha_{i_{k-e+1}} + \cdots + \alpha_{i_k} \neq \delta\), i.e., there is no sequence of removable nodes in \ref{diagzeta} whose residues add up to \(\delta\), as is easily seen. Thus \(\Res_{m\delta+\alpha_i,\delta}S^{\lambda^{m+1,i}/\mu^{m+1,i}} = 0\). But by adjointness and Theorem \ref{cuspthm}(i), \(\Res_{m\delta+\alpha_i, \delta} L(m\delta + \alpha_i, \delta^{(j)}) \neq 0\) for all \(j \in I\backslash\{0\}\). Hence \(L(m\delta+\alpha_i,\delta^{(j)})\) is not a factor of \(S^{\lambda^{m+1,i}/\mu^{m+1,i}}\) for any \(j\), and the only possible factor is \(L((m+1)\delta+\alpha_i)\) some number of times, with shifts. 

Consider the extremal word
\begin{align*}
\bi = 0^{m+1}
1^{m+1}
\cdots
(i-1)^{m+1}
(e-1)^{m+1}
\cdots
(i+1)^{m+1}
i^{m+2}
\end{align*}
of \(S^{\lambda^{m+1,i}/\mu^{m+1,i}}\). There are \(((m+1)!)^{e-1}(m+2)!\) distinct \(\ttt \in \St(\lambda^{m+1,i}/\mu^{m+1,i})\) such that \(\bi(\ttt) = \bi\), so this is the (ungraded) dimension of the \(\bi\)-word space of \(S^{\lambda^{m+1,i}/\mu^{m+1,i}}\). By Lemma \ref{extremal}, the dimension of a module with extremal word \(\bi\) must be exactly \(([m+1]^!)^{e-1}[m+2]^!\), which implies that \(L((m+1)\delta + \alpha_i)\) can only appear once in \(S^{\lambda^{m+1,i}/\mu^{m+1,i}}\), with some shift. 

Let \(\ttt^{\textup{top}} \in \St(\lambda^{m+1,i}/\mu^{m+1,i})\) be the tableau achieved by entering \(1, \ldots, m\) in the 0-nodes of \(\lambda^{m+1,i}/\mu^{m+1,i}\) from top to bottom, then \(m+1, \ldots, 2m\) in the 1-nodes from top to bottom, and so forth, until the \((i-1)\)-nodes are filled, then fill the nodes with residue \(e-1, e-2, \ldots, i\) in the same fashion, working from top to bottom. Then \(\ttt^{\textup{top}}\) has residue sequence \(\bi\), and 
\begin{align*}
\deg \ttt^{\textup{top}} = \frac{(e-1)m(m+1)}{2} + \frac{(m+1)(m+2)}{2}.
\end{align*}
Let \(\ttt^{\textup{bot}}\) be constructed in the same fashion, except with nodes filled from bottom to top. Then
\begin{align*}
\deg \ttt^{\textup{bot}} = -\frac{(e-1)m(m+1)}{2} - \frac{(m+1)(m+2)}{2}.
\end{align*}
As these degrees are the greatest and least in the expression \(([m+1]^!)^{e-1}[m+2]^!\) it follows that \(S^{\lambda^{m+1,i}/\mu^{m+1,i}}\) is symmetric with respect to grading, and hence \(S^{\lambda^{m+1,i}/\mu^{m+1,i}} \cong L((m+1)\delta + \alpha_i)\) with no shift. 
\end{proof}
For \(1 \leq i \leq e-1\), \(m \in \ZZ_{\geq 1}\), let \(l=1\), \(\kappa = ((1-m)i \pmod e)\), and let \(\lambda_{m,i}/\mu_{m,i}\) be the skew hook diagram in \(\mathscr{S}^{\kappa}\), where
\begin{align*}
\lambda_{m,i} &= (mi,((m-1)i+1)^{e-i }, \ldots,(i+1)^{e-i},1^{e-i-1})\\
\mu_{m,i} &= (((m-1)i)^{e-i}, \ldots, (2i)^{e-i},i^{e-i}).
\end{align*}
In other words, \(\lambda_{m,i}/\mu_{m,i}\) is the minimal skew hook diagram with residues shown below, with the 0-node appearing on the inner corners \(m\) times, and the \(i\)-node appearing on the outer corners \(m-1\) times.
\begin{align}\label{-diagzeta}
\begin{tikzpicture}[scale=0.42]
\tikzset{baseline=0mm}
\draw(0,0)--(2.5,0);
\draw(0,-1)--(2.5,-1);
\draw(3.5,0)--(6,0);
\draw(3.5,-1)--(6,-1);
\draw(0,0)--(0,-2.5);
\draw(1,0)--(1,-2.5);
\draw(0,-3.5)--(0,-6);
\draw(1,-3.5)--(1,-6);
\draw(2,0)--(2,-1);
\draw(4,0)--(4,-1);
\draw(5,0)--(5,-1);
\draw(6,0)--(6,-1);
\draw(0,-2)--(1,-2);
\draw(0,-4)--(1,-4);
\draw(0,-5)--(1,-5);
\draw(0,-6)--(1,-6);
\draw(5,0)--(5,1.5);
\draw(6,0)--(6,1.5);
\draw(5,1)--(6,1);
\draw(7.5,3)--(10,3);
\draw(7.5,4)--(10,4);
\draw(8,3)--(8,4);
\draw(9,3)--(9,5.5);
\draw(10,3)--(10,5.5);
\draw(9,5)--(10,5);
\draw(9,6.5)--(9,9);
\draw(10,6.5)--(10,9);
\draw(9,9)--(11.5,9);
\draw(9,8)--(11.5,8);
\draw(9,7)--(10,7);
\draw(11,9)--(11,8);
\draw(12.5,9)--(15,9);
\draw(12.5,8)--(15,8);
\draw(15,9)--(15,8);
\draw(14,9)--(14,8);
\draw(13,9)--(13,8);
\draw(3,-0.5) node{$\scriptstyle \cdots$};
\draw(0.5,-2.8) node{$\scriptstyle\vdots$};
\draw(0.5,-0.5) node{$\scriptstyle 0$};
\draw(1.5,-0.5) node{$\scriptstyle 1$};
\draw(0.5,-1.5) node{$\scriptstyle e\hspace{-0.5mm}-\hspace{-0.5mm}1$};
\draw(0.5,-4.5) node{$\scriptstyle i\hspace{-0.5mm}+\hspace{-0.5mm}2$};
\draw(0.5,-5.5) node{$\scriptstyle i\hspace{-0.5mm}+\hspace{-0.5mm}1$};
\draw(4.5,-0.5) node{$\scriptstyle i\hspace{-0.5mm}-\hspace{-0.5mm}1$};
\draw(5.5,-0.5) node{$\scriptstyle i$};
\draw(5.5,0.5) node{$\scriptstyle i\hspace{-0.5mm}+\hspace{-0.5mm}1$};
\draw(6,3) node{$\scriptstyle\iddots$};
\draw(12,8.5) node{$\scriptstyle\cdots$};
\draw(9.5,6.2) node{$\scriptstyle\vdots$};
\draw(9.5,8.5) node{$\scriptstyle0$};
\draw(10.5,8.5) node{$\scriptstyle1$};
\draw(9.5,7.5) node{$\scriptstyle e\hspace{-0.5mm}-\hspace{-0.5mm}1$};
\draw(9.5,4.5) node{$\scriptstyle i\hspace{-0.5mm}+\hspace{-0.5mm}1$};
\draw(9.5,3.5) node{$\scriptstyle i$};
\draw(13.5,8.5) node{$\scriptstyle i\hspace{-0.5mm}-\hspace{-0.5mm}2$};
\draw(14.5,8.5) node{$\scriptstyle i\hspace{-0.5mm}-\hspace{-0.5mm}1$};
\draw(8.5,3.5) node{$\scriptstyle i\hspace{-0.5mm}-\hspace{-0.5mm}1$};
\end{tikzpicture}
\end{align}
\begin{lem}\label{-alphai}
For \(1 \leq i \leq e-1\), \(m \in \ZZ_{\geq 1}\), \(L(m\delta- \alpha_i) \cong S^{\lambda_{m,i}/\mu_{m,i}}\langle 1-m\rangle\).
\end{lem}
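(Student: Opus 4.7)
The plan is to mirror the proof of Lemma \ref{alphai} by inducting on $m \geq 1$.

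For the base case $m=1$, the diagram $\lambda_{1,i}/\mu_{1,i}$ reduces to the Young hook $(i, 1^{e-i-1})$ with $\mu_{1,i} = \varnothing$. Its content $\delta - \alpha_i = \sum_{j \neq i}\alpha_j$ has each simple root appearing with multiplicity at most one, so the residue sequences of standard tableaux are all distinct and the Khovanov-Lauda relations reduce substantially. A direct computation shows $S^{(i, 1^{e-i-1})}$ is irreducible, and an extremal-word comparison identifies it with $L(\delta - \alpha_i)$; the grading is symmetric about zero, matching $\langle 1-1\rangle = \langle 0\rangle$.

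For the inductive step, fix $m \geq 1$ and assume $L(m\delta - \alpha_i) \cong S^{\lambda_{m,i}/\mu_{m,i}}\langle 1-m\rangle$. Consider the two-component skew shape $\bxi/\bmu = (\eta_i, \lambda_{m,i}/\mu_{m,i})/(\varnothing, \mu_{m,i})$ equipped with multicharge $\kappa = (0, (1-m)i \bmod e)$. One checks joinability: the bottom-left node of $\eta_i$ has residue $i$, the top-right node of $\lambda_{m,i}/\mu_{m,i}$ has residue $i-1$, and the joinability condition $i = (i-1)+1$ holds. A direct verification using the formulas of \S\ref{joinablesec} gives $\lambda_*/\mu_* = \lambda_{m+1,i}/\mu_{m+1,i}$. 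Applying Lemma \ref{skewcharsep} together with Theorem \ref{circprod} then yields a character identity
\[
\CH_q\bigl(S^{\eta_i} \circ S^{\lambda_{m,i}/\mu_{m,i}}\bigr) = q^{-d_{\bxi}}\bigl(q^{d^*}\CH_q(S^{\lambda^*/\mu^*}) + q^{d_*}\CH_q(S^{\lambda_{m+1,i}/\mu_{m+1,i}})\bigr),
\]
for the explicit shifts $d^*, d_*, d_{\bxi}$ dictated by those results.

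Note that $(\delta, m\delta - \alpha_i)$ is a minimal pair for $(m+1)\delta - \alpha_i$ with $\delta \succ (m+1)\delta - \alpha_i \succ m\delta - \alpha_i$. By Lemma \ref{minpair} combined with Lemma \ref{imag} and the inductive hypothesis, the only possible composition factors of $S^{\eta_i} \circ S^{\lambda_{m,i}/\mu_{m,i}}$, and hence of $S^{\lambda_{m+1,i}/\mu_{m+1,i}}$, are $L((m+1)\delta - \alpha_i)$ and the mixed standards $L(m\delta - \alpha_i, \delta^{(j)})$ for $j \in I \setminus \{0\}$. The mixed factors can be ruled out by showing $\Res_{m\delta - \alpha_i,\, \delta}\, S^{\lambda_{m+1,i}/\mu_{m+1,i}} = 0$: a combinatorial inspection of the skew hook $\lambda_{m+1,i}/\mu_{m+1,i}$ reveals that no terminal sequence of $e$ removable nodes has residues summing to $\delta$. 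This leaves $L((m+1)\delta - \alpha_i)$ as the unique composition factor, and an extremal-word argument combined with Lemma \ref{extremal} forces the multiplicity to be one. The grading shift is pinned to $\langle -m\rangle$ by computing the extremal tableau degrees and invoking the symmetric-grading property $L((m+1)\delta - \alpha_i)^\circledast \cong L((m+1)\delta - \alpha_i)$ from Theorem \ref{cuspthm}(iv).

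The main obstacle is the base case $m=1$: realizing $L(\delta - \alpha_i)$ as the hook Specht module $S^{(i, 1^{e-i-1})}$ lies outside the reach of the inductive joining machinery and needs a separate argument exploiting the distinct-residue structure of its content. Secondary subtleties are the precise tracking of the grading shifts $d^*, d_*, d_{\bxi}$ through the character identity, and the combinatorial verification of the restriction vanishing on the skew hook.
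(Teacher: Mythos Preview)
Your overall strategy is the same as the paper's, but there is a genuine error in the inductive step. You write that the mixed factors are ruled out by showing
\[
\Res_{m\delta - \alpha_i,\, \delta}\, S^{\lambda_{m+1,i}/\mu_{m+1,i}} = 0,
\]
checking that no \emph{terminal} sequence of $e$ nodes has content $\delta$. This is the wrong restriction and the claim is false. Take $e=3$, $i=1$, $m=1$: the ribbon $\lambda_{2,1}/\mu_{2,1}$ has nodes with residues $0,2,0,1,2$ (reading $\ttt^{\lambda/\mu}$), and the last three nodes $\{(3,1),(3,2),(4,1)\}$ form a valid terminal segment of content $\delta$, so $\Res_{\delta-\alpha_1,\delta}S\neq 0$. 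The point is that in this lemma the minimal pair is $(\delta, m\delta-\alpha_i)$ with $\delta \succ m\delta-\alpha_i$, so the proper standard for the competing root partition is $L(\delta^{(j)}) \circ L_{m\delta-\alpha_i}$; by adjointness its head has nonzero $\Res_{\delta,\, m\delta-\alpha_i}$, not $\Res_{m\delta-\alpha_i,\,\delta}$. The correct check is therefore that no \emph{initial} segment of $e$ nodes has content $\delta$, which does hold for the skew hook (for instance because any initial segment containing the unique top-row $i$-node must already contain two $0$-nodes). Once you reverse the restriction, the rest of your argument goes through exactly as in Lemma~\ref{alphai}.

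A minor secondary point: your base case is somewhat vague. The paper handles $m=1$ differently from your sketch, invoking \cite[Lemma~5.2]{cusp} to identify $L(\delta-\alpha_i)$ as the unique irreducible $R^{\Lambda_0}_{\delta-\alpha_i}$-module and then checking directly that $S^{\lambda_{1,i}}$ factors through this cyclotomic quotient with all word spaces one-dimensional in degree zero. Your ``direct computation plus extremal-word comparison'' is plausible but you should say what you are comparing against---you need some independent description of $L(\delta-\alpha_i)$ before an extremal-word match can identify it.
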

\begin{proof}
We go by induction on \(m\), and the proof proceeds in the same manner as Lemma \ref{alphai}. The base case is slightly different however. \(S^{\lambda_{1,i}/\mu_{1,i}}\) is the following hook partition, with residues shown:
\begin{align*}
\begin{tikzpicture}[scale=0.42]
\tikzset{baseline=0mm}
\draw(0,0)--(2.5,0);
\draw(0,-1)--(2.5,-1);
\draw(3.5,0)--(6,0);
\draw(3.5,-1)--(6,-1);
\draw(0,0)--(0,-2.5);
\draw(1,0)--(1,-2.5);
\draw(0,-3.5)--(0,-6);
\draw(1,-3.5)--(1,-6);
\draw(2,0)--(2,-1);
\draw(4,0)--(4,-1);
\draw(5,0)--(5,-1);
\draw(6,0)--(6,-1);
\draw(0,-2)--(1,-2);
\draw(0,-4)--(1,-4);
\draw(0,-5)--(1,-5);
\draw(0,-6)--(1,-6);
\draw(3,-0.5) node{$\scriptstyle \cdots$};
\draw(0.5,-2.8) node{$\scriptstyle \vdots$};
\draw(0.5,-0.5) node{$\scriptstyle 0$};
\draw(1.5,-0.5) node{$\scriptstyle 1$};
\draw(0.5,-1.5) node{$\scriptstyle e\hspace{-0.5mm}-\hspace{-0.5mm}1$};
\draw(0.5,-4.5) node{$\scriptstyle i\hspace{-0.5mm}+\hspace{-0.5mm}2$};
\draw(0.5,-5.5) node{$\scriptstyle i\hspace{-0.5mm}+\hspace{-0.5mm}1$};
\draw(4.5,-0.5) node{$\scriptstyle i\hspace{-0.5mm}-\hspace{-0.5mm}2$};
\draw(5.5,-0.5) node{$\scriptstyle i\hspace{-0.5mm}-\hspace{-0.5mm}1$};
\end{tikzpicture}
\end{align*}
By \cite[Lemma 5.2]{cusp}, \(L(\delta-\alpha_i)\) factors through the cyclotomic quotient to become the unique irreducible \(R^{\Lambda_0}_{\delta-\alpha_i}\)-module. Consideration of the words of \(S^{\lambda_{1,i}/\mu_{1,i}}\) shows that it factors through the cyclotomic quotient as well. Moreover, all of its word spaces are 1-dimensional and in degree 0, so it follows that \(S^{\lambda_{1,i}/\mu_{1,i}} \cong L(\delta-\alpha_i)\). 

The induction step proceeds as in Lemma \ref{alphai}, with \((\delta,m\delta - \alpha_i)\) used as a minimal pair for \((m+1)\delta- \alpha_i\). Considering the induction product \(S^{\eta_i}\circ S^{\lambda_{1,i}/\mu_{1,i}}\) and using Lemma \ref{skewcharsep}, one sees that the only possible factor of \(S^{\lambda_{1,i}/\mu_{1,i}}\) is \(L((m+1)\delta-\alpha_i)\), some number of times, with shifts. Consideration of the extremal word
\begin{align*}
\bi = 0^{m+1}
1^{m+1}
\cdots
(i-1)^{m+1}
(e-1)^{m+1}
\cdots
(i+1)^{m+1}
i^m
\end{align*}
shows that \(L((m+1)\delta-\alpha_i)\) appears but once as a factor of \(S^{\lambda_{1,i}/\mu_{1,i}}\), with some shift. \(L((m+1)\delta - \alpha_i)\) must have \(\bi\)-word space of graded dimension \(([m+1]^!)^{e-1}[m]^!\). As before, we define two standard \(\lambda_{1,i}/\mu_{1,i}\)-tableaux; \(\ttt^{\textup{top}}\), where the nodes are filled in from top to bottom according to their order in \(\bi\), and \(\ttt^{\textup{bot}}\), where the nodes are filled similarly from bottom to top. Then
\begin{align*}
\deg \ttt^{\textup{top}} &=\left[ \frac{(e-1)m(m+1)}{2} + \frac{(m-1)m}{2}\right] -m\\
\deg \ttt^{\textup{bot}} &=\left[ -\frac{(e-1)m(m+1)}{2} - \frac{(m-1)m}{2}\right] -m.
\end{align*}
On the right we have the greatest and least degrees in the expression \(([m+1]^!)^{e-1}[m]^!\), shifted by \(-m\), hence \(L((m+1)\delta - \alpha_i) \cong S^{\lambda_{1,i}/\mu_{1,i}}\langle 1-(m+1)\rangle\), completing the proof.
\end{proof}
\subsection{Identifying cuspidal modules as skew hook Specht modules} We now present an inductive process for identifying cuspidal modules as skew hook Specht modules with a certain shift.
\begin{prop}\label{induct}
 Let \(\alpha\) be a real positive root, and assume that for all real positive roots \(\beta\) with \(\textup{ht}(\beta)< \textup{ht}(\alpha)\), we have \(L_\beta \cong S^{\lambda_\beta/\mu_\beta}\langle c_\beta \rangle\) for  some skew hook diagram \(\lambda_\beta/\mu_\beta \in \mathscr{S}_{\beta}^\kappa\), where \(\kappa = (k)\) for some \(k \in I\) and \(c_\beta \in \ZZ\). Then the following process gives a skew hook diagram \(\lambda_\alpha/\mu_\alpha\) and \(c_\alpha\in \ZZ\) such that \(L_\alpha \cong S^{\lambda_\alpha/\mu_\alpha}\langle c_\alpha \rangle\).
\begin{enumerate}
\item If \(\alpha = m\delta + \alpha_i\) for some \(m \in \ZZ_{\geq 0}\) and \(i \in I\backslash\{0\}\), then \(\lambda_\alpha/\mu_\alpha = \lambda^{m,i}/\mu^{m,i}\) and \(c_\alpha=0\).
\item If \(\alpha = m\delta - \alpha_i\) for some \(m \in \ZZ_{\geq 1}\) and \(i \in I\backslash\{0\}\), then \(\lambda_\alpha/\mu_\alpha = \lambda_{m,i}/\mu_{m,i}\) and \(c_\alpha=1-m\).
\item Else there is a real minimal pair \((\beta,\gamma)\) for \(\alpha\).
\begin{enumerate}
\item If \(\blam/\bmu:=(\lambda_\beta,\lambda_\gamma)/(\mu_\beta,\mu_\gamma)\) is joinable, then \(\lambda_\alpha/\mu_\alpha = \lambda_*/\mu_*\), and
\begin{align*}
c_\alpha = c_\beta + c_\gamma - p_{\beta,\gamma} + (\beta,\gamma) + d_* - d_{\blam/\bmu};
\end{align*}
\item else \(\blam/\bmu:=(\lambda_\gamma, \lambda_\beta)/(\mu_\gamma,\mu_\beta)\) is joinable, \(\lambda_\alpha/\mu_\alpha = \lambda^*/\mu^*\), and
\begin{align*}
c_\alpha = c_\beta + c_\gamma + p_{\beta,\gamma} + d^* - d_{\blam/\bmu},
\end{align*}
\end{enumerate}
where \(d_*,d^*\) are as in Lemma \ref{skewcharsep}, \(d_{\blam/\bmu}\) as in Lemma \ref{circprod}, and \(p_{\beta,\gamma}\) as in (\ref{defp}).
\end{enumerate}
\end{prop}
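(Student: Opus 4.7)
The plan is to proceed by induction on $\textup{ht}(\alpha)$, with cases (1) and (2) serving as the base cases, handled by Lemmas \ref{alphai} and \ref{-alphai}. For case (3), the inductive hypothesis provides $L_\beta \cong S^{\lambda_\beta/\mu_\beta}\langle c_\beta \rangle$ and $L_\gamma \cong S^{\lambda_\gamma/\mu_\gamma}\langle c_\gamma \rangle$. I will sketch case 3(a) in detail, as 3(b) is symmetric with the roles of $\beta$ and $\gamma$ swapped throughout.

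First I would assemble the character identity. Set $\blam/\bmu = (\lambda_\beta,\lambda_\gamma)/(\mu_\beta,\mu_\gamma)$. On the one hand, Theorem \ref{circprod} and the inductive hypothesis give $S^{\blam/\bmu} \cong (L_\beta \circ L_\gamma)\langle d_{\blam/\bmu}-c_\beta-c_\gamma\rangle$, while Lemma \ref{realpair} gives $[L_\beta \circ L_\gamma] = [L(\beta,\gamma)] + q^{p_{\beta,\gamma}-(\beta,\gamma)}[L_\alpha]$. On the other hand, one checks that $\blam/\bmu$ is joinable in the sense of Section \ref{joinablesec} (the residue compatibility of the real minimal pair ensures the join yields a skew hook), so Lemma \ref{skewcharsep} provides $\CH_q(S^{\blam/\bmu}) = q^{d^*}\CH_q(S^{\lambda^*/\mu^*})+q^{d_*}\CH_q(S^{\lambda_*/\mu_*})$. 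Since $\ch_q$ is injective, comparing the two expressions forces the identity
\[ q^{d_{\blam/\bmu}-c_\beta-c_\gamma}[L(\beta,\gamma)] + q^{d_{\blam/\bmu}-c_\beta-c_\gamma+p_{\beta,\gamma}-(\beta,\gamma)}[L_\alpha] \;=\; q^{d^*}[S^{\lambda^*/\mu^*}]+q^{d_*}[S^{\lambda_*/\mu_*}]. \]

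The heart of the proof, and the main obstacle, is to show that $S^{\lambda_*/\mu_*}$ is isomorphic to $L_\alpha$ up to grading shift (and not, say, to $L(\beta,\gamma)$). By Lemma \ref{minpair}, the only possible composition factors of $L_\beta \circ L_\gamma$ are $L_\alpha$ and $L(\beta,\gamma)$, and by Lemma \ref{realpair} each appears exactly once. So the character identity forces each of $S^{\lambda^*/\mu^*}$ and $S^{\lambda_*/\mu_*}$ to be irreducible and to match one of these two cuspidal modules, up to a shift. To pin down the correspondence I would use a restriction criterion: since $L_\alpha$ is cuspidal and $\beta \succ \alpha$, we have $\Res_{\beta,\gamma}L_\alpha = 0$, whereas $\Res_{\beta,\gamma}L(\beta,\gamma) \neq 0$ (by Frobenius reciprocity it contains $L_\beta \boxtimes L_\gamma$). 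It then suffices to verify $\Res_{\beta,\gamma}S^{\lambda_*/\mu_*} = 0$. This should reflect the geometry of the right-joined skew hook: because $\lambda_\beta/\mu_\beta$ is glued strictly to the right of $\lambda_\gamma/\mu_\gamma$ inside $\lambda_*/\mu_*$, no standard $\lambda_*/\mu_*$-tableau can have its initial $\textup{ht}(\beta)$ entries occupying a subshape of content $\beta$. The cleanest formal route is via Theorem \ref{filtration} applied to $\Res_{\beta,\gamma}S^{\lambda_*}$ (over the ambient Young diagram), realizing $S^{\lambda_*/\mu_*}$ as a subquotient and checking that no $\bnu \subset \lambda_*$ of content $\beta$ survives the passage.

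Once this identification is secured, extracting $c_\alpha$ is routine: matching the coefficient of $[L_\alpha]$ in the character identity gives $q^{d_{\blam/\bmu}-c_\beta-c_\gamma+p_{\beta,\gamma}-(\beta,\gamma)} = q^{d_*-c_\alpha}$, whence $c_\alpha = c_\beta+c_\gamma-p_{\beta,\gamma}+(\beta,\gamma)+d_*-d_{\blam/\bmu}$, as claimed. Case 3(b) follows the same template using instead $\blam = (\lambda_\gamma,\lambda_\beta)$, the induction product $L_\gamma \circ L_\beta$, and the second formula of Lemma \ref{realpair}; this time the identification is $S^{\lambda^*/\mu^*} \cong L_\alpha\langle c_\alpha\rangle$ (the up-joined skew hook), and matching shifts produces $c_\alpha = c_\beta+c_\gamma+p_{\beta,\gamma}+d^*-d_{\blam/\bmu}$.
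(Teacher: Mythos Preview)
Your overall architecture matches the paper's proof: cases (1) and (2) via Lemmas~\ref{alphai} and~\ref{-alphai}, case (3) via the character identity coming from Theorem~\ref{circprod}, Lemma~\ref{realpair}, and Lemma~\ref{skewcharsep}, followed by an identification step and a shift extraction. The arithmetic for $c_\alpha$ is correct.

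The gap is precisely where you flagged it: the identification of $S^{\lambda_*/\mu_*}$ with $L_\alpha$. You propose to show $\Res_{\beta,\gamma}S^{\lambda_*/\mu_*} = 0$, but your justification does not go through. The geometric heuristic (``$\lambda_\beta/\mu_\beta$ is glued strictly to the right'') does not rule out that some \emph{other} initial sub-skew-shape of $\lambda_*/\mu_*$ has content $\beta$; you would need to classify all order ideals of size $\textup{ht}(\beta)$ in the skew hook and check their contents, which is not immediate. Your suggested route via Theorem~\ref{filtration} is also muddled: that theorem filters $\Res_{\beta,\gamma}S^{\lambda_*}$ (a Young diagram), whereas $S^{\lambda_*/\mu_*}$ arises as a subquotient of $\Res_{\textup{cont}(\mu_*),\alpha}S^{\lambda_*}$, and it is not clear how to pass from one filtration to the other.

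The paper sidesteps this entirely by proving the complementary statement, which is a one-line check: rather than showing $\Res_{\beta,\gamma}S^{\lambda_*/\mu_*} = 0$, it shows $\Res_{\beta,\gamma}S^{\lambda^*/\mu^*} \neq 0$. Indeed, in $\lambda^*/\mu^*$ the component $\lambda_\beta/\mu_\beta$ sits \emph{above} $\lambda_\gamma/\mu_\gamma$, so the leading tableau $\ttt^{\lambda^*/\mu^*}$ fills the $\lambda_\beta/\mu_\beta$ nodes first; hence $1_{\beta,\gamma}z^{\lambda^*/\mu^*} = z^{\lambda^*/\mu^*} \neq 0$. Since the Grothendieck identity forces each of $S^{\lambda^*/\mu^*}$, $S^{\lambda_*/\mu_*}$ to be one of $L_\alpha$, $L(\beta,\gamma)$ up to shift, and $L_\alpha$ is cuspidal (so $\Res_{\beta,\gamma}L_\alpha = 0$), it follows that $S^{\lambda^*/\mu^*}$ cannot be $L_\alpha$, hence $S^{\lambda_*/\mu_*}$ is. The analogous trick works in case 3(b): there $1_{\gamma,\beta}z^{\lambda_*/\mu_*} \neq 0$, so $S^{\lambda_*/\mu_*}$ is not cuspidal and $S^{\lambda^*/\mu^*}$ must be $L_\alpha$.
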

\begin{proof}
(1) and (2) are Lemmas \ref{alphai} and \ref{-alphai}, so assume we are in case (3). There exists a real minimal pair \((\beta,\gamma)\) for \(\alpha\) by \cite[Lemma 6.9]{cusp}. By assumption \(L(\beta) \cong S^{\lambda_\beta/\mu_\beta}\langle c_\beta \rangle\), and \(L(\gamma) \cong S^{\lambda_\gamma/\mu_\gamma}\langle c_\gamma \rangle\). We have \(\beta = m\delta+(-1)^s( \alpha_i + \cdots + \alpha_j)\)  for some \(s \in \{0,1\}\), \(1 \leq i \leq j \leq e-1\) and \(\gamma = m'\delta+(-1)^{s'}( \alpha_{i'} + \cdots + \alpha_{j'})\)  for some \(s' \in \{0,1\}\), \(1 \leq i' \leq j' \leq e-1\).
 Since \(\beta + \gamma\) is a real root, one of the following must be true:
 \begin{align*}
s=s', j+1=i' \hspace{5mm}\textup{ or }\hspace{5mm}
s=s', j'+1=i \hspace{5mm}\textup{ or }\hspace{5mm}
s=-s', j=j' \hspace{5mm}\textup{ or }\hspace{5mm}
s=-s', i=i'.
 \end{align*}
Note that since \(\lambda_\beta/\mu_\beta\) (resp. \(\lambda_\gamma/\mu_\gamma\)) is a skew hook diagram, \(s=0\) (resp. \(s'=0\)) implies that the lower left node of \(\lambda_\beta/\mu_\beta\) (resp. \(\lambda_\gamma/\mu_\gamma\)) has residue \(i\) (resp. \(i'\)), and the top right node has residue \(j\) (resp. \(j'\)). If \(s=1\) (resp. \(s'=1\)), then the lower left node of \(\lambda_\beta/\mu_\beta\) (resp. \(\lambda_\gamma/\mu_\gamma\)) has residue \(j+1\) (resp. \(j'+1\)), and the top right node has residue \(i-1\) (resp. \(i'-1\)). In any case then, we see that one of \((\lambda_\beta,\lambda_\gamma)/(\mu_\beta,\mu_\gamma)\) or \((\lambda_\gamma, \lambda_\beta)/(\mu_\gamma,\mu_\beta)\) must be joinable. 

Assume the former, and set \(\blam/\bmu:=(\lambda_\beta,\lambda_\gamma)/(\mu_\beta,\mu_\gamma)\). Then, using Lemma \ref{realpair},
\begin{align*}
[S^{\lambda_\beta/\mu_\beta} \circ S^{\lambda_\gamma/\mu_\gamma}]&=q^{-c_\beta-c_\gamma}[L_\beta \circ L_\gamma] = q^{-c_\beta-c_\gamma}[L(\beta,\gamma)] + q^{-c_\beta-c_\gamma+p_{\beta,\gamma}-(\beta,\gamma)}[L_\alpha].
\end{align*}
Using Lemma \ref{skewcharsep} and the fact that \(\CH_q\) is injective on \([R_\alpha\textup{-mod}]\), we also have
\begin{align*}
[S^{\lambda_\beta/\mu_\beta} \circ S^{\lambda_\gamma/\mu_\gamma}]&= q^{d^*-d_{\blam/\bmu}}[S^{\lambda^*/\mu^*}] + q^{d_*-d_{\blam/\bmu}}[S^{\lambda_*/\mu_*}].
\end{align*}
Thus, \(L_\alpha\) must be (a shift of) \(S^{\lambda^*/\mu^*}\) or \(S^{\lambda_*/\mu_*}\). But \(1_{\beta,\gamma}z^{\lambda^*/\mu^*} \neq 0\), so \(\Res_{\beta,\gamma}S^{\lambda^*/\mu^*} \neq 0\), and thus the cuspidality property of \(L(\alpha)\) implies it must be the latter, proving the validity of step (3)(a). If instead,  \(\blam/\bmu:= (\lambda_\beta, \lambda_\gamma, \mu_\beta, \mu_\gamma)\) is joinable, we use the second statement in Lemma \ref{realpair} and a similar argument to prove the validity of step (3)(b).
\end{proof}
\begin{cor}\label{cuspskew}
For a balanced convex preorder, all real cuspidal modules of \(R_\alpha\) are skew hook Specht modules up to some shift. 
\end{cor}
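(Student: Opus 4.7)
The plan is to deduce the corollary from Proposition \ref{induct} by induction on $\mathrm{ht}(\alpha)$, where $\alpha$ ranges over $\Phi^{\mathrm{re}}_+$. The base of the induction consists of the simple roots $\alpha_i$, for which Lemma \ref{basecase} already gives $L_{\alpha_i} \cong S^{\iota_i}$, and $\iota_i$ is (trivially) a skew hook diagram. This handles every positive real root of height one.

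For the inductive step, fix $\alpha \in \Phi^{\mathrm{re}}_+$ with $\mathrm{ht}(\alpha) \geq 2$, and assume that for every $\beta \in \Phi^{\mathrm{re}}_+$ with $\mathrm{ht}(\beta) < \mathrm{ht}(\alpha)$ we have already exhibited a skew hook diagram $\lambda_\beta/\mu_\beta$ and an integer $c_\beta$ with $L_\beta \cong S^{\lambda_\beta/\mu_\beta}\langle c_\beta\rangle$. Proposition \ref{induct} then produces a skew hook diagram $\lambda_\alpha/\mu_\alpha$ and an integer $c_\alpha$ with $L_\alpha \cong S^{\lambda_\alpha/\mu_\alpha}\langle c_\alpha\rangle$, provided that each of its three cases is available. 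Cases (1) and (2) (when $\alpha = m\delta \pm \alpha_i$) apply unconditionally via Lemmas \ref{alphai} and \ref{-alphai}, and in case (3) one invokes \cite[Lemma 6.9]{cusp} to produce a real minimal pair $(\beta,\gamma)$ for $\alpha$; since $\mathrm{ht}(\beta),\mathrm{ht}(\gamma) < \mathrm{ht}(\alpha)$ the inductive hypothesis supplies $\lambda_\beta/\mu_\beta, \lambda_\gamma/\mu_\gamma$, and the proposition's joinability analysis shows that at least one of the two orderings $(\lambda_\beta,\lambda_\gamma)/(\mu_\beta,\mu_\gamma)$ or $(\lambda_\gamma,\lambda_\beta)/(\mu_\gamma,\mu_\beta)$ is joinable, so the construction of $\lambda_\alpha/\mu_\alpha$ via $\lambda_*/\mu_*$ or $\lambda^*/\mu^*$ goes through.

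Thus the induction is complete, and the resulting diagram $\lambda_\alpha/\mu_\alpha$ is a skew hook diagram by construction (cases (1) and (2) produce skew hooks directly, and in case (3) the joinable gluing of two skew hooks whose residues match up as described yields another skew hook). The only genuinely non-routine ingredient is the verification, already inside the proof of Proposition \ref{induct}, that the cuspidality of $L_\alpha$ singles out the correct one of $\lambda_*/\mu_*$ and $\lambda^*/\mu^*$ via the $\Res_{\beta,\gamma}$ non-vanishing test; given that, the present corollary is immediate.
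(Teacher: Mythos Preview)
Your proof is correct and follows essentially the same approach as the paper: induct on $\mathrm{ht}(\alpha)$ with base case Lemma \ref{basecase} and inductive step Proposition \ref{induct}. The paper's proof is the one-line ``Apply Proposition \ref{induct} inductively, with base case given by Lemma \ref{basecase}''; you have simply unpacked this in more detail.
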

\begin{proof}
Apply Proposition \ref{induct} inductively, with base case given by Lemma \ref{basecase}.
\end{proof}
\begin{remark}
In \cite[\S8.4]{lyndon}, Kleshchev and Ram showed that in {\it finite} type \({\tt A}\), the cuspidal modules (associated to a convex lexicographic order) are Specht modules associated to hook partitions. Thus one can view Corollary \ref{cuspskew} as an affine analogue of this fact.
\end{remark}
\subsection{Cuspidal modules for a special preorder}
To give a complete picture, we explicitly describe the skew Specht modules corresponding to real positive roots in the case of a certain balanced {\it \(e\)-row preorder} (in the sense of \cite{ito}) on \(\Phi_+\), where the associated skew hook diagrams take on a very regular pattern. Take
\begin{enumerate}
\item \(m\delta + \alpha \succ m'\delta \succ m''\delta - \alpha\), for all \(m \in \ZZ_{\geq 0}\), \(m',m'' \in \ZZ_{\geq 1}\), \(\alpha \in \Phi'_+\).
\item \(m\delta + \alpha_i + \cdots + \alpha_j \succ m'\delta + \alpha_{i'} + \cdots + \alpha_{j'}\) if
\begin{align*}
i<i'; \hspace{8mm}\textup{or}\hspace{8mm}
i=i', m<m';\hspace{8mm}\textup{or}\hspace{8mm}
i=i', m=m', j<j'.
\end{align*}
\item \(m\delta - \alpha_i - \cdots - \alpha_j \succ m'\delta - \alpha_{i'} - \cdots - \alpha_{j'}\) if
\begin{align*}
i>i'; \hspace{8mm}\textup{or}\hspace{8mm}
i=i', m>m';\hspace{8mm}\textup{or}\hspace{8mm}
i=i', m=m', j<j'.
\end{align*}
\end{enumerate}
Under this preorder, it is easy to see that for any \(\alpha \succ \delta\) not of the form \(m\delta + \alpha_i\), the positive root \(\beta \succ \alpha\) immediately preceding \(\alpha\) in the order constitutes the lefthand side of a real minimal pair \((\beta,\alpha-\beta)\) for \(\alpha\). Similarly, for \(\alpha \prec \delta\) not of the form \(m\delta - \alpha_i\), the positive root \(\alpha \succ \beta\) immediately succeeding \(\alpha\) in the order constitutes the righthand side of a real minimal pair \((\alpha-\beta,\beta)\) for \(\alpha\). Then, applying the inductive process in Proposition \ref{induct}, we arrive at:
\begin{enumerate}
\item For \(1 \leq i\leq j\leq e-1\) and \(m \in \ZZ_{\geq 0}\), \(L(m\delta + \alpha_i + \cdots + \alpha_j) \cong S^{\lambda/\mu}\), where \(\lambda/\mu\) is the minimal skew hook diagram with residues shown on the left below, with the 0-node appearing on the inner corners \(m\) times, and the \(i\)-node appearing on the outer corners \(m+1\) times. 
\item For \(1 \leq i\leq j \leq e-1\) and \(m \in \ZZ_{\geq 1}\),  \(L(m\delta - \alpha_i - \cdots - \alpha_j) \cong S^{\lambda/\mu}\langle 1-m\rangle\), where \(\lambda/\mu\) is the minimal skew hook diagram with residues shown on the right below, with the 0-node appearing on the inner corners \(m\) times, and the \(i\)-node appearing on the outer corners \(m-1\) times.
\end{enumerate} 
\begin{align*}
\begin{tikzpicture}[scale=0.42]
\tikzset{baseline=0mm}
\draw(0,0)--(2.5,0);
\draw(0,-1)--(2.5,-1);
\draw(3.5,0)--(6,0);
\draw(3.5,-1)--(6,-1);
\draw(0,0)--(0,-2.5);
\draw(1,0)--(1,-2.5);
\draw(0,-3.5)--(0,-6);
\draw(1,-3.5)--(1,-6);
\draw(2,0)--(2,-1);
\draw(4,0)--(4,-1);
\draw(5,0)--(5,-1);
\draw(6,0)--(6,-1);
\draw(0,-2)--(1,-2);
\draw(0,-4)--(1,-4);
\draw(0,-5)--(1,-5);
\draw(0,-6)--(1,-6);
\draw(5,0)--(5,1.5);
\draw(6,0)--(6,1.5);
\draw(5,1)--(6,1);
\draw(7.5,3)--(10,3);
\draw(7.5,4)--(10,4);
\draw(8,3)--(8,4);
\draw(9,3)--(9,5.5);
\draw(10,3)--(10,5.5);
\draw(9,5)--(10,5);
\draw(9,6.5)--(9,9);
\draw(10,6.5)--(10,9);
\draw(9,9)--(11.5,9);
\draw(9,8)--(11.5,8);
\draw(9,7)--(10,7);
\draw(11,9)--(11,8);
\draw(12.5,9)--(15,9);
\draw(12.5,8)--(15,8);
\draw(15,9)--(15,8);
\draw(14,9)--(14,8);
\draw(13,9)--(13,8);
\draw(15,9)--(15,10.5);
\draw(14,9)--(14,10.5);
\draw(15,11.5)--(15,13);
\draw(14,11.5)--(14,13);
\draw(14,13)--(15,13);
\draw(14,12)--(15,12);
\draw(15,10)--(14,10);
\draw(3,-0.5) node{$\scriptstyle \cdots$};
\draw(0.5,-2.8) node{$\scriptstyle \vdots$};
\draw(0.5,-0.5) node{$\scriptstyle 0$};
\draw(1.5,-0.5) node{$\scriptstyle 1$};
\draw(0.5,-1.5) node{$\scriptstyle e\hspace{-0.5mm}-\hspace{-0.5mm}1$};
\draw(0.5,-4.5) node{$\scriptstyle i\hspace{-0.5mm}+\hspace{-0.5mm}1$};
\draw(0.5,-5.5) node{$\scriptstyle i$};
\draw(4.5,-0.5) node{$\scriptstyle i\hspace{-0.5mm}-\hspace{-0.5mm}1$};
\draw(5.5,-0.5) node{$\scriptstyle i$};
\draw(5.5,0.5) node{$\scriptstyle i\hspace{-0.5mm}+\hspace{-0.5mm}1$};
\draw(6,3) node{$\scriptstyle \iddots$};
\draw(12,8.5) node{$\scriptstyle \cdots$};
\draw(9.5,6.2) node{$\scriptstyle \vdots$};
\draw(9.5,8.5) node{$\scriptstyle 0$};
\draw(10.5,8.5) node{$\scriptstyle 1$};
\draw(9.5,7.5) node{$\scriptstyle e\hspace{-0.5mm}-\hspace{-0.5mm}1$};
\draw(9.5,4.5) node{$\scriptstyle i\hspace{-0.5mm}+\hspace{-0.5mm}1$};
\draw(9.5,3.5) node{$\scriptstyle i$};
\draw(13.5,8.5) node{$\scriptstyle i\hspace{-0.5mm}-\hspace{-0.5mm}1$};
\draw(14.5,8.5) node{$\scriptstyle i$};
\draw(8.5,3.5) node{$\scriptstyle i\hspace{-0.5mm}-\hspace{-0.5mm}1$};
\draw(14.5,9.5) node{$\scriptstyle i\hspace{-0.5mm}+\hspace{-0.5mm}1$};
\draw(14.5,12.5) node{$\scriptstyle j$};
\draw(14.5,11.2) node{$\scriptstyle \vdots$};
\end{tikzpicture}
\hspace{0cm}
\begin{tikzpicture}[scale=0.42]
\tikzset{baseline=0mm}
\draw(0,0)--(2.5,0);
\draw(0,-1)--(2.5,-1);
\draw(3.5,0)--(6,0);
\draw(3.5,-1)--(6,-1);
\draw(0,0)--(0,-2.5);
\draw(1,0)--(1,-2.5);
\draw(0,-3.5)--(0,-6);
\draw(1,-3.5)--(1,-6);
\draw(2,0)--(2,-1);
\draw(4,0)--(4,-1);
\draw(5,0)--(5,-1);
\draw(6,0)--(6,-1);
\draw(0,-2)--(1,-2);
\draw(0,-4)--(1,-4);
\draw(0,-5)--(1,-5);
\draw(0,-6)--(1,-6);
\draw(5,0)--(5,1.5);
\draw(6,0)--(6,1.5);
\draw(5,1)--(6,1);
\draw(7.5,3)--(10,3);
\draw(7.5,4)--(10,4);
\draw(8,3)--(8,4);
\draw(9,3)--(9,5.5);
\draw(10,3)--(10,5.5);
\draw(9,5)--(10,5);
\draw(9,6.5)--(9,9);
\draw(10,6.5)--(10,9);
\draw(9,9)--(11.5,9);
\draw(9,8)--(11.5,8);
\draw(9,7)--(10,7);
\draw(11,9)--(11,8);
\draw(12.5,9)--(15,9);
\draw(12.5,8)--(15,8);
\draw(15,9)--(15,8);
\draw(14,9)--(14,8);
\draw(13,9)--(13,8);
\draw(3,-0.5) node{$\scriptstyle \cdots$};
\draw(0.5,-2.8) node{$\scriptstyle \vdots$};
\draw(0.5,-0.5) node{$\scriptstyle 0$};
\draw(1.5,-0.5) node{$\scriptstyle 1$};
\draw(0.5,-1.5) node{$\scriptstyle e\hspace{-0.5mm}-\hspace{-0.5mm}1$};
\draw(0.5,-4.5) node{$\scriptstyle j\hspace{-0.5mm}+\hspace{-0.5mm}2$};
\draw(0.5,-5.5) node{$\scriptstyle j\hspace{-0.5mm}+\hspace{-0.5mm}1$};
\draw(4.5,-0.5) node{$\scriptstyle i\hspace{-0.5mm}-\hspace{-0.5mm}1$};
\draw(5.5,-0.5) node{$\scriptstyle i$};
\draw(5.5,0.5) node{$\scriptstyle i\hspace{-0.5mm}+\hspace{-0.5mm}1$};
\draw(6,3) node{$\scriptstyle \iddots$};
\draw(12,8.5) node{$\scriptstyle \cdots$};
\draw(9.5,6.2) node{$\scriptstyle \vdots$};
\draw(9.5,8.5) node{$\scriptstyle 0$};
\draw(10.5,8.5) node{$\scriptstyle 1$};
\draw(9.5,7.5) node{$\scriptstyle e\hspace{-0.5mm}-\hspace{-0.5mm}1$};
\draw(9.5,4.5) node{$\scriptstyle i\hspace{-0.5mm}+\hspace{-0.5mm}1$};
\draw(9.5,3.5) node{$\scriptstyle i$};
\draw(13.5,8.5) node{$\scriptstyle i\hspace{-0.5mm}-\hspace{-0.5mm}2$};
\draw(14.5,8.5) node{$\scriptstyle i\hspace{-0.5mm}-\hspace{-0.5mm}1$};
\draw(8.5,3.5) node{$\scriptstyle i\hspace{-0.5mm}-\hspace{-0.5mm}1$};
\end{tikzpicture}
\end{align*}
\vspace{0.5cm}

\end{document}